\documentclass[a4paper,10pt]{article}
\usepackage{mathtext}
\usepackage[T1,T2A]{fontenc}
\usepackage[cp1251]{inputenc}
\usepackage[english]{babel}
\usepackage{amsmath}
\usepackage{amsfonts}
\usepackage{amssymb}
\usepackage{mathrsfs}
\usepackage{amsthm}
\usepackage{enumerate}
\usepackage{graphicx}

\usepackage{pb-diagram}
\usepackage{tikz-cd}

\usepackage{euscript}

\newtheorem{Le}{Lemma}[section]
\newtheorem{Def}[Le]{Definition}
\newtheorem{St}[Le]{Proposition}
\newtheorem{Th}{Theorem}[section]
\newtheorem{Cor}[Le]{Corollary}
\newtheorem{Rem}[Le]{Remark}

\newtheorem{Ex}[Le]{Example}
\numberwithin{equation}{subsection}

\newcommand{\R}{\mathbb{R}}
\newcommand{\Co}{\mathbb{C}}
\newcommand{\N}{\mathbb{N}}
\newcommand{\Z}{\mathbb{Z}}

\newcommand{\eps}{\varepsilon}

\newcommand{\eq}[1]{\begin{equation}{#1}\end{equation}}
\newcommand{\mlt}[1]{\begin{multline}{#1}\end{multline}}
\newcommand{\alg}[1]{\begin{align}{#1}\end{align}}

\newcommand{\set}[2]{\{{#1}\mid{#2}\}}
\newcommand{\sset}[2]{\bigg\{{#1}\,\bigg|\;{#2}\bigg\}}
\newcommand{\Set}[2]{\Big\{{#1}\,\Big|\;{#2}\Big\}}
\newcommand{\scalprod}[2]{\langle{#1},{#2}\rangle}
\newcommand{\fdot}{\,\cdot\,}
\newcommand{\twodots}{\, .. \,}
\newcommand{\Eref}[1]{\stackrel{#1}{=}}
\newcommand{\Eeqref}[1]{\stackrel{\scriptscriptstyle{\eqref{#1}}}{=}}
\newcommand{\EeqrefTwo}[2]{\stackrel{\scriptscriptstyle{\genfrac{}{}{0pt}{-2}{\eqref{#1}}{\eqref{#2}}}}{=}}
\newcommand{\Leqref}[1]{\stackrel{\scriptscriptstyle{\eqref{#1}}}{\leq}}
\newcommand{\Lseqref}[1]{\stackrel{\scriptscriptstyle{\eqref{#1}}}{\lesssim}}

\newcommand{\LseqrefTwo}[2]{\stackrel{\scriptscriptstyle{\genfrac{}{}{0pt}{-2}{\eqref{#1}}{\eqref{#2}}}}{\lesssim}}
\newcommand{\Lref}[1]{\stackrel{#1}{\leq}}
\newcommand{\Lsref}[1]{\stackrel{#1}{\lesssim}}

\newcommand{\heat}{\HHeat}
\newcommand{\Heat}{{\boldsymbol \HHeat}}
\DeclareMathOperator{\HHeat}{H}
\DeclareMathOperator{\Dil}{Dil}
\DeclareMathOperator{\DIL}{DIL}
\DeclareMathOperator{\I}{I}
\DeclareMathOperator{\loc}{loc}
\DeclareMathOperator{\CO}{Co}
\DeclareMathOperator{\Fl}{Fl}
\DeclareMathOperator{\BV}{BV}
\DeclareMathOperator{\curl}{curl}
\DeclareMathOperator{\sign}{sign}
\newcommand{\Disp}{\mathbb{D}}
\newcommand{\E}{\mathbb{E}}
\newcommand{\mass}{{\bf \mathrm{m}}}
\newcommand{\m}{\mathfrak{m}}

\newcommand{\GenTree}{\EuScript{T}}

\DeclareMathOperator{\Image}{Im}
\DeclareMathOperator{\Lip}{Lip}

\newcommand{\W}{\mathcal{W}}
\newcommand{\WW}{\mathbb{W}}
\newcommand{\Sw}{\mathcal{S}}
\newcommand{\M}{\mathbb{M}}
\newcommand{\Max}{\mathrm{M}}
\DeclareMathOperator{\s}{s}
\DeclareMathOperator{\supp}{supp}
\newcommand{\one}{\vec{\bf 1}}

\DeclareMathOperator{\Smooth}{\mathfrak{S}}
\newcommand{\Tree}{\mathcal{T}}
\newcommand{\TS}{\boldsymbol{T}}
\newcommand{\TREE}{\mathfrak{T}}

\newcommand{\tf}{\tilde{f}}

\newcommand{\GGamma}{\boldsymbol{\Gamma}}
\newcommand{\vn}{\vec{n}}

\newcommand{\ldH}{\underline{\dim}_{\mathrm{H}}}

\newcommand{\dr}{d^{\vec{r}}}
\newcommand{\codr}{\partial^{\vec{r}}}

\title{Anisotropic Bourgain--Brezis inequalities}
\author{Dmitriy Stolyarov\thanks{Supported by the Russian Science Foundation grant N 24-71-10011.}
}
\begin{document}
\maketitle

\begin{abstract}
We provide an adjustment of the Hardy--Littlewood--Sobolev inequality for~$p=1$ to the anisotropic setting. Several examples of anisotropic Bourgain--Brezis inequalities are obtained as corollaries of the main theorem. 
\end{abstract}

\tableofcontents

\section{Introduction}\label{S1}

\subsection{Classical theory}\label{s11}

The classical Hardy--Littlewood--Sobolev inequality says the Riesz potential~$\I_\alpha$ maps~$L_p(\R^d)$ to~$L_q(\R^d)$ continuously whenever~$1 < p < q < \infty$ and the parameters satisfy the homogeneity condition~$1/p - 1/q = \alpha /d$.  In other words,
\eq{
\|\I_\alpha f\|_{L_q} \lesssim \|f\|_{L_p}.
}
The sign~`$\lesssim$' hides a multiplicative constant independent of the choice of~$f$. The inequality was introduced by Sobolev in~\cite{Sobolev1938} to prove what is now called the Sobolev embedding theorem~$W_p^l \hookrightarrow L_q$,~$1 < p < q < \infty$, and~$1/p - 1/q = l/d$. Later, Gagliardo~\cite{Gagliardo1959} and Nirenberg~\cite{Nirenberg1959} showed that the embedding theorem holds true at the endpoint~$p=1$.  This case is important for the study of functions of bounded variation.  The Gagliardo--Nirenberg--Sobolev embedding admits a Lorentz space refinement, as was shown by Alvino~\cite{Alvino1977}. This hints that while the Hardy--Littlewood--Sobolev inequality fails at the endpoint~$p=1$, there should be some modification that holds. The development of the so-called Bourgain--Brezis inequalities, e.g., in~\cite{BourgainBrezis2003},~\cite{BourgainBrezis2004},~\cite{BourgainBrezis2007},~\cite{CVSYu2017},~\cite{GRvS2019},~\cite{GRV2024},~\cite{HRS2023},~\cite{HernandezSpector2024},~\cite{LanzaniStein2005},~\cite{Mazya2010},~\cite{SpectorVanSchaftingen2019},~\cite{VanSchaftingen2004},~\cite{VanSchaftingen2008},~\cite{VanSchaftingen2013}, emphasized the existence of such a modification. The reader may find more historical information in the surveys~\cite{Spector2020},~\cite{VanSchaftingen2014}, and the lecture notes~\cite{VanSchaftingen2024}.

By~$\Sw'(\R^d,\R^\ell)$ we mean the space of~$\R^\ell$-valued tempered distributions.
\begin{Th}[Theorem~$1$ in~\cite{Stolyarov2022}]\label{OldMainTheorem}
Let~$\W$ be a closed translation and dilation invariant subspace of~$\Sw'(\R^d,\R^\ell)$. The inequality
\eq{
\|\I_\alpha f\|_{L_{q}} \lesssim \|f\|_{L_1},\qquad f\in \W, \quad \alpha \in (0,d),\quad \text{and}\quad  q= \frac{d}{d-\alpha} \in (1,\infty),
}
holds true if and only if~$\W$ does not contain distributions of the type~$a\otimes \delta_0$, where~$a\in \R^\ell \setminus \{0\}$ and~$\delta_0$ is the Dirac delta at the origin.
\end{Th}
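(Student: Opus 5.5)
Necessity is the easy direction. Suppose $a\otimes\delta_0\in\W$ with $a\neq 0$. Since $\W$ is closed and invariant under translations and dilations, it contains the approximate identities $a\otimes\varphi_\eps$, where $\varphi_\eps(x)=\eps^{-d}\varphi(x/\eps)$. These can be written as averages of translates of $a\otimes\delta_0$, with the averages taken as limits of Riemann sums, which converge in $\Sw'$. All of them have $L_1$ norm $|a|\,\|\varphi\|_{L_1}$. On the other hand, $\I_\alpha(a\otimes\varphi_\eps)\to a|x|^{\alpha-d}$ locally uniformly away from the origin, and $|x|^{\alpha-d}\notin L_{d/(d-\alpha)}$ because the integral diverges logarithmically at both ends. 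Hence the $L_q$ norms are unbounded, and the inequality fails.

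For sufficiency I first convert the hypothesis into an algebraic condition on the Fourier side. A closed subspace invariant under translations and dilations is described by a closed, dilation-invariant family of subspaces $\W_\xi\subset\Co^\ell$, with $\hat f(\xi)\in\W_\xi$. Equivalently, it is the kernel of a matrix of homogeneous (possibly non-smooth) Fourier multipliers, $f\in\W$ iff $\Omega(\xi)\hat f(\xi)=0$. The condition $a\otimes\delta_0\notin\W$ says that no fixed vector $a$ lies in $\W_\xi$ for all $\xi$ on the sphere. I would make this quantitative by compactness: the projections $P_\xi$ onto $\W_\xi$ satisfy $\int_{S^{d-1}}P_\xi\,d\sigma(\xi)\le (1-c)\,\mathrm{Id}$. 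In practice this means one can choose finitely many directions or cones such that the intersection over them of the admissible subspaces is trivial. This gives a cancellation condition: for $f\in\W$, the low-frequency behaviour $\hat f(0)=\int f$ must vanish, and more importantly, $f$ cannot concentrate its mass at a single scale and point the way a delta does.

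The main step is to prove the weak-type estimate $\|\I_\alpha f\|_{L_{q,\infty}}\lesssim\|f\|_{L_1}$, which holds for all $f$, and then upgrade it to strong $L_q$ for $f\in\W$. For the upgrade I would use a Littlewood--Paley decomposition $f=\sum_k \Delta_k f$ with $\I_\alpha f=\sum_k 2^{-k\alpha}\tilde\Delta_k f$. It suffices to show that the pieces $2^{-k\alpha}\Delta_k f$ are almost orthogonal in $L_q$, in the sense of the estimate
\[
\Big\|\Big(\sum_k |2^{-k\alpha}\Delta_k f|^2\Big)^{1/2}\Big\|_{L_q}\lesssim\|f\|_{L_1}.
\]
I would attempt this by a duality/atomic argument in the spirit of Bourgain--Brezis and Van Schaftingen. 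Pair with a test function $\varphi\in L_{q'}$, $q'=d/\alpha$, a critical Sobolev-type space. Then decompose $\I_\alpha\varphi$ into a bounded part plus a part lying in the annihilator $\W^\perp$ modulo controllable errors. The algebraic condition from the second paragraph is exactly what lets the unbounded (BMO-type) part of $\I_\alpha\varphi$ be absorbed by $\W^\perp$, following the approach of Bourgain--Brezis approximation of $\dot W^{\alpha,d/\alpha}$ functions by $L_\infty$ ones.

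The expected obstacle is this approximation step: constructing, for each $\varphi$, an $L_\infty$ function $\psi$ with $\I_\alpha\varphi-\psi\in\W^\perp$, with uniform bounds. This is where the nonsmooth multipliers and the cone-wise cancellation must be handled. I would first try it via a stopping-time/Whitney construction across frequency cones, choosing in each cone the coordinate direction singled out by the nondegeneracy condition.
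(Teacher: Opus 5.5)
Your necessity argument is correct and is essentially the paper's. You get $a\otimes\varphi_\eps\in\W$ by Proposition~\ref{ConvolutionTranslationInvariant}, and Fatou together with the divergence~\eqref{DivergenceOfKernel} of $\|K_\alpha\|_{L_q}$ gives the contradiction, as in Lemma~\ref{SmoothingLemma}.

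The sufficiency direction has a genuine gap, and it is exactly the step you call the expected obstacle. That step is not a reduction. By Hahn--Banach, the bound $|\langle f,\I_\alpha\varphi\rangle|\lesssim\|f\|_{L_1}\|\varphi\|_{L_{q'}}$ on $\W\cap L_1$ holds \emph{if and only if} every $\I_\alpha\varphi$ splits as $\psi+\eta$, with $\|\psi\|_{L_\infty}\lesssim\|\varphi\|_{L_{q'}}$ and $\eta$ annihilating $\W\cap L_1$. So your ``approximation step'' is the theorem restated in dual form, and the proposal gives no mechanism for it. Bourgain--Brezis-type $L_\infty$-liftings are only available where there is explicit structure to exploit, such as the Hodge decomposition for the divergence or Van Schaftingen's cocancelling annihilators with smooth symbols at the $\dot W^{1,d}$ level. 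Nothing of this sort exists for an arbitrary closed invariant $\W$. Moreover, small $\alpha$ is precisely the hard range, since larger $\alpha$ follows from smaller ones by the semigroup property and HLS, as in the proof of Corollary~\ref{MixedSummabilityCorollary}.

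The preparatory steps do not narrow the gap either:
\begin{itemize}
\item The square-function bound is, by Littlewood--Paley theory, just equivalent to $\|\I_\alpha f\|_{L_q}\lesssim\|f\|_{L_1}$.
\item The weak-type estimate is never used afterwards.
\item For a general closed $\W$, the Fourier fibers $\W_\xi$ are only closed in $\xi$, so there is no smooth homogeneous symbol for a cone-wise Whitney or multiplier construction to act on.
\end{itemize}

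The missing idea is the one the paper uses to avoid duality and Fourier-side structure entirely. After reducing to $\sum_{k\geq0}A^{-\alpha k}\|f_k\|_{L_q}\lesssim\|f\|_{L_1}$, where the $f_k$ are heat extensions, it uses that $\|f_k\|_{L_1}$ increases in $k$. Convex atoms are then paid for by the telescoping sum~\eqref{Telescopic}. At flat, localized atoms, a compactness argument (Theorem~\ref{Theorem41}) shows that $f$ is locally close to $a\otimes h$ with $h\in\M^\W$. Since $\delta_0\notin\M^\W$, the strengthened monotonicity formula (Proposition~\ref{StrengtheningOfSemiinvariant}) gives growth $A^{\alpha-\nu}$ instead of $A^{\alpha}$ per generation. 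This geometric gain is summed along trees whose roots are charged to convex atoms.
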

See~\cite{Stolyarov2022} for applications and historical remarks concerning Bourgain--Brezis inequalities. The standpoint of~\cite{Stolyarov2022} was that the theme of Sobolev embeddings and Hardy--Littlewood--Sobolev inequality is a phenomenon in harmonic analysis. The aim of the present paper is twofold. 

First, we  extend the theory of~\cite{Stolyarov2022}, in particular, Theorem~\ref{OldMainTheorem}, to the anisotropic setting, where the homogeneity with respect to different coordinates is different. The classical Sobolev embeddings have their anisotropic counterparts, see the monographs~\cite{BIN1979} and~\cite{Triebel2006}. As for the Bourgain--Brezis inequalities, seemingly, not much has been done (however, see~\cite{KMS2015} and~\cite{Stolyarov2021bibis}). The anisotropic setting shows the limitation of the classical methods such as the isoperimetry, the co-area formulas, or integration by parts in the spirit of Gagliardo and Nirenberg. Our aim is to show that the Bourgain--Brezis inequalities in the anisotropic setting are amenable to the harmonic analysis approach. 

Second, the proof of Theorem~\ref{OldMainTheorem} presented in~\cite{Stolyarov2022} is long and involved. While we do not give an essentially new argument, we present several shortcuts that were unnoticed in~\cite{Stolyarov2022}. We also consider many examples and explanations that show that some other natural simplifications are impossible. In the forthcoming subsection, we introduce the anisotropic formalism and state the results. After that, in Subsection~\ref{s13'}, we will provide the plan of the paper.


\subsection{Basics of anisotropic formalism}\label{s12}

The classical theory of anisotropic Sobolev spaces is described in Chapter~$3$ of~\cite{BIN1979}. The reader may find a more Fourier analytic approach to this subject in Section~$5$ of~\cite{Triebel2006}.

We call a vector~$a\in \R^d$  with positive coordinates and such that~$\sum_{j=1}^d a_j = d$ an \emph{anisotropy}\footnote{Sometimes it is called a \emph{pattern of homogeneity}.}. Consider the group of affine transforms
\eq{
\Dil_t\colon \R^d \to \R^d,\ t > 0;\qquad \Dil_t(x) = \big(t^{a_1}x_1,t^{a_2}x_2,\ldots,t^{a_d}x_d\big),\ x\in \R^d.
}
These transforms are anisotropic versions of the Euclidean dilations~$x\mapsto t x$,~$x\in \mathbb{R}^d$. Let~$m \in \R$. A function~$\Phi\colon \R^d \setminus \{0\}\to \R^\ell$,~$\ell \in \N$, is called~$m$-\emph{homogeneous} with respect to~$a$, provided
\eq{
\Phi(\Dil_t(x)) = t^{m}\Phi(x),\qquad t > 0,\ x\in \R^d\setminus \{0\}.
}
\begin{Ex}
Let~$d=2$ and~$a = (4/3,2/3)$. The function~$f(x,y) = x$ is~$4/3$-homogeneous, the function~$f(x,y) = y$ is~$2/3$-homogeneous, and the function~$xy$ is~$2$-homogeneous. The function~$f(x,y) = x+y$ is not homogeneous with respect to the chosen anisotropy, however, the function~$x+y^2$ is~$4/3$-homogeneous.
\end{Ex}
We fix an anisotropy and call functions simply homogeneous. Each anisotropy generates a function~$\rho\colon \R^d \to \R$ that replaces the isotropic Euclidean norm. It is defined implicitly:
\eq{\label{AnisotropicNorm}
\sum\limits_{j=1}^d \frac{x_j^2}{\rho^{2a_j}(x)} = 1,\quad x\in \R^d \setminus \{0\};\qquad \rho(0) = 0.
}
Note that~$\rho$ is a continuous, even, and~$1$-homogeneous function. The definition~\eqref{AnisotropicNorm} may be rephrased as~$\Dil_{1/\rho(x)}(x) \in S^{d-1}$, where the latter symbol denotes the unit sphere in~$\R^{d}$. This, in particular, yields
\eq{
\rho(x) \asymp \Big(\sum\limits_{j=1}^d |x_j|^{2/a_j}\Big)^\frac12.
}
Here and in what follows, the notation~$A \asymp B$ means~$A \lesssim B$ and~$B \lesssim A$.

We will also use two types of dilations of functions: the one that preserves the integral,
\eq{\label{L1Dilation}
\Dil_t[f](x) = t^{-d} f\big(\Dil_{t^{-1}} x\big) = t^{-d} f\big( t^{-a_1}x_1, t^{-a_2}x_2,\ldots, t^{-a_d}x_d\big),\qquad x\in \R^d,\ t > 0,
}
and the one that preserves the values of functions:
\eq{\label{LinftyDilation}
\Dil^t[f](x) = f\big(\Dil_{t^{-1}} x\big) =  f\big( t^{-a_1}x_1, t^{-a_2}x_2,\ldots, t^{-a_d}x_d\big),\qquad x\in \R^d,\ t > 0.
}
We will use dilations frequently and the author finds Fig.~\ref{Figure0} helpful, for example, to distinguish~$\Dil_t$ from~$\Dil_{1/t}$.
\begin{figure}[h!]
\centerline{
\includegraphics[height=5cm]{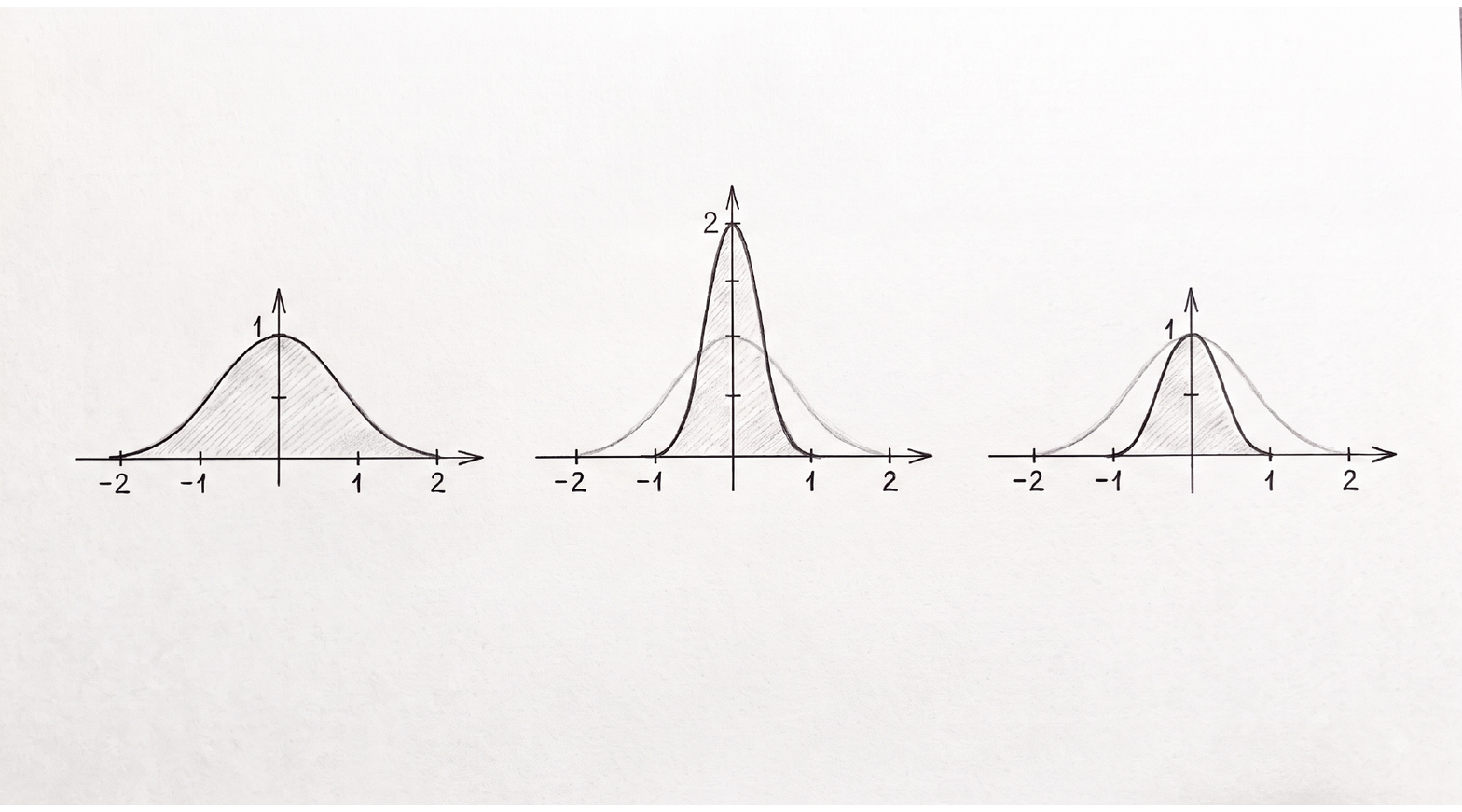}
}
\caption{A function~$f$ with its dilations~$\Dil_{1/2} f$ and~$\Dil^{1/2} f$.}
\label{Figure0}
\end{figure}

We will also apply dilations of the first type to measures:
\eq{
\Dil_t[\mu](A) = \mu \big(\Dil_{t^{-1}}A\big),\qquad \Dil_{s}A = \set{\Dil_s x}{x\in A},\quad s,t > 0,
}
here~$\mu$ is a measure and~$A\subset \R^d$ is a Borel set with finite~$\mu$-variation. Note that the definitions agree in the case where~$\mu$ is absolutely continuous: 
\mlt{
\Dil_t[\mu](A) =  \mu \big(\Dil_{t^{-1}}A\big) = \int\limits_{\Dil_{t^{-1}} A} f(y)\,dy = \int\limits_A f\big( \Dil_{t^{-1}} x\big) \,d (\Dil_{t^{-1}} x)\\
 \Eref{\scriptscriptstyle \sum a_j = d} t^{-d} \int\limits_{A} f\big( \Dil_{t^{-1}} x\big)\,dx = \int\limits_A \Dil_{t} f (x)\,dx,\qquad d\mu(x) = f(x)\,dx.
}

We will be using the following normalization of the Fourier transform:
\eq{
\hat{f}(\xi) = \int\limits_{\R^d}f(x)e^{-2\pi i \scalprod{x}{\xi}}\,dx,\qquad \xi \in \R^d,\quad f\in L_1(\R^d).
}
Consider an anisotropic version of the Riesz potential~$\I_\alpha$,~$\alpha \in (0,d)$, defined as the Fourier multiplier
\eq{\label{RieszPotentialDefinition}
\I_\alpha f  = \Big[(\rho(\fdot))^{-\alpha} \hat{f}(\fdot)\Big]\check{\phantom{\Big |}},\qquad f\in L_1(\R^d).
}
For the properties of the classical isotropic Riesz potentials, see Section~$1.2$ in~\cite{AdamsHedberg1996}. The heuristic meaning of the isotropic Riesz potential is that it is the most natural integral operator of order~$-\alpha$; the term `natural' refers to being rotation invariant. In the anisotropic case, there is no rotational invariance, and apparently, no comparably canonical unique operator. One may replace~$\rho^{-\alpha}$ in~\eqref{RieszPotentialDefinition} with any other reasonable~$(-\alpha)$-homogeneous function. What is important is the homogeneity property
\eq{\label{RieszPotentialHomogeneity}
\Dil_\lambda \big[\I_\alpha f\big] = \lambda^{-\alpha} \I_\alpha\big[\Dil_\lambda [f]\big],\qquad \lambda > 0,
}
which may be derived from
\eq{\label{DilationFourier}
\Dil_\lambda \hat{g} = (\Dil^{\lambda^{-1}} \!\! g)\hat{\phantom{i}}
}
as follows:
\eq{
\I_\alpha [\Dil_\lambda f] = \Big((\Dil_\lambda f)\!\hat{\phantom{I}}\rho^{-\alpha}\Big)\!\!\check{\phantom{\Big|}} = \Big(\Dil^{\lambda^{-1}}\! \big[\hat{f}\,\big]\rho^{-\alpha}\Big)\!\!\check{\phantom{\Big|}} = \lambda^\alpha \Big(\Dil^{\lambda^{-1}}\! \big[\hat{f}\,\rho^{-\alpha}\big]\Big)\!\!\check{\phantom{\Big|}} =\lambda^\alpha \Dil_\lambda\big[\I_\alpha f\big].
}

The family of operators~\eqref{RieszPotentialDefinition} satisfies the semigroup property
\eq{\label{SemigroupForRieszPotentials}
\I_{\beta} = \I_{\beta - \alpha}\circ \I_{\alpha},\qquad 0 < \alpha < \beta < d. 
}
By homogeneity, there exists an~$(\alpha -d)$-homogeneous function~$K_\alpha\colon \R^d\setminus \{0\}\to \R$ such that
\eq{
\I_\alpha f = K_\alpha*f.
} 
Seemingly, there is no concise formula for the kernel~$K_\alpha$ in our generality. In particular, the kernel~$K_\alpha$ need not be positive\footnote{The following explanation was suggested by Nikita Dobronravov. Take~$d=2$,~$a_1 = \eps$,~$a_2 = 2-\eps$, and~$\alpha = \eps/2$. As~$\eps\to 0$, the function~$\rho^{-\alpha}$ then converges to~\eq{\begin{cases} 1, &|\xi_1| \leq 1;\\ |\xi_1|^{-\frac12}, &\text{otherwise}\end{cases},} and the Fourier transform of the latter function attains values of both signs.}. Note that, similarly to the isotropic case,
\eq{\label{DivergenceOfKernel}
\int\limits_{\R^d}|K_\alpha(x)|^{\frac{d}{d-\alpha}}\,dx  = +\infty.
}
To justify this, we use the polar change of variables formula (see~$(2.6)$ in~\cite{Stolyarov2021bibis} or Section~$4.1$ in~\cite{BIN1979}; the corresponding formula is listed as~$(4)$ in that section):
\eq{
\int\limits_{\R^d}|K_\alpha(x)|^{\frac{d}{d-\alpha}}\,dx = \int\limits_{\R_+} r^{d-1} \int\limits_{S^{d-1}}\big|K_\alpha(\Dil_{r}(\zeta))\big|^{\frac{d}{d-\alpha}} \Big(\sum\limits_{j=1}^d a_j \zeta_j^2\Big)\,d\sigma(\zeta)\,dr,
}
where~$\sigma$ denotes the natural surface measure on the unit sphere. Using the homogeneity of the function~$K_\alpha$, we arrive at the divergent integral~$\int_{\R_+} dr/r$.

Since in the anisotropic setting different coordinates in~$\R^d$ have different scalings, it is also natural to consider~$L_p$-spaces that take this feature into account. To this end, let~$\vec{p}\in \R^d$ be a vector with~$p_j \geq 1$ for every~$j$. Consider the norm
\eq{
\|f\|_{L_{\vec{p}}(\R^d)} = \bigg(\int\limits_\R\bigg(\int\limits_{\R}\ldots \int\limits_{\R}\bigg(\int\limits_\R |f(x)|^{p_1}\,dx_1\bigg)^{\frac{p_2}{p_1}}\,dx_2 \ldots \,dx_{d-1}\bigg)^{\frac{p_d}{p_{d-1}}}\,dx_d\bigg)^{\frac{1}{p_d}}.
} 
One may show that this quantity defines a norm and a complete function space (see, e.g., Section~$1.1$ in~\cite{BIN1979}). In the case~$\vec{p} = (p,p,\ldots,p)$,~$p \in [1,\infty)$, we obtain the standard~$L_p$ norm. Now we are ready to formulate the anisotropic version of the classical Hardy--Littlewood--Sobolev inequality.
\begin{Th}[Hardy--Littlewood--Sobolev inequality, anisotropic form]\label{AnisotropicHLS}
Let~$\vec{p}$ and~$\vec{q}$ be vectors in~$\R^d$ such that
\eq{
\forall j\quad 1< p_j < q_j < \infty\qquad \text{and} \qquad \sum\limits_{j=1}^d \Big(\frac{1}{p_j} - \frac{1}{q_j}\Big)a_j = \alpha \in (0,d).
} 
The operator~$\I_\alpha$ maps~$L_{\vec{p}}$ to~$L_{\vec{q}}$ continuously.
\end{Th}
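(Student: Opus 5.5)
We would prove Theorem~\ref{AnisotropicHLS} by reducing it to one-dimensional Hardy--Littlewood--Sobolev inequalities applied iteratively in the mixed-norm spaces. The mechanism is a pointwise domination of the kernel~$K_\alpha$ by a product of one-dimensional Riesz kernels. First we record the size bound for the kernel. Since~$K_\alpha$ is smooth away from the origin and~$(\alpha-d)$-homogeneous, we have~$|K_\alpha(x)| \lesssim \rho(x)^{\alpha-d}$. Smoothness off the origin follows because~$\rho^{-\alpha}$ is smooth off the origin, after a standard Littlewood--Paley decomposition of the multiplier, or by citing the anisotropic theory in~\cite{BIN1979}. Consequently~$|\I_\alpha f| \lesssim \rho^{\alpha-d}*|f|$, so it suffices to treat the positive operator with kernel~$\rho^{\alpha-d}$.

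Next we split the exponent among coordinates. Set~$\gamma_j = 1 - \frac{1}{p_j} + \frac{1}{q_j} \in (0,1)$; this is exactly the exponent for which the one-dimensional fractional integral~$g \mapsto |t|^{-\gamma_j}*g$ maps~$L_{p_j}(\R)$ to~$L_{q_j}(\R)$, by the classical one-dimensional HLS. Using~$\rho(x) \asymp \max_j |x_j|^{1/a_j}$, we get~$\rho(x) \geq |x_j|^{1/a_j}$ for each~$j$. Hence for any weights~$\theta_j \geq 0$ with~$\sum_j \theta_j = d-\alpha$,
\[
\rho(x)^{\alpha-d} = \prod_j \rho(x)^{-\theta_j} \leq \prod_j |x_j|^{-\theta_j/a_j}.
\]
We choose~$\theta_j = a_j\gamma_j$. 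The required identity is
\[
\sum_j a_j\gamma_j = \sum_j a_j - \sum_j a_j\Big(\frac1{p_j}-\frac1{q_j}\Big) = d - \alpha,
\]
which is precisely the hypothesis together with~$\sum a_j = d$. Thus
\[
|\I_\alpha f| \lesssim \Big(\bigotimes_j |x_j|^{-\gamma_j}\Big) * |f|,
\]
a tensor product of one-dimensional Riesz potentials~$\I^{(j)}$ acting on separate variables.

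Finally we iterate. The product operator factors as~$\I^{(d)} \circ \cdots \circ \I^{(1)}$, and these factors commute. We apply~$\I^{(1)}$ in~$x_1$ for fixed~$(x_2,\ldots,x_d)$, which gives~$L_{p_1}\to L_{q_1}$ in the innermost norm. Then we must pass the remaining operators through the mixed norm. The main obstacle is that after the inner step we hold an~$L_{q_1}$ norm in~$x_1$ but an~$L_{p_2}$ norm in~$x_2$, with the remaining operators still acting on the outer variables. We handle this with Minkowski's integral inequality, which lets the positive operator~$\I^{(2)}$ be moved inside the~$L_{q_1}(dx_1)$ norm, since~$q_1 \geq 1$. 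This gives
\[
\big\|\I^{(2)}F\big\|_{L_{q_1}(dx_1)} \leq \I^{(2)}\big[\|F\|_{L_{q_1}(dx_1)}\big].
\]
One-dimensional HLS in~$x_2$ then yields the~$L_{p_2}\to L_{q_2}$ bound. Inducting over coordinates, with positivity of all kernels used at every step, gives
\[
\|\I_\alpha f\|_{L_{\vec q}} \lesssim \|f\|_{L_{\vec p}}.
\]
The order of the iteration must match the order of integration in the definition of the mixed norm, treating~$x_1$ innermost.
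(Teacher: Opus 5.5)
Your proof is correct. It is a genuinely different route only in the sense that the paper does not prove Theorem~\ref{AnisotropicHLS} at all: it refers to Subsection~2.20 of~\cite{BIN1979}, and remarks that the two standard isotropic proofs (via the maximal function, and via interpolation of weak-type endpoints) do not adapt easily to the anisotropic setting.

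Your argument avoids both of those proofs. The pointwise bound $\rho(x)\geq |x_j|^{1/a_j}$ is immediate from~\eqref{AnisotropicNorm}, since each summand there is at most one. Combined with the choice $\theta_j=a_j\gamma_j$, it turns the homogeneity hypothesis into $\sum_j a_j\gamma_j=d-\alpha$. This gives $\rho^{\alpha-d}\leq\prod_j|x_j|^{-\gamma_j}$ with $\gamma_j=1-\frac1{p_j}+\frac1{q_j}\in(0,1)$. After that you need only the one-dimensional HLS, Tonelli, and Minkowski's inequality. Your induction matches the ordering of the mixed norm: treat $x_1$ innermost, pull the positive outer operators inside the $L_{q_1}(dx_1)$ norm by Minkowski, then apply the $(d-1)$-variable statement with exponents $(p_2,\ldots,p_d)\to(q_2,\ldots,q_d)$ to $y'\mapsto\|f(\cdot,y')\|_{L_{p_1}}$.

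What your approach buys is a short, self-contained proof that uses nothing about $K_\alpha$ beyond $|K_\alpha|\lesssim\rho^{\alpha-d}$. It therefore covers any other $(-\alpha)$-homogeneous multiplier smooth off the origin, a replacement the paper itself calls natural. It also shows why every inequality $p_j<q_j$ must be strict in this formulation: each $\gamma_j$ must be strictly less than $1$. The product domination is lossy and could not handle $p_j=q_j$ for some $j$, but the statement excludes that case.

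Two small points you should make explicit. First, $\rho$ is smooth off the origin by the implicit function theorem applied to~\eqref{AnisotropicNorm}. Second, $K_\alpha$ has no part supported at the origin, because its homogeneity degree $\alpha-d$ exceeds $-d$, whereas derivatives of $\delta_0$ have anisotropic degree at most $-d$. Relatedly, for $f\in L_{\vec p}$ the operator $\I_\alpha$ must be read as $K_\alpha*f$. That integral converges absolutely almost everywhere by your bound, and it agrees with the Fourier-multiplier definition on Schwartz functions.
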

In other words,
\eq{\label{HLSFormula}
\|\I_\alpha f\|_{L_{\vec{q}}}\lesssim \|f\|_{L_{\vec{p}}}.
}
For the proof, see Subsection~$2.20$ in~\cite{BIN1979}. It is interesting that the most popular proofs of the classical isotropic Hardy--Littlewood--Sobolev inequality, the one based on the maximal function estimate (see p.~$354$ in~\cite{Stein1993}), and another based on interpolation of the endpoint weak-types (see, e.g., p.38 in~\cite{Peetre1976} or the original paper~\cite{ONeil1963}), seemingly, do not work well in the anisotropic situation. At least, they require significant modifications. 

We will write many inequalities in the style of~\eqref{HLSFormula}. We always assume that the inequality is true in the case where the right hand side (the one that bounds the quantity on the left) is infinite.

We are mostly interested in the case~$\vec{p} = (1,1,\ldots, 1)$, where Theorem~\ref{AnisotropicHLS} fails. Let us justify that failure on the example where all~$q_j$ are also equal: The inequality
\eq{\label{WrongHLS}
\|\I_{\alpha} f\|_{L_{d/(d-\alpha)}}\lesssim \|f\|_{L_1}
}
is false. To see this, we need a simple folklore lemma. By the symbol~$\M(\R^d;\R^\ell)$ we denote the space of charges of bounded variation with values in~$\R^\ell$; by a charge we mean a vector-valued or simply signed~$\sigma$-additive set function, while measures are always non-negative. The norm of a charge is its total variation.
\begin{Le}\label{SmoothingLemma}
Let~$\W$ be a translation invariant closed linear subspace of~$\Sw'(\R^d;\R^\ell)$. Assume the inequality
\eq{\label{SmoothingLemmaAssumption}
\|\I_{\alpha} f\|_{L_{d/(d-\alpha)}}\lesssim \|f\|_{L_1},\qquad f\in \W\cap L_1(\R^d;\R^\ell),
}
holds true with a uniform constant. Then, the inequality
\eq{\label{SmoothingLemmaConclusion}
\|\I_{\alpha} \mu\|_{L_{d/(d-\alpha)}}\lesssim \|\mu\|_{\M},\qquad \mu\in \W\cap \M(\R^d;\R^\ell),
}
also holds with the same constant.
\end{Le}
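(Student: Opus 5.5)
The plan is to approximate $\mu$ by mollifications that stay inside $\W$ and then pass to the limit on both sides. Fix a nonnegative $\varphi\in C_0^\infty(\R^d)$ with $\int\varphi = 1$, and set $\varphi_t = \Dil_t[\varphi]$ for $t\to 0$. This is an approximate identity shrinking to the origin, since $\Dil_{t^{-1}}$ expands every coordinate.

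\textbf{Step 1: the mollified charges lie in $\W\cap L_1$ with controlled norm.} For $\mu\in \W\cap\M(\R^d;\R^\ell)$, the function $\mu*\varphi_t$ is smooth and lies in $L_1$, with
\[
\|\mu*\varphi_t\|_{L_1}\le \|\mu\|_{\M}\|\varphi_t\|_{L_1}=\|\mu\|_{\M}.
\]
To see that $\mu*\varphi_t\in\W$, write $\mu*\varphi_t=\int \varphi_t(y)\,\tau_y\mu\,dy$, where $\tau_y$ denotes translation. The integral converges in $\Sw'$: pairing with a Schwartz function $g$ gives $\int\varphi_t(y)\langle \tau_y\mu,g\rangle\,dy$, and the integrand is continuous in $y$. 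The integral is then a weak-$*$ limit of Riemann sums of translates of $\mu$. Each translate lies in $\W$ by translation invariance, and $\W$ is closed. Closedness in the weak-$*$ topology of $\Sw'$ is the natural reading; if the author means a different topology, one checks that the Riemann sums converge in that topology as well. So the hypothesis \eqref{SmoothingLemmaAssumption} gives
\[
\|\I_\alpha(\mu*\varphi_t)\|_{L_{d/(d-\alpha)}}\le C\|\mu\|_{\M}
\]
uniformly in $t$.

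\textbf{Step 2: identify the limit.} We have $\I_\alpha(\mu*\varphi_t)=(\I_\alpha\mu)*\varphi_t=K_\alpha*\mu*\varphi_t$. Since $K_\alpha$ is $(\alpha-d)$-homogeneous and locally bounded away from the origin, it lies in $L_{1,\mathrm{loc}}$, and $K_\alpha*\mu$ is a well-defined function in $L_{1,\mathrm{loc}}$, or at least a tempered distribution. As $t\to0$, $(\I_\alpha\mu)*\varphi_t\to \I_\alpha\mu$ in $\Sw'$. Equivalently, on the Fourier side $\rho^{-\alpha}\hat\mu\,\hat\varphi(\Dil_t\cdot)\to\rho^{-\alpha}\hat\mu$ boundedly, and $\rho^{-\alpha}$ is locally integrable because $\alpha<d$.

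\textbf{Step 3: lower semicontinuity.} The family $\I_\alpha(\mu*\varphi_t)$ is bounded in $L_q$, $q=d/(d-\alpha)\in(1,\infty)$. This space is reflexive, so a subsequence converges weakly in $L_q$ to some $F$ with $\|F\|_{L_q}\le C\|\mu\|_\M$. Weak $L_q$ convergence implies convergence in $\Sw'$, so $F=\I_\alpha\mu$ by Step 2. This yields \eqref{SmoothingLemmaConclusion} with the same constant $C$.

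\textbf{Main obstacle.} The only delicate point is Step 1: showing that $\W$ is stable under convolution with $\varphi_t$. That is, the vector-valued integral of translates must converge in the topology in which $\W$ is closed. The remaining steps are routine distribution theory.
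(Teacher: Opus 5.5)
Your proposal is correct and follows the paper's proof. You mollify $\mu$, use translation invariance and closedness to keep $\mu*\varphi_t$ in $\W$, apply the hypothesis, and pass to the limit in $\Sw'$; your Riemann-sum argument is the same idea as the paper's appendix Proposition~\ref{ConvolutionTranslationInvariant}, which approximates $\varphi$ by finite combinations of point masses, and your Step~3 (weak compactness in $L_q$ plus uniqueness of $\Sw'$ limits) spells out the final limiting step that the paper leaves implicit.
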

\begin{Rem}
We do not require any sort of dilation invariance here.
\end{Rem}
\begin{proof}[Proof of Lemma~\ref{SmoothingLemma}]
Let~$\{\varphi_n\}_n$ be a non-negative smooth approximation of the identity:~$\varphi_n(x) = n^d \varphi(n x)$, where~$n\in\N$ and~$\varphi$ is a smooth compactly supported non-negative function with unit integral. Pick some~$\mu \in \W\cap \M(\R^d;\R^\ell)$ and construct the approximations~$\mu_n = \mu*\varphi_n$. By translation invariance and the closedness of~$\W$, we have\footnote{See Proposition~\ref{ConvolutionTranslationInvariant} in the appendix for this folklore statement; similar principles are discussed, for example, in the classical paper~\cite{Schwartz1957}, see p. 8 and 9 of that paper.}~$\mu_n\in \W$. Moreover,~$\|\mu_n\|_{L_1} \leq \|\mu\|_{\M}$. Therefore, by our assumption~\eqref{SmoothingLemmaAssumption},
\eq{
\|\I_\alpha [\mu_n]\|_{L_{d/(d-\alpha)}} \lesssim \|\mu_n\|_{L_1}\leq \|\mu\|_{\M}.
}
It remains to note that~$\I_\alpha [\mu_n]\to \I_\alpha [\mu]$ as tempered distributions, which, together with the above inequality yields~\eqref{SmoothingLemmaConclusion}.
\end{proof}
Now we can disprove~\eqref{WrongHLS} by relying on the lemma above. In this case, we set~$\ell = 1$ and~$\W = \Sw'(\R^d)$. If~\eqref{WrongHLS} holds true, then, by Lemma~\ref{SmoothingLemma},~$\I_\alpha [\delta_0] \in L_{d/(d-\alpha)}$, where~$\delta_0$ is the Dirac delta, as well. By definition,~$\I_\alpha [\delta_0] = K_\alpha$. The~$L_{d/(d-\alpha)}$ norm of the latter function is infinite by~\eqref{DivergenceOfKernel}. This is a contradiction.

We are ready to formulate a preliminary version of our main result. Here and in what follows, the term 'vectorial delta measure' means a distribution of the form~$a\otimes \delta_0$, where~$a\in \R^\ell$ is a non-zero vector. It is clear from the reasoning above that if~$\W$ contains a vectorial delta measure, then~$\I_\alpha$ cannot map~$\W \cap L_1$ to~$L_{d/(d-\alpha)}$ continuously.

\begin{Th}\label{MainTheoremLorentzScale}
Let~$\W$ be a closed translation invariant linear subspace of~$\Sw'(\R^d; \R^\ell)$. Assume~$\W$ is also invariant under the dilations~$\Dil_t$. If~$\W$ does not contain vectorial delta measures, then~$\I_\alpha$ maps~$\W \cap L_1$ to~$L_{d/(d-\alpha)}$ continuously, whenever~$\alpha \in (0,d)$.
\end{Th}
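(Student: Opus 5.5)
We follow the harmonic-analysis scheme of~\cite{Stolyarov2022}, with Euclidean objects replaced by their anisotropic counterparts. The first step is an algebraic description of~$\W$. A closed translation and dilation invariant subspace of~$\Sw'(\R^d;\R^\ell)$ is described on the Fourier side by a closed subset~$\Omega$ of the anisotropic sphere~$\{\rho(\xi)=1\}$ together with fibers~$V_\zeta\subset\Co^\ell$. Morally,~$\hat f(\xi)\in V_{\Dil_{1/\rho(\xi)}\xi}$. The condition that~$\W$ contains no vectorial delta measures means~$\bigcap_\zeta V_\zeta=\{0\}$, with the intersection taken over a set of full support. We will not need a full classification. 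It suffices to extract the following consequence via Proposition~\ref{ConvolutionTranslationInvariant} and duality. There exist finitely many~$0$-homogeneous smooth (away from the origin) matrix-valued Fourier multipliers, and an anisotropic Littlewood--Paley decomposition~$f=\sum_k f_k$ with~$\hat f_k$ supported in~$\{\rho(\xi)\asymp 2^k\}$. The pieces~$f_k$ of~$f\in\W$ then obey a ``cancellation'' in directions. Concretely, a compactness argument on the anisotropic sphere gives a uniform bound
\[
\|\varphi_k * f\|_{L_\infty}\lesssim 2^{kd}\,\omega(\ldots)\|f\|_{L_1}.
\]
The improvement over the trivial bound comes from the absence of the delta mass: the family~$\{\Dil_t[f]\}$ cannot concentrate.

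The second step reduces the strong-type inequality to a weak-type-plus-interpolation scheme. Since Marcinkiewicz-type interpolation fails at~$p=1$, we instead prove the inequality for~$\I_\beta$ with a smaller~$\beta$ and a more flexible target. We use the semigroup property~\eqref{SemigroupForRieszPotentials}, and we use Theorem~\ref{AnisotropicHLS} to pass from~$\I_\beta f$ to~$\I_\alpha f=\I_{\alpha-\beta}\I_\beta f$. Mixed-norm spaces~$L_{\vec p}$ enter here: we aim to show~$\I_\beta f\in L_{\vec p}$ with some~$p_j>1$ satisfying~$\sum_j(1-1/p_j)a_j=\beta$. Then Theorem~\ref{AnisotropicHLS} gives~$L_{\vec p}\to L_{d/(d-\alpha)}$, since~$\sum_j (1/p_j-(d-\alpha)/d)a_j=\alpha-\beta$. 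Choosing~$\vec p$ anisotropically, e.g.\ with only one~$p_j>1$, allows us to exploit cancellation in a single coordinate direction where the Fourier support of~$\W$ is ``small''. This case distinction on~$\Omega$, according to which coordinate hyperplanes it avoids, is where anisotropy genuinely enters.

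The third step proves the needed endpoint estimate~$\|\I_\beta f\|_{L_{\vec p}}\lesssim\|f\|_{L_1}$ for~$f\in\W$. We decompose~$\I_\beta f=\sum_k 2^{-k\beta}f_k$. Each~$f_k$ is controlled in~$L_1$ by~$\|f\|_{L_1}$ and in~$L_\infty$ by~$2^{kd}\|f\|_{L_1}$, and we need an extra geometric gain~$2^{-\eps|k-k'|}$ in the interaction of~$f_k$ and~$f_{k'}$. We obtain this gain from the no-delta condition via the dilation-invariance/compactness argument of Lemma~\ref{SmoothingLemma}. Suppose no gain occurred. Rescaling by~$\Dil_t$ and using~$\|\Dil_t f\|_{L_1}=\|f\|_{L_1}$, we would obtain a sequence in~$\W\cap L_1$ converging weak-$*$ to a measure concentrated at the origin, i.e.\ a vectorial delta measure, in~$\W$ by closedness. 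This contradicts the hypothesis.

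The main obstacle is precisely this quantitative non-concentration. Turning the qualitative statement ``no delta in~$\W$'' into a uniform summable gain across anisotropic scales is hard, because there is no rotation invariance and the kernel~$K_\alpha$ need not be positive. We would first try to prove, for~$f\in\W\cap L_1$, a Besov-type bound of the form
\[
\sup_k 2^{-k(d-\gamma)}\|f_k\|_{L_\infty}\lesssim\|f\|_{L_1}
\]
with some~$\gamma>0$, in the spirit of~\cite{Stolyarov2022}. Combining it with the trivial~$L_1$ bound and summing over~$k$ via a real-interpolation argument should yield the required~$L_{\vec p}$ estimate.
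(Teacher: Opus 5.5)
\textbf{There is a genuine gap.} The estimate you ultimately aim for, $\sup_k 2^{-k(d-\gamma)}\|f_k\|_{L_\infty}\lesssim\|f\|_{L_1}$ with $\gamma>0$, is false for every $\W\neq\{0\}$, and the dilation invariance you assume is exactly what rules it out. Since $\widehat{\Dil_t f}(\xi)=\hat f(\Dil_t\xi)$ and $\rho(\Dil_t\xi)=t\rho(\xi)$, suppose $f_{k_0}\neq 0$ and set $g=\Dil_{2^{-m}}f\in\W$. Then $\|g\|_{L_1}=\|f\|_{L_1}$ and $g_{k_0+m}=\Dil_{2^{-m}}[f_{k_0}]$. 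Hence $2^{-(k_0+m)(d-\gamma)}\|g_{k_0+m}\|_{L_\infty}=2^{m\gamma}\,2^{-k_0(d-\gamma)}\|f_{k_0}\|_{L_\infty}\to\infty$ as $m\to\infty$, and this persists if you restrict to $k\geq 0$. By the same scaling, any uniform single-scale bound has the form $2^{-kd}\|f_k\|_{L_\infty}\le c\|f\|_{L_1}$ with $c$ independent of $k$, which cannot be summed. A gain $2^{-\eps|k-k'|}$ between pieces is an $L_2$-orthogonality device with no counterpart at $p=1$. What the absence of delta measures actually buys is the passage from the trivial bound $\sup_k A^{-\alpha k}\|f_k\|_{L_q}\lesssim\|f\|_{L_1}$, valid on all of $L_1$, to $\ell_1$-summability in $k$. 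That summability is the entire content of the theorem, so your Step~3 has nothing to interpolate.

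Step~2 does not reduce anything either. The $L_{\vec p}$ bound for $\I_\beta$ is the same problem at another exponent; the paper uses this chain only in the opposite direction, to get Corollary~\ref{MixedSummabilityCorollary}. Theorem~\ref{AnisotropicHLS} also requires $p_j>1$ for every $j$, so ``only one $p_j>1$'' is not allowed, and the no-delta condition has nothing to do with coordinate hyperplanes. The fiberwise Fourier description in Step~1 is likewise unavailable for general closed invariant subspaces of $\Sw'$, and the paper never needs it.

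Your contradiction argument also fails as stated. A normalized sequence violating the inequality need not concentrate at a point after rescaling. Its mass may spread along a hypersurface, as for $\nabla\chi_{Q_{0,0}}$ in Subsection~\ref{NecessityForNonSplit}, or escape to infinity, as in Subsection~\ref{SNecessity}, and weak-$*$ limits may simply vanish.

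The paper makes compactness work only locally and only for rank-one objects:
\begin{enumerate}
\item It splits atoms into $\eps$-convex ones, paid for directly by the telescoping sum~\eqref{Telescopic} (Proposition~\ref{ConvexSummation}), and $\eps$-flat ones.
\item At a flat atom that is also localized, compactness combined with the rigidity Lemma~\ref{ZeroFlatnessLemma} yields a limit $a\otimes h$ with $h$ in the invariant heat-stable cone $\M^\W$. Localization is supplied by saturated atoms, Lemma~\ref{SaturatedtoConcentrated}.
\item For that cone, the strict monotonicity formula gives the per-scale gain $A^{\alpha-2\nu}$ of Theorem~\ref{Theorem41}. This formula is Propositions~\ref{StrengtheningOfSemiinvariant} and~\ref{Convenient}, reduced to the isotropic case of~\cite{Stolyarov2022} via multiparametric heat extensions.
\item These gains are summed along trees of flat atoms (Corollary~\ref{TreeBoundLebesgueCorollary}).
\end{enumerate}
Some version of this local multiscale mechanism is what your plan lacks.
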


For the convenience of notation, we will always assume
\eq{\label{DefOfq}
q = \frac{d}{d-\alpha},\qquad \text{which is the same as}\qquad \alpha = \frac{q-1}{q}d.
}
With the help of Theorem~\ref{AnisotropicHLS}, we may extend our result to the setting of anisotropic~$L_{\vec{p}}$ spaces.
\begin{Cor}\label{MixedSummabilityCorollary}
Let~$\W$ be a closed translation invariant linear subspace of~$\Sw'(\R^d; \R^\ell)$. Assume~$\W$ is also invariant under the dilations~$\Dil_t$. If~$\W$ does not contain vectorial delta measures, then~$\I_\alpha$ maps~$\W \cap L_1$ to~$L_{\vec{q}}$ continuously whenever
\eq{\label{HomogeneityForAnisotropicLp}
\forall j\quad q_j\in (1,\infty)\qquad \text{and}\quad \sum\limits_{j=1}^d\frac{a_j}{q_j} = d-\alpha.
}
\end{Cor}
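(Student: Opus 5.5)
Assuming Theorem~\ref{MainTheoremLorentzScale}, we reduce the corollary to the anisotropic Hardy--Littlewood--Sobolev inequality via the semigroup property~\eqref{SemigroupForRieszPotentials}. Fix~$\vec{q}$ satisfying~\eqref{HomogeneityForAnisotropicLp}. We look for a splitting~$\alpha = \beta + \gamma$ with~$\beta\in(0,\alpha)$ and
\eq{
\I_\alpha f = \I_{\gamma}\big[\I_\beta f\big],\qquad f\in \W\cap L_1.
}
Theorem~\ref{MainTheoremLorentzScale}, applied with~$\beta$ in place of~$\alpha$, gives~$\|\I_\beta f\|_{L_{p}}\lesssim \|f\|_{L_1}$ with~$p = d/(d-\beta)$. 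Theorem~\ref{AnisotropicHLS}, applied with~$\vec{p} = (p,\ldots,p)$ and target~$\vec{q}$, then bounds~$\|\I_\gamma g\|_{L_{\vec{q}}}\lesssim \|g\|_{L_p}$ for~$g = \I_\beta f$. Composing the two estimates yields the claim.

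It remains to choose~$\beta$ so that both theorems apply. Theorem~\ref{AnisotropicHLS} needs~$1<p<q_j<\infty$ for all~$j$ and
\eq{
\sum_j\Big(\frac1p-\frac1{q_j}\Big)a_j = \gamma.
}
Since~$\sum_j a_j = d$, the left-hand side equals~$\frac{d}{p} - (d-\alpha) = d-\beta-d+\alpha = \alpha-\beta$. So the homogeneity condition holds automatically, because~\eqref{HomogeneityForAnisotropicLp} is exactly the scaling that makes the exponents compatible. For the strict inequalities, note that~$p = d/(d-\beta)\to 1^+$ as~$\beta\to 0^+$. Since~$\min_j q_j>1$, a sufficiently small~$\beta>0$ gives~$1<p<\min_j q_j$. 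Then~$\gamma = \alpha-\beta\in(0,d)$, and all hypotheses are satisfied.

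The one point needing care, and the main (mild) obstacle, is justifying the identity~$\I_\alpha f = \I_\gamma[\I_\beta f]$ for~$f\in\W\cap L_1$ rather than only for Schwartz functions. I would argue on the Fourier side. Both sides are the tempered distribution with Fourier transform~$\rho^{-\alpha}\hat f$; this is locally integrable because~$\hat f$ is bounded and~$\alpha<d$. Moreover,~$\I_\beta f\in L_p$ has Fourier transform~$\rho^{-\beta}\hat f$, so applying~$\I_\gamma$ to this~$L_p$ function is consistent with the multiplier definition. Alternatively, one can first prove the bound for~$f$ in a dense class, for instance~$f*\varphi_n$ as in the proof of Lemma~\ref{SmoothingLemma}, which stays in~$\W$. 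One then passes to the limit using Fatou's lemma in the mixed norm together with distributional convergence.
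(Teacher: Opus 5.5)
Your proposal is correct and follows essentially the same route as the paper. The paper also picks a small $\beta\in(0,\alpha)$ with $d/(d-\beta)<\min_j q_j$, applies Theorem~\ref{MainTheoremLorentzScale} to $\I_\beta$ and Theorem~\ref{AnisotropicHLS} to $\I_{\alpha-\beta}\colon L_{d/(d-\beta)}\to L_{\vec{q}}$ (with the same homogeneity computation via $\sum_j a_j = d$), and composes using~\eqref{SemigroupForRieszPotentials}; your Fourier-side justification of $\I_\alpha f=\I_{\alpha-\beta}[\I_\beta f]$ for $f\in\W\cap L_1$ is a detail the paper leaves implicit.
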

\begin{proof}
Let~$\beta$ be a small positive number such that~$\beta < \alpha$ and also~$d/(d-\beta) < q_j$ for every~$j$. Then, by Theorem~\ref{MainTheoremLorentzScale} with~$\beta$ in the role of~$\alpha$,~$\I_\beta$ maps~$\W\cap L_1$ to~$L_{d/(d-\beta)}$ continuously. Consider the vector
\eq{
\vec{p} = \Big(\frac{d}{d-\beta},\frac{d}{d-\beta},\ldots,\frac{d}{d-\beta}\Big).
}
Then,~$L_{d/(d-\beta)} = L_{\vec{p}}$. By Theorem~\ref{AnisotropicHLS}, the latter space is continuously mapped by~$\I_{\alpha - \beta}$ to~$L_{\vec{q}}$ since (recall~$\sum_j a_j = d$)
\eq{
\sum\limits_{j=1}^d \Big(\frac{1}{p_j} - \frac{1}{q_j}\Big)a_j = \sum\limits_{j=1}^d \Big(\frac{d-\beta}{d} - \frac{1}{q_j}\Big)a_j \Eeqref{HomogeneityForAnisotropicLp} (d-\beta) - (d-\alpha) = \alpha - \beta.
}
It remains to use~\eqref{SemigroupForRieszPotentials}.
\end{proof}
The proof above might be summarized by the diagram:
\eq{
\begin{diagram}
\node{\W\cap L_1}
\arrow[2]{e,t,..}{\I_\alpha}
\arrow{se,b}{\I_\beta}
\node[2]{L_{\vec{q}}}\\
\node[2]{L_{d/(d-\beta)}}
\arrow{ne,b}{\I_{\alpha - \beta}}
\end{diagram}.
}

Theorem~\ref{MainTheoremLorentzScale} may be sharpened further by replacing the Lebesgue space~$L_q$ on the left hand side by a smaller Lorentz space~$L_{q,1}$, smaller anisotropic homogeneous Besov\footnote{In the literature this space is usually denoted by~$B_{q,1}^0$; we prefer the notation~$B_{q}^{0,1}$ since Besov spaces are interpolation spaces with respect to the smoothness, not to the summability parameter. The use of Besov--Lorentz spaces~$B_{q,1}^{0,1}$ may serve as yet another justification of consistency of this notation.} space~$\dot{B}_{q}^{0,1}$, or with even narrower Besov--Lorentz space. 
Mixed-norm Lorentz spaces~$L^{\vec{q},\vec{r}}$, as well as Besov spaces built on them, have been studied in the literature; see, for instance,~\cite{Fernandez1977},~\cite{Mandel2023}, and~\cite{WHWY2024}. We do not pursue such refinements here. 
\begin{Th}\label{MainTheoremBesovLorentzScale}
Let~$\W$ be a closed translation invariant linear subspace of~$\Sw'(\R^d; \R^\ell)$. Assume~$\W$ is also invariant under the dilations~$\Dil_t$. If~$\W$ does not contain vectorial delta measures, then~$\I_\alpha$ maps~$\W \cap L_1$ to~$\dot{B}_{q}^{0,1}$,~$q = d/(d-\alpha)$, continuously, whenever~$\alpha \in (0,d)$.
\end{Th}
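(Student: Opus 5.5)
The plan is to prove the estimate by duality, reducing it to a decomposition statement for $L_\infty$ functions in the spirit of Bourgain and Brezis, and to use the structure of translation and dilation invariant subspaces to supply the cancellation. First we describe $\W$ on the Fourier side. A closed translation invariant subspace of $\Sw'(\R^d;\R^\ell)$ that is also $\Dil_t$-invariant should be given by a family of linear subspaces $W_\zeta\subset\Co^\ell$, $\zeta\in S^{d-1}$, extended $0$-homogeneously, with $f\in\W$ if and only if $\hat f(\xi)\in W_{\Dil_{1/\rho(\xi)}\xi}$ for $\xi\ne0$. I would import the corresponding lemma of~\cite{Stolyarov2022} and adapt it to the anisotropic dilations; this should be routine, since only the group structure of $\{\Dil_t\}$ is used there.

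\emph{Reformulating the hypothesis.} A vectorial delta $a\otimes\delta_0$ has constant Fourier transform $a$. Hence $\W$ contains no vectorial delta exactly when $\bigcap_\zeta W_\zeta=\{0\}$. By compactness of $S^{d-1}$ and finite dimensionality, we can then choose finitely many patches $U_1,\dots,U_N$ and the following data:
\begin{itemize}
\item linear projections $P_1,\dots,P_N$ with $P_jW_\zeta=0$ for $\zeta\in U_j$;
\item smooth $0$-homogeneous multipliers realizing this, i.e.\ smooth homogeneous symbols $m_j(\xi)$ with $m_j(\xi)\hat f(\xi)=0$ on $\supp$ of a partition of unity subordinate to the patches, such that every $a\in\R^\ell$ satisfies $|a|\lesssim\max_j|P_ja|$ pointwise.
\end{itemize}
The second point uses $\bigcap W_\zeta=0$ in the form that $a$ cannot be simultaneously annihilated by all the annihilators.

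\emph{Duality reduction.} Let $\Delta_k$ be an anisotropic Littlewood--Paley decomposition, with $\widehat{\Delta_kf}$ supported where $\rho(\xi)\asymp2^k$. Then
\[
\|\I_\alpha f\|_{\dot B^{0,1}_q}=\sum_k\|\Delta_k\I_\alpha f\|_{L_q}=\sup\Big|\sum_k\langle f,\I_\alpha\Delta_k g_k\rangle\Big|,
\]
where the supremum is over families with $\|g_k\|_{L_{q'}}\le1$. Set $h_k=\I_\alpha\tilde\Delta_k g_k$. Since $\alpha=d/q'$, Bernstein's inequality in the anisotropic form gives $\|h_k\|_{L_\infty}\lesssim1$, and $h_k$ is frequency localized at scale $2^k$. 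The theorem therefore follows once we find $H\in L_\infty$ with $\|H\|_\infty\lesssim1$ and $\sum_kh_k-H\in\W^\perp$. This is where the hypothesis is consumed, because adding elements of $\W^\perp$ does not change the pairing with $f\in\W\cap L_1$; an approximation argument via Lemma~\ref{SmoothingLemma} handles convergence issues.

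\emph{The main obstacle: constructing $H$.} The trivial choice $H=\sum h_k$ fails logarithmically. Following the scheme of~\cite{Stolyarov2022}, I would build $H$ scale by scale from the coarsest scale. At step $k$, decompose the new piece as $h_k=\sum_j(\text{part in the range of }P_j^*)$ using the patches, and modify it by an element of $\W^\perp$ so that the accumulated sum stays bounded. The tool for this is a Bourgain--Brezis type correction: a bounded function whose frequency is localized in the annihilator cone can be added nonlinearly, for instance by composing the running sum with a bounded Lipschitz retraction. The error this creates is pushed to higher frequencies and is geometrically small, of size $2^{-\epsilon(k'-k)}$. I expect the hardest part to be controlling these errors in the anisotropic setting. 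There, the high-frequency tails of products and compositions must be estimated with anisotropic Littlewood--Paley pieces and anisotropic Bernstein inequalities, and the gain $\epsilon>0$ has to come from the smoothness and homogeneity of the symbols $m_j$ rather than from rotations. If a direct construction is too messy, the fallback is to prove the weaker Theorem~\ref{MainTheoremLorentzScale} first. We would then upgrade to $\dot B^{0,1}_q$ by real interpolation in $\alpha$: apply it for $\alpha\pm\epsilon$, using the dilation invariance of $\W$ and the semigroup property~\eqref{SemigroupForRieszPotentials}, and note that the Besov space with $\ell_1$ summation arises as an interpolation space of the Sobolev scale.
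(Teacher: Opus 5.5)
\textbf{There is a genuine gap.} Your duality step is a reformulation, not a reduction. By Hahn--Banach, a bounded $H$ with $\sum_k h_k-H$ annihilating $\W\cap L_1$ exists, with uniform bounds, exactly when the estimate you are proving holds. So the entire proof is the construction of $H$, and you only gesture at it. Composing the running sum with a Lipschitz retraction creates defects that do not lie in $\W^\perp$, and you give no mechanism for absorbing them. Nothing in your sketch turns the qualitative hypothesis ``$\W$ contains no vectorial delta'' into the claimed per-scale gain $2^{-\epsilon(k'-k)}$. Bounded Bourgain--Brezis-type solutions are known only for operators with special algebraic structure.

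The structural input you plan to import is not available either. To my knowledge, \cite{Stolyarov2022} contains no lemma describing a general closed translation- and dilation-invariant $\W$ by pointwise conditions $\hat f(\xi)\in W_\zeta$, and the paper never uses one. Even when such a description exists, $\zeta\mapsto W_\zeta$ need not be continuous or of constant rank. Your first bullet fails already for $\W=\nabla\dot{W}_1^1$: there $W_\zeta=\Co\zeta$ spans $\Co^d$ over every open patch, so any fixed $P_j$ with $P_jW_\zeta=0$ on $U_j$ is zero.

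The paper never dualizes. After Remark~\ref{DiscretizationRemark} and Lemma~\ref{TruncationLemma}, it bounds $\sum_{k\geq 0}A^{-\alpha k}\|f_k\|_{L_q}$ by $\|f\|_{L_1}$ directly, using multiparametric heat extensions. The hypothesis enters only through the cone $\M^\W$, which does not contain $\delta_0$:
\begin{itemize}
\item Proposition~\ref{StrengtheningOfSemiinvariant} gives $\kappa(\M^\W,\theta_1,C_1)>0$.
\item The compactness argument of Theorem~\ref{Theorem41} converts this into the gain $A^{\alpha-2\nu}$ at $\eps$-flat concentrated atoms.
\item Convex atoms are paid for by \eqref{Telescopic}.
\item Flat atoms are summed along trees.
\end{itemize}

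Your fallback does not close the gap. It presupposes Theorem~\ref{MainTheoremLorentzScale}, which you have not proved. Moreover, the bounds at $\alpha\pm\epsilon$ only place $\I_\alpha f$ in $\I_{\epsilon}(L_{q_-})\cap\I_{-\epsilon}(L_{q_+})$, where $q_\pm=d/(d-\alpha\mp\epsilon)$ and $\I_{-\epsilon}$ is the multiplier $\rho^{\epsilon}$. Real interpolation between these spaces does not land in $\dot{B}_q^{0,1}$. Take $N$ well-separated wave packets at frequency scales $A,\dots,A^N$, each of unit $L_q$-norm. Their two endpoint norms are $\approx N^{1/q_-}$ and $\approx N^{1/q_+}$, with geometric mean $N^{1/q}$, but their $\dot{B}_q^{0,1}$-norm is $\approx N$. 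The $\ell_1$-Besov space is an interpolation space of Sobolev spaces only at a common integrability exponent. In the paper the implication runs the other way: Corollary~\ref{LorentzCorollary} is derived from the Besov theorem.
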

\begin{Cor}\label{LorentzCorollary}
Let~$\W$ be a closed translation invariant linear subspace of~$\Sw'(\R^d; \R^\ell)$. Assume~$\W$ is also invariant under the dilations~$\Dil_t$. If~$\W$ does not contain vectorial delta measures, then~$\I_\alpha$ maps~$\W \cap L_1$ to~$L_{q,1}$,~$q = d/(d-\alpha)$, continuously, whenever~$\alpha \in (0,d)$.
\end{Cor}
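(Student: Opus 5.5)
This corollary should follow from Theorem~\ref{MainTheoremBesovLorentzScale} together with a continuous embedding $\dot{B}_{q}^{0,1}\hookrightarrow L_{q,1}$, valid for $1<q<\infty$. Once that embedding is available, compose it with the map $\I_\alpha\colon \W\cap L_1\to \dot{B}_q^{0,1}$ given by the theorem. The only work is to verify the embedding in the anisotropic setting, and to check that the identification of $\dot B_q^{0,1}$ (a space modulo polynomials) with genuine functions is compatible with the $\I_\alpha f$ we started from.

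For the embedding, I will use the anisotropic Littlewood--Paley decomposition that defines $\dot{B}_q^{0,1}$. Take $\psi\in\Sw$ with $\hat\psi$ supported in an anisotropic annulus $\{1/2\le\rho(\xi)\le 2\}$ and $\sum_{k\in\Z}\Dil^{2^{k}}\hat\psi=1$ on $\R^d\setminus\{0\}$. Set $g_k=(\Dil^{2^k}\hat\psi\,\hat g)\check{\phantom{i}}$, so that $\|g\|_{\dot B_q^{0,1}}\asymp\sum_k\|g_k\|_{L_q}$. For $g=\I_\alpha f$ with $f\in L_1$, the series $\sum_k g_k$ converges in $L_q$: each piece lies in $L_q$ and the norms are summable. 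The limit agrees with $\I_\alpha f$ as a distribution, because low frequencies cause no polynomial ambiguity. Indeed, $\hat g=\rho^{-\alpha}\hat f$ is locally integrable near the origin when $\alpha<d$, and $\sum_{k<0} g_k\to 0$ in $\Sw'$ modulo the fact that no polynomial can lie in $L_q$.

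Then I apply the triangle inequality in $L_{q,1}$. This space has an equivalent norm (for $q>1$, e.g. via $\int t^{1/q}f^{**}(t)\,dt/t$), and $\|g\|_{L_{q,1}}\lesssim\sum_k\|g_k\|_{L_{q,1}}$. It remains to show $\|g_k\|_{L_{q,1}}\lesssim\|g_k\|_{L_q}$ uniformly in $k$. This is false for general functions, so summing term by term would lose. The correct route is interpolation: $\dot B^{0,1}_q$ is the real interpolation space $(\dot B^{s_0,\infty}_{q_0},\dot B^{s_1,\infty}_{q_1})_{\theta,1}$ with $s_0<0<s_1$ and $s_j-\alpha_j$ matched along the anisotropic Sobolev line.

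A cleaner alternative avoids this altogether by using the structure $g=\I_\alpha f$ directly. Split $\I_\alpha=\I_{\alpha_0}\circ\I_{\alpha-\alpha_0}$ and interpolate: Theorem~\ref{MainTheoremLorentzScale} holds for every $\alpha'\in(0,d)$, so the operator $T_{\alpha'}=\I_{\alpha'}$ maps $\W\cap L_1\to L_{d/(d-\alpha')}$. By a Stein-type complex interpolation in the parameter $\alpha'$ (analytic family $\I_{z}$) one only gets $L_q$ again, however. So I will instead use the elementary embedding
\[
(L_{q_0},L_{q_1})_{\theta,1}=L_{q,1}
\]
applied to the dyadic pieces. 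Write $g=\sum_k g_k$ and split it at a level $N$ chosen depending on the height $\lambda$. The low-frequency part is estimated in $L_{q_1}$ with $q_1>q$ via Bernstein's inequality $\|g_k\|_{L_{q_1}}\lesssim 2^{kd(1/q-1/q_1)}\|g_k\|_{L_q}$, which holds in the anisotropic setting because the annulus has volume $2^{kd}$ by $\sum a_j=d$. The high-frequency part is estimated in $L_{q_0}$, $q_0<q$, by the reverse weighting that comes from $\|g_k\|_{L_{q_0}}$-control. The main obstacle is exactly this step: high frequencies do not gain integrability downward. I would resolve it by using the Besov bound at two different $\alpha$, namely $\|2^{k(\alpha-\alpha')}(\I_{\alpha}f)_k\|_{L_{q'}}$, both controlled by $\|f\|_{L_1}$ via Theorem~\ref{MainTheoremBesovLorentzScale} for $\alpha'$ slightly above and below $\alpha$. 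Optimizing $N$ then yields the $L_{q,1}$ bound through the $K$-functional characterization.
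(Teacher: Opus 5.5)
Your opening plan fails as stated, because $\dot{B}_{q}^{0,1}$ does not embed into $L_{q,1}$ for any $q\in(1,\infty)$. Let $\hat\phi$ be supported in a fixed annulus, and put $h=\sum_{n\le M}n^{-1/q}\phi(\fdot-x_n)$ with the $x_n$ very far apart. Only boundedly many Littlewood--Paley pieces of $h$ are non-zero, so $\|h\|_{\dot{B}_{q}^{0,1}}\lesssim\|h\|_{L_q}\asymp(\log M)^{1/q}$. On the other hand, the distribution function of $h$ is essentially that of the sequence $(n^{-1/q})_{n\le M}$, so $\|h\|_{L_{q,1}}\gtrsim\log M$. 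Hence the membership $\I_\alpha f\in\dot{B}_{q}^{0,1}$ given by Theorem~\ref{MainTheoremBesovLorentzScale} at the same $\alpha$ does not yield the corollary by itself.

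The same example refutes your identification of $\dot{B}_{q}^{0,1}$ with $(\dot{B}_{q_0}^{s_0,\infty},\dot{B}_{q_1}^{s_1,\infty})_{\theta,1}$. On functions with spectrum in one annulus, that interpolation norm is equivalent to the $L_{q,1}$ norm, not to the $L_q$ norm. These passages, and the aside on complex interpolation, should be removed.

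The argument in your last paragraph is correct, and it is organized differently from the paper's. Fix $\alpha_-<\alpha<\alpha_+$ in $(0,d)$, set $q_\pm=d/(d-\alpha_\pm)$, and write $g=\I_\alpha f=\sum_k g_k$.

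\begin{itemize}
\item On the $k$-th annulus, the multiplier $\rho^{\alpha_\pm-\alpha}$ has a kernel of $L_1$ norm $\lesssim 2^{k(\alpha_\pm-\alpha)}$. So Theorem~\ref{MainTheoremBesovLorentzScale} at $\alpha_\pm$ gives $\sum_k 2^{k(\alpha-\alpha_\pm)}\|g_k\|_{L_{q_\pm}}\lesssim\|f\|_{L_1}$. For $\alpha_+$, Bernstein plus the theorem at $\alpha$ already suffices.
\item Split $g=\sum_{k>N}g_k+\sum_{k\le N}g_k$ with $t\asymp 2^{-N(\alpha_+-\alpha_-)}$; this bounds $K(t,g;L_{q_-},L_{q_+})$.
\item Integrate against $t^{-\theta}\,dt/t$ with $\alpha=(1-\theta)\alpha_-+\theta\alpha_+$. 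Only two geometric series remain, and \eqref{LorentzInterpolation} gives $\|g\|_{L_{q,1}}\lesssim\|f\|_{L_1}$.
\end{itemize}

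Make sure $N$ is tied to the $K$-functional parameter $t$, as in your final sentence. A Chebyshev splitting at each height $\lambda$, as your earlier wording suggests, naturally gives only a weak-type bound.

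The paper (Lemma~\ref{FromBesovToLorentzBesov}) instead applies the theorem at a single $\gamma<\alpha$ and works block by block. On each frequency block, $\I_{\alpha-\gamma}$ maps $L_r$, $r=d/(d-\gamma)$, into every $L_s$ with $s\ge r$. The log-convexity bound \eqref{LorentzMultiplicative} then gives $\|(\I_\alpha f)_k\|_{L_{q,1}}\lesssim\|(\I_\gamma f)_k\|_{L_r}$, so $\I_\alpha f\in\dot{B}_{q,1}^{0,1}$, and \eqref{BesovToLorentz} finishes.

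Both proofs rest on the point your opening plan missed: the Lorentz gain has to come from the main theorem at an exponent below $\alpha$, that is, from control in some $L_r$ with $r<q$. Your $K$-functional route lands directly in $L_{q,1}$. The paper's blockwise route avoids any global decomposition and yields the sharper space $\dot{B}_{q,1}^{0,1}$.
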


The definitions of anisotropic Besov-type spaces are quite natural, and we provide them and a toolkit for these spaces in Subsection~\ref{Besov--Lorentz} of the appendix.

\begin{proof}[Derivation of Corollary~\ref{LorentzCorollary} from Theorem~\ref{MainTheoremBesovLorentzScale}]
This is similar to derivation of Corollary~\ref{MixedSummabilityCorollary} from Theorem~\ref{MainTheoremLorentzScale}. Fix~$\gamma \in (0,\alpha)$ and set~$r = d/(d-\gamma)$. Then, Theorem~\ref{MainTheoremBesovLorentzScale} implies~$\I_\gamma \colon \W\cap L_1 \to \dot{B}_r^{0,1}$. According to Lemma~\ref{FromBesovToLorentzBesov}, this yields~$\I_{\alpha}$ maps~$\W \cap L_1$ to~$\dot{B}_{q,1}^{0,1}$, which, by~\eqref{BesovToLorentz}, is continuously embedded into~$L_{q,1}$.
\end{proof}
\begin{Rem}
Theorem~\ref{MainTheoremLorentzScale} is also true in the limiting case~$\alpha = d$ and~$q=\infty$, if we define the Riesz potential with~$\alpha = d$ properly. In the language of Besov spaces, we will obtain a slightly sharper inequality~$\W \cap L_1 \hookrightarrow \dot{B}_{\infty}^{-d,1}$. This statement is derived from Theorem~\ref{MainTheoremBesovLorentzScale} in the same way as Corollary~\ref{LorentzCorollary}, see Remark~\ref{LInftyRemark} in the appendix. Note, however, that the Calder\'on--Zygmund operators do not act continuously on the space~$L_\infty$. Therefore, one may replace~$\I_d$ with another operator of the same homogeneity, and the new inequality is not equivalent to the old one. One may raise the question about description of homogeneous of order~$-d$ operators that map a constrained space~$\W$ to~$L_{\infty}$. The answer to this question indeed depends on more delicate cancellation properties of the kernel and the space. It was given in~\cite{Stolyarov2021bibis} based on earlier work of Raita in~\cite{Raita2019}.
\end{Rem}

\subsection{Plan of the paper}\label{s13'}
The heat extension played an important role in~\cite{Stolyarov2022}. To adjust it to the anisotropic setting, we need to consider multiparametric heat extension where the time parameter is a vector in~$\R^d$ with positive coordinates rather than a scalar. In other words, we consider heat extension with respect to each of the one-dimensional variables~$x_1,x_2,\ldots,x_d$. Section~\ref{S2} contains the study of the multiparametric heat extension. We discuss uniqueness and simple monotonicity properties in Subsection~\ref{s21}. The main result is Proposition~\ref{MonotoniityFormula}, which provides a form of control of the growth of the~$L_p$ norm of the extension for positive functions and measures. The extensions generated by delta measures provide the maximal possible growth of the~$L_p$-norms among all non-negative measures.  We provide an elementary proof that is new even in the isotropic case. Subsection~\ref{s22} describes a convenient way to split the function~$f$ into pieces~$f_k$ in such a way that~$\|f\|_{L_1}$ splits in a controlled way. Here the heat extension is also useful, and the splitting is, in fact, a version of an anisotropic Littlewood--Paley decomposition. We also perform further splitting that decomposes the quantity~$\|f_k\|_{L_1}$ into a sum of suitably localized weighted norms. Each weight naturally corresponds to a pair~$(k,j)$ called atom; here~$k \geq 0$ and~$j\in \Z^d$.  Subsection~\ref{s23} contains the main definition of convex and flat atoms. Convex atoms are easier to deal with and we collect the estimates corresponding to them in Proposition~\ref{ConvexSummation}.

Section~\ref{S3} contains a strengthening of Proposition~\ref{MonotoniityFormula}, which says that if a measure is somehow separated from the collection of delta measures, then the~$L_p$-norm of its heat extension grows strictly slower than that of a delta measure. The rigorous form of this principle is formulated in Proposition~\ref{StrengtheningOfSemiinvariant}. A similar principle also played a pivotal role in~\cite{Stolyarov2022}. We manage to reduce the anisotropic case to the isotropic one via multiparametric heat extensions. This reduction is not immediate and occupies Subsection~\ref{s31}. Subsection~\ref{s32} is devoted to yet another elaboration of Proposition~\ref{StrengtheningOfSemiinvariant} that is more convenient for application to the functions~$f_k$ obtained in Subsection~\ref{s22}.

We present the main body of the proof of Theorem~\ref{MainTheoremBesovLorentzScale} in Section~\ref{S4}. Subsection~\ref{s41} contains a compactness argument that allows to pass from the assumption that~$f_k$ is a non-negative measure separated from the cone of delta measures in Proposition~\ref{StrengtheningOfSemiinvariant} to the condition that some atom~$(k,j)$ is flat. Here we also need a certain concentration assumption on the atom in question. The formal statement is given in Corollary~\ref{CubeEstimateCorollary}, which concludes a series of similar theorems and propositions. The combinatorial counterpart concludes the proof and  is presented in Subsection~\ref{s42}. In fact, it is quite similar to the one presented in~\cite{Stolyarov2022}, which, in its turn, models the argument for a related discrete problem from~\cite{ASW2021}. After the proof of the main theorem is finished, we provide a reflection and several suggestions for further research in Subsection~\ref{s43}.

The paper is supplemented with a large appendix that contains the proofs of technical statements, surveys folklore facts about Besov--Lorentz spaces, and provides several explanations why expected simplifications of the proof are impossible. We also provide a separate subsection in the appendix where we explain how Theorem~\ref{MainTheoremBesovLorentzScale} implies the already known and new inequalities for differential operators.


\section{Multiparametric heat extensions}\label{S2}

\subsection{Basic properties}\label{s21}
Let~$\vec{t} = (t_1,t_2,\ldots,t_d)$ be a vector with positive coordinates. Let~$f$ be a summable function of~$d$ variables. Define the function~$\Heat[f](\fdot;\vec{t}\,)\colon \R^d \to \R$ by the formula
\eq{
\Heat[f](x,\vec{t}) = \Big(\prod\limits_{j=1}^d(4\pi t_j)\Big)^{-\frac12}\int\limits_{\R^d}f(x-y)e^{-\sum_{1}^d\frac{y_j^2}{4t_j}}\,dy.
}
The operator~$f\mapsto \Heat[f](\fdot; \vec{t}\,)$ may be extended to~$f\in \Sw'(\R^d)$ in the usual way. We may also apply it coordinatewise to functions and distributions taking values in Euclidean spaces. We list simple properties of the constructed function without proof. 

The function~$\Heat[f]$ is a solution to the heat equation
\eq{\label{MultiparametricHeatEquations}
\frac{\partial \Heat[f]}{\partial t_j} = \frac{\partial^2 \Heat[f]}{\partial x_j^2}
}
for any~$j$. What is more,~$\Heat[f](x;\vec{t}\,)\to f(x)$ as~$\vec{t} \to 0$, provided~$f$ is continuous at~$x$. We call~$\Heat[f]$ the \emph{multiparametric heat extension} of~$f$. The multiparametric heat extension also satisfies the \emph{semigroup} property
\eq{\label{SemigroupProperty}
\Heat[f](x; \vec{t} + \vec{s}) = \Heat\big[\Heat[f](\fdot,\vec{t}\,)\big](x,\vec{s}),\qquad x\in \R^d.
}
The operator~$f\mapsto \Heat[f](\fdot;\vec{t}\,)$ is a Fourier multiplier:
\eq{
\mathcal{F}\big[\Heat[f](\fdot;\vec{t}\,)\big](\xi) = e^{-4\pi^2\sum_{1}^dt_j\xi^2_j}\hat{f}(\xi),\qquad \xi \in \R^d;
}
both symbols~$\mathcal{F}$ and~$\hat{\phantom{o}}$ denote the Fourier transform. The latter formula also allows to consider the case where some of~$t_j$ are equal to zero. 

The classical heat extension of a function or a distribution, that is,
\eq{
\heat[f](x,t) = (4\pi t)^{-\frac{d}{2}} \int\limits_{\R^d} f(x-y)e^{-\frac{|y|^2}{4t}}\,dy,\qquad x\in \R^d, t > 0,
}
may be restored from the multiparametric heat extension via the formula
\eq{\label{ClassicalHeat}
\heat[f](x,t) = \Heat[f](x,t,t,\ldots, t).
}
One may proceed in the reverse direction and construct~$\Heat[f]$ from~$\heat[f]$ since the latter extension defines~$f$. We will use this principle later.

We have two families of dilations,~\eqref{L1Dilation} and~\eqref{LinftyDilation}. It will be convenient to use yet another dilation operator:
\eq{\label{DILDef}
\DIL_\lambda [G](x,\vec{t}) = \lambda^{-d} G(\Dil_{\lambda^{-1}} x, \Dil_{\lambda^{-2}} \vec{t}\,),\qquad \lambda > 0,\ x\in\R^d,\ \vec{t} \in (\R_+)^d.
}
Note that it preserves neither the~$L_1$ nor the~$L_\infty$ norm of~$G$; it preserves the~$L_1$ norm of~$G$ in the~$x$ variable.
\begin{Le}\label{SeveralDilations}
For any function~$f\in \Sw'(\R^d)$ and any~$\lambda > 0$, we have
\eq{\label{SeveralDilationsFormula}
\Heat \big[\Dil_\lambda f\big] = \DIL_\lambda\big[\Heat [f]\big].
}
\end{Le}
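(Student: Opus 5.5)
The plan is to check \eqref{SeveralDilationsFormula} on the Fourier side in the~$x$ variable, for each fixed~$\vec{t}$, and then pass to tempered distributions by duality. Both sides are tempered distributions in~$x$ for fixed~$\vec{t}$. Since~$\Heat[\,\cdot\,](\fdot;\vec{t}\,)$ is the Fourier multiplier with symbol~$e^{-4\pi^2\sum_j t_j\xi_j^2}$, it suffices to show that the Fourier transforms in~$x$ of both sides coincide.

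First I compute the transform of~$\Dil_\lambda f$. By~\eqref{DilationFourier}, or by a direct change of variables using~$\sum a_j = d$, we have~$\widehat{\Dil_\lambda f}(\xi) = \hat f(\Dil_\lambda \xi)$. Hence the left-hand side has transform
\eq{
e^{-4\pi^2\sum_j t_j\xi_j^2}\,\hat f(\Dil_\lambda\xi).
}

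For the right-hand side, I write~$G = \Heat[f]$ and~$\vec{s} = \Dil_{\lambda^{-2}}\vec{t}$, so that~$s_j = \lambda^{-2a_j}t_j$. Then~$\DIL_\lambda[G](\fdot,\vec{t}\,) = \Dil_\lambda\big[G(\fdot,\vec{s}\,)\big]$, which is the~$L_1$-type dilation~\eqref{L1Dilation} in~$x$. Applying the same Fourier identity gives the transform
\eq{
e^{-4\pi^2\sum_j s_j(\lambda^{a_j}\xi_j)^2}\,\hat f(\Dil_\lambda\xi) = e^{-4\pi^2\sum_j \lambda^{-2a_j}t_j\lambda^{2a_j}\xi_j^2}\,\hat f(\Dil_\lambda\xi),
}
and this equals the expression obtained for the left-hand side.

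The only point needing care is the identity~$\widehat{\Dil_\lambda f} = \hat f\circ \Dil_\lambda$ for distributions. This follows by pairing with Schwartz test functions and using that~$\Dil^{\lambda}$ acts on~$\Sw$; it is also precisely~\eqref{DilationFourier} read in the other direction. Alternatively, for~$f\in L_1$ one can verify the identity directly from the integral formula by the change of variables~$y = \Dil_\lambda z$, whose Jacobian is~$\lambda^{d}$: the Gaussian exponent~$y_j^2/(4t_j)$ becomes~$z_j^2/(4\lambda^{-2a_j}t_j)$, and the prefactor~$\prod(4\pi t_j)^{-1/2}$ produces exactly the factor~$\lambda^{-d}$ required by~\eqref{DILDef}. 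One then extends to~$\Sw'$ by density and continuity. I do not expect a genuine obstacle here; the proof is a bookkeeping check of exponents.
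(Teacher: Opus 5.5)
Your argument is correct. Your main route differs from the paper's, although the paper's own argument is the one you sketch as an alternative.

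The paper does not pass to the Fourier side. It writes both sides of \eqref{SeveralDilationsFormula} at a point $(x,\vec{t}\,)$ as Gaussian integrals against $f$ and matches them by the substitution $z=\Dil_{\lambda^{-1}}y$. This is exactly your ``alternatively'' paragraph, and your bookkeeping there is right: the Jacobian $\lambda^{d}$ cancels the factor $\lambda^{-d}$ in $\Dil_\lambda f$, and $\prod_j(4\pi t_j)^{-1/2}=\lambda^{-d}\prod_j(4\pi\lambda^{-2a_j}t_j)^{-1/2}$ because $\sum_j a_j=d$.

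Your main argument instead uses two facts:
\begin{itemize}
\item the map $g\mapsto\Heat[g](\fdot;\vec{t}\,)$ is the Fourier multiplier $e^{-4\pi^2\sum_j t_j\xi_j^2}$;
\item $\widehat{\Dil_\lambda g}=\hat g\circ\Dil_\lambda$, which is \eqref{DilationFourier} read in the other direction.
\end{itemize}
With these, the lemma collapses to the identity $s_j(\lambda^{a_j}\xi_j)^2=t_j\xi_j^2$.

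The advantage of your route is that it applies verbatim to $f\in\Sw'(\R^d)$. Composing a tempered distribution with the linear map $\Dil_\lambda$ and multiplying it by a Gaussian are both legitimate operations, so no density argument is needed. The paper's computation is more elementary and pointwise, but it is written for functions and leaves the passage to distributions implicit.

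One small addition to your version: for fixed $\vec{t}$, both sides are smooth functions of $x$, being convolutions of a tempered distribution with a Gaussian. Hence equality in $\Sw'$ gives the pointwise identity the lemma asserts.
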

See Subsection~\ref{AppendixA} of the appendix for the proof.

By a \emph{weight} we mean a non-negative locally summable function. A weight~$w$ defines the weighted Lebesgue space via formula 
\eq{
\|f\|_{L_q(w)} = \Big(\int\limits_{\R^d}|f(x)|^qw(x)\,dx\Big)^\frac{1}{q}.
}
Two lemmas below are given without proofs since they are direct generalizations of Lemmas~$1$ and~$2$ in~\cite{Stolyarov2022} (the reasonings work verbatim).
\begin{Le}\label{BasicMonotonicityLemma}
Let~$w$ be a weight, let~$g\in L_{1,\loc}\cap \Sw'(\R^d;\R^\ell)$, and let~$p \geq 1$. Then,
\eq{\label{BasicMonotonicity}
\big\|\Heat[g](\fdot;\vec{t}\,)\big\|_{L_p(w)}\leq \|g\|_{L_p(\Heat[w](\fdot;\vec{t}\,))},\qquad \vec{t} \in (\R_+)^d.
}
\end{Le}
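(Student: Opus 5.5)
The inequality follows from Jensen's inequality applied to the Gaussian average, followed by Fubini and the symmetry of the Gaussian kernel. Write the multiparametric heat kernel as
\[
G_{\vec{t}}(y) = \Big(\prod_{j=1}^d (4\pi t_j)\Big)^{-\frac12} e^{-\sum_{j=1}^d \frac{y_j^2}{4t_j}},
\]
so that $\Heat[g](x;\vec{t}\,) = \int g(x-y)G_{\vec{t}}(y)\,dy$. The kernel is non-negative, even, and has unit integral. Since $g \in L_{1,\loc}\cap\Sw'$, this integral converges absolutely for every $x$; the Gaussian decay beats the polynomial growth allowed for a locally summable tempered function. If convergence needs checking, we first treat bounded compactly supported $g$ and pass to the limit by monotone convergence applied to $|g|$.

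\textbf{Step 1 (pointwise Jensen).} Since $G_{\vec{t}}\,dy$ is a probability measure and $v\mapsto |v|^p$ is convex on $\R^\ell$ for $p\geq 1$ (Euclidean norm), we get for every $x$
\[
\big|\Heat[g](x;\vec{t}\,)\big|^p \leq \int\limits_{\R^d} |g(x-y)|^p G_{\vec{t}}(y)\,dy .
\]

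\textbf{Step 2 (integrate against $w$ and swap).} Multiply by $w(x)$ and integrate in $x$. All integrands are non-negative, so Tonelli's theorem applies. We change variables $z = x-y$ and use the evenness $G_{\vec{t}}(x-z) = G_{\vec{t}}(z-x)$:
\[
\int\limits_{\R^d} |\Heat[g](x;\vec{t}\,)|^p w(x)\,dx \leq \int\limits_{\R^d} |g(z)|^p \int\limits_{\R^d} G_{\vec{t}}(z-x) w(x)\,dx\,dz = \int\limits_{\R^d} |g(z)|^p\, \Heat[w](z;\vec{t}\,)\,dz .
\]
Taking $p$-th roots gives the claimed inequality. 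By the standing convention, nothing needs to be proved when the right hand side is infinite.

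\textbf{Obstacle.} The only delicate point is making sense of $\Heat[w]$ when $w$ is merely locally summable and possibly not tempered. Here $\Heat[w](z;\vec{t}\,)$ is defined as the integral of a non-negative function, possibly $+\infty$, and the argument above goes through verbatim. Likewise, the a.e. identification of the distributional $\Heat[g]$ with the convolution integral is routine for $g\in L_{1,\loc}\cap \Sw'$.
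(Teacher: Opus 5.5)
Your proof is correct and is essentially the argument the paper relies on; the paper omits the proof and defers to the verbatim proof of Lemma~1 in~\cite{Stolyarov2022}, which is the same Jensen-then-Tonelli argument: pointwise Jensen for the Gaussian probability measure, then Tonelli and the evenness of the kernel to move the heat operator onto the weight. One side remark is false: a locally summable tempered function need not have polynomial growth even on average (e.g.\ $g(x)=\partial_1\big(\sin e^{x_1^2}\big)$), so the heat integral can diverge absolutely at some points. This does not hurt the proof, because at such points the right hand side of your pointwise bound is $+\infty$ (apply Jensen to $|g|$), and wherever $\int|g(x-y)|e^{-\sum_j y_j^2/(4t_j)}\,dy<\infty$ the distributional value $\Heat[g](x;\vec{t}\,)$ coincides with the Lebesgue integral.
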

\begin{Le}\label{ZeroFlatnessLemma}
Assume~$p=1$,~$t_j > 0$ for all~$j$, the inequality~\eqref{BasicMonotonicity} turns into equality with both sides being finite quantities, and~$w$ is almost everywhere positive. Then, there exists~$a\in \R^\ell$ and~$h \in \Sw'(\R^d)$,~$h \geq 0$, such that~$g = a\otimes h$.
\end{Le}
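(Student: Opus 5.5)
The plan is to look closely at where the proof of Lemma~\ref{BasicMonotonicityLemma} loses information when~$p=1$, and to read off the structure of~$g$ from the equality case. Write~$G_{\vec{t}}(y) = \big(\prod_j 4\pi t_j\big)^{-1/2}e^{-\sum_j y_j^2/(4t_j)}$, so that~$G_{\vec{t}}$ is an even, strictly positive probability density. The inequality~\eqref{BasicMonotonicity} with~$p=1$ then follows from two steps. First, the pointwise triangle inequality for vector integrals,
\eq{
\Big|\int\limits_{\R^d} g(x-y)G_{\vec{t}}(y)\,dy\Big| \leq \int\limits_{\R^d} |g(x-y)|G_{\vec{t}}(y)\,dy.
}
Second, integration against~$w$ and Fubini, which turn the right-hand side into~$\int |g|\,\Heat[w](\fdot;\vec{t}\,)$. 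Only the first step is an inequality; the Fubini step is an identity. Since both sides are finite, equality in~\eqref{BasicMonotonicity} forces equality in the pointwise triangle inequality for~$w$-almost every~$x$. Because~$w>0$ almost everywhere, this means equality for Lebesgue-almost every~$x\in\R^d$.

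Next I use the equality case of the triangle inequality in~$\R^\ell$ with the Euclidean norm. If~$|\int v\,d\nu| = \int |v|\,d\nu<\infty$ for a positive measure~$\nu$, then~$v(y) = |v(y)|\,e$ for~$\nu$-a.e.~$y$, with a single unit vector~$e$. Fix~$x$ in the full-measure set where equality holds and~$\int|g(x-y)|G_{\vec t}(y)\,dy<\infty$. Since~$G_{\vec{t}}>0$ everywhere, I obtain~$g(z) = |g(z)|\,e(x)$ for a.e.~$z\in\R^d$ on the set~$\{g\neq 0\}$. The direction~$e(x)$ is determined by~$g$ itself, namely as the direction of~$g(z)$ at any point~$z$ with~$g(z)\ne 0$. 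Hence it does not depend on~$x$. If~$g=0$ a.e., the claim is trivial.

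So~$g = e\,|g|$ almost everywhere. Put~$a=e$ and~$h=|g|$. Then~$h\ge 0$ is locally integrable, and~$h = \scalprod{g}{e}$ is a tempered distribution because~$g\in\Sw'$. Therefore~$g = a\otimes h$ with~$h \in \Sw'(\R^d)$,~$h\geq 0$, as required.

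The main obstacle is the measure-theoretic bookkeeping in the second step. One has to choose a single~$x$ for which both the equality and the finiteness of the convolution hold. One also has to check that the finiteness of the right-hand side of~\eqref{BasicMonotonicity} guarantees~$\int |g(x-y)|G_{\vec t}(y)\,dy<\infty$ for almost every~$x$. This holds because~$\Heat[w]>0$ and the integral~$\int |g|\,\Heat[w]$ is finite, so Fubini applies.
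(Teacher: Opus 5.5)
Your proof is correct and follows the same route as the paper, which defers to Lemma~2 of~\cite{Stolyarov2022}. For $p=1$, the only lossy step in Lemma~\ref{BasicMonotonicityLemma} is the pointwise bound $|\Heat[g]|\leq \Heat[|g|]$; equality there, together with the strict positivity of the Gaussian kernel and the equality case of the Euclidean triangle inequality, gives $g=e\,|g|$ almost everywhere. As a minor simplification, the kernel is positive everywhere, so equality at a single $x$ with $\int|g(x-y)|G_{\vec t}(y)\,dy<\infty$ already suffices, and the discussion of why $e(x)$ does not depend on $x$ can be dropped.
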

We also need the dilation properties of the weighted norms:
\eq{\label{eq217}
\big\|\Dil_\lambda[f]\big\|_{L_q(\Dil^\lambda[w])} = \lambda^{-\frac{q-1}{q}d}\|f\|_{L_q(w)} \Eeqref{DefOfq} \lambda^{-\alpha}\|f\|_{L_q(w)},\qquad \lambda > 0.
} 
Note that we apply dilations that preserve the values, not the integral, to the weight.

Let~$\one$ denote the vector~$(1,1,\ldots,1)$.
\begin{St}\label{MonotoniityFormula}
Let~$\mu$ be a measure, let~$w$ be a weight. Then,
\eq{\label{MonotoniityFormulaFormula}
\big\|\Heat[\mu](\fdot; \vec{t}\,)\big\|_{L_q(\Heat[w](\fdot; \frac{\one - \vec{t}}{q}))}\leq \Big(\prod\limits_{j=1}^dt_j\Big)^{-\frac{q-1}{2q}}\big\|\Heat[\mu](\fdot;\one\,)\big\|_{L_q(w)},\qquad \forall j\quad t_j\in [0,1],
}
provided the quantity on the right hand side is finite.
\end{St}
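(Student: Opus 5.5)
The plan is to reduce \eqref{MonotoniityFormulaFormula} to a pointwise-in-$z$ inequality and then to exact Gaussian computations, one coordinate at a time. Everything factorizes over coordinates, so I write $G_s(u)=(4\pi s)^{-1/2}e^{-u^2/(4s)}$ for the one-dimensional kernel and $G_{\vec{s}}(u)=\prod_j G_{s_j}(u_j)$. First I assume $t_j\in(0,1)$ for all $j$; the endpoint cases $t_j\in\{0,1\}$ will follow by monotone or Fatou-type limits.

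By Fubini, the $q$-th power of the left hand side equals $\int w(z)\,\Phi(z)\,dz$, where
\[
\Phi(z)=\int_{\R^d}\big(\Heat[\mu](x;\vec{t}\,)\big)^q\, G_{(\one-\vec t)/q}(z-x)\,dx .
\]
So it suffices to prove the pointwise bound $\Phi(z)\leq (\prod_j t_j)^{-\frac{q-1}{2}}\big(\Heat[\mu](z;\one)\big)^q$ for every $z$.

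To prove it, I use two elementary Gaussian identities.
\begin{enumerate}
\item The power rule: $G_s^r=(4\pi s)^{\frac{1-r}{2}}r^{-\frac12}G_{s/r}$.
\item The product rule: $G_{t}(x-y)\,G_{1-t}(z-x)=G_1(z-y)\,G_{t(1-t)}\big(x-m(y,z)\big)$, where $m(y,z)=(1-t)y+tz$.
\end{enumerate}
By the power rule, $G_{(1-t)/q}^{1/q}$ is an explicit constant times $G_{1-t}$. Since $\mu\ge 0$, this gives
\[
\Phi(z)=c_1\int\Big(\int G_{\vec t}(x-y)\,G_{\one-\vec t}(z-x)\,d\mu(y)\Big)^q dx .
\]
By the product rule, the inner integral becomes $\int G_{\vec t(\one-\vec t)}(x-m(y,z))\,d\nu_z(y)$ with the non-negative measure $d\nu_z(y)=G_{\one}(z-y)\,d\mu(y)$. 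This measure has total mass $\Heat[\mu](z;\one)$, which is finite for a.e.\ $z$ under our hypothesis.

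Next I apply Jensen (H\"older) with respect to the probability measure $\nu_z/\|\nu_z\|$:
\[
\Big(\int G\,d\nu_z\Big)^q\le \|\nu_z\|^{q-1}\int G^q\,d\nu_z .
\]
Integrating in $x$ first and using $\int_{\R} G_s^q=(4\pi s)^{-\frac{q-1}{2}}q^{-\frac12}$, I get $\Phi(z)\le c_2\,\|\nu_z\|^q$. The constant $c_2$ is checked per coordinate:
\[
\Big(\tfrac{4\pi(1-t_j)}{q}\Big)^{\frac{q-1}{2}}q^{\frac q2}\cdot\big(4\pi t_j(1-t_j)\big)^{-\frac{q-1}{2}}q^{-\frac12}=t_j^{-\frac{q-1}{2}} .
\]
All factors of $1-t_j$, $4\pi$ and $q$ cancel, so $c_2=(\prod_j t_j)^{-\frac{q-1}{2}}$. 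Taking $q$-th roots gives exactly the exponent $-\frac{q-1}{2q}$ in \eqref{MonotoniityFormulaFormula}. As a consistency check, equality holds for $\mu=\delta_0$, where Jensen is sharp, matching the remark that delta measures give maximal growth.

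The main obstacle is bookkeeping rather than ideas. One must verify that the constants cancel exactly, and handle the degenerate cases rigorously. If $t_j=1$, the weight $\Heat[w](\cdot;(\one-\vec t)/q)$ has zero time in coordinate $j$, so the heat kernel in that variable is the identity. If $t_j=0$, the right hand side is $+\infty$ and there is nothing to prove. The first case I would handle by letting $t_j\uparrow1$ and applying Fatou's lemma on the left side, using that $G_{(1-t_j)/q}\to\delta$ in the $j$-th variable. I would also note that Jensen requires $\mu\ge0$; this is why the statement is restricted to measures.
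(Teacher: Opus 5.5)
Your argument is correct and is essentially the paper's proof. Your Tonelli step reducing to a pointwise bound in $z$ is the paper's reduction to $w=\delta_y$, and your product rule is exactly its completing-the-square identity $\frac{(x-z)^2}{t}+\frac{x^2}{1-t}-z^2=\frac{(x-(1-t)z)^2}{t(1-t)}$. The only difference is how you pass from general $\mu\ge 0$ to point masses: you use H\"older/Jensen against $\nu_z/\|\nu_z\|$ followed by Fubini, whereas the paper invokes Minkowski's inequality (a convex functional of $\mu$ dominated by a linear one), and the two give the same bound here because the Gaussians $G_{\vec t(\one-\vec t)}(\cdot-m(y,z))$ are all translates of one another.
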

The isotropic version of this proposition was justified in~\cite{Stolyarov2022} by a tricky method borrowed from~\cite{BCT2006}. Though that method will still be needed to prove a strengthening of Proposition~\ref{MonotoniityFormula}, Proposition~\ref{StrengtheningOfSemiinvariant} below, we prefer to provide an elementary proof as well. Here it is.
\begin{proof}[Proof of Proposition~\ref{MonotoniityFormula}]
Without loss of generality, we may assume~$\mu$ is a finite measure with compact support. We raise the inequality to the power~$q$:
\eq{\label{eq218}
\int\limits_{\R^d}\Big(\Heat[\mu](x; \vec{t}\,)\Big)^q\Heat[w]\Big(x; \frac{\one - \vec{t}}{q}\Big)\,dx \leq \Big(\prod\limits_{j=1}^dt_j\Big)^{-\frac{q-1}{2}}\int\limits_{\R^d}\Big(\Heat[\mu](x;\one\,)\Big)^q w(x)\,dx.
}
This inequality is linear with respect to~$w$. Thus, it suffices to test it against~$w = \delta_{y}$ for some~$y\in \R^d$; by translation invariance, we may assume~$y=0$ without loss of generality\footnote{To formalize this principle, we may go backwards. If~\eqref{eq2111} is true, then 
\eq{
\int\limits_{\R^d}\Big(\Heat[\mu](x; \vec{t}\,)\Big)^q \Big(\frac{4\pi}{q}\Big)^{-\frac{d}{2}}\Big(\prod\limits_{j=1}^d(1-t_j)\Big)^{-\frac12} e^{-q\sum_{1}^d\frac{(x_j-y_j)^2}{4(1-t_j)}}\,dx \leq \Big(\prod\limits_{j=1}^dt_j\Big)^{-\frac{q-1}{2}} \Big(\Heat[\mu](y;\one\,)\Big)^q
}
is also true. If we multiply this inequality by~$w(y)$ and integrate with respect to~$y$, we obtain~\eqref{eq218}. A similar deduction of~\eqref{eq2111} from~\eqref{eq2114} involves Minkowski's inequality as an additional ingredient.}. In such a case,
\eq{
\Heat[w]\Big(x; \frac{\one - \vec{t}}{q}\Big) = \Big(\frac{4\pi}{q}\Big)^{-\frac{d}{2}}\Big(\prod\limits_{j=1}^d(1-t_j)\Big)^{-\frac12} e^{-q\sum_{1}^d\frac{x_j^2}{4(1-t_j)}},\qquad w = \delta_y,
}
and we arrive at
\eq{
\int\limits_{\R^d}\Big(\Heat[\mu](x; \vec{t}\,)\Big)^q \Big(\frac{4\pi}{q}\Big)^{-\frac{d}{2}}\Big(\prod\limits_{j=1}^d(1-t_j)\Big)^{-\frac12} e^{-q\sum_{1}^d\frac{x_j^2}{4(1-t_j)}}\,dx \leq \Big(\prod\limits_{j=1}^dt_j\Big)^{-\frac{q-1}{2}} \Big(\Heat[\mu](0;\one\,)\Big)^q,
}
which may be rewritten as
\mlt{\label{eq2111}
\bigg(\int\limits_{\R^d}\Big(\Heat[\mu](x; \vec{t}\,)\Big)^q \Big(\frac{4\pi}{q}\Big)^{-\frac{d}{2}}\Big(\prod\limits_{j=1}^d(1-t_j)\Big)^{-\frac12} e^{-q\sum_{1}^d\frac{x_j^2}{4(1-t_j)}}\,dx\bigg)^{1/q}\\
 \leq \Big(\prod\limits_{j=1}^dt_j\Big)^{-\frac{q-1}{2q}} (4\pi)^{-\frac{d}{2}}\int\limits_{\R^d} e^{-\frac{|x|^2}{4}}\,d\mu(x).
}
This is a bound of a convex functional of~$\mu$ with a linear functional of~$\mu$. For such estimates on the cone of measures, we may restrict our attention to the case~$\mu = \delta_{z}$ for some~$z \in \R^d$. In this case,
\eq{
\Big(\Heat[\mu](x; \vec{t}\,)\Big)^q = (4\pi)^{-\frac{dq}{2}}\Big(\prod\limits_{j=1}^dt_j\Big)^{-\frac{q}{2}} e^{-\sum_{1}^d \frac{q|x_j-z_j|^2}{4t_j}},\qquad \int\limits_{\R^d} e^{-\frac{|x|^2}{4}}\,d\mu(x) = e^{-\frac{|z|^2}{4}},\quad \mu = \delta_z,
}
and we arrive at
\mlt{\label{eq2114}
\bigg(\int\limits_{\R^d}(4\pi)^{-\frac{dq}{2}}\Big(\prod\limits_{j=1}^dt_j\Big)^{-\frac{q}{2}} e^{-\sum_{1}^d \frac{q|x_j-z_j|^2}{4t_j}} \Big(\frac{4\pi}{q}\Big)^{-\frac{d}{2}}\Big(\prod\limits_{j=1}^d(1-t_j)\Big)^{-\frac12} e^{-q\sum_{1}^d\frac{x_j^2}{4(1-t_j)}}\,dx\bigg)^{1/q}\\ \leq  \Big(\prod\limits_{j=1}^dt_j\Big)^{-\frac{q-1}{2q}} (4\pi)^{-\frac{d}{2}} e^{-\frac{|z|^2}{4}}.
}
We will shortly show this inequality is, in fact, an identity. Note that the variables separate, and it suffices to establish a one-dimensional identity
\eq{
(4\pi)^{-\frac12}q^{\frac12}t^{-\frac12}(1-t)^{-\frac12} \int\limits_{\R} e^{-\frac{q}{4}(\frac{(x-z)^2}{t} + \frac{x^2}{1-t} - z^2)}\,dx = 1,\qquad z\in\R.
}
This follows from the fact
\eq{
\frac{(x-z)^2}{t} + \frac{x^2}{1-t} - z^2 = \frac{1}{t(1-t)}(x-(1-t)z)^2.
}
\end{proof}
\begin{Rem}\label{MeasureRemark}
The proof says we may slightly generalize Proposition~\ref{MonotoniityFormula} and assume~$w$ is a measure. What is crucial is that~$\mu$ and~$w$ are non-negative. Note that we do not postulate any sort of finiteness of these measures, the only condition is that the right hand side of~\eqref{MonotoniityFormulaFormula} is finite.
\end{Rem}
Proposition~\ref{MonotoniityFormula} has a useful reformulation, which is merely a translation into the PDE language. This reformulation is based upon a representation formula for the multilinear heat equation.
\begin{Le}\label{WidderLemma}
Let~$u\colon \R^{d}\times [0,T]^d\to \R$ be a non-negative solution to the multiparametric heat equation~\eqref{MultiparametricHeatEquations}. Then, there exists a measure~$\mu$ on~$\R^d$ such that
\eq{\label{eq2120}
u(x,\vec{t}\,) = \Big(\prod\limits_{j=1}^d(4\pi t_j)\Big)^{-\frac12}\int\limits_{\R^d}e^{-\sum_{1}^d\frac{(x_j -y_j)^2}{4t_j}}\,d\mu(y)
}
and these integrals converge for all~$x\in \R^d$ and~$t_j\in (0,T]$. The integral above defines the solution to~\eqref{MultiparametricHeatEquations} as long as it converges for all~$(x,\vec{t}\,) \in \R^d \times (0,T]^d$.
\end{Le}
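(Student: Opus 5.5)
The plan is to reduce Lemma~\ref{WidderLemma} to the classical one-parameter Widder theorem, which says that a non-negative solution of the heat equation on~$\R\times(0,T]$ is the Gauss--Weierstrass integral of a non-negative measure. We apply it one variable at a time and glue the resulting measures together by uniqueness.

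First, fix~$x_2,\dots,x_d$ and~$t_2,\dots,t_d\in(0,T]$. Then~$(x_1,t_1)\mapsto u(x,\vec{t}\,)$ is a non-negative solution of the one-dimensional heat equation. By Widder's theorem,
\eq{
u(x,\vec{t}\,)=(4\pi t_1)^{-\frac12}\int\limits_{\R}e^{-\frac{(x_1-y_1)^2}{4t_1}}\,d\nu_{x',t'}(y_1),
}
where~$x'=(x_2,\dots,x_d)$,~$t'=(t_2,\dots,t_d)$, and~$\nu_{x',t'}$ is a measure. Iterating this would require measurability and positivity of the measures in the parameters, which is messy. I would instead take a cleaner route.

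\textbf{Cleaner route.} For~$\eps\in(0,T)$, set~$u_\eps(x,\vec{s}\,)=u(x,\vec{s}+\eps\one)$ for~$\vec{s}\in[0,T-\eps]^d$. This is a non-negative solution that is smooth up to~$\vec{s}=0$, with initial datum~$g_\eps=u(\fdot,\eps\one)\geq 0$.

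Step one is to show that~$u_\eps=\Heat[g_\eps](\fdot;\vec{s}\,)$, with the integral convergent. Apply the one-dimensional Widder theorem successively in each coordinate, each time with the other variables frozen. Non-negativity of the initial data at each stage gives both convergence, by monotone convergence, and the representation: the solution in the~$t_1$ variable with datum~$u(\fdot,(0,s_2,\dots)+\eps\one)$ equals the Gaussian integral of that datum. Then, inductively, substitute the representation of the datum in the remaining variables, using Tonelli's theorem since everything is non-negative.

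Step two is to let~$\eps\to0$. Evaluate the representation at~$\vec{s}=(T-\eps)\one$ and~$x=0$. This gives a uniform bound
\eq{
\int\limits_{\R^d}e^{-\frac{|y|^2}{4(T-\eps)}}g_\eps(y)\,dy\lesssim u(0,T\one).
}
Hence the measures~$e^{-|y|^2/(4T')}g_\eps\,dy$, for any fixed~$T'<T$, are uniformly bounded. Extract a vaguely convergent subsequence~$g_\eps\,dy\to\mu$. Passing to the limit in~$u(x,\vec{t}\,)=\Heat[g_\eps](x;\vec{t}-\eps\one)$ yields~\eqref{eq2120} for~$\vec{t}\in(0,T)^d$. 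The case~$t_j=T$ follows by monotone or Fatou arguments, or by a slight enlargement using continuity.

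\textbf{Main obstacle.} Passing to the limit requires tightness at infinity. The Gaussian kernel at time~$\vec{t}$ decays only like~$e^{-|y_j|^2/4t_j}$, whereas the uniform bound controls the weight~$e^{-|y|^2/4T}$. For~$t_j<T$, the ratio of these tends to zero at infinity, which gives uniform integrability of the tails. This is the step I expect to require care.

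The final claim, that the integral defines a solution whenever it converges, will follow by differentiating under the integral sign. Convergence at a slightly larger time dominates the derivatives of the Gaussian at smaller times locally uniformly.
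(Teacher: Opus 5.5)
Your proof is correct, but it takes a different route from the paper's.

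\textbf{The paper's route.} The paper restricts $u$ to the diagonal, $U(x,t)=u(x,t\one)$, and notes that $\partial_t U=\sum_j\partial_{t_j}u=\Delta U$. It then cites the $d$-dimensional Widder (Aronson) representation theorem for $U$, which produces $\mu$ in one step. The integrals \eqref{eq2120} converge because every Gaussian with all $t_j\le T$ is dominated by the isotropic one at time $T$. Finally, the resulting solution $u'$ is identified with $u$ off the diagonal. This uses the one-dimensional uniqueness theorem for non-negative solutions, one time variable at a time, starting from the diagonal point $(\min_j t_j)\one$.

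\textbf{Your route.} You never pass through the diagonal or the $d$-dimensional theorem. You shift time by $\eps\one$, so the datum $g_\eps=u(\fdot,\eps\one)$ is a genuine continuous function. You identify $u(\fdot,\vec s+\eps\one)=\Heat[g_\eps](\fdot;\vec s\,)$ by iterating the same one-dimensional uniqueness theorem together with Tonelli. You then build $\mu$ as a vague limit of $g_\eps\,dy$, using the Gaussian-weighted mass bound from evaluation at $(0,T\one)$ and the tail comparison you describe. In effect you reprove the existence half of Widder's theorem directly in the multiparametric setting by compactness.

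\textbf{What each buys.} Your argument is more self-contained: only the one-dimensional uniqueness theorem is cited. The paper's proof is shorter because it outsources the construction of $\mu$ to the classical $d$-dimensional theorem.

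\textbf{Endpoint details.} Two points deserve one more line each.
\begin{enumerate}
\item When some $t_j=T$, Fatou only gives finiteness together with $\int\ldots\,d\mu\le u(x,\vec t\,)$. Equality then follows by dominated convergence in $\vec t$ at fixed $x$. The majorant is $C_\delta e^{-|x-y|^2/(4T)}$, valid for $t_j\in[\delta,T]$, and it is $\mu$-integrable by the Fatou step at $T\one$.
\item For the last claim of the lemma at $t_j=T$ there is no ``slightly larger time'' to borrow domination from. Instead, dominate $|z|^k e^{-z^2/(4T)}$ by a multiple of $e^{-(z-1)^2/(4T)}+e^{-(z+1)^2/(4T)}$, i.e.\ use convergence at the shifted points $x\pm e_j$. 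Alternatively, interpret the $t_j$-derivative there as one-sided.
\end{enumerate}
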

\begin{proof}
The case~$d=1$ of the ordinary heat equation was obtained by Widder in~\cite{Widder1944}; the case of the classical heat equation and arbitrary dimension is completely similar (we may formally cite~\cite{Aronson1968} where the case of a general parabolic equation is considered). We omit the proof of the second part of the lemma since this is an exercise in standard calculus techniques. 

To prove the first part, consider the function~$U\colon \R^d \times [0,T]\to \R$ defined by~$U(x,t) = u(x,t,t,\ldots, t)$. Then,~$U$ is the solution to the ordinary heat equation on its domain; thus, by Widder's theorem, there exists a measure~$\mu$ on~$\R^d$ such that
\eq{
U(x,t) = \big(4\pi t\big)^{-\frac{d}{2}}\int\limits_{\R^d} e^{-\frac{|x-y|^2}{4t}}\,d\mu(y);\qquad x\in \R^d, t\in (0,T].
}
The integrals~\eqref{eq2120} automatically converge and define a solution~$u'$ to~\eqref{MultiparametricHeatEquations}; this is a standard calculus exercise again. What remains to justify is the coincidence of~$u$ and~$u'$. Fix some~$\vec{t} \in (0,T]^d$, without loss of generality, assume~$t_1 \leq t_2 \leq t_3 \leq \ldots \leq t_d$. We know~$u(\fdot, t_1,t_1,\ldots, t_1) = u'(\fdot, t_1,t_1,\ldots, t_1)$ for any~$t_1 \in [0,T]$. By the uniqueness result for positive solutions to the heat equation (Theorem~$5$ in~\cite{Widder1944}),~$u(\fdot, t_1,t_1,\ldots,t_1, t_d) = u'(\fdot, t_1,t_1,\ldots, t_1,t_d)$. Reasoning in the same manner for the other coordinates,  we obtain the desired coincidence of~$u$ and~$u'$.
\end{proof}
\begin{Cor}\label{PDECorollary}
Let~$u\colon \R^d\times [0,1]^d\to \R$ be a non-negative solution to~\eqref{MultiparametricHeatEquations}. Let~$v\colon \R^d\times [0,1]^d\to \R$ be a non-negative solution to 
\eq{\label{eq2115}
-q\frac{\partial v}{\partial t_j} = \frac{\partial^2 v}{\partial x_j^2},\qquad j\in [1\twodots d].
}
Then,
\eq{
\int\limits_{\R^d}u^q(x,\vec{t}\,)v(x,\vec{t}\,)\,dx \leq \Big(\prod\limits_{j=1}^d t_j\Big)^{-\frac{q-1}{2}}\int\limits_{\R^d}u^q(x,\one\,)v(x,\one\,)\,dx
}
for any~$\vec{t}\in [0,1]^d$.
\end{Cor}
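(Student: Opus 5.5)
The plan is to represent both $u$ and $v$ by measures through Lemma~\ref{WidderLemma}, after which the inequality becomes an instance of Proposition~\ref{MonotoniityFormula} (in the measure version of Remark~\ref{MeasureRemark}).

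\textbf{Representing $u$.} By Lemma~\ref{WidderLemma} with $T=1$, there is a measure $\mu$ such that $u(\fdot,\vec{t}\,)=\Heat[\mu](\fdot;\vec{t}\,)$ for all $\vec{t}\in(0,1]^d$. In particular $u(\fdot,\one)=\Heat[\mu](\fdot;\one)$.

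\textbf{Representing $v$.} Reverse time and rescale by setting
\[
\tilde v(x,\vec{s}\,)=v\big(x,\one-q\vec{s}\,\big),\qquad \vec{s}\in[0,1/q]^d.
\]
Then
\[
\partial_{s_j}\tilde v=-q\,\partial_{t_j}v=\partial^2_{x_j}v,
\]
so $\tilde v$ is a non-negative solution of \eqref{MultiparametricHeatEquations} on $\R^d\times[0,1/q]^d$. Lemma~\ref{WidderLemma} applies on $(0,1/q]^d$. At $\vec{s}=0$ we have $\tilde v(\fdot,0)=v(\fdot,\one)$. Define $w=v(\fdot,\one)$ as a measure (it is the initial trace of $\tilde v$). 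Then
\[
v(x,\vec{t}\,)=\tilde v\Big(x,\frac{\one-\vec{t}}{q}\Big)=\Heat[w]\Big(x;\frac{\one-\vec{t}}{q}\Big).
\]
One should check that the Widder measure for $\tilde v$ coincides with $v(\fdot,\one)\,dx$. This follows from continuity of $v$ up to $\vec{t}=\one$: the initial trace of a non-negative solution is its representing measure, by the standard weak convergence $\Heat[\nu](\fdot;\vec{s}\,)\to\nu$.

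\textbf{Applying the proposition.} Substituting both representations, the claimed inequality is exactly \eqref{eq218}, i.e. Proposition~\ref{MonotoniityFormula} raised to the power $q$, with the measure $\mu$ and the weight $w$. This holds for $\vec{t}\in(0,1)^d$ whenever the right-hand side is finite. If the right-hand side is infinite there is nothing to prove, by the paper's convention.

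\textbf{Boundary cases.} If some $t_j=0$, the factor $\prod t_j^{-(q-1)/2}$ is infinite and the inequality is trivial (or, if one insists on a meaning, it follows by Fatou's lemma). If some $t_j=1$, the proposition is still stated for $t_j\in[0,1]$, and $\Heat[w](\fdot;0)$ in that coordinate is understood via the Fourier-multiplier convention. Alternatively, one can pass to the limit $t_j\to1$ using continuity of $v$ and Fatou's lemma on the left side.

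\textbf{Main obstacle.} The main difficulty is applying Lemma~\ref{WidderLemma} to $v$ on the closed time box. The lemma represents $v$ through a measure obtained as a limit, and we must identify that measure with $v(\fdot,\one)$ while ensuring the integrals converge. I would handle this by first applying everything on the slightly smaller box $\vec{t}\in[0,1-\eps]^d$, that is, with $w_\eps=v(\fdot,(1-\eps)\one)$, a genuine continuous weight. There the representation of $v$ at interior times is direct from the semigroup property \eqref{SemigroupProperty}. I would then let $\eps\to0$.
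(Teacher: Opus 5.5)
Your proposal is correct and matches the paper's proof: you represent $u$ and the time-reversed $v$ as multiparametric heat extensions of measures $\mu$ and $w$ via Lemma~\ref{WidderLemma}, after which the claim is exactly~\eqref{eq218}, i.e.\ Proposition~\ref{MonotoniityFormula} (with Remark~\ref{MeasureRemark} allowing $w$ to be a measure). Identifying $w$ with $v(\fdot,\one)$ by continuity at $\vec{t}=\one$ and handling the boundary times are details the paper leaves implicit; the $\eps$-shrinking detour is unnecessary, and as written it would still need a justification for passing to the limit on the right-hand side.
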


\begin{proof}
By Lemma~\ref{WidderLemma}, there exist measures~$\mu$ and~$w$ such that
\alg{
u(x,\vec{t}\,) &= \Heat[\mu](\fdot; \vec{t}\,),& \quad &x\in \R^d, \vec{t}\in [0,1]^d;&\\
v(x,\vec{t}\,) &= \Heat[w]\Big(\fdot; \frac{\one - \vec{t}}{q}\Big),&\quad &x\in \R^d, \vec{t} \in [0,1]^d,&
}
and, in the light of Remark~\ref{MeasureRemark}, the corollary reduces to Proposition~\ref{MonotoniityFormula}.
\end{proof}

Using the dilations~$(x,\theta)\mapsto (\sqrt{s} x, s \theta)$,~$x\in\R$,~$\theta>0$, in each of the coordinates, we obtain a slightly more general version.
\begin{St}\label{RescaledPDECorollary}
Let~$\vec{s}\in (\R_+)^d$. Let~$u\colon \R^d\times \prod_{j}[0,s_j]\to \R$ be a non-negative solution to~\eqref{MultiparametricHeatEquations}. Let~$v$ be a non-negative solution to~\eqref{eq2115} on~$\R^d\times \prod_{j}[0,s_j]$. Then,
\eq{
\int\limits_{\R^d}u^q(x,\vec{t}\,)v(x,\vec{t}\,)\,dx \leq \Big(\prod\limits_{j=1}^d \frac{s_j}{t_j}\Big)^{\frac{q-1}{2}}\int\limits_{\R^d}u^q(x,\vec{s}\,)v(x,\vec{s}\,)\,dx,
}
whenever~$t_j \in (0,s_j]$ for every~$j = 1,2,\ldots, d$.
\end{St}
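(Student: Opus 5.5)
We deduce Proposition~\ref{RescaledPDECorollary} from Corollary~\ref{PDECorollary} by an independent parabolic rescaling in each coordinate. For $\vec{s}\in(\R_+)^d$ we set
\[
\tilde u(y,\vec{\tau}\,) = u\big(\sqrt{s_1}\,y_1,\ldots,\sqrt{s_d}\,y_d;\ s_1\tau_1,\ldots,s_d\tau_d\big),\qquad y\in\R^d,\ \vec{\tau}\in[0,1]^d,
\]
and define $\tilde v$ from $v$ by the same formula. Both equations~\eqref{MultiparametricHeatEquations} and~\eqref{eq2115} are invariant under $(x_j,\theta_j)\mapsto(\sqrt{s_j}x_j,s_j\theta_j)$: the factor $s_j$ from $\partial_{\tau_j}$ matches the factor $(\sqrt{s_j})^2$ from $\partial^2_{y_j}$, and the constant $-q$ in~\eqref{eq2115} is unaffected. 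Hence $\tilde u$ is a non-negative solution of~\eqref{MultiparametricHeatEquations} on $\R^d\times[0,1]^d$, and $\tilde v$ is a non-negative solution of~\eqref{eq2115} there. Non-negativity and the domain transfer trivially.

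Next we apply Corollary~\ref{PDECorollary} to $\tilde u,\tilde v$ at the point $\vec{\tau}$ with $\tau_j=t_j/s_j\in(0,1]$:
\[
\int_{\R^d}\tilde u^q(y,\vec{\tau}\,)\tilde v(y,\vec{\tau}\,)\,dy\le\Big(\prod_j\tau_j\Big)^{-\frac{q-1}{2}}\int_{\R^d}\tilde u^q(y,\one\,)\tilde v(y,\one\,)\,dy.
\]
The change of variables $x_j=\sqrt{s_j}\,y_j$ multiplies both integrals by the same Jacobian factor $\prod_j s_j^{-1/2}$, which cancels. What remains is
\[
\int u^q(x,\vec t\,)v(x,\vec t\,)\,dx\le\Big(\prod_j s_j/t_j\Big)^{\frac{q-1}{2}}\int u^q(x,\vec s\,)v(x,\vec s\,)\,dx,
\]
which is the claim.

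The only point needing care is the endpoint behaviour. Corollary~\ref{PDECorollary} rests on Lemma~\ref{WidderLemma}, which represents $u$ on the closed box $[0,1]^d$ in time, including the face $\vec{\tau}=0$ for the forward equation. For $v$ we use the reversed time $(\one-\vec\tau)/q$, and Lemma~\ref{WidderLemma} applies after this substitution. We check that the rescaled domains are exactly $[0,1]^d$, so that the hypotheses match verbatim. The rescaling maps $\prod_j[0,s_j]$ bijectively onto $[0,1]^d$, so this is immediate. No estimate beyond Corollary~\ref{PDECorollary} is needed. If the right-hand side is infinite there is nothing to prove, consistent with the convention adopted in the paper.
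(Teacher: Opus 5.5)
Your proposal is correct and follows the paper's proof: the paper likewise rescales coordinatewise by $(x_j,\theta_j)\mapsto(\sqrt{s_j}\,x_j,s_j\theta_j)$, notes that both equations are invariant under this map, and applies Corollary~\ref{PDECorollary} with $\theta_j=t_j/s_j$. You additionally spell out the equation invariance and the cancellation of the Jacobian factor $\prod_j s_j^{-1/2}$ on both sides, which the paper leaves implicit; your remarks on the endpoints are harmless but not needed.
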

\begin{proof}
Define the functions~$\tilde{u}$ and~$\tilde{v}$ on the domain~$\R^d \times [0,1]^d$:
\alg{
\tilde{u}(x,\vec{\theta}) &= u\Big(\sqrt{s_1}x_1,\sqrt{s_2}x_2,\ldots, \sqrt{s_d}x_d,s_1\theta_1, s_2\theta_2,\ldots,s_d\theta_d\Big);\\
\tilde{v}(x,\vec{\theta}) &= v\Big(\sqrt{s_1}x_1,\sqrt{s_2}x_2,\ldots, \sqrt{s_d}x_d,s_1\theta_1, s_2\theta_2,\ldots,s_d\theta_d\Big).
}
By dilation invariance, they solve the same partial differential equations as~$u$ and~$v$ do. We apply Corollary~\ref{PDECorollary} to them and obtain the desired result by choosing~$\theta_j = t_j/s_j$ for every~$j$.
\end{proof}


\subsection{Anisotropic Littlewood--Paley decomposition}\label{s22}
Pick some large number~$A$. There will be further clarifications what we mean by `large', for now we assume that at least~$A > 2$. Let~$f$ be a summable function on~$\R^d$. Consider the functions
\eq{
f_k = \Heat[f]\big(\fdot; A^{-2ka_1}, A^{-2ka_2},\ldots, A^{-2ka_d}\big),\qquad k\in \Z.
}

\begin{Cor}\label{DilationPartsCorollary}
For any~$k, m \in \mathbb{Z}$, we have
\eq{
\Big(\Dil_{A^m}[f]\Big)_k = \Dil_{A^m}[f_{k+m}].
}
\end{Cor}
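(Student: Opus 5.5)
This is a direct consequence of Lemma~\ref{SeveralDilations}: the plan is to apply formula~\eqref{SeveralDilationsFormula} with $\lambda = A^m$ and evaluate at the time vector defining the $k$-th piece. By definition,
\eq{
\Big(\Dil_{A^m}[f]\Big)_k(x) = \Heat\big[\Dil_{A^m} f\big]\big(x; A^{-2ka_1},\ldots,A^{-2ka_d}\big) = \DIL_{A^m}\big[\Heat[f]\big]\big(x; A^{-2ka_1},\ldots,A^{-2ka_d}\big).
}

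Next, unwind the definition~\eqref{DILDef} of $\DIL_\lambda$ with $\lambda = A^m$. The spatial variable becomes $\Dil_{A^{-m}}x$, and the prefactor is $A^{-md}$. The time vector $\vec{t}=(A^{-2ka_1},\ldots,A^{-2ka_d})$ is replaced by $\Dil_{\lambda^{-2}}\vec{t}$. Since $\Dil_{A^{-2m}}$ multiplies the $j$-th coordinate by $A^{-2ma_j}$, the new time vector is $(A^{-2(k+m)a_1},\ldots,A^{-2(k+m)a_d})$. This is exactly the time vector defining $f_{k+m}$.

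Hence the right-hand side equals $A^{-md}f_{k+m}(\Dil_{A^{-m}}x)$. By~\eqref{L1Dilation} this is $\Dil_{A^m}[f_{k+m}](x)$, which proves the corollary.

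There is no real obstacle, because all the analytic content sits in Lemma~\ref{SeveralDilations}. The only point needing care is the bookkeeping of exponents: one must check that the time dilation in $\DIL$ is $\Dil_{\lambda^{-2}}$, which yields the factor $A^{-2ma_j}$ and hence the index shift $k\mapsto k+m$ rather than $k-m$.
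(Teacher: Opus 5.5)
Your proposal is correct and is essentially the paper's own proof: apply Lemma~\ref{SeveralDilations} with $\lambda = A^m$ at the time vector $(A^{-2ka_1},\ldots,A^{-2ka_d})$ and unwind the definition of $\DIL_{A^m}$. The time rescaling $\Dil_{A^{-2m}}$ then turns the time vector into $(A^{-2(k+m)a_1},\ldots,A^{-2(k+m)a_d})$, and comparing the result with~\eqref{L1Dilation} gives $\Dil_{A^m}[f_{k+m}]$.
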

\begin{proof}
Pick some~$x\in \R^d$ and compute
\mlt{
\big(\Dil_{A^m}[f]\big)_k(x)\\ 
= \Heat\big[\Dil_{A^m} f\big]\big(x; A^{-2ka_1}, A^{-2ka_2},\ldots, A^{-2ka_d}\big)
 \Eref{\text{\tiny Lem.~\ref{SeveralDilations}}} \DIL_{A^m}\big[\Heat[f]\big] \big(x; A^{-2ka_1}, A^{-2ka_2},\ldots, A^{-2ka_d}\big)\\
 = A^{-md} \Heat[f]\big(\Dil_{A^{-m}}x; A^{-2(k+m)a_1}, A^{-2(k+m)a_2},\ldots, A^{-2(k+m)a_d}\big) = \Dil_{A^m}[f_{k+m}](x).
}
\end{proof}

The functions~$f_k$ are convenient for expressing the anisotropic Riesz potential defined in~\eqref{RieszPotentialDefinition}.
\begin{Le}\label{TriangleInequalityLemma}
For any~$q \in (1,\infty)$ and~$\alpha\in(0,d)$, the inequality
\eq{\label{RieszPotentialDiscretizationFormula}
\|\I_\alpha f\|_{L_{q}}\lesssim \sum\limits_{k \in \Z}A^{-\alpha k}\|f_k\|_{L_{q}}
}
holds true with a constant independent of~$f$.
\end{Le}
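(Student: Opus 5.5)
We will write $\I_\alpha f$ as a sum of pieces, each of which is $\I_\alpha$ applied to a difference of consecutive heat extensions $f_k$, and then use the scaling of $\I_\alpha$ together with $L_q$-boundedness of a fixed smoothing multiplier. Throughout, $\vec{\tau}_k = (A^{-2ka_1},\ldots,A^{-2ka_d})$, so that $f_k = \Heat[f](\fdot;\vec{\tau}_k)$.

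\emph{Telescoping.} On the Fourier side, $\hat{f_k}(\xi) = e^{-4\pi^2\sum_j A^{-2ka_j}\xi_j^2}\hat f(\xi)$. As $k\to+\infty$ the multiplier tends to $1$, and as $k\to-\infty$ it tends to $0$ for every $\xi\neq 0$. Hence, in the sense of distributions (for $f\in L_1$, $\hat f$ is bounded and $\rho^{-\alpha}$ is locally integrable),
\[
\I_\alpha f = \sum_{k\in\Z}\I_\alpha\big[f_{k+1}-f_k\big].
\]
By the semigroup property~\eqref{SemigroupProperty}, $f_{k+1} - f_k = \Heat[f_{k}](\fdot;\vec{\tau}_{k+1}-\vec{\tau}_k)$ up to the sign convention. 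Since $\vec{\tau}_{k+1}<\vec{\tau}_k$ coordinatewise, the correct way to write this is
\[
f_{k+1}-f_k = T_k f_{k+1},
\]
where $T_k$ is the multiplier $1-e^{-4\pi^2\sum_j(A^{-2ka_j}-A^{-2(k+1)a_j})\xi_j^2}$. Therefore
\[
\I_\alpha[f_{k+1}-f_k] = \big(\rho^{-\alpha}\,m_k\big)\check{\ }*f_{k+1},
\]
with
\[
m_k(\xi)=1-e^{-4\pi^2\sum_j (A^{-2ka_j}-A^{-2(k+1)a_j})\xi_j^2}.
\]

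\emph{Scaling to a single operator.} The multiplier $m_k$ is the anisotropic dilate of the single symbol
\[
m(\xi)=1-e^{-4\pi^2\sum_j(1-A^{-2a_j})\xi_j^2},
\]
namely $m_k(\xi)=m(\Dil_{A^{-k}}\xi)$, as one checks from $(A^{-ka_j}\xi_j)^2$. Combined with the $(-\alpha)$-homogeneity of $\rho^{-\alpha}$, this gives $\rho^{-\alpha}m_k = A^{-k\alpha}\,\big(\rho^{-\alpha}m\big)(\Dil_{A^{-k}}\fdot)$. The operator with symbol $M=\rho^{-\alpha}m$ is therefore conjugated by $L_q$-isometric (up to the same power) dilations, in the spirit of~\eqref{RieszPotentialHomogeneity} and~\eqref{DilationFourier}. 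Consequently
\[
\|\I_\alpha[f_{k+1}-f_k]\|_{L_q} \le A^{-k\alpha}\,\|T_M\|_{L_q\to L_q}\,\|f_{k+1}\|_{L_q},
\]
since dilation-invariance of multiplier norms on $L_q$ makes the operator norm of the dilated multiplier independent of $k$. Reindexing, the right-hand side becomes $A^{\alpha}A^{-\alpha(k+1)}\|f_{k+1}\|_{L_q}$. Summing over $k$ and using Minkowski's inequality then yields~\eqref{RieszPotentialDiscretizationFormula}, with constant $A^\alpha\|T_M\|_{L_q\to L_q}$; this is independent of $f$, which is all the statement requires.

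\emph{Main obstacle: $L_q$-boundedness of $M$.} It remains to show that $M=\rho^{-\alpha}m$ is an $L_q$ Fourier multiplier.
\begin{itemize}
\item Near the origin, $m(\xi)\asymp|\xi|^2$ up to smooth factors, so $M$ behaves like $\rho^{-\alpha}\cdot(\text{quadratic form})$. This is bounded near zero because $\rho(\xi)^{-\alpha}|\xi|^2\lesssim \rho(\xi)^{2\min a_j-\alpha}$. That exponent may be negative when $\alpha$ is large, so the low-frequency part needs care.
\item The clean route is to note that $M$ is smooth away from $0$, decays like $\rho^{-\alpha}$ at infinity (with all anisotropic derivatives), and that its inverse Fourier transform is integrable. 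Integrability of the kernel gives boundedness on every $L_q$.
\end{itemize}
If the low-frequency part fails, we instead pair differently, writing $\I_\alpha f_{k+1} - \I_\alpha f_k$ as $\I_\alpha$ of a heat difference of $f_{k}$. Alternatively, we split the factor $m$ so that the power of $\xi$ compensates: by the anisotropic Mikhlin theorem, $\rho^{-\alpha}\big(1-e^{-\rho^{2\max a_j}}\big)$-type symbols are bounded multipliers. We expect the verification that the single symbol $M$ satisfies anisotropic Mikhlin (or kernel integrability) conditions to be the only real work; if needed, $m$ can be replaced by a finite sum of differences of Gaussians, one per coordinate, each vanishing to second order in one variable.
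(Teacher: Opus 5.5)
Your scaling reduction is correct, but the one fact it rests on --- that $M=\rho^{-\alpha}m$ is an $L_q$ Fourier multiplier --- is false in part of the range the lemma covers. Your ``clean route'' (an integrable kernel) cannot work there. The heat difference $m(\xi)=1-e^{-4\pi^2\sum_j(1-A^{-2a_j})\xi_j^2}$ vanishes only to second order at $0$. On the $\xi_j$-axis, where $\rho(\xi)=|\xi_j|^{1/a_j}$, this gives $M(\xi)\asymp|\xi_j|^{2-\alpha/a_j}$, which is unbounded as $\xi_j\to0$ once $\alpha>2a_j$. Every $L_q$ multiplier is a bounded function, so for $\alpha>2\min_j a_j$ no Mikhlin-type or kernel argument can help; this already happens for $a=\one$, $d\ge3$, $\alpha\in(2,d)$.

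In fact the termwise estimate $\|\I_\alpha[f_{k+1}-f_k]\|_{L_q}\lesssim A^{-\alpha k}\|f_{k+1}\|_{L_q}$ is then genuinely false, not merely unproved. The reason is that the functions $f_{k+1}$ with $f\in L_1$ include every $g$ with $\hat g\in C_c^\infty$, hence are dense in $L_q$. None of your fallbacks repairs this:
\begin{itemize}
\item Pairing with $f_k$ brings in the backward heat factor $e^{4\pi^2\sum_j(A^{-2ka_j}-A^{-2(k+1)a_j})\xi_j^2}$. It is unbounded at high frequencies and tends to $1$ at low ones, so it does not touch the problem.
\item Coordinatewise Gaussian differences still vanish only to second order.
\item $\rho^{-\alpha}(1-e^{-\rho^{2\max_j a_j}})$ is not the symbol that arises, and it is itself unbounded when $\alpha>2\max_j a_j$.
\end{itemize}
For $\alpha<2\min_j a_j$ your argument does go through.

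The missing idea is to cut $\I_\alpha f$ into frequency pieces whose symbols vanish to infinite order at the origin, and to compare each piece with $f_k$ itself rather than with a heat difference. The paper proceeds as follows:
\begin{itemize}
\item Take $\psi\in\Sw$ with $\hat\psi\in C_c^\infty$ and $\hat\psi\equiv1$ near $0$, and set $\psi_k=\Dil_{A^{-k}}[\psi]$.
\item Write $\I_\alpha f=\sum_k\I_\alpha f*(\psi_k-\psi_{k-1})$.
\item After rescaling to $k=0$, it only needs $e^{4\pi^2|\xi|^2}\big(\hat\psi(\xi)-\Dil^{A^{-1}}[\hat\psi](\xi)\big)\rho(\xi)^{-\alpha}$ to have an integrable inverse Fourier transform.
\end{itemize}
This holds for every $\alpha\in(0,d)$: the function is smooth with compact support avoiding the origin. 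Compactness makes the inverse heat factor harmless, and staying away from $0$ makes $\rho^{-\alpha}$ harmless.

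If you want to keep your telescoping, you must additionally Littlewood--Paley decompose each $\I_\alpha[f_{k+1}-f_k]$. The block at anisotropic frequency scale $A^{\ell}$ with $\ell\le k$ is then bounded by $A^{-\alpha\ell}A^{-2(k-\ell)\min_j a_j}\|f_\ell\|_{L_q}$. The blocks with $\ell>k+1$ carry a factor decaying super-exponentially in $\ell-k$, and the double sum converges. This, however, already relies on the same annular decomposition.
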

\begin{proof}
Let~$\psi$ be a Schwartz function whose Fourier transform is compactly supported and is equal to~$1$ in a neighborhood of the origin. Define the functions~$\psi_k$ by the rule
\eq{
\psi_k(x) = \Dil_{A^{-k}}[\psi],\qquad k\in\mathbb{Z},\ x\in \R^d.
}
Let us first prove the inequality
\eq{\label{eq224}
\|g\|_{L_{q}} \lesssim \sum\limits_{k\in\Z}\big\|g*(\psi_k - \psi_{k-1})\big\|_{L_{q}}.
}
For that, we recall the limit relations 
\eq{\label{LimitRelationsLorentz}
g*\psi_k \longrightarrow g \quad \text{in}\ L_{q},\quad k\to \infty;\qquad\qquad g*\psi_k \longrightarrow 0 \quad \text{in}\ L_{q},\quad k\to -\infty,
}
leading to the representation
\eq{
g= \sum_k g*(\psi_k - \psi_{k-1}),\qquad \text{the series converges in~$L_{q}$,}
}
which, in its turn, implies~\eqref{eq224} via the triangle inequality 
in~$L_{q}$. 

Thus, it remains to show
\eq{\label{IndividualBound}
\big\|\I_\alpha [f]*(\psi_k - \psi_{k-1})\big\|_{L_{q}} \lesssim A^{-\alpha k}\|f_k\|_{L_{q}}
}
for any~$k\in \Z$ with a uniform constant. We see that, by using dilations (namely, we rely upon Corollary~\ref{DilationPartsCorollary},~\eqref{RieszPotentialHomogeneity}, and~\eqref{eq217}), this inequality reduces to the case~$k=0$. The function~$\I_\alpha [f]*(\psi_0 - \psi_{-1})$ is obtained from~$f_0$ by application of the Fourier multiplier with the symbol
\eq{
e^{4\pi^2|\xi|^2}\frac{\hat{\psi}(\xi) - \Dil^{A^{-1}}[\hat{\psi}](\xi)}{(\rho(\xi))^{\alpha}},\qquad \xi \in \R^d,
}
see formula~\eqref{DilationFourier}. This symbol is a compactly supported smooth function, and therefore, the kernel of the Fourier multiplier in question is a summable function, which yields~\eqref{IndividualBound}.
\end{proof}
\begin{Rem}
While the multiplicative constant in~\eqref{RieszPotentialDiscretizationFormula} is independent of~$f$, it might depend on~$A$.
\end{Rem}
\begin{Rem}\label{DiscretizationRemark}
The bound~\eqref{IndividualBound} leads to the inequality
\eq{
\|\I_\alpha f\|_{\dot{B}_{q}^{0,1}} \lesssim \sum\limits_{k \in \Z}A^{-\alpha k}\|f_k\|_{L_{q}},
}
see~\eqref{DefOfBesovNorm}. In particular, both Theorems~\ref{MainTheoremLorentzScale} and~\ref{MainTheoremBesovLorentzScale} reduce to the bound
\eq{
\sum\limits_{k \in \Z}A^{-\alpha k}\|f_k\|_{L_{q}} \lesssim \|f\|_{L_1},\qquad f \in \W,
}
provided~$\W$ meets the requirements of those theorems; recall that the parameters satisfy~\eqref{DefOfq}.
\end{Rem}

Since we will be using induction on scales in our proof, it is convenient to have some basic scale. The following lemma provides us with such a scale.
\begin{Le}\label{TruncationLemma}
Let~$\W$ be a closed translation invariant linear subspace of~$\Sw'(\R^d; \R^\ell)$. Assume~$\W$ is also invariant under the dilations~$\Dil_t$. If the estimate
\eq{\label{TruncatedSumEstimate}
 \sum\limits_{k = 0}^\infty A^{-\alpha k}\|f_k\|_{L_{q}} \lesssim \|f\|_{L_1},\qquad f \in \W,
}
holds true for all~$f\in \W \cap L_1$ with a uniform constant, then the inequality
\eq{
\sum\limits_{k \in \Z}A^{-\alpha k}\|f_k\|_{L_{q}} \lesssim \|f\|_{L_1},\qquad f \in \W,
}
is also true.
\end{Le}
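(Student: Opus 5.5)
The plan is to use dilation invariance of $\W$ to shift the truncation point of the sum to an arbitrary negative index, and then let that index go to $-\infty$. Fix $f\in\W\cap L_1$ and $m\in\N$, and set $g=\Dil_{A^{m}}[f]$. By the dilation invariance of $\W$, $g\in\W$. Since $\Dil_{A^m}$ preserves the integral, it preserves the $L_1$ norm, so $\|g\|_{L_1}=\|f\|_{L_1}$ and $g\in\W\cap L_1$. Hence the assumed estimate \eqref{TruncatedSumEstimate} applies to $g$:
\eq{
\sum\limits_{k=0}^\infty A^{-\alpha k}\|g_k\|_{L_q}\lesssim \|f\|_{L_1},
}
with a constant independent of $m$ and $f$.

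Next I rewrite the left hand side in terms of $f$. By Corollary~\ref{DilationPartsCorollary}, $g_k=\Dil_{A^m}[f_{k+m}]$. The $L_q$ norm of an integral-preserving dilation scales as in \eqref{eq217} with the trivial weight $w\equiv 1$ (note $\Dil^\lambda[1]=1$), which gives $\|\Dil_{A^m}[f_{k+m}]\|_{L_q}=A^{-\alpha m}\|f_{k+m}\|_{L_q}$. Therefore
\eq{
\sum\limits_{k=0}^\infty A^{-\alpha k}\|g_k\|_{L_q}=\sum\limits_{k=0}^\infty A^{-\alpha(k+m)}\|f_{k+m}\|_{L_q}=\sum\limits_{n=m}^\infty A^{-\alpha n}\|f_n\|_{L_q}.
}
This is a shift in the wrong direction, so I take $m$ negative instead: applying the same computation to $g=\Dil_{A^{-M}}[f]$ with $M\in\N$ yields
\eq{
\sum\limits_{n=-M}^\infty A^{-\alpha n}\|f_n\|_{L_q}\lesssim\|f\|_{L_1},
}
uniformly in $M$.

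Finally, I let $M\to\infty$. The partial sums of non-negative terms increase, so monotone convergence gives the full bound over $k\in\Z$. The only points needing care are checking the sign conventions in Corollary~\ref{DilationPartsCorollary} and in \eqref{eq217}, which determine the direction of the shift, and confirming that $\Dil_t$ maps $\W\cap L_1$ into itself. Both follow directly from the definitions, so I do not expect a genuine obstacle.
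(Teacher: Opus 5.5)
Your proposal is correct and matches the paper's proof. You substitute $\Dil_{A^{-N}}[f]$ into the truncated estimate, use Corollary~\ref{DilationPartsCorollary} and \eqref{eq217} to rewrite the left-hand side as $\sum_{k\ge -N}A^{-\alpha k}\|f_k\|_{L_q}$ with a constant independent of $N$, and let $N\to\infty$. The preliminary computation with positive $m$ does no harm, because it is valid for every integer $m$ and so already contains the case you need.
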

\begin{proof}
Assume~\eqref{TruncatedSumEstimate} holds true. Then, given any~$N \in \mathbb{N}$, the estimate
\eq{\label{TruncationN}
 \sum\limits_{k = -N}^\infty A^{-\alpha k}\|f_k\|_{L_{q}} \lesssim \|f\|_{L_1},\qquad f \in \W,
}
holds as well. To show this, we plug~$\tilde{f} = \Dil_{A^{-N}}[f]$ instead of~$f$ into~\eqref{TruncatedSumEstimate}. The~$L_1$ norms on the right hand side are the same. The quantities on the left hand side are also the same:
\mlt{
 \sum\limits_{k = 0}^\infty A^{-\alpha k}\|\tilde{f}_k\|_{L_{q}} =  \sum\limits_{k = 0}^\infty A^{-\alpha k}\Big\|\big(\Dil_{A^{-N}}f\big)_k\Big\|_{L_{q}} \\ 
 \Eref{\text{\tiny Cor.~\ref{DilationPartsCorollary}}} \sum\limits_{k = 0}^\infty A^{-\alpha k}\Big\|\Dil_{A^{-N}}\big[f_{k-N}\big]\Big\|_{L_{q}} \Eeqref{eq217} \sum\limits_{k=0}^\infty A^{-\alpha(k-N)} \|f_{k-N}\|_{L_q} =  \sum\limits_{k = -N}^\infty A^{-\alpha k}\|f_k\|_{L_{q}}.
} 
Thus, we have obtained~\eqref{TruncationN} with the multiplicative constant independent of~$N$ (it is the same as in~\eqref{TruncatedSumEstimate}). The desired bound follows by passing to the limit as~$N\to\infty$.
\end{proof}

Now we wish to link the functions~$f_k$ to the~$L_1$ norm of~$f$ more directly. We see that~$f_0$ is a smooth function and that~$f_k \to f$ in~$L_1(\R^d, \R^\ell)$ as~$k\to \infty$. Note that by~\eqref{SemigroupProperty}
\eq{\label{ReproducingThefk}
f_k = \Heat[f_m]\big(\fdot; A^{-2ka_1} - A^{-2ma_1}, A^{-2ka_2}-A^{-2ma_2},\ldots, A^{-2ka_d}-A^{-2ma_d}\big),\qquad m \geq k.
}
This implies via Lemma~\ref{BasicMonotonicityLemma} with constant weight that
\eq{
\|f_k\|_{L_1} \leq \|f_m\|_{L_1},\qquad m \geq k.
}
Therefore, we may represent
\eq{\label{eq2215}
\|f\|_{L_1} = \|f_0\|_{L_1} + \sum\limits_{k=0}^\infty\Big(\|f_{k+1}\|_{L_1} - \|f_k\|_{L_1}\Big),
}
and each term in the series is non-negative. Introduce the technical parameters
\eq{\label{DefinitionOfK}
K = \big\lceil \max_{i,j} \frac{a_i}{a_j}\big\rceil+1, \quad L = K+1;
}
here we use the notation~$\lceil x \rceil$ for the ceiling of a real number~$x$, which is the smallest possible integer number that is larger than or equal to~$x$. In the classical isotropic case~$a_i=1$ we have~$K=2$ and~$L=3$.

For technical purposes, we will use the inequality
\eq{\label{Telescopic}
\sum\limits_{k=0}^\infty\Big(\|f_{k+L}\|_{L_1} - \|f_k\|_{L_1}\Big)\leq L \|f\|_{L_1}
}
instead of~\eqref{eq2215}. Note that this inequality also holds true for vector-valued functions~$f$. We need to decompose the quantities~$\|f_{k+L}\|_{L_1} - \|f_k\|_{L_1}$ further. An informal principle says that the function~$f_k$ behaves like a function on the lattice~$\prod_{j=1}^d \big(A^{-ka_j}\Z\big)$. Let~$w$ be a weight such that
\eq{\label{SumOneForWeight}
\sum_{j\in \Z^d} w(x-j) = 1\qquad \text{for any}\quad x\in \R^d.
} 
There will be further requirements on~$w$, for now we assume it is smooth and satisfies the bound
\eq{\label{eq2216}
w(x) \geq C (1+|x|)^{-\theta}
}
for some~$\theta > d$ and~$C > 0$. The reader may look up the formula for~$w$ in~\eqref{OurWeights} below for the final choice of~$w$; before that choice we prefer to reason for more general weights.
Set 
\eq{\label{DefOfWeights}
w_{k,j}(x) = w\Big( \Dil_{A^k}x- j\Big),\quad j\in \Z^d, k \in \Z.
}
In other words,
\eq{
w_{0,j}(x) = w(x-j),\qquad j \in \Z^d, \quad \text{and}\quad w_{k,j} = \Dil^{A^{-k}}[w_{0,j}],\qquad k\in \Z.
}
Note that~$\sum_jw_{k,j} = 1$ for any~$k$. Then,
\mlt{\label{DefOfTildeWeights}
\|f_{k+L}\|_{L_1} - \|f_k\|_{L_1} = \sum\limits_{j\in\Z^d}\bigg(\|f_{k+L}\|_{L_1(\tilde{w}_{k,j})} - \|f_k\|_{L_1(w_{k,j})}\bigg),\quad \text{where}\\
\tilde{w}_{k,j} = \Heat [w_{k,j}](\fdot; A^{-2ka_1} - A^{-2(k+L)a_1}, A^{-2ka_2}-A^{-2(k+L)a_2},\ldots, A^{-2ka_d}-A^{-2(k+L)a_d}),
}
since~$\sum_j \tilde{w}_{k,j} = 1$ as well. By Lemma~\ref{BasicMonotonicityLemma} and~\eqref{ReproducingThefk}, each summand in this sum is non-negative.


\subsection{Convex and flat atoms}\label{s23}
An atom is a pair~$(k,j)$,~$k\geq 0$ and~$j\in \Z^d$. Each atom has a parallelepiped
\eq{
Q_{k,j} = \Set{x\in \R^d}{\big|\Dil_{A^k} x - j\big|_{\ell_{\infty}^d} \leq 1/2},\qquad |y|_{\ell_{\infty}^d} = \sup_i |y_i|, \ y= (y_1,y_2,\ldots, y_d)\in \R^d,
}
associated with it. In the classical isotropic case~$a=(1,1,\ldots,1)$ these parallelepipeds are, in fact, cubes. If~$A$ is an odd integer, any two cubes are either disjoint up to a  set of measure zero or one contains the other. This defines a tree-like structure on the set of these cubes ($A$-adic cubes) in a natural way: We join the two cubes with side lengths~$A^{-k}$ and~$A^{-k-1}$ by an edge if the former contains the latter. In the general anisotropic setting, this might not be the case: Though the parallelepipeds~$Q_{k,j}$ tile~$\R^d$ when~$k$ is fixed, in general, they do not form any tree-like structure; see Fig.~\ref{Figure1} for an example. If~$a$ has rational coordinates, one may choose~$A$ such that the collection of~$A$-adic parallelepipeds form a tree according to the aforementioned principle, see Fig.~\ref{Figure1} again.

\begin{figure}[h!]
\centerline{
\includegraphics[height=7cm]{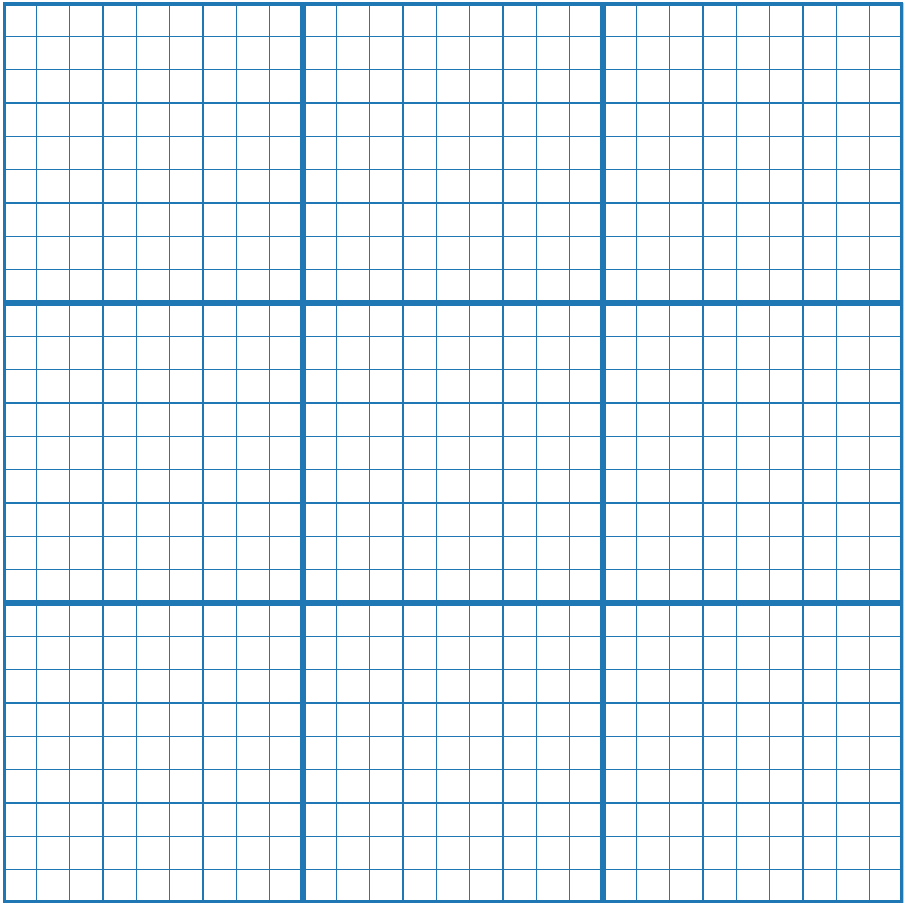}\hspace{30pt}
\includegraphics[height=7cm]{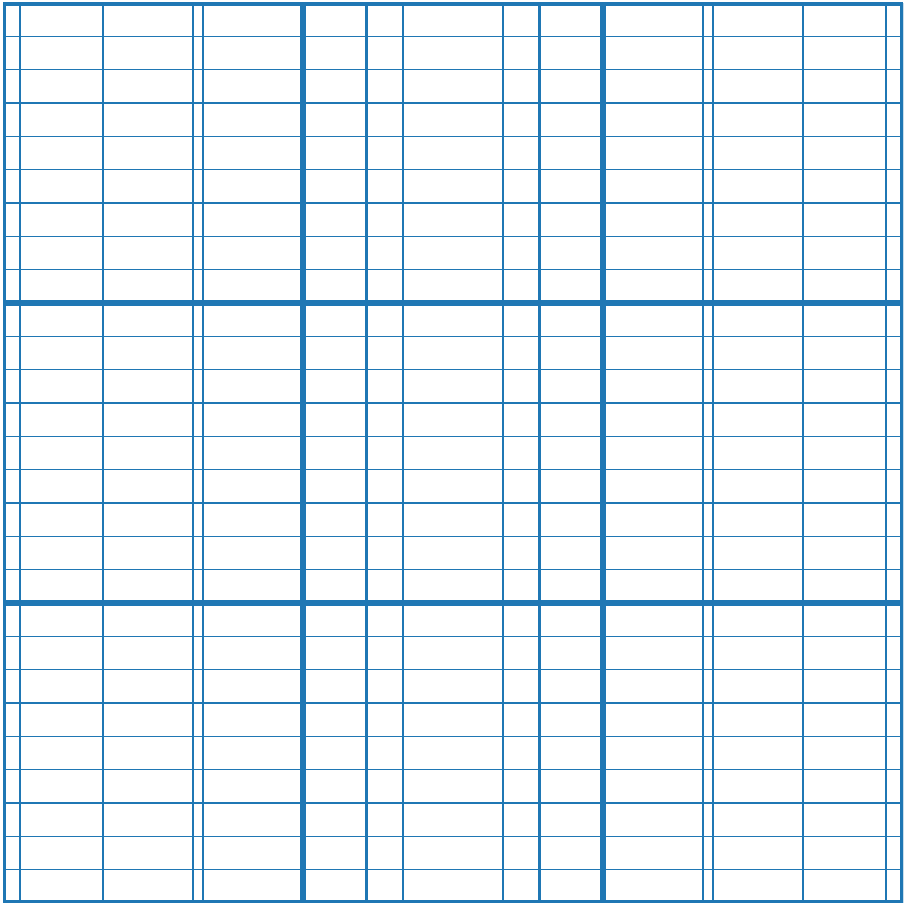}}
\caption{Classical~$3$-adic squares,~$a = (1,1)$,~$A = 3$, and anisotropic rectangles,~$a = (2/3, 4/3)$,~$A = 3^{3/4}$; each rectangle~$Q_{k,j}$ is tiled by the parallelepipeds of~$(k+2)$th generation in this case. If we choose~$A = 3^{3/2}$ for the second anisotropy, the~$A$-adic parallelepipeds form a tree-like structure.}
\label{Figure1}
\end{figure}

We will still need the tree structure and define it in the most natural way. 
\begin{Def}\label{TreeStructure}
Let~$(k,j)$ be an atom and let~$k \geq 1$. If~$Q_{k,j}$ is contained in some~$Q_{(k-1),i}$, then~$(k-1,i)$ is the parent of~$Q_{k,j}$. If this is not the case, we choose some~$Q_{k-1,i}$ that intersects~$Q_{k,j}$ to be the parent of the latter atom. Denote the obtained graph by~$\GenTree$.
\end{Def}
The next definition goes back to~\cite{ASW2021} and lies at the heart of the method.
\begin{Def}\label{Convex/flat}
Fix~$\eps \in (0,1/2)$. An atom~$(k,j)$,~$k \in \N \cup \{0\}, j\in \Z^d$, is called~$\eps$-convex, provided
\eq{
\|f_{k+L}\|_{L_1(\tilde{w}_{k,j})} \geq (1+\eps) \|f_k\|_{L_1(w_{k,j})},\qquad \text{where}\ \tilde{w}_{k,j}\ \text{is defined in~\eqref{DefOfTildeWeights}}.
}
Otherwise, the pair~$(k,j)$ is an~$\eps$-flat atom. The set of convex atoms is denoted by~$\CO$ and the set of flat atoms is~$\Fl$.
\end{Def}
Here~$\eps$ is a small number to be chosen later. In all our reasonings it is fixed. Convex atoms are easier to deal with, and the estimate for them does not require any constraint on~$f$. 
\begin{St}\label{ConvexSummation}
For any~$A > 2$ and~$\eps \in (0,\frac12)$, the estimate
\eq{
\sum\limits_{(k,j)\in\CO}A^{-\alpha k}\|f_k\|_{L_{q}(Q_{k,j})} \lesssim \|f\|_{L_1}
}
holds true uniformly in~$f$.
\end{St}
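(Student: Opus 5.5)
Two estimates will be combined: a pointwise bound of $A^{-\alpha k}\|f_k\|_{L_q(Q_{k,j})}$ by the weighted $L_1$ mass $\|f_k\|_{L_1(w_{k,j})}$, and a convexity bound converting that mass into the telescopic increment from \eqref{Telescopic}.

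\textbf{Step 1: local $L_q$ by local $L_1$.} The claim is
\[
A^{-\alpha k}\|f_k\|_{L_q(Q_{k,j})}\lesssim \|f_k\|_{L_1(w_{k,j})},
\]
uniformly in $k\ge 0$ and $j$. We have $f_k=\Heat[f_{k+L'}](\cdot;\ldots)$ for any $L'\ge1$, and more simply $f_k=\Heat[f](\cdot;\vec t_k)$ with $t_{k,i}=A^{-2ka_i}$. By dilation (Corollary~\ref{DilationPartsCorollary} together with \eqref{eq217}, which converts $A^{-\alpha k}$ exactly) and translation, this reduces to $k=0$, $j=0$. There I want
\[
\|\Heat[g](\cdot;\one/2)\|_{L_q(Q_{0,0})}\lesssim \|\Heat[g](\cdot;\one/2)\|_{L_1(w)}
\]
for $g=f_{\text{at time }\one/2}$, splitting $\one=\one/2+\one/2$ by the semigroup property \eqref{SemigroupProperty}.

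Minkowski's inequality gives
\[
\|\Heat[g](\cdot;\one/2)\|_{L_q(Q)}\le\int |g(y)|\,\|G_{1/2}(\cdot-y)\|_{L_q(Q)}\,dy,
\]
and $\|G_{1/2}(\cdot-y)\|_{L_q(Q)}\lesssim e^{-c|y|^2}$. Since $e^{-c|y|^2}\lesssim(1+|y|)^{-\theta}\lesssim w(y)$ by \eqref{eq2216}, the right side is bounded by $\|g\|_{L_1(w)}$.

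To stay on the right times, I will write $f_k=\Heat[f_{k+1}](\cdot;\vec s)$ with $\vec s$ of size comparable to $A^{-2ka}$ (this uses $A>2$). Step 1 then yields the bound in terms of $\|f_{k+1}\|_{L_1(w_{k,j})}$; the same argument with $f_k$ replaced by $f_{k-1}$ works equally well. To obtain the statement in the form above I will instead use the variant
\[
A^{-\alpha k}\|f_k\|_{L_q(Q_{k,j})}\lesssim \|f_{k-1}\|_{L_1(w_{k,j})}.
\]
Cleaner still: Proposition~\ref{MonotoniityFormula} with $w=\chi_Q$-type weights gives $L_q$ at time $\vec t$ against $L_q$ at a larger time, and the final $L_q\to L_1$ step is the Gaussian estimate above.

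\textbf{Step 2: convexity absorbs the mass.} For a convex atom, Definition~\ref{Convex/flat} gives
\[
\|f_k\|_{L_1(w_{k,j})}\le \eps^{-1}\Big(\|f_{k+L}\|_{L_1(\tilde w_{k,j})}-\|f_k\|_{L_1(w_{k,j})}\Big).
\]
The summand on the right is non-negative, as established before \eqref{DefOfTildeWeights}. Summing over $(k,j)\in\CO$, bounding the sum over convex atoms by the sum over all atoms, and using \eqref{DefOfTildeWeights} and \eqref{Telescopic}, I get
\[
\sum_{(k,j)\in\CO}\|f_k\|_{L_1(w_{k,j})}\le \eps^{-1}L\|f\|_{L_1}.
\]
Combined with Step 1 this proves the proposition, provided Step 1 is stated with $f_k$ itself on the right.

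\textbf{Main obstacle.} The delicate point is getting $\|f_k\|_{L_1(w_{k,j})}$, and not a neighbouring scale, on the right in Step 1. I will handle it by noting that $f_k$ is a heat extension of $f$ at time $\vec t_k$. Writing
\[
f_k=\Heat\big[\Heat[f](\cdot;\vec t_k/2)\big](\cdot;\vec t_k/2)
\]
leaves the $L_1$ norm of a different function. So instead I will apply the reverse direction: by Lemma~\ref{BasicMonotonicityLemma} with $p=q$, the $L_q(Q)$ norm of $f_k$ is at most $\sup$-type estimates. Concretely, I will bound $\|f_k\|_{L_\infty(Q_{k,j})}$ through the Harnack-type comparison for heat extensions of $|f|$: by the parabolic Harnack inequality, $|f|$-heat at time $\vec t_k$ on $Q_{k,j}$ is $\lesssim$ the average against $w_{k,j}$ of $|f|$-heat at time $2\vec t_k$. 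This would require $k-1$ scales, so I will accept a shift, or assume $w$ is chosen as in \eqref{OurWeights} so that the comparison works. If the shift is unavoidable, I will bound $\|f_k\|_{L_1}$-type masses at the adjacent scale and absorb them through monotonicity $\|f_k\|_{L_1(w)}$ versus $\|f_{k+L}\|_{L_1(\tilde w)}$, which convexity already controls.
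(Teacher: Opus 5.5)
There is a genuine gap. Your argument hinges on Step~1 with $f_k$ on both sides, $A^{-\alpha k}\|f_k\|_{L_q(Q_{k,j})}\lesssim\|f_k\|_{L_1(w_{k,j})}$, and this inequality is false. It fails even for fixed $A$ and even when restricted to convex atoms.

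Here is a counterexample at $k=0$, $j=0$. Take $h$ with $\hat h\in C_0^\infty$, $\hat h\geq 0$, $\hat h\not\equiv 0$. Put $h_\delta=\delta^{-d}h(\fdot/\delta)$ and $f=\mathcal{F}^{-1}\big[e^{4\pi^2|\xi|^2}\hat h_\delta\big]$, which is a Schwartz function. Then $f_0=h_\delta$, so $\|f_0\|_{L_1(w)}\lesssim 1$, while $\|f_0\|_{L_q(Q_{0,0})}\gtrsim\delta^{-d(q-1)/q}\to\infty$. Moreover $\hat f_L=e^{4\pi^2\sum_i(1-A^{-2La_i})\xi_i^2}\hat h(\delta\xi)$ is astronomically large on part of its support; pairing with modulated bumps shows $\|f_L\|_{L_1(\tilde w_{0,0})}$ is huge. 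So this atom is convex.

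Your Minkowski computation does not prove Step~1. It bounds $\|\Heat[g](\fdot;\one/2)\|_{L_q(Q)}$ by $\|g\|_{L_1(w)}$, the weighted mass of the function at the \emph{earlier} time, not by the weighted mass of $\Heat[g](\fdot;\one/2)$ itself.

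None of your fallbacks repairs this:
\begin{itemize}
\item The variant with $\|f_{k-1}\|_{L_1(w_{k,j})}$ goes the wrong way. $f_{k-1}$ is a heat extension of $f_k$, and for $\hat f$ concentrated near a frequency $R$ it is smaller than $f_k$ by a factor of order $e^{-cR^2}$.
\item Proposition~\ref{MonotoniityFormula} and the parabolic Harnack inequality require non-negative solutions, whereas $f$ is signed and vector-valued.
\item Passing to $\Heat[|f|]$ produces quantities unrelated to the convexity increments; for each $k$ their sum over $j$ is simply $\|f\|_{L_1}$.
\end{itemize}

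The idea you are missing is that convexity also controls the \emph{larger} quantity, so the scale shift costs nothing. If $(k,j)\in\CO$, then $\|f_k\|_{L_1(w_{k,j})}\leq(1+\eps)^{-1}\|f_{k+L}\|_{L_1(\tilde w_{k,j})}$, hence
\[
\|f_{k+L}\|_{L_1(\tilde w_{k,j})}\leq\frac{1+\eps}{\eps}\Big(\|f_{k+L}\|_{L_1(\tilde w_{k,j})}-\|f_k\|_{L_1(w_{k,j})}\Big).
\]
It therefore suffices to show $A^{-\alpha k}\|f_k\|_{L_q(Q_{k,j})}\lesssim\|f_{k+L}\|_{L_1(\tilde w_{k,j})}$. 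This is your Gaussian/Minkowski estimate with the right bookkeeping:
\begin{itemize}
\item Write $f_k=\Heat[f_{k+L}](\fdot;\vec s\,)$ with $s_i=A^{-2ka_i}-A^{-2(k+L)a_i}\in[A^{-2ka_i}/2,2A^{-2ka_i}]$, which holds because $a_iL>1$.
\item Use $\tilde w_{k,j}(x)\gtrsim(1+|\Dil_{A^k}x-j|)^{-\theta}$, from Corollary~\ref{Lemma42Cor} and \eqref{eq2216}.
\item Apply Corollary~\ref{Cor44} with $u=\tilde w_{k,j}$ and $v=\chi_{Q_{k,j}}$.
\end{itemize}
Summing over $\CO$ with \eqref{DefOfTildeWeights} and \eqref{Telescopic} then finishes exactly as in your Step~2.

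This is the paper's proof. Your last sentence gestures toward it, but your Step~2 records only the weaker consequence $\|f_k\|_{L_1(w_{k,j})}\leq\eps^{-1}(\ldots)$. As written, the proposal concludes only conditionally on a Step~1 that cannot hold.
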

Of course, the constants in the inequality may depend on~$A$ and~$\eps$. Recall the relation~\eqref{DefOfq} on~$q$ and~$\alpha$. The remaining part of the subsection is occupied by the proof of Proposition~\ref{ConvexSummation}. At the very end we provide further explanations and a plan for the proof of Theorems~\ref{MainTheoremLorentzScale} and~\ref{MainTheoremBesovLorentzScale}.

We start the proof of Proposition~\ref{ConvexSummation} with three weighted lemmas, which will be also useful further.
The proofs are placed in Subsection~\ref{SWeights} of the appendix since they are standard and resemble the corresponding proofs in~\cite{Stolyarov2022}. We start with two lemmas that postulate the principle `if a weight is smooth, heating does not change it much'. 
\begin{Le}\label{Lemma41}
Assume the weight~$G$ satisfies the bound
\eq{
G(x) \leq C (1+|x|)^{-\theta},\qquad x\in \R^d.
}
Then, 
\eq{
\Heat[G](x,\vec{t}\,) \leq \tilde{C}(1+|x|)^{-\theta},\qquad \forall j\in [1\twodots d] \quad t_j \in [0,2],\  x\in \R^d,
}
and the constant~$\tilde{C}$ depends on~$d, \theta$ and~$C$ only. 
\end{Le}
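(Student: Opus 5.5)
We will estimate the Gaussian convolution directly. Write
\[
\Heat[G](x,\vec{t}\,) = \int_{\R^d} G(x-y)\,\Phi_{\vec{t}}(y)\,dy,\qquad \Phi_{\vec{t}}(y) = \Big(\prod_{j=1}^d(4\pi t_j)\Big)^{-\frac12}e^{-\sum_j \frac{y_j^2}{4t_j}}.
\]
Here $\Phi_{\vec{t}}$ is a probability density. If some $t_j=0$, the corresponding one-dimensional factor is replaced by the Dirac delta in $y_j$; this case is covered by the same computation, or by a limiting argument. Since $G\geq 0$, we may replace $G$ by its majorant $C(1+|\fdot|)^{-\theta}$. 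So it suffices to bound $\int (1+|x-y|)^{-\theta}\Phi_{\vec{t}}(y)\,dy$ by a constant times $(1+|x|)^{-\theta}$, uniformly for $t_j\in[0,2]$.

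We split the integral into the regions $|y|\leq |x|/2$ and $|y|>|x|/2$.

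On the first region, $|x-y|\geq |x|/2$. Hence $(1+|x-y|)^{-\theta}\leq 2^\theta(1+|x|)^{-\theta}$, and the contribution of this region is at most $2^\theta(1+|x|)^{-\theta}$, because $\Phi_{\vec t}$ has unit integral.

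On the second region, we bound $(1+|x-y|)^{-\theta}\leq 1$. It remains to show that the Gaussian mass of $\{|y|>|x|/2\}$ is $\lesssim (1+|x|)^{-\theta}$. The event $|y|>|x|/2$ forces $|y_j|>|x|/(2\sqrt d)$ for some $j$. By the union bound and the one-dimensional tail estimate for variance $2t_j\leq 4$, the mass is at most
\[
2d\,e^{-|x|^2/(64 d)}.
\]
This is $\leq C_{d,\theta}(1+|x|)^{-\theta}$, since Gaussian decay dominates any polynomial. The key point is that the upper bound $t_j\leq 2$ keeps all variances bounded, which makes this tail bound uniform in $\vec{t}$. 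Near $t_j=0$ there is no problem, because the factor only concentrates further.

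Adding the two contributions gives $\tilde C = C\big(2^\theta + C_{d,\theta}\big)$, which depends only on $d$, $\theta$ and $C$. The argument has no real obstacle. The only care needed is to keep the estimates uniform over the degenerate times $t_j=0$, and that uniformity is automatic from the probabilistic form of the bound.
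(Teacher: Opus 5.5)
Your argument is correct, but it takes a different route from the paper.

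The paper does not split the integral. It uses the two-sided inequality $(1+|x|)^{-\theta}(1+|y|)^{-\theta}\le(1+|x-y|)^{-\theta}\le(1+|x|)^{-\theta}(1+|y|)^{\theta}$ to factor out $(1+|x|)^{-\theta}$ at once. This reduces the lemma to the $x$-independent moment bound $\big(\prod_j 4\pi t_j\big)^{-1/2}\int_{\R^d}(1+|y|)^{\theta}e^{-\sum_j y_j^2/(4t_j)}\,dy\lesssim 1$ for $t_j\in[0,2]$. That bound is proved from $(1+|y|)^\theta\lesssim e^{|y|^2/10}$ and the comparability of $\frac{1}{4t_j}-\frac{1}{10}$ with $\frac{1}{4t_j}$ when $t_j\le 2$.

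You instead use only two facts: $\Phi_{\vec t}$ has unit mass, and its mass on $\{|y|>|x|/2\}$ obeys a uniform Gaussian tail bound. This asks of the kernel only a tail of order $(1+r)^{-\theta}$, not a finite $\theta$-th moment. Your probabilistic reading also handles the degenerate times $t_j=0$ more cleanly than the paper's formulas, whose normalizing factor is literally singular there.

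What the paper's factorization buys is reuse. The left half of the same inequality gives Lemma~\ref{Lemma42}, reduced to a lower bound for the Gaussian integral of $(1+|y|)^{-\theta}$. Replacing it by $C^{-1}(1+|y|)^{-\theta}G(x)\le G(x-y)\le C(1+|y|)^{\theta}G(x)$ gives Lemma~\ref{Lemma43} for weights in $\Smooth(\theta,C)$. There your near/far split has no natural analogue: $G$ is not assumed bounded, so on the far region you must compare $G(x-y)$ with $G(x)$ anyway, which brings back the moment bound.

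Both arguments tacitly use $\theta\ge 0$, which is the paper's setting.
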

\begin{Le}\label{Lemma42}
Assume the weight~$G$ satisfies the bound
\eq{\label{LowerPowerBound}
G(x) \geq c (1+|x|)^{-\theta},\qquad x\in \R^d.
}
Then, 
\eq{
\Heat[G](x,\vec{t}\,) \geq \tilde{c}(1+|x|)^{-\theta},\qquad  \forall j\in [1\twodots d] \quad t_j \in [0,2],\ x\in \R^d,
}
and the constant~$\tilde{c}$ depends on~$d, \theta$ and~$c$ only. 
\end{Le}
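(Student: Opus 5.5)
We prove Lemma~\ref{Lemma42} directly from the integral representation of the multiparametric heat extension, by the principle that heating at bounded times only averages $G$ over a region of bounded size. The argument splits into two cases according to whether some $t_j$ is small, with a direct comparison in each.

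First, we reduce to the model weight. Since $\Heat$ is positivity preserving and linear, it suffices to treat $G(x) = c(1+|x|)^{-\theta}$, or even the smaller weight $c(2+|x|)^{-\theta}$. For $t_j = 0$ the heat extension in the $j$th variable is the identity, so we interpret $\Heat[G](x,\vec{t}\,)$ as the convolution of $G$ with the product of one-dimensional Gaussians over the indices $j$ with $t_j>0$. The Gaussian factors with $t_j=0$ are Dirac masses in the corresponding variables.

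Second, we make the main observation: each one-dimensional Gaussian factor $g_{t_j}(y_j) = (4\pi t_j)^{-1/2}e^{-y_j^2/(4t_j)}$ with $t_j\in(0,2]$ places mass at least an absolute constant $\kappa>0$ on the interval $[0,1]$. Indeed, the mass of $[0,\infty)$ is $1/2$, and since $t_j \le 2$ the mass of $(1,\infty)$ is at most $P(\sqrt{2t_j}Z > 1) \le P(Z>1/2) < 1/2$. Thus $\kappa = 1/2 - P(Z > 1/2) > 0$. The same bound holds trivially for the Dirac mass at $0$ when $t_j = 0$.

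Third, we estimate the integral. Restricting the convolution to $y\in[0,1]^d$ in the appropriate coordinates gives
\[
\Heat[G](x,\vec{t}\,) \geq \kappa^d \min_{y\in[0,1]^d} G(x-y) \ge \kappa^d c \,(1+|x|+\sqrt{d})^{-\theta} \geq \kappa^d c\,(1+\sqrt d)^{-\theta}(1+|x|)^{-\theta}.
\]
This is the desired bound with $\tilde c = c\,\kappa^d(1+\sqrt d)^{-\theta}$, which depends only on $d$, $\theta$, and $c$.

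The only step requiring care is the uniformity of $\kappa$ as some $t_j\to 0$ or as $t_j$ approaches the upper limit $2$. It is handled by choosing the fixed window $[0,1]$, which captures a definite portion of the Gaussian mass regardless of the variance, rather than a window scaled with $\sqrt{t_j}$. No lower bound on $t_j$ is needed because the minimum of $G$ over the translated unit cube is comparable to $G(x)$.
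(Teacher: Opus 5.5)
Your argument is correct and is essentially the paper's: both restrict the Gaussian average to a bounded region where the lower bound for $G(x-y)$ is comparable to $(1+|x|)^{-\theta}$, and where the Gaussian mass is bounded below uniformly in $t_j\in[0,2]$. The only difference is cosmetic: the paper first factors out $(1+|x|)^{-\theta}$ via $(1+|x-y|)^{-\theta}\geq(1+|x|)^{-\theta}(1+|y|)^{-\theta}$ and then restricts to the rescaled ball $\{|z|\leq 1\}$, $z_j=y_j/\sqrt{t_j}$, instead of your fixed cube $[0,1]^d$, so a $\sqrt{t_j}$-scaled window works equally well.
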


The next lemma provides us with the basic~$L_1 \to L_q$ bound. 
\begin{Le}\label{Lemma44}
Let~$u$ and~$v$ be two weights that satisfy the bounds
\eq{\label{ConditionsOnuv}
v(x) \leq C_v(1+|x|)^{-\theta_v};\qquad u(x) \geq c_u(1+|x|)^{-\theta_u}.
}
Assume also
\eq{\label{ThetavThetau}
\theta_v \geq q\theta_u.
} 
Then,
\eq{\label{eq416}
\big\|\Heat[f](\fdot, \vec{t}\,)\big\|_{L_q(v)} \lesssim \|f\|_{L_1(u)}, \qquad \forall j \in [1\twodots d] \quad t_j\in \Big[\frac12,2\Big].
}
\end{Le}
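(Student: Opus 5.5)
The estimate is linear in $f$ on the left after taking absolute values inside the heat integral, so I will reduce it to point masses. First, since $|\Heat[f](x,\vec{t}\,)|\leq \Heat[|f|](x,\vec{t}\,)$, it suffices to treat $f\geq 0$, and by approximation a finite non-negative measure $\mu$. The left hand side of~\eqref{eq416} is a convex functional of $\mu$ (an $L_q(v)$ norm of a linear expression), while the right hand side $\int u\,d\mu$ is linear. By Minkowski's integral inequality,
\[
\big\|\Heat[\mu](\fdot,\vec{t}\,)\big\|_{L_q(v)} \leq \int\limits_{\R^d}\big\|\Heat[\delta_z](\fdot,\vec{t}\,)\big\|_{L_q(v)}\,d\mu(z).
\]
So it is enough to prove the pointwise bound
\[
\big\|\Heat[\delta_z](\fdot,\vec{t}\,)\big\|_{L_q(v)}\lesssim u(z)
\]
uniformly in $z\in\R^d$ and $t_j\in[\frac12,2]$, and then integrate against $\mu$.

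For the single Gaussian, I note that when $t_j\in[\frac12,2]$ the kernel $\Heat[\delta_z](x,\vec{t}\,)$ is bounded above by $C e^{-|x-z|^2/8}$, with $C$ depending only on $d$. Hence
\[
\big\|\Heat[\delta_z](\fdot,\vec{t}\,)\big\|_{L_q(v)}^q\lesssim \int\limits_{\R^d} e^{-q|x-z|^2/8}(1+|x|)^{-\theta_v}\,dx,
\]
and this is essentially a heat extension of $v$ at a time of order one. Applying Lemma~\ref{Lemma41}, or a direct computation splitting into $|x-z|\leq |z|/2$ and its complement, I bound the integral by $C(1+|z|)^{-\theta_v}$. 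The Gaussian factor kills the region far from $z$, and near $z$ we have $1+|x|\asymp 1+|z|$.

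Taking $q$-th roots gives $(1+|z|)^{-\theta_v/q}$. By~\eqref{ThetavThetau}, this is at most $(1+|z|)^{-\theta_u}\leq c_u^{-1}u(z)$, using the lower bound in~\eqref{ConditionsOnuv}. This closes the argument.

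The only delicate point is the uniform off-diagonal estimate. The integral of the Gaussian against the polynomially decaying weight must be shown to decay like $(1+|z|)^{-\theta_v}$, with constants independent of $\vec{t}$ in the compact range. That is exactly the content of Lemma~\ref{Lemma41} applied with times in $[0,2]$, after the comparison of Gaussians described above. Everything else is Minkowski's inequality and exponent bookkeeping.
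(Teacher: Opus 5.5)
Your proof is correct and follows essentially the same route as the paper. Both reduce to point masses; you make the paper's ``as usual for $L_1$-estimates'' step explicit via Minkowski's integral inequality. Both then bound the resulting Gaussian integral against $(1+|\cdot|)^{-\theta_v}$ by $(1+|z|)^{-\theta_v}$ through Lemma~\ref{Lemma41}, and finish with $\theta_v \geq q\theta_u$. The approximation by finite measures is unnecessary, since Minkowski's inequality applies directly to $|f(z)|\,dz$, but it does no harm.
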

The three lemmas have rescaled versions stated in three corollaries below. The derivations are straightforward applications of Lemma~\ref{SeveralDilations} and~\eqref{eq217}.
\begin{Cor}\label{Lemma41Cor}
Let~$k  \geq 0$,~$y\in \R^d$. Assume the weight~$G$ satisfies the bound
\eq{
G(x) \leq C (1+ |\Dil_{A^k}x-y|)^{-\theta},\qquad x\in \R^d.
}
Then, 
\eq{
\Heat[G](x,\vec{t}\,) \leq \tilde{C}(1+|\Dil_{A^k}x-y|)^{-\theta},\qquad \forall j\in [1\twodots d] \quad t_j \in [0,2A^{-2ka_j}],\  x\in \R^d,
}
and the constant~$\tilde{C}$ depends on~$d, \theta$ and~$C$ only. 
\end{Cor}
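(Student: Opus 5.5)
The plan is to reduce Corollary~\ref{Lemma41Cor} to Lemma~\ref{Lemma41} by an anisotropic change of variables, in the spirit of Lemma~\ref{SeveralDilations}. First we remove the shift $y$. Translations commute with $\Heat$, so we can pass from $G$ to its translate. Concretely, set $z = \Dil_{A^{-k}} y$, so that $\Dil_{A^k}x - y = \Dil_{A^k}(x - z)$. Then $G_1(x) = G(x+z)$ satisfies $G_1(x)\leq C(1+|\Dil_{A^k}x|)^{-\theta}$, and $\Heat[G](x,\vec t\,) = \Heat[G_1](x-z,\vec t\,)$. Hence it suffices to treat $y=0$.

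Next we rescale. Define $G_2(x) = G_1(\Dil_{A^{-k}}x)$, that is, $G_2 = \Dil^{A^k}[G_1]$. By the hypothesis, $G_2(x)\leq C(1+|x|)^{-\theta}$, which is exactly the assumption of Lemma~\ref{Lemma41}. We then need the value-preserving analogue of Lemma~\ref{SeveralDilations}:
\eq{
\Heat\big[\Dil^{\lambda}g\big](x,\vec t\,) = \Heat[g]\big(\Dil_{\lambda^{-1}}x, \Dil_{\lambda^{-2}}\vec t\,\big).
}
This follows from Lemma~\ref{SeveralDilations} and the identity $\Dil^\lambda g = \lambda^{d}\Dil_\lambda g$ together with the definition~\eqref{DILDef}. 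It can also be checked directly by the substitution $y_j\mapsto \lambda^{a_j}y_j$ in the Gaussian integral: the factor $\prod (4\pi t_j)^{-1/2}$ absorbs the Jacobian $\lambda^{d}$ because $\sum a_j = d$. Here $\Dil_{s}\vec t$ means $(s^{a_1}t_1,\ldots,s^{a_d}t_d)$.

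Apply this identity with $\lambda = A^{-k}$ and $g = G_2$, so that $G_1 = \Dil^{A^{-k}}G_2$. We get
\eq{
\Heat[G_1](x,\vec t\,) = \Heat[G_2]\big(\Dil_{A^k}x, (A^{2ka_1}t_1,\ldots,A^{2ka_d}t_d)\big).
}
When $t_j\in[0,2A^{-2ka_j}]$, the rescaled times $A^{2ka_j}t_j$ lie in $[0,2]$. Lemma~\ref{Lemma41} therefore gives $\Heat[G_1](x,\vec t\,)\leq \tilde C(1+|\Dil_{A^k}x|)^{-\theta}$, with $\tilde C$ depending only on $d,\theta,C$. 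Undoing the translation by $z$ yields the claim.

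There is no real obstacle here. The only point requiring care is the bookkeeping of the exponents: each time coordinate must scale as $\lambda^{2a_j}$ while each space coordinate scales as $\lambda^{a_j}$, and the normalizing constant must be checked to cancel exactly (this uses $\sum_j a_j = d$). That check guarantees that $\tilde C$ is the same constant as in Lemma~\ref{Lemma41}, independent of $k$ and $y$.
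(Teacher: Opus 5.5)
Your proposal is correct and follows essentially the same route as the paper, which states that the corollary is a straightforward application of Lemma~\ref{SeveralDilations} to reduce to Lemma~\ref{Lemma41}. Your translation step removing $y$ and the value-preserving form of the dilation identity (with the check that $\sum_j a_j = d$ makes the normalization cancel) spell out exactly the bookkeeping the paper leaves implicit.
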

\begin{Cor}\label{Lemma42Cor}
Let~$k  \geq 0$,~$y\in \R^d$. Assume the weight~$G$ satisfies the bound
\eq{
G(x) \geq c (1+|\Dil_{A^k}x-y|)^{-\theta},\qquad x\in \R^d.
}
Then, 
\eq{
\Heat[G](x,\vec{t}\,) \geq \tilde{c}(1+|\Dil_{A^k}x-y|)^{-\theta},\qquad  \forall j\in [1\twodots d] \quad t_j \in [0,2A^{-2ka_j}],\ x\in \R^d,
}
and the constant~$\tilde{c}$ depends on~$d, \theta$ and~$c$ only. 
\end{Cor}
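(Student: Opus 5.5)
The plan is to reduce Corollary~\ref{Lemma42Cor} to Lemma~\ref{Lemma42} by a change of variables that combines a translation with the dilation relation of Lemma~\ref{SeveralDilations}. The heat extension is translation invariant, so first replace $G$ by $G_1(x) = G(x + \Dil_{A^{-k}} y)$. Then $G_1(x)\ge c(1+|\Dil_{A^k}x|)^{-\theta}$, since $\Dil_{A^k}$ is linear and $\Dil_{A^k}(x+\Dil_{A^{-k}}y) - y = \Dil_{A^k}x$. Moreover $\Heat[G_1](x,\vec t) = \Heat[G](x+\Dil_{A^{-k}}y,\vec t)$, so it suffices to treat $y=0$.

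Next, set $g = \Dil^{A^{-k}}[G]$, that is, $g(z) = G(\Dil_{A^{-k}}z)$. This is the value-preserving dilation, and the hypothesis becomes $g(z)\ge c(1+|z|)^{-\theta}$, which is exactly the hypothesis of Lemma~\ref{Lemma42}. We have $G = \Dil^{A^{k}}[g] = A^{-kd}\Dil_{A^k}[g]$ up to the normalizing factor. By Lemma~\ref{SeveralDilations}, or directly by the substitution $y_j \mapsto A^{-ka_j} y_j$ in the Gaussian integral, we get
\[
\Heat[G](x,\vec t) = \Heat[g]\big(\Dil_{A^k}x,\ (A^{2ka_1}t_1,\ldots,A^{2ka_d}t_d)\big).
\]
The Gaussian normalization $\prod_j (4\pi t_j)^{-1/2}$ absorbs the Jacobian, because the coordinate $x_j$ is scaled by $A^{ka_j}$ and time $t_j$ by $A^{2ka_j}$.

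If $t_j\in[0,2A^{-2ka_j}]$, the rescaled times $A^{2ka_j}t_j$ lie in $[0,2]$. Lemma~\ref{Lemma42} then gives $\Heat[g](z,\vec s)\ge \tilde c(1+|z|)^{-\theta}$ with $z=\Dil_{A^k}x$, which is the claim. The constant $\tilde c$ comes from Lemma~\ref{Lemma42} and so depends only on $d,\theta,c$; it is independent of $k$ and $y$.

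The only point requiring care is checking the scaling identity for the multiparametric kernel, including the degenerate times $t_j=0$. For those, one either uses the Fourier multiplier description, or treats the heat operator as the identity in those variables, in which case the identity is trivial coordinatewise. Everything else is bookkeeping.
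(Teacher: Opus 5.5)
Your proof is correct and follows the paper's route, which it summarizes as a straightforward application of Lemma~\ref{SeveralDilations}: translate to $y=0$, then rescale so the times become $A^{2ka_j}t_j\in[0,2]$, and apply Lemma~\ref{Lemma42}. One cosmetic slip: under the paper's convention $\Dil^t[f](x)=f(\Dil_{t^{-1}}x)$, your $g(z)=G(\Dil_{A^{-k}}z)$ is $\Dil^{A^{k}}[G]$, and $G=\Dil^{A^{-k}}[g]=A^{-kd}\Dil_{A^{-k}}[g]$; the explicit formulas and the substitution you actually carry out are nevertheless right.
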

\begin{Cor}\label{Cor44}
Let~$k  \geq 0$,~$y\in \R^d$.  Assume~$u$ and~$v$ are weights that satisfy
\eq{
v(x) \leq C_v(1+|\Dil_{A^k}x - y|)^{-\theta_v};\qquad u(x) \geq c_u(1+|\Dil_{A^k}x - y|)^{-\theta_u}.
}
Assume also~\eqref{ThetavThetau}. Then, 
\eq{\label{eq417}
A^{-\alpha k}\big\|\Heat[f](\fdot, \vec{t}\,)\big\|_{L_q(v)} \lesssim \|f\|_{L_1(u)}, \qquad \forall j \in [1\twodots d] \quad t_j \in \Big[\frac{A^{-2ka_j}}{2},2A^{-2ka_j}\Big],
}
where~$\alpha$ is defined by the usual homogeneity relation~\eqref{DefOfq}.
\end{Cor}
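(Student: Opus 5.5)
The plan is to derive Corollary~\ref{Cor44} from Lemma~\ref{Lemma44} by rescaling. We pass from $f$ to $g=\Dil_{A^{k}}[f]$ and from the weights to their value-preserving dilations $\Dil^{A^{k}}[u]$, $\Dil^{A^{k}}[v]$; this sends the scale $A^{-k}$ to the unit scale. Translations commute with $\Heat$ and preserve all the quantities involved, so we may assume $y=0$ after replacing $f$ by its translate by $\Dil_{A^{-k}}y$.

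\textbf{Step 1: unit-scale weights.} Set $U(z)=u(\Dil_{A^{-k}}z)$ and $V(z)=v(\Dil_{A^{-k}}z)$, that is, $U=\Dil^{A^k}[u]$ and $V=\Dil^{A^k}[v]$. Since $\Dil_{A^k}\Dil_{A^{-k}}z=z$, the hypotheses become $U(z)\geq c_u(1+|z|)^{-\theta_u}$ and $V(z)\leq C_v(1+|z|)^{-\theta_v}$. These are exactly the conditions~\eqref{ConditionsOnuv}, and~\eqref{ThetavThetau} is unchanged.

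\textbf{Step 2: transport the heat extension.} Define $\tilde f=\Dil_{A^{k}}[f]$ by~\eqref{L1Dilation}, so that $\|\tilde f\|_{L_1(U)}=\|f\|_{L_1(u)}$ by the change of variables $z=\Dil_{A^k}x$ (the Jacobian is $A^{kd}$, and the $L_1$ dilation absorbs it). By Lemma~\ref{SeveralDilations},
\[
\Heat[\tilde f](\fdot,\vec{s}\,)=\DIL_{A^k}\big[\Heat[f]\big](\fdot,\vec{s}\,)=\Dil_{A^{k}}\Big[\Heat[f]\big(\fdot,\Dil_{A^{-2k}}\vec{s}\,\big)\Big],
\]
where $\Dil_{A^{-2k}}\vec{s}=(A^{-2ka_1}s_1,\ldots,A^{-2ka_d}s_d)$. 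Thus $t_j\in[A^{-2ka_j}/2,\,2A^{-2ka_j}]$ corresponds exactly to $s_j=A^{2ka_j}t_j\in[1/2,2]$.

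\textbf{Step 3: compare norms.} By~\eqref{eq217} with $\lambda=A^k$,
\[
\big\|\Heat[\tilde f](\fdot,\vec{s}\,)\big\|_{L_q(V)}=A^{-\alpha k}\big\|\Heat[f](\fdot,\vec{t}\,)\big\|_{L_q(v)}.
\]
Lemma~\ref{Lemma44} bounds the left-hand side by $\|\tilde f\|_{L_1(U)}=\|f\|_{L_1(u)}$, which is~\eqref{eq417}. The implicit constant is that of Lemma~\ref{Lemma44}, so it depends only on $d,q,C_v,c_u,\theta_u,\theta_v$ and not on $k$ or $y$.

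The only delicate point is bookkeeping: getting the directions of $\Dil_{A^k}$ versus $\Dil_{A^{-k}}$ right in Lemma~\ref{SeveralDilations} and in~\eqref{eq217}. The check is that the factor produced is $A^{-\alpha k}$, not $A^{\alpha k}$. A sanity check with $f=\delta_0$ confirms this: $\Heat[\delta_0](\fdot,\vec{t}\,)$ at scale $A^{-k}$ has $L_q$ norm of order $A^{k d(q-1)/q}=A^{\alpha k}$, which the factor $A^{-\alpha k}$ cancels. Corollaries~\ref{Lemma41Cor} and~\ref{Lemma42Cor} follow by the same rescaling without any norm factor, since $\Heat$ of a weight rescales under $\Dil^{\lambda}$ pointwise.
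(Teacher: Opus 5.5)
Your proposal is correct and takes the paper's route: the paper calls this corollary a straightforward application of Lemma~\ref{SeveralDilations} and~\eqref{eq217} that reduces it to Lemma~\ref{Lemma44}. That is precisely your rescaling, with $\Dil_{A^k}$ applied to $f$, $\Dil^{A^k}$ applied to the weights, and a harmless translation. Your bookkeeping is right throughout: the time correspondence $s_j=A^{2ka_j}t_j\in[\frac12,2]$, the preserved $L_1(u)$ norm, and the factor $A^{-\alpha k}$ all check out, and the resulting constant is independent of $k$, $y$, and $A$.
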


\begin{proof}[Proof of Proposition~\ref{ConvexSummation}]
Recall
\eq{
\tilde{w}_{k,j} = \Heat [w_{k,j}](\fdot; A^{-2ka_1} - A^{-2(k+L)a_1}, A^{-2ka_2}-A^{-2(k+L)a_2},\ldots, A^{-2ka_d}-A^{-2(k+L)a_d}).
}
Note that~$a_j L  > 1$ for any~$j$, which implies
\eq{
A^{-2ka_j} - A^{-2(k+L)a_j} \in \Big[\frac{A^{-2ka_j}}{2},2A^{-2ka_j}\Big],\qquad \text{for any}\ j \in [1\twodots d].
}
By Corollary~\ref{Lemma42Cor} and the assumption~\eqref{eq2216},
\eq{
\tilde{w}_{k,j}(x) \gtrsim (1+|\Dil_{A^k} x - j|)^{-\theta}.
}
Therefore, we may choose~$y = j$ and~$u = \tilde{w}_{k,j}$ in Corollary~\ref{Cor44}. Choosing~$\chi_{Q_{k,j}}$ as~$v$ and using the convexity of~$(k,j)$, we get:
\eq{
A^{-\alpha k}\|f_k\|_{L_{q}(Q_{k,j})} \lesssim \|f_{k+L}\|_{L_1(\tilde{w}_{k,j})}\leq \frac{1+\eps}{\eps}\Big( \|f_{k+L}\|_{L_1(\tilde{w}_{k,j})} - \|f_k\|_{L_1(w_{k,j})}\Big),\quad (k,j)\in \CO.
}
It remains to sum over all~$(k,j) \in \CO$ and use~\eqref{Telescopic}:
\mlt{
\sum\limits_{(k,j)\in\CO}A^{-\alpha k}\|f_k\|_{L_{q}(Q_{k,j})}  \lesssim \sum\limits_{(k,j)\in\CO}\Big( \|f_{k+L}\|_{L_1(\tilde{w}_{k,j})} - \|f_k\|_{L_1(w_{k,j})}\Big)\\
\leq \sum\limits_{k=0}^\infty\sum\limits_{j\in\Z^d}\Big( \|f_{k+L}\|_{L_1(\tilde{w}_{k,j})} - \|f_k\|_{L_1(w_{k,j})}\Big) \lesssim \|f\|_{L_1}.
}
\end{proof}
It is convenient to define the sets~$\Omega_k$ as
\eq{\label{OmegakDefinition}
\Omega_k = \bigcup_{j\colon (k,j)\in \CO} Q_{k,j}.
}
\begin{Cor}\label{ConvexAtomsEstimates}
For any~$A > 2$ and~$\eps \in (0,\frac12)$, the estimate
\eq{\label{ConvexAtomsEstimateFormula}
\sum\limits_{k \geq 0}A^{-\alpha k}\|f_k\|_{L_{q}(\Omega_k)} \lesssim \|f\|_{L_1}
}
holds true uniformly in~$f$.
\end{Cor}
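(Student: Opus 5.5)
The corollary should follow from Proposition~\ref{ConvexSummation} by comparing the norm over $\Omega_k$ with the sum of norms over the individual parallelepipeds. Fix $k\geq 0$.

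For this $k$, the parallelepipeds $Q_{k,j}$, $j\in\Z^d$, tile $\R^d$ up to sets of measure zero: they are the images under $\Dil_{A^{-k}}$ of the unit cubes centered at lattice points. Hence
\eq{
\|f_k\|_{L_q(\Omega_k)}^q = \sum\limits_{j\colon (k,j)\in\CO}\|f_k\|_{L_q(Q_{k,j})}^q.
}
Since $q\geq 1$, the $\ell_q$ norm of a sequence is dominated by its $\ell_1$ norm. Therefore
\eq{
\|f_k\|_{L_q(\Omega_k)} \leq \sum\limits_{j\colon (k,j)\in\CO}\|f_k\|_{L_q(Q_{k,j})}.
}

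Multiply this inequality by $A^{-\alpha k}$ and sum over $k\geq 0$. The right hand side becomes exactly
\eq{
\sum\limits_{(k,j)\in\CO}A^{-\alpha k}\|f_k\|_{L_q(Q_{k,j})},
}
which Proposition~\ref{ConvexSummation} bounds by a constant multiple of $\|f\|_{L_1}$. That constant depends only on $A$ and $\eps$, which gives~\eqref{ConvexAtomsEstimateFormula}.

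There is no real obstacle here. The only point to check is that the parallelepipeds of a single generation $k$ overlap only on null sets, so that the $L_q^q$ norm over $\Omega_k$ splits additively. Note that no tree structure across different generations is needed for this.
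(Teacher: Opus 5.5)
Your proof is correct and is essentially the paper's argument. The paper obtains $\|f_k\|_{L_q(\Omega_k)}\leq\sum_{j\colon (k,j)\in\CO}\|f_k\|_{L_q(Q_{k,j})}$ directly from the (countable) triangle inequality in $L_q$, which does not even need the generation-$k$ parallelepipeds to be essentially disjoint, whereas you derive the same bound from additivity of $\|\cdot\|_{L_q}^q$ over the tiling and $\ell_q\subset\ell_1$; both then sum over $k$ and invoke Proposition~\ref{ConvexSummation}.
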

\begin{proof}
By the triangle inequality,
\eq{
\|f_k\|_{L_{q}(\Omega_k)}  \leq \sum\limits_{j\colon (k,j)\in \CO}\|f_k\|_{L_{q}(Q_{k,j})},
}
and we arrive at the statement of Proposition~\ref{ConvexSummation}.
\end{proof}
We conclude the section with a brief summary of our progress towards Theorems~\ref{MainTheoremLorentzScale} and~\ref{MainTheoremBesovLorentzScale}. Remark~\ref{DiscretizationRemark} and Lemma~\ref{TruncationLemma} reduce those theorems to the bound
\eq{
\sum\limits_{k \geq 0}A^{-\alpha k}\|f_k\|_{L_{q}} \lesssim \|f\|_{L_1},\qquad f \in \W,
}
provided~$\W$ meets the requirements of those theorems. We have proved a simpler bound~\eqref{ConvexAtomsEstimateFormula}. The sets~$\Omega_k$ in that formula were constructed from~$\eps$-convex atoms. By the triangle inequality, it remains to justify
\eq{\label{SumOverFlatAtoms}
\sum\limits_{k \geq 0} A^{-\alpha k} \|f_k\|_{L_{q}(\!\!\!\bigcup\limits_{(k,j)\in\Fl}\!\!\! Q_{k,j})} \lesssim \|f\|_{L_1},\qquad f\in \W.
}
This bound is more demanding than~\eqref{ConvexAtomsEstimateFormula}. First, we have not used the space~$\W$ yet, and it will play the pivotal role in the bound for flat atoms. Second, we cannot bound them individually as we did with convex atoms in Proposition~\ref{ConvexSummation}, i.e., the bound
\eq{
\sum\limits_{k \geq 0} A^{-\alpha k} \sum\limits_{j\colon (k,j)\in \Fl} \|f_k\|_{L_{q}(Q_{k,j})} \lesssim \|f\|_{L_1},\qquad f\in \W,
}
might be false in general, see Subsection~\ref{NecessityForNonSplit} in the appendix. We will need to split them into groups related to certain trees. However, before that we wish to make the assumptions that~$f\in \W$ and that~$\W$ does not contain vectorial delta measures, quantitative. The forthcoming section is devoted to that topic.


\section{Stronger monotonicity formula}\label{S3}

\subsection{Reduction to the isotropic case}\label{s31}
The target of this section is to obtain an improvement of Proposition~\ref{MonotoniityFormula} in the case where both~$\mu$ and~$w$ are somehow separated from the set of delta measures. In~\cite{Stolyarov2022}, the separability of~$\mu$ was expressed via invariant cones of measures, a notion related to tangent measures. We provide its anisotropic analog. For anisotropic analogs of tangent measures, see, e.g.,~\cite{Mattila2022}.
\begin{Def}\label{InvariantConeOfMeasuresDefinition}
A set~$\M\subset \Sw'(\R^d)$ is called an invariant cone of measures, provided:
\begin{enumerate}[1)]
\item Any element~$\mu \in\M$ is a measure\textup, i.e., a non-negative distribution\textup;
\item The set~$\M$ is closed in the topology of~$\Sw'(\R^d)$\textup;
\item The set~$\M$ is invariant with respect to the dilations~$\Dil_t$\textup;
\item The set~$\M$ is translation invariant\textup;
\item The set~$\M$ is a cone in the sense that~$c\mu\in\M$ provided~$c \geq 0$ and~$\mu \in \M$. 
\end{enumerate}
\end{Def}
We also need the definition of the smoothness function.
\begin{Def}
Let~$w$ be a weight. Its smoothness function~$\s[w]\colon \mathbb{R}_+\to[1,\infty]$ is defined as follows\textup:
\eq{
\s[w](\zeta) = \sup\Set{\frac{w(x)}{w(y)}}{|x-y| \leq \zeta,\ x,y\in\R^d}.
}
\end{Def}
Note that the smoothness function is defined pointwise. Therefore, we usually compute~$\s[w]$ for continuous or piecewise continuous weights~$w$. The smoothness function~$\s[w]$ is non-decreasing. We will also use two simple properties: If~$\Phi \geq 0$, then
\eq{\label{ConvolutionAndSmoothness}
\s[w*\Phi] \leq \s[w],
}
and if~$t \in (0,1)$, then
\eq{\label{DilationAndSmoothness}
\s\!\big[\Dil^{t^{-1}}[w]\big] \leq \s[w].
} 
Note that
\eq{\label{TypicalWeightSmoothness}
\s[(1+|\cdot|)^{-\theta}](\zeta) = (1+\zeta)^{\theta},\qquad \zeta \geq 0.
}
This identity may be derived from the elementary inequality~\eqref{eqE01} in the appendix.
We will often consider weights~$w$ that satisfy the smoothness bound of the form
\eq{\label{StandardBoundOnWeight}
\s[w](\zeta) \leq C_w(1+\zeta)^{\theta_w},\qquad \zeta \geq 0,
}
and need notation for them.
\begin{Def}\label{SmoothnessOfWeightsDef}
Let~$\theta \geq 0$ and~$C \geq 1$ be given. Denote the set of weights~$w$ obeying the bound~\eqref{StandardBoundOnWeight} with~$C_w = C$ and~$\theta_w = \theta$ by~$\Smooth(\theta,C)$.
\end{Def}
The two statements below are variations on Lemmas~\ref{Lemma41},~\ref{Lemma42}, and~\ref{Lemma44}. The proofs are postponed to Subsection~\ref{SWeights} of the appendix.
\begin{Le}\label{Lemma43}
Assume~$G \in \Smooth(\theta,C)$ is a weight. Then,
\eq{\label{HeatingSmoothWeightFormula}
\Heat[G](x,\vec{t}\,)\asymp G(x), \qquad \forall j\in [1\twodots d]\quad t_j \in  [0,2].
}
\end{Le}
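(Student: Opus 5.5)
The plan is to write $\Heat[G](x,\vec{t}\,)$ as an average of $G$ against a Gaussian probability kernel. Explicitly,
\eq{
\Heat[G](x,\vec{t}\,) = \int\limits_{\R^d} G(x-y)\,g_{\vec{t}}(y)\,dy,\qquad g_{\vec{t}}(y) = \Big(\prod\limits_{j=1}^d(4\pi t_j)\Big)^{-\frac12}e^{-\sum_{1}^d\frac{y_j^2}{4t_j}},
}
with the convention that a coordinate with $t_j=0$ contributes a one-dimensional delta. Each factor is a probability density, so $g_{\vec{t}}$ is a probability measure on $\R^d$. The smoothness bound $G\in\Smooth(\theta,C)$ gives the two-sided pointwise comparison
\eq{
C^{-1}(1+|y|)^{-\theta}G(x)\leq G(x-y)\leq C(1+|y|)^{\theta}G(x),\qquad x,y\in\R^d,
}
because $G(x-y)/G(x)$ and $G(x)/G(x-y)$ are both bounded by $\s[G](|y|)\leq C(1+|y|)^\theta$.

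For the upper bound, I integrate the right inequality against $g_{\vec{t}}$ and get $\Heat[G](x,\vec{t}\,)\leq C\,G(x)\int(1+|y|)^\theta g_{\vec{t}}(y)\,dy$. The remaining integral is a Gaussian moment. It is monotone in each $t_j$: rescaling $y_j=\sqrt{t_j}z_j$ turns it into $\E(1+|(\sqrt{t_j}Z_j)_j|)^\theta$ with standard Gaussian $Z$. Since $t_j\leq2$, it is bounded by $\int(1+\sqrt2|z|)^\theta\gamma(z)\,dz<\infty$, a constant depending only on $d$ and $\theta$.

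For the lower bound I use the left inequality, restricting the integration to the set $\{|y|\leq 1\}$. The Gaussian mass of the unit ball under $g_{\vec{t}}$ must be bounded below uniformly for $t_j\in[0,2]$. I would argue this via the cube $\{|y_j|\leq d^{-1/2}\}$. Its mass is the product of the one-dimensional masses $P(|\sqrt{2t_j}N|\leq d^{-1/2})\geq P(2|N|\leq d^{-1/2})>0$, and this covers $t_j=0$ as well. This gives $\Heat[G](x,\vec{t}\,)\geq C^{-1}2^{-\theta}c_d\,G(x)$.

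The only delicate point is uniformity in $\vec{t}$, including degenerate coordinates $t_j=0$. This is handled by the coordinatewise scaling argument above. The constants in the conclusion depend only on $d$, $\theta$ and $C$, and no lower or upper decay assumptions like those in Lemmas~\ref{Lemma41} and~\ref{Lemma42} are needed.
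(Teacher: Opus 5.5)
Your proof is correct and matches the paper's argument: the paper also replaces the decay comparison by $C^{-1}(1+|y|)^{-\theta}G(x)\leq G(x-y)\leq C(1+|y|)^{\theta}G(x)$, then uses a uniform bound on the Gaussian moment of $(1+|y|)^{\theta}$ for $t_j\leq 2$ and a uniform lower bound on the Gaussian mass of the unit ball. One cosmetic slip: after the substitution $y_j=\sqrt{t_j}z_j$ the variable $Z$ is distributed as $N(0,2)$ in each coordinate rather than standard normal, which changes only the constant.
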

The lemma above says that the constants in~\eqref{HeatingSmoothWeightFormula} depend on the parameters~$d,\theta$, and~$C$ only. 
\begin{Le}\label{Lemma47}
Let~$u$ and~$v$ be two weights. Let~$v\in \Smooth(\theta,C)$ and~$v(x) \lesssim u^q(x)$ for all~$x\in \R^d$. Then,
\eq{
\|\Heat[f](\fdot, \vec{t}\,)\|_{L_q(v)}\lesssim \|f\|_{L_1(u)},\qquad \forall j \in [1\twodots d]\qquad t_j \in \Big[\frac12,2\Big].
}
\end{Le}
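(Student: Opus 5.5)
The plan is to prove the inequality pointwise-to-integral in the usual way: bound $|\Heat[f](x,\vec t\,)|^q$ by a convex combination over $f$, then integrate against $v$. By Minkowski's integral inequality in $L_q(v)$, applied to the representation $\Heat[f](\fdot,\vec t\,)=\int f(y)\,\Phi_{\vec t}(\fdot-y)\,dy$, where $\Phi_{\vec t}$ is the multiparametric Gaussian kernel, we get
\[
\|\Heat[f](\fdot,\vec t\,)\|_{L_q(v)}\le \int_{\R^d}|f(y)|\,\|\Phi_{\vec t}(\fdot-y)\|_{L_q(v)}\,dy .
\]
It therefore suffices to show that $\|\Phi_{\vec t}(\fdot-y)\|_{L_q(v)}\lesssim u(y)$ uniformly in $y\in\R^d$ and in $t_j\in[\frac12,2]$. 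This reduces the statement to a bound for a single translated Gaussian, so $f$ plays no further role.

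For that bound, note that for $t_j\in[\frac12,2]$ we have $\Phi_{\vec t}(z)\lesssim e^{-c|z|^2}$ with constants depending only on $d$. Hence
\[
\|\Phi_{\vec t}(\fdot-y)\|_{L_q(v)}^q\lesssim \int e^{-cq|x-y|^2}v(x)\,dx .
\]
We now use $v\in\Smooth(\theta,C)$, which gives $v(x)\le C(1+|x-y|)^{\theta}v(y)$. Substituting, the integral is at most
\[
C\,v(y)\int e^{-cq|z|^2}(1+|z|)^{\theta}\,dz\lesssim v(y).
\]
(Alternatively, the previous display is essentially $\Heat[v]$ at a time in $[0,2]$ up to rescaling, and one may invoke Lemma~\ref{Lemma43}; the direct computation is simpler.) Thus $\|\Phi_{\vec t}(\fdot-y)\|_{L_q(v)}\lesssim v(y)^{1/q}\lesssim u(y)$ by the assumption $v\lesssim u^q$. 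Plugging this back into the Minkowski bound yields $\|\Heat[f](\fdot,\vec t\,)\|_{L_q(v)}\lesssim\int|f(y)|u(y)\,dy=\|f\|_{L_1(u)}$.

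There is no genuine obstacle. The one step requiring care is the smoothness step: it must be used in the direction $v(x)/v(y)\le \s[v](|x-y|)$, which is exactly how the smoothness function is defined. The Gaussian decay must also dominate the polynomial growth $(1+|z|)^\theta$, which holds since $t_j\le2$ gives a uniform lower bound on the Gaussian's decay rate. For vector-valued $f$, apply the argument with $|f|$, since $|\Heat[f]|\le\Heat[|f|]$.
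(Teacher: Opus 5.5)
Your proof is correct and follows the paper's route. The paper says the argument mirrors that of Lemma~\ref{Lemma44}: it reduces to $f=\delta_y$, which is exactly your Minkowski step, and then bounds the Gaussian integral against $v$ by $v(y)$ via Lemma~\ref{Lemma43}, whose proof is your direct smoothness computation. The hypothesis $v\lesssim u^q$ then finishes, as in your write-up.
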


\begin{St}\label{StrengtheningOfSemiinvariant}
Let~$\M$ be an invariant cone of measures that does not contain delta measures.  Let~$w \in \Smooth(\theta, C)$ be a weight. Then, there exists a number~$\nu > 0$,~$\nu = \nu(\M, \theta, C)$, such that
\eq{\label{StrengtheningOfSemiinvariantFormula}
\Big\|\Heat[\mu]\big(\fdot;t^{a_1},t^{a_2},\ldots, t^{a_d}\big)\Big\|_{L_q\big(\Heat[w](\fdot;\frac{1-t^{a_1}}{q},\frac{1-t^{a_2}}{q},\ldots, \frac{1-t^{a_d}}{q})\big)} \leq t^{-\frac{\alpha}{2} + \nu}\|\Heat[\mu](\fdot;\one)\|_{L_q(w)}
}
for any~$t \in (0,1)$.
\end{St}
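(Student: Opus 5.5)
The plan is to get the gain $t^{\nu}$ from a single strict improvement at one fixed time, then iterate it using the dilation structure. Proposition~\ref{MonotoniityFormula} with $\vec t=(t^{a_1},\ldots,t^{a_d})$ already gives \eqref{StrengtheningOfSemiinvariantFormula} with $\nu=0$, because $\prod_j t^{a_j\frac{q-1}{2q}}=t^{\frac{q-1}{2q}d}=t^{\alpha/2}$ by \eqref{DefOfq}. So it suffices to find $t_0\in(0,1)$ and $\eta\in(0,1)$, depending only on $\M,\theta,C$, such that
\eq{
\Big\|\Heat[\mu]\big(\fdot;t_0^{a_1},\ldots, t_0^{a_d}\big)\Big\|_{L_q(\Heat[w](\fdot;\frac{\one-t_0^{a}}{q}))} \leq (1-\eta)\,t_0^{-\frac{\alpha}{2}}\|\Heat[\mu](\fdot;\one)\|_{L_q(w)}
}
holds for every $\mu\in\M$ and every $w\in\Smooth(\theta,C)$.

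\textbf{Iteration.} Write $\tau=t_0^n$ and split the curve $s\mapsto(s^{a_1},\ldots,s^{a_d})$, $s\in[\tau,1]$, into the pieces $[t_0^{m+1},t_0^m]$.
\begin{enumerate}[1)]
\item On each piece, pass to the rescaled functions via Lemma~\ref{SeveralDilations} and the operator $\DIL$, as in Proposition~\ref{RescaledPDECorollary}. Here $u=\Heat[\mu]$ solves the forward equation and $v$ solves the backward equation \eqref{eq2115}.
\item After rescaling, the piece becomes the one-step situation above, with $\mu$ replaced by an element of $\M$ (by dilation invariance of $\M$).
\item The weight becomes a heated and dilated copy of $w$. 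By \eqref{ConvolutionAndSmoothness} and \eqref{DilationAndSmoothness} it stays in $\Smooth(\theta,C)$, so the one-step bound applies with the same $\eta$.
\item Multiplying the $n$ one-step bounds gives the factor $(1-\eta)^n t_0^{-n\alpha/2}=\tau^{-\alpha/2+\nu}$ with $\nu=\log(1-\eta)/\log t_0>0$.
\item For $t$ between two consecutive powers of $t_0$, use Proposition~\ref{RescaledPDECorollary} on the remaining short piece; this costs only a bounded constant. Absorb that constant by slightly decreasing $\nu$.
\end{enumerate}
The one thing to verify carefully here is that the backward solution $v$ restricted to a sub-piece is again of the form $\Heat[w'](\fdot;\cdot/q)$ with $w'\in\Smooth(\theta,C)$. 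Lemma~\ref{WidderLemma} together with the semigroup property \eqref{SemigroupProperty} should give this.

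\textbf{One-step gain by compactness.} Argue by contradiction. Take $\mu_n\in\M$, $w_n\in\Smooth(\theta,C)$ whose ratio in the one-step inequality tends to $t_0^{-\alpha/2}$.
\begin{enumerate}[1)]
\item Normalize $\|\Heat[\mu_n](\fdot;\one)\|_{L_q(w_n)}=1$, using the cone property.
\item Choose a good center and translate $\mu_n$ and $w_n$ by it. Normalize $w_n$ at that point. The bound \eqref{StandardBoundOnWeight} gives uniform two-sided polynomial control of $w_n$, so a subsequence converges locally uniformly to some $w\in\Smooth(\theta,C)$.
\item The measures $\mu_n$, tested against Gaussians, are uniformly bounded. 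By closedness of $\M$ they converge in $\Sw'$ to some $\mu\in\M$.
\item If tightness holds, the limit gives equality in Proposition~\ref{MonotoniityFormula}. Its proof shows the inequality is a Minkowski (convexity) bound that is an identity on point masses. Equality for a weight that is positive almost everywhere should force $\mu$ to be a multiple of a single delta. This is a rigidity statement in the spirit of Lemma~\ref{ZeroFlatnessLemma}, which I would prove separately. The conclusion contradicts the assumption that $\M$ contains no delta measures.
\end{enumerate}

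\textbf{Main obstacle.} The hard step is the loss of mass in the compactness argument. The weights are only polynomially decaying while the Gaussians decay fast, and the near-extremal mass may escape or spread over several scales. The limit could then be $0$, or the equality could fail to pass to the limit. My first attempt would be to localize. Split $\mu_n=\mu_n\chi_{B}+\mu_n\chi_{B^c}$ and use the strict convexity gap of $L_q$ with $q>1$: two pieces with comparable contributions and far-apart supports already produce a definite gain in the Minkowski step. This reduces matters to concentrated measures, where tightness holds. If that fails, I would fall back on the route announced in the paper. Restrict to the diagonal $\vec t=(s,\ldots,s)$ via \eqref{ClassicalHeat}, where the isotropic strengthening from \cite{Stolyarov2022} is available. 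Then transfer the gain to the anisotropic curve by monotonicity in each separate coordinate $t_j$, comparing the two curves using Proposition~\ref{RescaledPDECorollary}.
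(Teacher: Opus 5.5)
Your steps 1)--3) are sound. They are exactly the rescaling of Lemma~\ref{DilationOfInvariantCone}. The rescaled weight is explicit, $\tilde w(x)=\Heat[w]\big(\Dil_{\lambda}x;\frac{1-\lambda^{2a_1}}{q},\ldots,\frac{1-\lambda^{2a_d}}{q}\big)$ for a rescaling by $\lambda\in(0,1)$, and it lies in $\Smooth(\theta,C)$ by \eqref{ConvolutionAndSmoothness} and \eqref{DilationAndSmoothness}. The semigroup property suffices, so no Widder-type argument is needed.

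The first gap is step 5). In the notation \eqref{SpDefinition}, a gain $1-\eta$ at one fixed time $t_0$ together with monotonicity only gives $S_q(t)\le t_0^{-\nu q}\,t^{\nu q}S_q(1)$. On $[t_0,1)$ you know nothing beyond $S_q(t)\le S_q(1)$. The factor $t_0^{-\nu q}>1$ cannot be absorbed by lowering $\nu$ as $t\to1^-$, because $t^{\nu' q}\to 1$; it can only be absorbed for $t$ below some threshold. The statement with constant $1$ on all of $(0,1)$ forces $\liminf_{t\to 1^-}\frac{S_q(1)-S_q(t)}{1-t}\ge \nu q\,S_q(1)$ uniformly in $\mu\in\M$ and $w\in\Smooth(\theta,C)$. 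In other words, it amounts to the first-order bound $S_q'(1)\ge \nu q\,S_q(1)$, which your rescaling then transports to every $t$ and integrates, as in Lemma~\ref{DilationOfInvariantCone}. A compactness argument at a fixed $t_0$ gives no rate for $1-\eta(t_0)$ as $t_0\to 1$, so the gain has to be proved infinitesimally.

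Second, the one-step gain itself is not established. The compactness argument stops exactly at the loss-of-mass problem you flag, the localization idea is not carried out, and the rigidity is only announced. The fallback does not close this either. The isotropic strengthening of \cite{Stolyarov2022} concerns cones invariant under isotropic dilations, while $\M$ is only $\Dil_t$-invariant. To get a gain along the whole diagonal $s\one$, $s\in(0,1)$, you would have to renormalize $\mu$ isotropically, which takes you out of $\M$. The transfer step itself would be fine: by Proposition~\ref{RescaledPDECorollary}, $Q_q$ is coordinatewise non-decreasing, so $S_q(t)=Q_q(t^{a_1},\ldots,t^{a_d})\le Q_q(t^{a_{\min}},\ldots,t^{a_{\min}})$ with $a_{\min}=\min_j a_j$.

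The missing idea is to compare the two curves only at their common point $\one$ and only to first order. By Lemma~\ref{PartialDerivativeBCT}, see \eqref{LongPartialDerivative}, each $\partial Q_q/\partial t_j$ is a non-negative integral of $|y_j-z_j|^2$ against Gaussian-localized copies of $\mu$. Hence $S_q'(1)=\sum_j a_j\,\partial_jQ_q(\one)\ge a_{\min}\sum_j\partial_jQ_q(\one)$. The right-hand side is a constant multiple of the static dispersion functional on the left of \eqref{VoidInequality}, in which no dilation occurs. Its uniform positivity on cones without delta measures is the only input borrowed from \cite{Stolyarov2022}. Lemma~\ref{DilationOfInvariantCone} then propagates it to all $t\in(0,1)$, using only the anisotropic dilations under which $\M$ is invariant.
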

Let us fix~$\mu$ and~$w$ and use the notation
\eq{\label{SpDefinition}
S_q[\mu,w](t) = t^{\frac{d(q-1)}{2}}\Big\|\Heat[\mu]\big(\fdot;t^{a_1},t^{a_2},\ldots, t^{a_d}\big)\Big\|^q_{L_q\big(\Heat[w](\fdot;\frac{1-t^{a_1}}{q},\frac{1-t^{a_2}}{q},\ldots, \frac{1-t^{a_d}}{q})\big)}
}
for brevity; sometimes we will abbreviate~$S_q[\mu,w](t)$ as~$S_q(t)$ or even simply as~$S_q$. Then,~\eqref{StrengtheningOfSemiinvariantFormula} reduces to~$S_q(t) \leq t^{\nu q}S_q(1)$. Note that unlike inequalities at the end of Subsection~\ref{s21}, the time parameter~$t$ is one-dimensional here.
\begin{Le}\label{DilationOfInvariantCone}
Let~$\M$ be an invariant cone of measures. Let~$\theta > 0$ and~$C \geq 1$ be fixed. If there exists~$\nu > 0$ such that for any choice of~$\mu\in \M$ and~$w\in \Smooth(\theta, C)$, the bound 
\eq{\label{DilationOfInvariantConeFormula}
\frac{\partial S_q[\mu,w]}{\partial t}\Big|_{t=1} \geq \nu q S_q[\mu,w](1)
}
holds true, then
\eq{
S_q[\mu,w](t) \leq t^{\nu q}S_q[\mu,w](1)
}
holds as well.
\end{Le}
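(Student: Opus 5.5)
The plan is to show that the derivative bound at the single point $t=1$ propagates to every $t\in(0,1)$, using the dilation invariance of $\M$ and of the class $\Smooth(\theta,C)$. We prove the equivalent statement that the function $t\mapsto t^{-\nu q}S_q[\mu,w](t)$ is non-increasing on $(0,1]$. Concretely, for every $s\in(0,1)$ we aim for
\eq{
S_q[\mu,w](st) = c(s)\,S_q[\tilde\mu,\tilde w](t),\qquad t\in(0,1],
}
where $\tilde\mu=\Dil_{s^{-1/2}}[\mu]\in\M$ (possibly up to a harmless normalisation), $\tilde w\in\Smooth(\theta,C)$, and $c(s)$ is an explicit positive constant. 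Applying \eqref{DilationOfInvariantConeFormula} to the pair $(\tilde\mu,\tilde w)$ at $t=1$ then gives $\frac{d}{dt}\log S_q[\mu,w](t)\big|_{t=s}\geq \nu q/s$. Integrating this from $t$ to $1$ yields $S_q(t)\leq t^{\nu q}S_q(1)$. If $S_q$ vanishes somewhere there is nothing to prove, and finiteness at $t=1$ is implicit in the statement.

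\textbf{Transferring the heat extension of $\mu$.} By Lemma~\ref{SeveralDilations}, $\Heat[\Dil_\lambda\mu](x;\vec\tau)=\lambda^{-d}\Heat[\mu](\Dil_{\lambda^{-1}}x;\Dil_{\lambda^{-2}}\vec\tau)$. Taking $\lambda=s^{-1/2}$ and $\vec\tau=(t^{a_1},\dots,t^{a_d})$ gives $\Dil_{\lambda^{-2}}\vec\tau=((st)^{a_1},\dots,(st)^{a_d})$. So the $\mu$-part at time $st$ equals, after the change of variables $x\mapsto\Dil_{\lambda}x$, the $\tilde\mu$-part at time $t$, multiplied by powers of $s$. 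Invariance of $\M$ under $\Dil$ gives $\tilde\mu\in\M$.

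\textbf{Transferring the weight.} The weight part at time $st$ is $\Heat[w](\cdot;(1-(st)^{a_j})/q)$. After the same change of variables it becomes a heat extension of $\Dil^{\lambda^{-1}}[w]$ (the value-preserving dilation) with times $((s^{-a_j}-t^{a_j})/q)_j$. We split $s^{-a_j}-t^{a_j}=(1-t^{a_j})+(s^{-a_j}-1)$. By the semigroup property \eqref{SemigroupProperty} this equals $\Heat[\tilde w](\cdot;(1-t^{a_j})/q)$ with
\[
\tilde w=\Heat\big[\Dil^{\lambda^{-1}}w\big]\Big(\cdot;\frac{s^{-a_j}-1}{q}\Big).
\]
Since $\lambda^{-1}=s^{1/2}<1$, \eqref{DilationAndSmoothness} keeps the smoothness function no larger. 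Since heating is convolution with a nonnegative kernel, \eqref{ConvolutionAndSmoothness} also keeps it no larger. Hence $\tilde w\in\Smooth(\theta,C)$ with the \emph{same} constants, and this is exactly why the hypothesis is uniform over the class. The factor $t^{d(q-1)/2}$ in \eqref{SpDefinition} splits as $s^{d(q-1)/2}t^{d(q-1)/2}$. All powers of $s$ combine into $c(s)$, which does not depend on $t$, so logarithmic derivatives are unaffected.

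\textbf{Main obstacle.} The only delicate point is bookkeeping: checking that the anisotropic dilations of the space variable and of the time variables match, through Lemma~\ref{SeveralDilations} and \eqref{DILDef}, in both the $\mu$-factor and the $w$-factor. In particular one must confirm that the leftover heating of the weight has nonnegative times, which holds because $s<1$. A lesser point is justifying differentiability of $S_q$ in $t$. This follows from the heat equations \eqref{MultiparametricHeatEquations} and local finiteness. Alternatively, one can use the Dini derivative version of the monotonicity argument, which only needs the lower bound on the derivative at each point.
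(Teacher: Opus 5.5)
Your proposal is correct and follows the paper's proof: the paper also rescales, taking $\tilde\mu=t^{-d}\Dil_{t^{-1}}[\mu]$ and a heated-and-dilated weight $\tilde w$ that stays in $\Smooth(\theta,C)$ by \eqref{DilationAndSmoothness} and \eqref{ConvolutionAndSmoothness}. This turns the hypothesis at $t=1$ into $S_q'(t)\geq \nu q\, S_q(t)/t$ on $(0,1)$, which is then integrated. One notational slip: after the substitution $y=\Dil_\lambda x$ the weight is $w(\Dil_{s^{1/2}}\fdot)=\Dil^{\lambda}[w]$ in the paper's convention, not $\Dil^{\lambda^{-1}}[w]$; your smoothness argument, which uses $s^{1/2}<1$, applies to the former, and that is the object the argument needs.
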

\begin{proof}
The desired inequality~$S_q(t) \leq t^{\nu q}S_q(1)$ follows from
\eq{\label{eq316}
\frac{\partial S_q}{\partial t}(t) \geq \frac{\nu q S_q(t)}{t},\qquad t \in (0,1),
}
via integration:
\eq{
\log S_q(t) = \log S_q(1) - \int\limits_t^1\frac{S_q'(\theta)}{S_q(\theta)}\,d\theta \leq \log S_q(1) - \int\limits_t^1 \frac{\nu q\,d\theta}{\theta} = \log S_q(1) + \nu q \log t.
}
The inequality~\eqref{eq316} is derived from~\eqref{DilationOfInvariantConeFormula} via a scaling argument. To this end, let
\eq{
U(x,s) = \Heat[\mu]\big(x;s^{a_1},s^{a_2},\ldots, s^{a_d}\big);\quad V(x,s) = \Heat[w]\Big(x;\frac{1-s^{a_1}}{q},\frac{1-s^{a_2}}{q},\ldots, \frac{1-s^{a_d}}{q}\Big).
}
Consider the functions~$\tilde{U}$ and~$\tilde{V}$ given  by
\eq{\label{eq3115}
\tilde{U}(x,\theta) = U(\Dil_{t}x, t^2\theta);\qquad \tilde{V}(x,\theta) = V(\Dil_{t}x, t^2\theta),\qquad x\in \R^d,\ \theta \in [0,1],
}
similarly to~\eqref{DILDef}. Let~$\tilde{\mu}$ be the dilation of~$\mu$:~$\tilde{\mu} = t^{-d}\Dil_{t^{-1}} [\mu]$. Then,~$\tilde{U}(x,\theta) = \Heat[\tilde{\mu}](x, \theta)$ by Lemma~\ref{SeveralDilations}, and, moreover,
\mlt{\label{DefinitionOfTildeV}
\tilde{V}(x,\theta) = \Heat[\tilde{w}]\Big(x;\frac{1-\theta^{a_1}}{q},\frac{1-\theta^{a_2}}{q},\ldots, \frac{1-\theta^{a_d}}{q}\Big),\quad \text{where}\\
 \tilde{w}(x)= \Heat[w]\Big(\Dil_{t}x;\frac{1-t^{2a_1}}{q},\frac{1-t^{2a_2}}{q},\ldots, \frac{1-t^{2a_d}}{q}\Big).
}
Let us justify the latter claim. First, by~\eqref{eq3115},
\eq{\label{eq3117}
\tilde{V}(x,\theta) =  \Heat[w]\Big(\Dil_{t} x, \frac{1 - t^{2a_1}\theta^{a_1}}{q}, \frac{1 - t^{2a_2}\theta^{a_2}}{q},\ldots, \frac{1 - t^{2a_d}\theta^{a_d}}{q}\Big).
}
Second,
\mlt{
\Heat[\tilde{w}]\Big(x;\frac{1-\theta^{a_1}}{q},\frac{1-\theta^{a_2}}{q},\ldots, \frac{1-\theta^{a_d}}{q}\Big)\\
 = t^{-d}\Heat\bigg[\Dil_{t^{-1}}\Heat[w]\Big(\fdot;\frac{1-t^{2a_1}}{q},\frac{1-t^{2a_2}}{q},\ldots, \frac{1-t^{2a_d}}{q}\Big)\bigg] \Big(x ,\frac{1 - \theta^{a_1}}{q},\frac{1 - \theta^{a_2}}{q}, \ldots, \frac{1 - \theta^{a_d}}{q}\Big)\\
 \Eref{\text{\tiny Lem.\ref{SeveralDilations}}} \Heat\bigg[\Heat[w]\Big(\fdot;\frac{1-t^{2a_1}}{q},\frac{1-t^{2a_2}}{q},\ldots, \frac{1-t^{2a_d}}{q}\Big)\bigg] \Big(\Dil_{t}x ,\frac{t^{2a_1} - t^{2a_1}\theta^{a_1}}{q},\frac{t^{2a_2} - t^{2a_2}\theta^{a_2}}{q}, \ldots, \frac{t^{2a_d} - t^{2a_d}\theta^{a_d}}{q}\Big)\\
 \Eeqref{SemigroupProperty}\Heat[w]\Big(\Dil_{t} x, \frac{1 - t^{2a_1}\theta^{a_1}}{q}, \frac{1 - t^{2a_2}\theta^{a_2}}{q},\ldots, \frac{1 - t^{2a_d}\theta^{a_d}}{q}\Big),
}
which coincides with~\eqref{eq3117}.

The crucial observation is that~$\tilde{w}\in \Smooth(\theta, C)$ as well. This may be justified as follows:
\mlt{
\s[\tilde{w}](\zeta) = \s\Big[\Heat[w]\Big(\Dil_{t}x;\frac{1-t^{2a_1}}{q},\frac{1-t^{2a_2}}{q},\ldots, \frac{1-t^{2a_d}}{q}\Big)
\Big](\zeta)\\
 \Leqref{DilationAndSmoothness}\s\Big[\Heat[w]\Big(x;\frac{1-t^{2a_1}}{q},\frac{1-t^{2a_2}}{q},\ldots, \frac{1-t^{2a_d}}{q}\Big)
\Big](\zeta) \Leqref{ConvolutionAndSmoothness} \s[w](\zeta).
}

Thus, if we denote
\eq{
T_q(\theta) = \theta^{\frac{d(q-1)}{2}}\int\limits_{\R^d}\tilde{U}^q(x,\theta)\tilde{V}(x,\theta)\,dx,
}
then application of~\eqref{DilationOfInvariantConeFormula} to the measure~$\tilde{\mu}$ in the role of~$\mu$ and the weight~$\tilde{w}$ in the role of~$w$ yields
\eq{\label{eq3112}
\frac{\partial T_q}{\partial \theta}(1) \geq \nu q T_q(1).
}
By substitution,
\mlt{
T_q(\theta) = \theta^{\frac{d(q-1)}{2}}\int\limits_{\R^d}U^q\big(\Dil_{t}x,t^2\theta\big) V\big(\Dil_{t}x,t^2\theta\big)\,dx \\
= t^{-d} \theta^{\frac{d(q-1)}{2}}\int\limits_{\R^d}U^q\big(y,t^2\theta\big) V\big(y,t^2\theta\big)\,dy = t^{-dq}S_q(t^2\theta),
}
and we see that~\eqref{eq3112} implies~\eqref{eq316}.
\end{proof}

Now we return to the proof of Proposition~\ref{StrengtheningOfSemiinvariant} and aim at establishing~\eqref{DilationOfInvariantConeFormula}. For that we will need multiparametric heat extensions and consider time parameters~$\vec{t}$ of arbitrary form, not only of the type~$\Dil_{t^{-1}}\one$. Let~$u\colon \R^d\times [0,1]^d\to \R_+$ be a solution to the multiparametric heat equation~\eqref{MultiparametricHeatEquations}. Let~$v\colon \R^d \times [0,1]^d \to \R_+$ be a solution of the rescaled backwards multiparametric heat equation~\eqref{eq2115}.
Compose the quantity
\eq{
Q_q(\vec{t}\,) = \Big(\prod\limits_{j=1}^dt_j\Big)^{\frac{q-1}{2}}\int\limits_{\R^d}u^q(x,\vec{t}\,)v(x,\vec{t}\,)\,dx.
}
\begin{Le}\label{PartialDerivativeBCT}
For any~$j$,
\mlt{\label{LongPartialDerivative}
\frac{\partial Q_q}{\partial t_j} = \frac{q-1}{8}(4\pi)^{-d}\Big(\prod\limits_{j=1}^d t_j\Big)^{\frac{q-3}{2}}t_j^{-2}\times \\
\times\int\limits_{\R^d}\bigg(\int\limits_{\R^{2d}}|y_j - z_j|^2e^{-\sum_{1}^d\frac{|x_i-y_i|^2+|x_i - z_i|^2}{4t_i}}u(y,0)u(z,0)\,dy\,dz\bigg)u^{q-2}(x,\vec{t}\,)v(x,\vec{t}\,)\,dx.
}
\end{Le}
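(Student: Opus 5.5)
Fix $j$. The computation is the BCT-type (Bennett--Carbery--Tao) monotonicity calculation carried out in the single variable $t_j$; the other time variables enter only as parameters. Write $P=\prod_i t_i$. Differentiating $Q_q$ gives three contributions: the derivative of the prefactor, $\frac{q-1}{2t_j}Q_q$; the term $qP^{\frac{q-1}{2}}\int u^{q-1}\partial_{t_j}u\,v$; and the term $P^{\frac{q-1}{2}}\int u^q\partial_{t_j}v$.

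In the second term we use $\partial_{t_j}u=\partial_{x_j}^2u$ from \eqref{MultiparametricHeatEquations}. In the third we use $\partial_{t_j}v=-\frac1q\partial_{x_j}^2 v$ from \eqref{eq2115}. We then integrate by parts in $x_j$, twice in the third term. Decay is guaranteed by the representation of Lemma~\ref{WidderLemma}, and we may first assume $u(\cdot,0)$ and $v(\cdot,\one)$ are compactly supported measures and then pass to the limit. The result is the standard identity
\eq{
\frac{\partial}{\partial t_j}\int u^q v\,dx=-\frac{q-1}{q}\int u^{q-2}\Big(q\,u\,\partial_{x_j}^2u-(q-1)\ldots\Big)v\,dx.
}
A cleaner route is to collect everything into the form
\eq{
(q-1)\int u^{q-2}\Big(u\,\partial_{x_j}^2 u-(\partial_{x_j}u)^2\Big)v\,dx\cdot(-1)+\ldots
}
Concretely, the expected outcome is
\eq{
\partial_{t_j}\Big(P^{\frac{q-1}{2}}\int u^qv\Big)=(q-1)P^{\frac{q-1}{2}}\int u^{q-2}\Big((\partial_{x_j}u)^2-u\,\partial^2_{x_j}u+\frac{u^2}{2t_j}\Big)v\,dx.
}
I would check the signs and coefficients carefully here. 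In the third term, $\int u^q\partial_{x_j}^2v=\int \partial_{x_j}^2(u^q)\,v$ produces $qu^{q-1}u''+q(q-1)u^{q-2}(u')^2$. Multiplying by $-1/q$ and adding $q\int u^{q-1}u''v$ gives $(q-1)\int u^{q-2}(uu''-(u')^2)v$. So the correct grouping is $(q-1)\int u^{q-2}\big(u\,u''-(u')^2+\frac{u^2}{2t_j}\big)v$, and the sign in the display above has to be flipped accordingly.

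The key step is to express $u\,u''-(u')^2+\frac{u^2}{2t_j}$ as a positive double integral. By Lemma~\ref{WidderLemma}, write
\eq{
u(x,\vec t\,)=(4\pi)^{-d/2}P^{-1/2}\int e^{-\sum_i\frac{(x_i-y_i)^2}{4t_i}}u(y,0)\,dy.
}
Each $x_j$-derivative then brings down the factor $-\frac{x_j-y_j}{2t_j}$, and the second derivative also produces $-\frac1{2t_j}$. Writing $u^2$, $u\,u''$ and $(u')^2$ as integrals over $(y,z)\in\R^{2d}$ against the product kernel $e^{-\sum_i\frac{|x_i-y_i|^2+|x_i-z_i|^2}{4t_i}}u(y,0)u(z,0)$, the bracket becomes the kernel times
\eq{
\frac{(x_j-y_j)^2}{4t_j^2}-\frac{1}{2t_j}-\frac{(x_j-y_j)(x_j-z_j)}{4t_j^2}+\frac1{2t_j}.
}
Symmetrizing in $y\leftrightarrow z$ turns this into $\frac{|y_j-z_j|^2}{8t_j^2}$.

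Collecting the constants, $(4\pi)^{-d}P^{-1}$ from the two kernels and $P^{\frac{q-1}{2}}$ from the prefactor, gives $(4\pi)^{-d}P^{\frac{q-3}{2}}t_j^{-2}\frac{q-1}{8}$. This matches \eqref{LongPartialDerivative}, with $u^{q-2}v$ left outside.

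The main obstacle is bookkeeping rather than ideas: getting the signs right in the integration by parts, and justifying the boundary terms and the differentiation under the integral sign for general nonnegative solutions. Since $u^{q-2}$ may be singular where $q<2$ and $u$ is small, I would treat those points by approximation through the Gaussian representation, where $u>0$ everywhere for $t_j>0$.
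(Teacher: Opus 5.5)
Your argument is correct and reaches \eqref{LongPartialDerivative}, but by a different route than the paper. The paper does not redo the calculus. It takes the one-dimensional identity (formula $(2.12)$ of \cite{Stolyarov2022}, rewritten through the dispersion identity \eqref{DispersionFormula} as \eqref{eq3127}) and applies it to $R(t_j)=t_j^{\frac{q-1}{2}}\int u^qv\,dx_j$ with all other variables frozen. It then uses the semigroup property \eqref{SemigroupProperty} to express the sliced initial datum $u(x_1,\ldots,y,\ldots,x_d;t_1,\ldots,0,\ldots,t_d)$ through $u(\cdot,0)$, and finally integrates over the remaining $d-1$ coordinates.

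You instead work directly in $d$ dimensions. You use the two heat equations and integrate by parts in $x_j$ to get $(q-1)P^{\frac{q-1}{2}}\int u^{q-2}\big(u\,\partial_{x_j}^2u-(\partial_{x_j}u)^2+\frac{u^2}{2t_j}\big)v\,dx$. You then identify the bracket pointwise with a positive double integral via the full Gaussian representation. Your $y\leftrightarrow z$ symmetrization is exactly the dispersion identity the paper invokes.

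What each route buys:
\begin{itemize}
\item Your route is self-contained. The factorization of the Gaussian kernel automatically does the slicing and semigroup bookkeeping.
\item The paper's route is shorter, because it outsources the integration by parts to the earlier one-dimensional computation. On your route, the boundary terms and the differentiation under the integral are your own responsibility. You flag this, and reducing first to compactly supported $u(\cdot,0)$ and $w$ is the natural way to handle it.
\end{itemize}

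When you write it up, delete the two unfinished displays and the sign-reversed ``expected outcome''. Your own check that the correct bracket is $u\,\partial_{x_j}^2u-(\partial_{x_j}u)^2+\frac{u^2}{2t_j}$ is right, and it is what the rest of your computation actually uses.
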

Before proving Lemma~\ref{PartialDerivativeBCT}, we recall a useful formula for the particular case~$d=1$ from~\cite{Stolyarov2022}. For that, fix~$d=1$ and define the measure~$\mu_{x,t}$ on~$\R$ as the absolutely continuous measure with the density
\eq{\label{eq3123}
d\mu_{x,t}(y) = e^{-\frac{|x-y|^2}{4t}} u(y,0)\,dy.
}
The paper~\cite{Stolyarov2022} also used the notation~$\mu = u(\fdot,0)$. Note that
\eq{\label{eq3124}
\mu_{x,t}(\R) = \sqrt{4\pi t}\,u(x,t).
}
The Euclidean space equipped with the probability measure~$(\mu_{x,t}(\R))^{-1}\,\mu_{x,t}$ becomes a probability space. The function~$y$ may be treated as the vectorial random variable~$Y_x$ on this probability space. Formula~$(2.12)$ in~\cite{Stolyarov2022} says:
\eq{
\frac{\partial Q_q}{\partial t} = \frac{q-1}{4} (4\pi)^{-\frac{q}{2}} t^{-\frac52}\int\limits_{\R}\Disp(x- Y_x)(\mu_{x,t}(\R))^qv(x,t)\,dx.
}
It will be convenient to transform this formula using the identity
\eq{\label{DispersionFormula}
\Disp \zeta = \frac12 \E |\zeta_1 - \zeta_2|^2,\qquad \text{where}\ \zeta_1 \text{ and } \zeta_2 \text{ are independent copies of }\zeta.
}
Thus,
\mlt{\label{eq3127}
\frac{\partial Q_q}{\partial t} = \frac{q-1}{8} (4\pi)^{-\frac{q}{2}} t^{-\frac52}\int\limits_{\R}\bigg(\iint\limits_{\R^2} |y-z|^2\,d\mu_{x,t}(y)\,d\mu_{x,t}(z)\bigg)(\mu_{x,t}(\R))^{q-2}v(x,t)\,dx\\
\EeqrefTwo{eq3123}{eq3124} \frac{q-1}{8} (4\pi)^{-1} t^{\frac{q-7}{2}}\int\limits_{\R}\bigg(\iint\limits_{\R^2} |y-z|^2e^{-\frac{|x-y|^2 + |x-z|^2}{4t}}u(y,0) u(z,0)\,dy\,dz\bigg)u^{q-2}(x,t)v(x,t)\,dx.
}
\begin{proof}[Proof of Lemma~\ref{PartialDerivativeBCT}]
To pass to the case of arbitrary dimension, consider the expression
\eq{
R(t_j) = t_j^{\frac{q-1}{2}} \int\limits_{\R}u^{q}(x,\vec{t}\,)v(x,\vec{t}\,)\,dx_j,
}
where all the~$t_i$ except~$t_j$ are fixed; this quantity also depends on all the~$x_i$ except~$x_j$. Note that
\eq{\label{ExpressingQInTermsofR}
Q_q(t) = \Big(\prod\limits_{i\ne j}t_i\Big)^{\frac{q-1}{2}}\int\limits_{\R^{d-1}} R(t_j)\, dx_1\, dx_2\ldots dx_{j-1}\, dx_{j+1}\ldots dx_d. 
}
By application of~\eqref{eq3127},
\mlt{
\frac{\partial R}{\partial t_j} = \frac{q-1}{8}\cdot(4\pi)^{-1}\cdot t_j^{\frac{q-7}{2}}\times \\ 
\times\int\limits_{\R}\bigg(\iint\limits_{\R^2}|y-z|^2 e^{-\frac{|x_j-y|^2 + |x_j - z|^2}{4t_j}}u(x_1,\ldots, x_{j-1},y, x_{j+1},\ldots, x_d; t_1,\ldots, t_{j-1},0,t_{j+1},\ldots, t_d)\times \\
\times u(x_1,\ldots, x_{j-1},z, x_{j+1},\ldots, x_d; t_1,\ldots, t_{j-1},0,t_{j+1},\ldots, t_d)\,dy\,dz\bigg)\cdot u^{q-2}(x,\vec{t}\,)v(x,\vec{t}\,)\,dx_j\\
\Eeqref{SemigroupProperty}\frac{q-1}{8}\cdot(4\pi)^{-1}\cdot t_j^{\frac{q-7}{2}}\cdot \prod_{i\ne j}(4\pi t_i)^{-1}\times \\
\times\int\limits_\R\bigg(\int\limits_{\R^{2d}}|y_j - z_j|^2e^{-\sum_{1}^d\frac{|x_i - y_i|^2 + |x_i - z_i|^2}{4t_i}} u(y,0)u(z,0)\,dy\,dz\bigg)\cdot u^{q-2}(x,\vec{t}\,)v(x,\vec{t}\,)\,dx_j.
}
Plugging this into~\eqref{ExpressingQInTermsofR}, we obtain~\eqref{LongPartialDerivative}.
\end{proof}
\begin{proof}[Proof of Proposition~\ref{StrengtheningOfSemiinvariant}]
By Lemma~\ref{DilationOfInvariantCone}, it suffices to obtain~\eqref{DilationOfInvariantConeFormula}. Note that
\eq{
S_q(t) = Q_q(t^{a_1}, t^{a_2},\ldots, t^{a_d})\quad \text{and} \quad \frac{\partial S_q}{\partial t} = \sum\limits_{j=1}^d a_j t^{a_j - 1}\frac{\partial Q_q}{\partial t_j}.
}
Thus,
\mlt{
S_q'(1) = \frac{q-1}{8}\cdot(4\pi)^{-d}\times \\
\times \int\limits_\R\bigg(\int\limits_{\R^{2d}}\Big(\sum\limits_{j=1}^da_j |y_j - z_j|^2\Big)e^{-\sum_{1}^d\frac{|x_i - y_i|^2 + |x_i - z_i|^2}{4}} u(y,0)u(z,0)\,dy\,dz\bigg)u^{q-2}(x,\one\,)v(x,\one\,)\,dx.
}
The principal idea of the proof is that the quantities above for arbitrary anisotropy~$a$ and the standard isotropic~$a=\one$ are comparable. This allows to reduce the problem to the latter particular case already considered in~\cite{Stolyarov2022}. By using~\eqref{DispersionFormula} once again, 
\eq{
S_q'(1) \gtrsim \int\limits_{\R^d}\Disp(x- Y_x)(\mu_{x,1}(\R))^qv(x,1)\,dx,
}
where~$Y_x$ is the vectorial random variable~$y\in \R^d$ on the same probability space. Thus, the desired inequality~\eqref{DilationOfInvariantConeFormula} reduces to
\mlt{\label{VoidInequality}
\int\limits_{\R^d}\Big(\int\limits_{\R^d} |\mass(x)-y|^2e^{-\frac{|x-y|^2}{4}}\,d\mu(y)\Big)\Big(\int\limits_{\R^d} e^{-\frac{|x-y|^2}{4}}\,d\mu(y)\Big)^{q-1}G(x)\,dx\\
 \geq \delta q \int\limits_{\R^d}\Big(\int\limits_{\R^d} e^{-\frac{|x-y|^2}{4}}\,d\mu(y)\Big)^qG(x)\,dx,
}
where~$G$ denotes~$v(\fdot,1)$ and
\eq{
\mass(x) = \frac{\int_{\R^d} y\, e^{-\frac{|x-y|^2}{4}}\,d\mu(y)}{\int_{\R^d}e^{-\frac{|x-y|^2}{4}}\,d\mu(y)}.
}
Proposition~$3.10$ and the proof of Theorem~$3.1$ in~\cite{Stolyarov2022} say that if this inequality is violated (there does not exist~$\nu$ such that~\eqref{VoidInequality} holds true for all~$\mu\in \M$), then~$\M$ contains a delta measure. Since this is not the case, we have obtained~\eqref{DilationOfInvariantConeFormula}.
\end{proof}
\begin{Def}
Let~$\M$ be an invariant cone of measures, let~$C \geq 1$,~$\theta > 0$. Define
\eq{
\kappa(\M,\theta, C) = \sup\Set{\nu}{ \forall \mu \in \M, w\in \Smooth(\theta,C), t \in (0,1) \quad \eqref{StrengtheningOfSemiinvariantFormula} \ \text{holds true}}.
}
\end{Def}
\begin{Rem}
Proposition~\ref{StrengtheningOfSemiinvariant} may be restated:~$\kappa(\M,\theta,C) > 0$ if and only if~$\M$ does not contain delta measures. Note that the supremum in the definition above is attained.
\end{Rem}


\subsection{A more convenient inequality}\label{s32}
We will need a technical definition.
\begin{Def}\label{HeatStableCones}
Let~$\M$ be an invariant cone of measures. We say that it is heat stable, provided for all~$\mu\in \M$ and~$\vec{s} \in (\R_+)^d$ we also have~$\Heat[\mu](\fdot, \vec{s}) \in \M$.
\end{Def}
Recall the numbers~$K$ and~$L$ defined in~\eqref{DefinitionOfK}.
\begin{St}\label{Convenient}
Let~$\M$ be an invariant heat stable cone of measures that does not contain delta measures, let also~$\theta > 0$ and~$C \geq 1$ be fixed. There exists~$\nu > 0$ such that for any sufficiently small~$t > 0$ and
any non-negative solution to the multiparametric heat equation~\eqref{MultiparametricHeatEquations} on~$\prod_{j=1}^d [t^{Ka_j},1]$ such that
\eq{\label{eq321}
u\big(\fdot; t^{Ka_1}, t^{Ka_2},\ldots, t^{Ka_d}\big) \in \M,
}
the inequality~\eqref{StrengtheningOfSemiinvariantFormula} in the form
\eq{
\Big\|u\big(\fdot;t^{a_1},t^{a_2},\ldots, t^{a_d}\big)\Big\|_{L_q\big(\Heat[w](\fdot;\frac{1-t^{a_1}}{q},\frac{1-t^{a_2}}{q},\ldots, \frac{1-t^{a_d}}{q})\big)} \leq t^{-\frac{\alpha}{2} + \nu}\|u(\fdot;\one)\|_{L_q(w)}
}
holds true, provided~$w \in \Smooth(\theta,C)$.
\end{St}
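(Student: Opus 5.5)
The plan is to split the time interval $[t,1]$ into two pieces at an intermediate anisotropic time, and to treat them differently. On the short initial segment we use only the crude monotonicity bound (Proposition~\ref{RescaledPDECorollary}). On the long remaining segment we use the strengthened bound (Proposition~\ref{StrengtheningOfSemiinvariant}), applied to a measure that lies in $\M$.

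Set $\mu = u(\fdot; t^{Ka_1},\ldots,t^{Ka_d})\in\M$. By Lemma~\ref{WidderLemma} and the semigroup property~\eqref{SemigroupProperty}, $u(\fdot;\vec{s}\,) = \Heat[\mu](\fdot;\vec{s}-\vec{t}_K)$ for $\vec{s}\geq \vec{t}_K := (t^{Ka_j})_j$. The time $t^{a_j}$ differs from $t^{a_j}-t^{Ka_j}$ only by a relatively negligible amount, since $K>1$ and $t$ is small. Heat stability of $\M$ lets us absorb this shift. Namely, $u(\fdot;\vec{s}\,)=\Heat[\mu'](\fdot;\vec{s}-\vec{r}\,)$ with $\mu'=\Heat[\mu](\fdot;\vec{r}-\vec{t}_K)\in\M$ for any $\vec{r}\geq\vec{t}_K$.

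The shifted time vector is not exactly of the form $(\tau^{a_j})_j$. I will choose $\vec{r}$ so that $\vec{s}-\vec{r}$ is exactly of that anisotropic form at the two relevant endpoints. At the top endpoint this requires $1-r_j = \lambda^{a_j}$, so we rescale by the dilation $\Dil$ corresponding to a common factor. At the bottom endpoint we need $t^{a_j}-r_j = (\tau\lambda)^{a_j}$. With $r_j = t^{Ka_j}$, both conditions can only be met up to factors $1+O(t^{(K-1)a_{\min}})$.

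So the concrete scheme is as follows.
\begin{enumerate}[(i)]
\item Apply Proposition~\ref{RescaledPDECorollary} to pass between the true times and the nearby anisotropic times. This costs only multiplicative factors $\big(\prod_j s_j/t_j\big)^{(q-1)/2}$ that are $1+o(1)$, since the relative perturbations are $O(t^{(K-1)a_j})$. The weight $v$ solving~\eqref{eq2115} is carried along exactly.
\item On the main piece, use the rescaling from the proof of Lemma~\ref{DilationOfInvariantCone} to reduce to a pair $(\tilde\mu,\tilde w)$ with $\tilde\mu\in\M$ (dilation invariance) and $\tilde w\in\Smooth(\theta,C)$ (by~\eqref{ConvolutionAndSmoothness},~\eqref{DilationAndSmoothness}). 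Then apply Proposition~\ref{StrengtheningOfSemiinvariant} with some $\nu_0=\kappa(\M,\theta,C)>0$. This yields the factor $\tau^{-\alpha/2+\nu_0}$ with $\tau \asymp t$.
\item Combine: the total factor is $t^{-\alpha/2+\nu_0}(1+O(t^{c}))$. This is at most $t^{-\alpha/2+\nu}$ for $\nu=\nu_0/2$ once $t$ is sufficiently small, which is exactly why the statement restricts to small $t$.
\end{enumerate}

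The main obstacle is the bookkeeping in step (i), namely matching the weight. The weight in the claim is $\Heat[w](\fdot;(1-t^{a_j})/q)$. After the time shift by $\vec{r}$, the backward solution must be evaluated at slightly different times. We cannot simply lower the weight's time, because Proposition~\ref{RescaledPDECorollary} requires $u$ and $v$ to be evaluated at the same time point.

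I would handle this by defining $v(x,\vec{s}\,)=\Heat[w](x;(\one-\vec{s}\,)/q)$ on the whole box, so that $v$ is a genuine backward solution. The weight is then never changed, and only the intermediate endpoint of the two-stage comparison is chosen anisotropically. If an exact anisotropic endpoint cannot be reached, the remaining mismatch in the weight is a heat extension of $w$ by times of order $t^{Ka_j}$. Lemma~\ref{Lemma43} and the dilation bookkeeping show that this mismatch changes the weighted norm by a factor that is $1+o(1)$ as $t\to0$, not merely $O(1)$. That $1+o(1)$ control is precisely what step (iii) needs, since an $O(1)$ factor would be absorbed only by $t^{\nu_0/2}$, which is fine anyway.
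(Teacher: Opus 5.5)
Your argument is correct and has the same architecture as the paper's proof. The crude monotonicity of Proposition~\ref{RescaledPDECorollary} connects the true times to an anisotropic curve issuing from a point where the trace of $u$ lies in $\M$. Corollary~\ref{StrengthenedRescaledPDECorollary} is then run along that curve, and all bounded losses are absorbed into half of $\kappa(\M,\theta,C)$.

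The difference is where the curve is anchored. The paper uses heat stability to place an element of $\M$ at the time $(1-s^{a_1},\ldots,1-s^{a_d})$, with $s$ chosen by~\eqref{ChoiceOfs}. The curve $\tau\mapsto(1-s^{a_j}+\tau^{a_j})_j$ then hits $\one$ exactly at $\tau=s$, so a single crude step from $(t^{a_j})_j$ to $(t^{a_j}+1-s^{a_j})_j$ suffices. Its factor is merely bounded, and that boundedness is precisely where $K\geq\max_{i,j}a_j/a_i$ from~\eqref{DefinitionOfK} is used.

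With your choice $r_j=t^{Ka_j}$ you anchor at the point where $u$ is in $\M$ by hypothesis, so heat stability is not actually needed. Instead you pay two crude steps, and both factors are $1+o(1)$ for every $K>1$:
\begin{itemize}
\item at the bottom, from $(t^{a_j})_j$ to $(t^{Ka_j}+t^{a_j})_j$, with factor $\prod_j(1-t^{(K-1)a_j})^{-(q-1)/2}$;
\item at the top, from $(t^{Ka_j}+\tau_1^{a_j})_j$ to $\one$ with $\tau_1=\min_j(1-t^{Ka_j})^{1/a_j}$, with factor $\prod_j\big((1-t^{Ka_j})/\tau_1^{a_j}\big)^{(q-1)/2}$.
\end{itemize}
For the quantity $\int u^q v\,dx$, the middle step gives $(t/\tau_1)^{-d(q-1)/2+\kappa q}$. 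The weight at its upper end is $\Heat[w]$ at times $(1-t^{Ka_j}-\tau_1^{a_j})/q$. It stays in $\Smooth(\theta,C)$ after rescaling by~\eqref{ConvolutionAndSmoothness} and~\eqref{DilationAndSmoothness}, since $\tau_1<1$.

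You should write these choices out explicitly. Proposition~\ref{RescaledPDECorollary} only bounds the quantity at one time by the quantity at a componentwise later time, with elapsed times counted from $(t^{Ka_j})_j$. This is what forces the lower anisotropic point to satisfy $\tau_0^{a_j}\geq t^{a_j}-t^{Ka_j}$ for all $j$ (take $\tau_0=t$), and the upper one to satisfy $\tau_1^{a_j}\leq 1-t^{Ka_j}$.

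Your final paragraph is unnecessary and partly inaccurate. Once $v(x,\vec{s}\,)=\Heat[w](x;(\one-\vec{s}\,)/q)$ is used as an exact backward solution on the whole box, no weight mismatch ever arises. Moreover, Lemma~\ref{Lemma43} only gives two-sided comparability with constants depending on $\theta$ and $C$, not a $1+o(1)$ factor. As you note yourself, an $O(1)$ loss would be absorbed by $t^{\nu_0/2}$ anyway.
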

We will derive this proposition from Proposition~\ref{StrengtheningOfSemiinvariant}, which is translated into PDE language in the same manner as Proposition~\ref{MonotoniityFormula}.
\begin{St}
Let~$u\colon \R^d \times [0,1]^d\to \R$ be a non-negative solution to the multiparametric heat equation, let also~$u(\fdot;0)\in \M$. Let~$v$ be a non-negative solution to~\eqref{eq2115}, let also~$v(\fdot,\one)\in \Smooth(\theta,C)$. Then,
\eq{
\int\limits_{\R^d}u^q(x; t^{a_1},t^{a_2},\ldots, t^{a_d}) v(x; t^{a_1},t^{a_2},\ldots, t^{a_d})\,dx \leq t^{-\frac{d(q-1)}{2} + \kappa q} \int\limits_{\R^d} u^q(x; \one)v(x;\one)\,dx,
}
where~$\kappa = \kappa(\M,\theta,C)>0$.
\end{St}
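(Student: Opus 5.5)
The plan is to translate the statement directly into Proposition~\ref{StrengtheningOfSemiinvariant}, in exactly the way Corollary~\ref{PDECorollary} was obtained from Proposition~\ref{MonotoniityFormula}. First, I apply Lemma~\ref{WidderLemma} to the non-negative solution $u$ on $\R^d\times[0,1]^d$. This gives a measure $\mu$ with $u(\fdot;\vec{t}\,)=\Heat[\mu](\fdot;\vec{t}\,)$. Since $u(\fdot;0)$ is the boundary trace, we have $\mu=u(\fdot;0)\in\M$.

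Second, I handle the backward equation~\eqref{eq2115} with the time reversal $\vec{t}\mapsto(\one-\vec{t}\,)/q$. After this change of variables the function $\tilde v(x,\vec{\tau}) = v(x,\one-q\vec{\tau})$ solves the forward multiparametric heat equation on $\prod_j[0,1/q]$. Lemma~\ref{WidderLemma}, after a harmless rescaling of time as in Proposition~\ref{RescaledPDECorollary}, then produces a measure $w$ with
\[
v(x,\vec{t}\,)=\Heat[w]\Big(x;\frac{\one-\vec{t}}{q}\Big).
\]
In particular $w=v(\fdot,\one)$, so $w\in\Smooth(\theta,C)$ by assumption; it is a genuine weight, not merely a measure.

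Third, I substitute $\vec{t}=(t^{a_1},\ldots,t^{a_d})$. The left-hand side becomes
\[
\Big\|\Heat[\mu]\big(\fdot;t^{a_1},\ldots,t^{a_d}\big)\Big\|^q_{L_q\big(\Heat[w](\fdot;\frac{1-t^{a_1}}{q},\ldots,\frac{1-t^{a_d}}{q})\big)},
\]
and the right-hand side becomes $\|\Heat[\mu](\fdot;\one)\|_{L_q(w)}^q$. Now I apply Proposition~\ref{StrengtheningOfSemiinvariant} with $\nu=\kappa(\M,\theta,C)$; the supremum in the definition of $\kappa$ is attained, and $\kappa>0$ because $\M$ has no delta measures. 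Raising that inequality to the power $q$ gives the factor $t^{-\alpha q/2+\kappa q}$. By~\eqref{DefOfq}, $\alpha q/2 = d(q-1)/2$, which is precisely the claimed exponent. Equivalently, the claim reads $S_q[\mu,w](t)\le t^{\kappa q}S_q[\mu,w](1)$.

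The only delicate point, where I expect the main obstacle, is justifying the Widder-type representation for $v$. In particular, one must check that the measure obtained at time $\vec{t}=\one$, i.e.\ at $\tau=0$, is exactly the function $v(\fdot,\one)$, so that the smoothness hypothesis transfers to $w$. Since $v$ is assumed to be a (continuous) solution up to $\vec{t}=\one$, the heat extension converges to its boundary values. This identifies $w$ with $v(\fdot,\one)$, and the finiteness condition in Remark~\ref{MeasureRemark} holds whenever the right-hand side is finite; if it is infinite, there is nothing to prove.
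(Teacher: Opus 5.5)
Your proposal is correct and follows the route the paper intends. The paper gives no separate proof: it obtains the statement as the PDE translation of Proposition~\ref{StrengtheningOfSemiinvariant} via Lemma~\ref{WidderLemma}, exactly as Corollary~\ref{PDECorollary} was obtained from Proposition~\ref{MonotoniityFormula}, with $\mu=u(\fdot;0)\in\M$, $w=v(\fdot,\one)\in\Smooth(\theta,C)$, $\nu=\kappa$ (the supremum being attained), and $\alpha q/2=d(q-1)/2$ — and the extra time rescaling you mention is unnecessary, since Lemma~\ref{WidderLemma} already covers $[0,T]^d$ with $T=1/q$.
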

Applying the same dilations as we used to derive Proposition~\ref{RescaledPDECorollary} from Corollary~\ref{PDECorollary}, we obtain the following corollary.
\begin{Cor}\label{StrengthenedRescaledPDECorollary}
Fix~$s > 0$. Let~$u\colon \R^d \times \prod_j [0,s^{a_j}]\to \R$ be a non-negative solution to the multiparametric heat equation, let also~$u(\fdot;0)\in \M$. Let~$v$ be a non-negative solution to~\eqref{eq2115} on the same domain, let also~$v(\fdot,s^{a_1},s^{a_2},\ldots,s^{a_d})\in \Smooth(\theta,C)$. Then,
\mlt{
\int\limits_{\R^d}u^q(x; t^{a_1},t^{a_2},\ldots, t^{a_d}) v(x; t^{a_1},t^{a_2},\ldots, t^{a_d})\,dx\\
 \leq \Big(\frac{t}{s}\Big)^{-\frac{d(q-1)}{2} + \kappa q} \int\limits_{\R^d} u^q\big(x; s^{a_1},s^{a_2},\ldots,s^{a_d}\big)v\big(x;s^{a_1},s^{a_2},\ldots,s^{a_d}\big)\,dx,
}
where~$\kappa = \kappa(\M,\theta,C)>0$.
\end{Cor}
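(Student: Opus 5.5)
We derive Corollary~\ref{StrengthenedRescaledPDECorollary} from the unnumbered proposition preceding it by an anisotropic rescaling, just as Proposition~\ref{RescaledPDECorollary} was obtained from Corollary~\ref{PDECorollary}. Here the time vector is a single parameter raised to the powers~$a_j$, so the natural rescaling is the one-parameter anisotropic dilation. Set~$\lambda = s^{1/2}$ and define, on~$\R^d\times[0,1]^d$,
\eq{
\tilde{u}(x,\vec{\theta}) = u\big(\Dil_{\lambda}x, s^{a_1}\theta_1,\ldots,s^{a_d}\theta_d\big),\qquad \tilde{v}(x,\vec{\theta}) = v\big(\Dil_{\lambda}x, s^{a_1}\theta_1,\ldots,s^{a_d}\theta_d\big).
}
Since~$(\Dil_\lambda x)_j = s^{a_j/2}x_j$ and the time coordinate~$t_j$ is scaled by~$s^{a_j}$, each equation~\eqref{MultiparametricHeatEquations} and~\eqref{eq2115} is preserved coordinatewise. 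This is the same computation as in Proposition~\ref{RescaledPDECorollary} with~$s_j = s^{a_j}$. Both functions are non-negative.

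Next, we check the hypotheses of the proposition for~$\tilde u,\tilde v$. By Lemma~\ref{WidderLemma},~$u$ is the heat extension of~$\mu = u(\fdot;0)$. By Lemma~\ref{SeveralDilations} (in the form used in the proof of Lemma~\ref{DilationOfInvariantCone}),~$\tilde u$ is the heat extension of~$\tilde\mu$, a constant multiple of~$\Dil_{\lambda^{-1}}[\mu]$. The cone~$\M$ is dilation invariant and a cone, so~$\tilde u(\fdot;0)=\tilde\mu\in\M$. For the weight, we have~$\tilde v(\fdot,\one) = v(\Dil_\lambda\fdot, s^{a_1},\ldots,s^{a_d})$. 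This is the step that needs the most care: we must check that~$\Smooth(\theta,C)$ is preserved. Property~\eqref{DilationAndSmoothness} gives this when~$\lambda<1$, i.e.\ when the dilation compresses. For~$s>1$ the smoothness function could worsen. I expect the statement is really meant for~$s\le 1$, which is the relevant case since the solution is rescaled from small time ranges. If needed, one adds the normalization remark that only the case~$s\leq1$ is used. The pointwise definition of~$\s$ makes the~$s\le1$ case immediate.

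Finally, apply the proposition to~$\tilde u,\tilde v$ with~$\tau = t/s\in(0,1]$ in place of~$t$. Note that~$\tilde u(x;\tau^{a_1},\ldots,\tau^{a_d}) = u(\Dil_\lambda x; t^{a_1},\ldots,t^{a_d})$ because~$s^{a_j}\tau^{a_j}=t^{a_j}$, and similarly for~$\tilde v$ and for time~$\one$. Changing variables~$y=\Dil_\lambda x$ multiplies both integrals by the same Jacobian~$\lambda^{-d}$, which cancels. This yields the claimed inequality with factor~$(t/s)^{-\frac{d(q-1)}{2}+\kappa q}$ and the same~$\kappa=\kappa(\M,\theta,C)$.
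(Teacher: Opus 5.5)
Your proof is correct and follows the paper's own argument: the paper just applies to the preceding proposition the dilations $x_j\mapsto s^{a_j/2}x_j$, $t_j\mapsto s^{a_j}t_j$ used for Proposition~\ref{RescaledPDECorollary}, with $s_j=s^{a_j}$. Your caveat is also accurate. The paper's one-line derivation silently assumes the rescaling preserves $\Smooth(\theta,C)$, which by~\eqref{DilationAndSmoothness} holds only for $s\le 1$. For $s>1$ the rescaled weight lies only in $\Smooth(\theta,Cs^{\theta\max_j a_j/2})$, so one gets the possibly smaller exponent $\kappa(\M,\theta,Cs^{\theta\max_j a_j/2})$. The only place the corollary is used, the proof of Proposition~\ref{Convenient}, has $s<1$.
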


In the corollary above, we may replace the parallelepiped~$\prod_j [0,s^{a_j}]$ with~$\prod_j [1-s^{a_j},1]$.

\begin{figure}[h!]
\centerline{
\includegraphics[height=7cm]{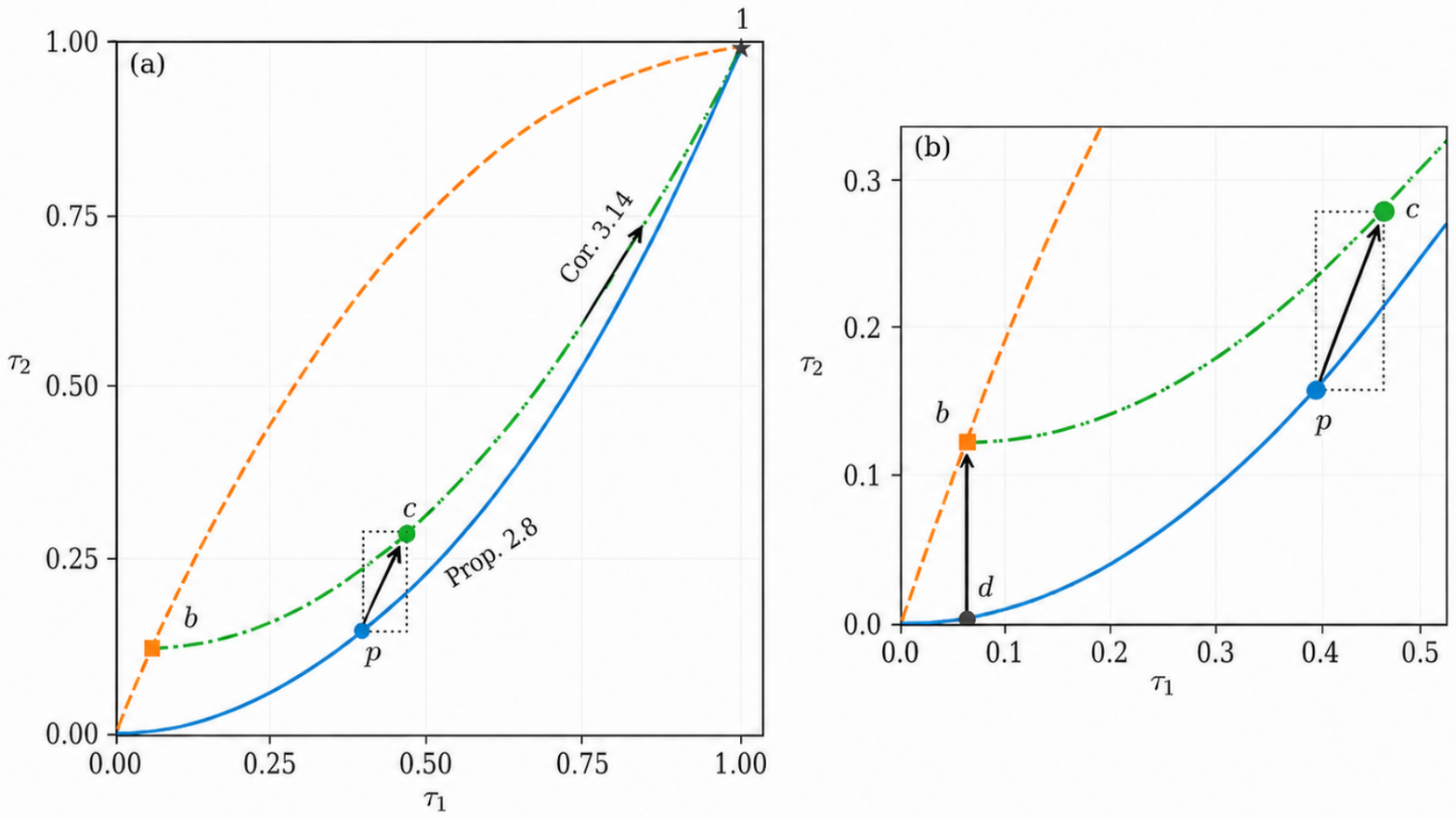}
}
\caption{Illustration to the proof of Proposition~\ref{Convenient}. Here~$d=2$,~$a_1 = 2/3$,~$a_2 = 4/3$, and~$K=3$. The blue curve is~$(\tau^{a_1},\tau^{a_2})$, which is the parabola in this case. The orange curve is~$(1-\sigma^{a_1},1-\sigma^{a_2})$. The point~$d$ is~$(t^{Ka_1}, t^{Ka_2})$ for some fixed small~$t$. The point~$b$ is~$(1-s^{a_1}, 1-s^{a_2})$ defined by~\eqref{ChoiceOfs}. The green curve is~$\tau \mapsto (1-s^{a_1} + \tau^{a_1},1-s^{a_2} + \tau^{a_2})$, and the black arrow from~$p = (t^{a_1},t^{a_2})$ to~$c$ signifies the application of Proposition~\ref{RescaledPDECorollary}. After that, the application of Corollary~\ref{StrengthenedRescaledPDECorollary} corresponds to the move from~$c$ to~$(1,1)$ along the green curve.}
\label{Figure311}
\end{figure}

\begin{proof}[Proof of Proposition~\ref{Convenient}]
We pick~$t$ and choose a number~$s$ close to~$1$ such that
\eq{\label{FirstAssumptionOns}
\forall j \qquad t^{Ka_j} \leq 1-s^{a_j};
}
there will be further requirements on this number. Then, by the assumptions that~$\M$ is heat stable and~\eqref{eq321}, we have
\eq{
u(\fdot; 1-s^{a_1},1-s^{a_2},\ldots, 1-s^{a_d})\in \M.
}
Then, with the standard notation~$v(x,\vec{\tau}) = \Heat[w](x;\frac{1-\tau_1}{q},\frac{1-\tau_2}{q},\ldots, \frac{1-\tau_d}{q})$, we have
\mlt{
\int\limits_{\R^d}u^q(x; t^{a_1},t^{a_2},\ldots, t^{a_d}) v(x; t^{a_1},t^{a_2},\ldots, t^{a_d})\,dx \\
\Lref{\text{\tiny Prop.~\ref{RescaledPDECorollary}}} \Big(\prod\limits_{j=1}^d\frac{t^{a_j}}{t^{a_j}+1-s^{a_j}}\Big)^{-\frac{q-1}{2}}\int\limits_{\R^d}u^q\big(x; t^{a_1}+1-s^{a_1},\ldots, t^{a_d}+1-s^{a_d}\big) v\big(x; t^{a_1}+1-s^{a_1},\ldots, t^{a_d}+1-s^{a_d}\big)\,dx \\
\Lref{\text{\tiny Cor.~\ref{StrengthenedRescaledPDECorollary}}} \Big(\prod\limits_{j=1}^d\frac{t^{a_j}}{t^{a_j}+1-s^{a_j}}\Big)^{-\frac{q-1}{2}} \Big(\frac{t}{s}\Big)^{-\frac{d(q-1)}{2}+\kappa q}\int\limits_{\R^d}u^q(x;\one) v(x;\one)\,dx,
}
where~$\kappa = \kappa(\M,\theta,C)$.
Thus, it remains to justify
\eq{
 \Big(\prod\limits_{j=1}^d\frac{t^{a_j}}{t^{a_j}+1-s^{a_j}}\Big)^{-\frac{q-1}{2}} \Big(\frac{t}{s}\Big)^{-\frac{d(q-1)}{2}+\kappa q} \leq t^{-\frac{d(q-1)}{2}+\frac{\kappa q}{2}},
}
since we may set~$\nu := 1/2\, \kappa(\M,\theta,C)$ in Proposition~\ref{Convenient}. The latter inequality is equivalent to
\eq{
\prod\limits_{j=1}^d\Big(1+ \frac{1-s^{a_j}}{t^{a_j}}\Big)^{\frac{q-1}{2}} s^{\frac{d(q-1)}{2}-\kappa q} \leq t^{-\frac{\kappa q}{2}}.
}
We claim that for any sufficiently small~$t$, there exists a choice of~$s = s(t)$ that fulfills~\eqref{FirstAssumptionOns} and for which the left-hand side of the inequality above is bounded (uniformly bounded with respect to~$t$); clearly, the claim yields the inequality above since the right hand side blows up as~$t\to 0$. We choose~$s$ in such a way that
\eq{\label{ChoiceOfs}
t = \min\limits_j (1-s^{a_j})^{\frac{1}{K a_j}}.
}
Let the minimum be attained at~$j=i$. The variable~$r = 1-s$ is more convenient. In this case,
\eq{
t = \big(a_i r+ o(r)\big)^{\frac{1}{K a_i}}.
}
The boundedness of the left hand side reduces to the boundedness of
\eq{
\frac{1-s^{a_j}}{(1-s^{a_i})^{\frac{a_j}{Ka_i}}} \asymp \frac{a_j r + o(r)}{\big(a_i r + o(r)\big)^{\frac{a_j}{Ka_i}}} = O(r^{1-\frac{a_j}{Ka_i}}),
}
which is bounded, provided~$K \geq a_j/a_i$ for any~$j$. The latter inequality is true by the definition of~$K$ in~\eqref{DefinitionOfK}.
\end{proof}
\begin{Rem}\label{Rem312}
By a better choice of~$\nu$, we may show the following: For any~$\nu \in (0,\kappa(\M,\theta,C))$ the inequality~\eqref{StrengtheningOfSemiinvariantFormula} holds true, provided~$t$ is sufficiently small, the required smallness of~$t$ depends on~$\nu$.
\end{Rem}


\section{End of the proof}\label{S4}

\subsection{A compactness argument}\label{s41}
Given a translation invariant closed subspace~$\W\subset \Sw'(\R^d; \R^\ell)$ that is also dilation invariant, one naturally constructs the set
\eq{
\M^\W = \Set{\mu \in \Sw'(\R^d)}{\mu \geq 0, \exists a\in \R^\ell \setminus \{0\} \quad a\otimes \mu \in \W}.
}
The set~$\M^\W$ is an invariant cone of measures in the sense of Definition~\ref{InvariantConeOfMeasuresDefinition}. By Proposition~\ref{ConvolutionTranslationInvariant} in the appendix, it is also a heat stable cone, see Definition~\ref{HeatStableCones}.
We wish to find a strengthening of Proposition~\ref{Convenient} that applies to vector-valued distributions in the sense that the assumption~\eqref{eq321} is replaced with~$f\in \W$ and the flatness assumption is the same as in our main Definition~\ref{Convex/flat}. It appears that one cannot do this in the very straightforward way, and an additional assumption that the corresponding function or measure is localized, is needed. We provide an example showing this necessity in Subsection~\ref{SNecessity} of the appendix and now state the localization assumption. Let~$u$ be another weight. We assume that it decays slower than~$w$ at infinity. The said localization assumption reads as
\eq{\label{Localization}
\|f_K\|_{L_1(u)} \leq C \|f_{K}\|_{L_1(\Heat[w](\fdot; 1 - A^{-2Ka_1}, 1 - A^{-2Ka_2}, \ldots, 1 - A^{-2Ka_d}))}.
}
This is the localization assumption for the atom~$(0,0)$, later we will transfer it to an arbitrary atom via dilations and translations. The choice of the scale~$K$ is dictated by Proposition~\ref{Convenient}. Lemmas~\ref{Lemma41} and~\ref{Lemma42} say that~$\Heat[w](\fdot; 1 - A^{-2Ka_1}, 1 - A^{-2Ka_2}, \ldots, 1 - A^{-2Ka_d})$ may be replaced with simply~$w$ in this definition; the constant~$C$ will change as well. 

We now fix the weights. We set
\eq{\label{OurWeights}
\begin{aligned}
w(x) &= \frac{(1+|x|^2)^{-\frac{\theta_1}{2}}}{\sum_{j\in \Z^d} (1+|x-j|^2)^{-\frac{\theta_1}{2}}};\\
u(x) &= (1+|x|^2)^{-\frac{\theta_2}{2}}; \\
v(x) &= (1+|x|^2)^{-\frac{\theta_3}{2}}.
\end{aligned}
}
A more complicated formula for~$w$ is prescribed by~\eqref{SumOneForWeight}. The role of the weight~$v$ will become clear slightly later.
We assume
\alg{
\label{eq414}& 2d < \theta_2 < \theta_1;\\
\label{eq415}& q\theta_1 < \theta_3.
}
The second inequality in~\eqref{eq414} says~$w$ is more concentrated than~$u$ as required by our understanding of the localization assumption~\eqref{Localization}. The new weight~$v$ decays at infinity so fast that~$\|f_1\|_{L_q(v)}$ is controlled by~$\| f_3\|_{L_1(w)}$ according to Lemma~\ref{Lemma44}. Note that our choice for~$w$ satisfies the requirement~\eqref{eq2216} and~$w\in \Smooth(\theta_1, C_1)$,~$u \in \Smooth(\theta_2,C_2)$,~$v\in \Smooth(\theta_3,C_3)$ for sufficiently large constants~$C_1$,~$C_2$, and~$C_3$ (recall Definition~\ref{SmoothnessOfWeightsDef} and formula~\eqref{TypicalWeightSmoothness}).  We need yet another weighted lemma. If~$\Omega \subset \R^d$ and~$g \colon \Omega \to \R$ is a function, we define its Lipschitz constant by the formula
\eq{
\|g\|_{\Lip(\Omega)} = \sup\Set{\frac{|g(x) - g(y)|}{|x-y|}}{x,y\in \Omega, x\ne y}.
}
\begin{Le}\label{LipschitzBound}
Assume~$w$ satisfies the bound~\eqref{eq2216} and~$R$ is fixed. Then,
\eq{
\big\|\Heat[f](\fdot, \vec{t}\,)\big\|_{\Lip(B_R(0))}\lesssim \|f\|_{L_1(w)}, \qquad \forall j \quad t_j \in \Big[\frac12,2\Big].
}
\end{Le}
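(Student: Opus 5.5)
The plan is to bound the gradient of $\Heat[f](\fdot,\vec{t}\,)$ pointwise on $B_R(0)$ by differentiating the Gaussian kernel. Then the Lipschitz constant on the convex set $B_R(0)$ is at most $\sup_{B_R(0)}|\nabla_x \Heat[f](\fdot,\vec{t}\,)|$, by the mean value inequality along segments. For $x\in B_R(0)$ and each coordinate $i$ we have
\eq{
\partial_{x_i}\Heat[f](x,\vec{t}\,) = -\Big(\prod_{j=1}^d(4\pi t_j)\Big)^{-\frac12}\int_{\R^d} \frac{x_i-y_i}{2t_i}\,e^{-\sum_{j}\frac{(x_j-y_j)^2}{4t_j}} f(y)\,dy,
}
so that $|\nabla_x\Heat[f](x,\vec{t}\,)| \lesssim \int_{\R^d} |f(y)|\,\Phi_x(y)\,dy$. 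Here $\Phi_x(y) = (1+|x-y|)e^{-|x-y|^2/8}$, using $t_j\in[\frac12,2]$ to control the prefactors and to bound the exponent from below by $|x-y|^2/8$.

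The remaining step is to dominate $\Phi_x$ by a multiple of $w$, uniformly in $x\in B_R(0)$. For $|x|\le R$ we have $|x-y|\geq |y|-R$. Hence $\Phi_x(y)\lesssim_R e^{-|y|^2/16}$ for all $y$: when $|y|\le 2R$ this is trivial, and otherwise $|x-y|\ge |y|/2$. On the other hand, \eqref{eq2216} gives $w(y)\geq C(1+|y|)^{-\theta}$, and this polynomial lower bound dominates any Gaussian. So $\Phi_x(y)\lesssim_{R,\theta,C} w(y)$ uniformly in $x\in B_R(0)$ and $\vec{t}\in[\frac12,2]^d$. Plugging this in gives $\sup_{B_R(0)}|\nabla \Heat[f]|\lesssim \|f\|_{L_1(w)}$, which is the claim. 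For vector-valued $f$ we argue coordinatewise.

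There is essentially no obstacle. The only care needed is bookkeeping: the constant depends on $R$, $\theta$, $C$ and $d$, and the derivative formula must be justified when $f\in L_1(w)$ is not integrable. Differentiation under the integral is legitimate because the same domination, $\Phi_x(y)\lesssim w(y)$ locally uniformly in $x$, supplies an integrable majorant $|f|w$. If $f$ is only a measure, the same computation works verbatim with $|f|\,dy$ replaced by the total variation.
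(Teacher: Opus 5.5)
The paper omits this proof (``We omit the tedious proof''), so there is no argument to compare against. Your proposal is correct and complete: differentiate the kernel, bound the integrand uniformly for $x\in B_R(0)$ by a Gaussian in $y$, and absorb that into $w$ via the polynomial lower bound \eqref{eq2216}. This is in the same spirit as the appendix proofs of Lemmas~\ref{Lemma41}--\ref{Lemma44}.

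One small slip in your justification of $\Phi_x(y)\lesssim_R e^{-|y|^2/16}$. From $|x-y|\geq |y|/2$ you only get $\Phi_x(y)\leq (1+R+|y|)e^{-|y|^2/32}$, which is not $O(e^{-|y|^2/16})$. The stated bound is still true, using $|y|\leq |x-y|+R$ instead:
\[
-\frac{|x-y|^2}{8}+\frac{|y|^2}{16}\leq -\frac{|x-y|^2}{16}+\frac{R|x-y|}{8}+\frac{R^2}{16}.
\]
In any case, any Gaussian rate is dominated by $(1+|y|)^{-\theta}$, so the conclusion is unaffected.
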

We omit the tedious proof of Lemma~\ref{LipschitzBound}.

\begin{Th}\label{Theorem41}
Assume~$\delta_0 \notin \M^\W$ and fix~$\nu \in (0,\kappa(\M^\W,\theta_1, C_1))$. For every~$C \geq 1$ and every sufficiently large~$A$ there exists~$\eps > 0$ such that the flatness condition
\eq{\label{FlatnessInThm41}
\|f_L\|_{L_1(\tilde{w})} \leq (1+\eps) \|f_0\|_{L_1(w)},\qquad \tilde{w} = \Heat [w]\big(\fdot; 1 - A^{-2a_1L},  1 - A^{-2a_2L}, \ldots,  1 - A^{-2a_dL}\big),
}
and the localization condition~\eqref{Localization} yield
\eq{\label{eq426}
\|f_1\|_{L_q(\tilde{v})} \leq A^{\alpha - 2\nu} \|f_0\|_{L_q(v)},\qquad \tilde{v} = \Heat[v]\Big(\fdot; \frac{1- A^{-2a_1}}{q},\frac{1- A^{-2a_2}}{q},\ldots, \frac{1- A^{-2a_d}}{q}\Big).
}
\end{Th}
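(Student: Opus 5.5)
We argue by contradiction combined with compactness, as in the analogous step of~\cite{Stolyarov2022}. Fix~$C$ and a large~$A$, and suppose that for every~$\eps = 1/n$ there is~$f^{(n)}\in\W\cap L_1$ satisfying the flatness condition~\eqref{FlatnessInThm41} with~$\eps = 1/n$ and the localization condition~\eqref{Localization}, while~\eqref{eq426} fails. All conditions are homogeneous, so we normalize~$\|f^{(n)}_0\|_{L_1(w)} = 1$. The second inequality in~\eqref{eq414} and Lemma~\ref{BasicMonotonicityLemma} with the heated weight give~$\|f^{(n)}_K\|_{L_1(u)}\lesssim C$. Since~$u$ decays only polynomially with~$\theta_2>2d$, the measures~$|f^{(n)}_K|\,dx$ lie in a weighted-bounded set. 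After passing to a subsequence,~$f^{(n)}_K\to\mu$ in~$\Sw'$, where~$\mu$ is an~$\R^\ell$-valued charge with~$\int(1+|x|)^{-\theta_2}d|\mu|<\infty$ and~$\mu\in\W$ by closedness. Lemma~\ref{Lemma47}, Lemma~\ref{LipschitzBound}, and the rapid decay of~$w,v,\tilde v$ compared with~$u$ turn this weak convergence into convergence of all relevant quantities. Namely, the functions~$f^{(n)}_m = \Heat[f^{(n)}_K](\fdot;A^{-2ma_j}-A^{-2Ka_j})$ for~$m=0,1,L$ converge locally uniformly, with tails controlled by the weights. Hence~$\|f^{(n)}_0\|_{L_1(w)}$,~$\|f^{(n)}_L\|_{L_1(\tilde w)}$,~$\|f^{(n)}_1\|_{L_q(\tilde v)}$ and~$\|f^{(n)}_0\|_{L_q(v)}$ converge to the corresponding quantities for~$\mu$.

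In the limit, the flatness condition becomes equality in Lemma~\ref{BasicMonotonicityLemma} with~$p=1$. Here~$g = \Heat[\mu](\fdot;A^{-2La_j}-A^{-2Ka_j})$ (note~$K<L$) is a genuine function, the weight is~$w$ (a.e. positive), and~$f_0$ is~$g$ heated by~$1-A^{-2La_j}$. The limiting norm is~$1$, so both sides are finite. Lemma~\ref{ZeroFlatnessLemma} then yields~$g = a\otimes h$ with~$h\ge0$. By injectivity of the heat semigroup,~$\mu_K:=\lim f^{(n)}_K = a\otimes h_K$ with~$h_K\geq0$, and~$a\neq0$ by the normalization. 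Thus~$h_K\in\M^\W$, and the heat-stability of~$\M^\W$ propagates this to later times.

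Now apply Proposition~\ref{Convenient} to~$u(\fdot;\vec\tau)=|a|^{-1}\langle a,\cdot\rangle$ of the limiting heat extension, a non-negative solution whose trace at time~$\Dil_{A^{-2K}}$-scale lies in~$\M^\W$. We take~$t=A^{-2}$, which is small when~$A$ is large, and the weight~$v\in\Smooth(\theta_3,C_3)$. Remark~\ref{Rem312} allows any~$\nu<\kappa$ at the price of~$t$ small, i.e.~$A$ large. Proposition~\ref{Convenient} is stated with~$\kappa(\M,\theta,C)$ for the weight class, so we need~$\kappa$ for~$\theta_3$ rather than~$\theta_1$. I expect either monotonicity of~$\kappa$ in the weight class or the freedom to choose~$\nu$ accordingly will handle this. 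The output is exactly the limiting form of~\eqref{eq426} with the strict inequality~$A^{\alpha-2\nu'}$ for some~$\nu'>\nu$. This contradicts the limit of the failed inequalities, which is~$\geq A^{\alpha-2\nu}$, provided~$\|\mu_0\|_{L_q(v)}>0$. That holds, since~$\mu_0\ne0$ because~$\|\mu_0\|_{L_1(w)}=1$. The time normalizations match via~\eqref{eq217} and Lemma~\ref{SeveralDilations}.

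The main obstacle is the compactness step. We must show that the~$L_q$ and~$L_1$ weighted quantities genuinely pass to the limit, without loss of mass at infinity. This is exactly where localization~\eqref{Localization} and the decay hierarchy~$\theta_2<\theta_1<\theta_3/q$ are used, together with Lemmas~\ref{Lemma47} and~\ref{LipschitzBound} for dominated or uniform convergence on balls and tail estimates outside. A secondary delicate point is applying Lemma~\ref{ZeroFlatnessLemma} to the limit at a slightly earlier time than~$f_0$ while retaining finiteness.
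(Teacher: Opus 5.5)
Your architecture matches the paper's: contradiction, the normalization $\|f^{(n)}_0\|_{L_1(w)}=1$, a limit at level $K$ where the localization lives, rank-one structure via Lemma~\ref{ZeroFlatnessLemma}, membership in $\M^\W$, and a contradiction with Proposition~\ref{Convenient} and Remark~\ref{Rem312} at $t=A^{-2}$ with some $\nu'>\nu$. However, the step that passes the flatness condition to the limit fails, because you have reversed the direction of time between levels $K$ and $L$. Since $K<L$ and $A>1$, we have $A^{-2La_j}<A^{-2Ka_j}$, so $f_L$ is \emph{less} smoothed than $f_K$. By~\eqref{ReproducingThefk}, $f_K=\Heat[f_L](\fdot;A^{-2Ka_1}-A^{-2La_1},\ldots)$, and not conversely. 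Consequences:
\begin{itemize}
\item Your formula $f^{(n)}_m=\Heat[f^{(n)}_K](\fdot;A^{-2ma_j}-A^{-2Ka_j})$ is false for $m=L$.
\item Your $g=\Heat[\mu](\fdot;A^{-2La_j}-A^{-2Ka_j})$ is a backward heat flow with negative times. It is not a ``genuine function'' and is not even defined in general.
\item Convergence of $f^{(n)}_K$ gives no control of $\|f^{(n)}_L\|_{L_1(\tilde w)}$ in terms of $\mu$.
\item The same reversal is behind ``by injectivity of the heat semigroup, $\mu_K=a\otimes h_K$ with $h_K\geq 0$''. Undoing the heat flow would give the rank-one form but not the sign.
\end{itemize}

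The repair is what the paper does: never take a limit at level $L$. Lemma~\ref{BasicMonotonicityLemma} and the semigroup property give
\[
\|f^{(n)}_0\|_{L_1(w)}\leq\|f^{(n)}_K\|_{L_1(W_K)}\leq\|f^{(n)}_L\|_{L_1(\tilde w)}\leq\tfrac{n+1}{n}\|f^{(n)}_0\|_{L_1(w)},\qquad W_K=\Heat[w](\fdot;1-A^{-2Ka_1},\ldots,1-A^{-2Ka_d}).
\]
So the near-equality already holds between levels $0$ and $K$. In the limit $F=\lim f^{(n)}_K$ you get equality in Lemma~\ref{BasicMonotonicityLemma} for $F$ heated by $1-A^{-2Ka_j}$. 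Hence $F=a\otimes h$ with $h\geq0$, and forward heating preserves the sign.

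For this you need the middle norm to pass to the limit and $F$ to be a function. The paper gets strong convergence in $L_1(W_K)$ from the uniform Lipschitz bounds of Lemma~\ref{LipschitzBound} (which come from $\|f^{(n)}_L\|_{L_1(\tilde w)}\leq 2$), combined with~\eqref{Localization}. With your weak-$*$ limit you can still close the sandwich via lower semicontinuity of the weighted variation, but you then need a version of Lemma~\ref{ZeroFlatnessLemma} for charges.

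Finally, your worry about $\kappa$ is justified, and the right answer is your second option (freedom to choose $\nu$), not monotonicity. The weight entering Proposition~\ref{Convenient} is $v\in\Smooth(\theta_3,C_3)$. Since $\kappa(\M,\theta,C)$ can only decrease when the class $\Smooth(\theta,C)$ is enlarged, monotonicity cannot turn $\nu<\kappa(\M^\W,\theta_1,C_1)$ into what is actually needed, namely $\nu<\kappa(\M^\W,\theta_3,C_3)$. That quantity is positive by Proposition~\ref{StrengtheningOfSemiinvariant}, and the paper's own application of Proposition~\ref{Convenient} to $v$ needs the same adjustment.
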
 
\begin{proof}
Assume the contrary: Let there exist a sequence of functions~$f^n \in \W$ such that
\alg{
\label{flatness1n}\|f_L^n\|_{L_1(\tilde{w})} &\leq \frac{n+1}{n}\|f_0^n\|_{L_1(w)},\\
\label{localization}\|f^n_K\|_{L_1(u)} &\leq C \|f^n_{K}\|_{L_1(\Heat[w](\fdot; 1 - A^{-2Ka_1}, 1 - A^{-2Ka_2}, \ldots, 1 - A^{-2Ka_d}))}, \\
\label{antigrowth}\|f^n_1\|_{L_q(\tilde{v})} &\geq A^{\alpha- 2\nu} \|f^n_0\|_{L_q(v)}.
}
Without loss of generality, we may assume
\eq{
\|f_0^n\|_{L_1(w)} = 1.
}
By~\eqref{flatness1n},~$\|f_L^n\|_{L_1(\tilde{w})} \leq 2$. By Lemma~\ref{LipschitzBound}, this yields the functions~$f_K^n$ are uniformly Lipschitz on every bounded subset of~$\R^d$. By this and~\eqref{localization}, the sequence~$\{f_K^n\}_n$ is precompact in the space
\eq{
L_1\big(\Heat[w](\fdot; 1-A^{-2Ka_1}, 1-A^{-2Ka_2},\ldots , 1-A^{-2Ka_d})\big);
}
formally, we may cite Lemma~$12$ in~\cite{Stolyarov2022}. Without loss of generality, let this sequence converge to a function~$F$. Then, since Lemma~\ref{BasicMonotonicityLemma} yields
\eq{
\|f_0^n\|_{L_1(w)} \leq \|f^n_{K}\|_{L_1(\Heat[w](\fdot; 1 - A^{-2Ka_1}, 1 - A^{-2Ka_2}, \ldots, 1 - A^{-2Ka_d}))} \leq \|f_L^n\|_{L_1(\tilde{w})} \Leqref{flatness1n} \frac{n+1}{n}\|f_0^n\|_{L_1(w)},
}
we have
\eq{
\|F\|_{L_1(\Heat[w](\fdot; 1 - A^{-2Ka_1}, 1 - A^{-2Ka_2}, \ldots ,1 - A^{-2Ka_d}))} = 1.
}
On the other hand, by Lemma~\ref{BasicMonotonicityLemma},
\eq{
f_0^n \longrightarrow \Heat[F]\Big(\fdot; 1 - A^{-2Ka_1}, 1 - A^{-2Ka_2}, \ldots ,1 - A^{-2Ka_d}\Big) \qquad \text{in}\quad L_1(w),\quad n\to \infty.
}
Thus, the above leads to
\mlt{
\Big\|\Heat[F]\Big(\fdot; 1 - A^{-2Ka_1}, 1 - A^{-2Ka_2}, \ldots ,1 - A^{-2Ka_d}\Big)\Big\|_{L_1(w)}\\
 = \|F\|_{L_1(\Heat[w](\fdot; 1 - A^{-2Ka_1}, 1 - A^{-2Ka_2}, \ldots ,1 - A^{-2Ka_d}))},
}
since both sides are equal to one.
By Lemma~\ref{ZeroFlatnessLemma},~$F = a\otimes h$ with~$h \geq 0$ and~$a \in \R^\ell \setminus \{0\}$. Since~$F\in \W$, we have~$h \in \M^\W$. By Lemma~\ref{Lemma47},
\eq{
\begin{aligned}
&f^n_0 \longrightarrow a \otimes \Heat[h](\fdot, 1 - A^{-2Ka_1},\ldots, 1 - A^{-2Ka_d})& \quad & \text{in } L_q(v),\ n \to \infty;\\
&f^n_1 \longrightarrow a\otimes \Heat[h](\fdot, A^{-2a_1} - A^{-2Ka_1},\ldots, A^{-2a_d} - A^{-2Ka_d})& \quad & \text{in } L_q(\tilde{v}),\ n \to \infty.
\end{aligned}
}
Consequently,~\eqref{antigrowth} yields
\mlt{
\|\Heat[h](\fdot, A^{-2a_1} - A^{-2Ka_1},A^{-2a_2} - A^{-2Ka_2},\ldots, A^{-2a_d} - A^{-2Ka_d})\|_{L_q(\tilde{v})}\\
 \geq A^{\alpha - 2\nu} \|\Heat[h](\fdot, 1 - A^{-2Ka_1},1 - A^{-2Ka_2},\ldots, 1 - A^{-2Ka_d})\|_{L_q(v)}.
}
This contradicts Proposition~\ref{Convenient} and Remark~\ref{Rem312} (we set~$t := A^{-2}$ and
\eq{
u(x,\vec{\tau}) = \Heat[h](\fdot; t_1 - A^{-2Ka_1},\ldots, t_d - A^{-2Ka_d}),\qquad x\in \R^d,\ \vec{\tau} \in \prod\limits_{j=1}^d [A^{-2Ka_j},1],
} 
when we apply Proposition~\ref{Convenient}).
\end{proof}
The next proposition contains the anisotropic analog of the second half of Theorem~$5$ in~\cite{Stolyarov2022}. It might be thought of as a generalization of Lemma~\ref{ZeroFlatnessLemma}. That lemma says, in particular, that the existence of a~$0$-flat atom yields that~$f$ is a rank-one measure. Now we wish to find a more robust version that somehow describes a similar phenomenon for~$\eps$-flat atoms. The same philosophy that led us to introduction of the assumption~\eqref{Localization} in Theorem~\ref{Theorem41} says there should be some concentration assumption in this robust version of Lemma~\ref{ZeroFlatnessLemma} as well. We will use the same one for convenience. 
\begin{St}\label{Prop45}
Let the atom~$(0,0)$ be~$\eps$-flat in the sense that~\eqref{FlatnessInThm41} holds true. Let it be localized in the sense that~\eqref{Localization} holds true. Then,
\eq{\label{NonzeroAnisotropic}
\|f_K\|_{L_1(\Heat[w](\fdot, 1 - A^{-2Ka_1}, 1 - A^{-2Ka_2}, \ldots, 1 - A^{-2Ka_d}))} \lesssim \|f_0\|_{L_q(Q_{0,0})}
}
for any~$q$, provided~$\eps$ is sufficiently small; the constant in the latter inequality is independent of~$A$.
\end{St}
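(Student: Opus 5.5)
The approach is a compactness argument by contradiction, parallel to the proof of Theorem~\ref{Theorem41}, but with the limit object analysed via Lemma~\ref{ZeroFlatnessLemma} rather than via the monotonicity formula. By homogeneity we normalize
\[
\|f_K\|_{L_1(\Heat[w](\fdot; 1 - A^{-2Ka_1}, \ldots, 1 - A^{-2Ka_d}))} = 1 .
\]
Suppose the claim fails. Then there are $\eps_n \to 0$ and functions $f^n$ satisfying~\eqref{FlatnessInThm41} with $\eps_n$ and~\eqref{Localization} with a fixed $C$, while $\|f^n_0\|_{L_q(Q_{0,0})} \to 0$.

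The $A$ appearing in the flatness condition enters only through the times $1-A^{-2a_jL}$ and $1-A^{-2Ka_j}$, which lie in $[1/2,1]$ for $A>2$. So $A$ can be allowed to vary along the sequence as well, passing to a subsequence on which these times converge. This is exactly what makes the constant in~\eqref{NonzeroAnisotropic} uniform in $A$.

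The first step is compactness, copied from the proof of Theorem~\ref{Theorem41}.
\begin{itemize}
\item Lemma~\ref{BasicMonotonicityLemma} gives $\|f^n_0\|_{L_1(w)} \le 1 \le \|f^n_L\|_{L_1(\tilde w)} \le (1+\eps_n)\|f^n_0\|_{L_1(w)}$. Hence $\|f^n_0\|_{L_1(w)} \to 1$ and $\|f^n_L\|_{L_1(\tilde w)} \le 2$.
\item Lemma~\ref{LipschitzBound} makes $f^n_K$ locally uniformly Lipschitz.
\item Together with the localization~\eqref{Localization}, Lemma~$12$ of~\cite{Stolyarov2022} gives a subsequence with $f^n_K \to F$ in $L_1(\Heat[w](\fdot;\ldots))$.
\end{itemize}

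Passing to the limit, $F$ has unit norm, and the equality case of Lemma~\ref{BasicMonotonicityLemma} holds for $F$ with $p=1$ and the heat times $1-A^{-2Ka_j}$. Moreover $f^n_0 \to \Heat[F](\fdot; 1-A^{-2Ka_1},\ldots)$ in $L_1(w)$. Lemma~\ref{ZeroFlatnessLemma} then gives $F = a\otimes h$ with $h\ge 0$ and $a\ne 0$. Membership in $\W$ is not even needed here, since the statement is about arbitrary $f$. Consequently $f^n_0 \to a\otimes \Heat[h](\fdot;\vec{s}\,)$ with $s_j = 1-A^{-2Ka_j} \in [1/2,1]$. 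By Lemma~\ref{Lemma47} this convergence also holds in $L_q$ on the bounded set $Q_{0,0}$; alternatively, one can use local uniform convergence from the Lipschitz bounds.

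The final step is the lower bound on the limit. We must show that
\[
\|\Heat[h](\fdot;\vec{s}\,)\|_{L_q(Q_{0,0})} \gtrsim \|h\|_{L_1(\Heat[w](\fdot;\vec{s}\,))} = 1/|a|,
\]
which contradicts $\|f^n_0\|_{L_q(Q_{0,0})} \to 0$. For a non-negative measure $h$ and times $s_j \in [1/2,1]$, the density $\Heat[h](x;\vec{s}\,)$ is bounded below at $x\in Q_{0,0}$ by $\int e^{-c|x-y|^2}\,dh(y)$, and this exceeds $\int e^{-c'|y|^2}\,dh(y)$ uniformly for $x$ in the cube. The difficulty is that the weight $\Heat[w](\fdot;\vec s\,)$ decays only polynomially, by Lemma~\ref{Lemma41}, whereas the Gaussian decays faster. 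So mass of $h$ sitting far away is not seen by this pointwise bound.

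This comparison is the main obstacle, and it is where I would use the localization~\eqref{Localization} in the limit. The weight $u$ decays more slowly than $w$ (because $\theta_2<\theta_1$), and the limit satisfies $\|h\|_{L_1(u)} \le C\|h\|_{L_1(\Heat[w])}$. A measure whose $w$-mass is comparable to its $u$-mass must place a fixed proportion of that mass within a bounded radius $R=R(C)$. Indeed, outside $B_R$ the ratio $w/u \lesssim R^{\theta_2-\theta_1}$ is small.

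On $B_R$, positivity of the Gaussian then gives the lower bound $\Heat[h] \gtrsim_{R} \|h\|_{L_1(\Heat[w])}$ pointwise on $Q_{0,0}$. Integrating over the cube yields the bound with a constant depending only on $C$, $d$ and the weights, and not on $A$ or $q$ beyond trivial factors. This contradiction completes the argument.
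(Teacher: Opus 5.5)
Your compactness argument works for a \emph{fixed} $A$, but then it only gives a constant depending on $A$. The statement requires a constant independent of $A$, and this is what later lets the paper choose $A$ large after all constants are fixed, in the passage from \eqref{NonStrictInequality} to \eqref{StrictInequality}. You try to get uniformity by letting $A=A_n$ vary along the contradicting sequence, on the grounds that $A$ enters only through the times $1-A^{-2La_j}$ and $1-A^{-2Ka_j}$. That is not true: $A$ also enters through the functions $f_K=\Heat[f](\fdot;A^{-2Ka_1},\ldots,A^{-2Ka_d})$ and $f_L$ themselves, and your compactness step fails when $A_n\to\infty$. Lemma~\ref{LipschitzBound} needs heat times in $[\frac12,2]$, whereas $f^n_K$ comes from $f^n_L$ by heating for times $A_n^{-2Ka_j}-A_n^{-2La_j}\approx A_n^{-2Ka_j}$. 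So the Lipschitz bounds on $f^n_K$ grow like a power of $A_n$; in the proof of Theorem~\ref{Theorem41} this is harmless only because $A$ is fixed there. Strong $L_1$ precompactness genuinely fails. Take $f^n=b\otimes \phi(x)\big(1+\cos(A_n^{Ka_1}x_1)\big)$ with a fixed vector $b\neq 0$ and a fixed bump $\phi\geq 0$. It is exactly flat ($\eps=0$) and localized, yet $f^n_K$ keeps an oscillating component of size about $e^{-1}\phi$ at scale $A_n^{-Ka_1}$, so it converges only weakly.

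This can be repaired, for instance by running the compactness on $\Heat[f^n](\fdot;\frac12\one)$, whose heat time does not depend on $A$. The cleanest fix, though, is that your last paragraph, combined with the flatness inequality, already proves the statement when applied to $f$ itself rather than to the limit. Write $\vec\tau_K=(1-A^{-2Ka_1},\ldots,1-A^{-2Ka_d})$ and define $\vec\tau_L$ likewise. Since $f_0=\Heat[f_L](\fdot;\vec\tau_L)$ and $\|f_L\|_{L_1(\tilde w)}=\int\Heat[|f_L|](\fdot;\vec\tau_L)\,w$, flatness reads $\int\big(\Heat[|f_L|](\fdot;\vec\tau_L)-|f_0|\big)w\le\eps\|f_0\|_{L_1(w)}\le\eps\|f_K\|_{L_1(\Heat[w](\fdot;\vec\tau_K))}$. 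Moreover $\Heat[|f_L|](\fdot;\vec\tau_L)\geq\Heat[|f_K|](\fdot;\vec\tau_K)$ pointwise. Your localization argument (a fixed fraction of the $\Heat[w](\fdot;\vec\tau_K)$-mass of $|f_K|$ lies in $B_R$ with $R=R(C)$) gives $\Heat[|f_K|](\fdot;\vec\tau_K)\gtrsim\|f_K\|_{L_1(\Heat[w](\fdot;\vec\tau_K))}$ on $Q_{0,0}$. Since $w\gtrsim1$ on $Q_{0,0}$ and $|Q_{0,0}|=1$, you get $\|f_0\|_{L_q(Q_{0,0})}\geq\|f_0\|_{L_1(Q_{0,0})}\geq(c-C'\eps)\|f_K\|_{L_1(\Heat[w](\fdot;\vec\tau_K))}$, with $c,C'$ independent of $A$ and $q$ and no compactness at all. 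This is a self-contained substitute for the paper's route, which reduces to the isotropic second part of Theorem~$5$ in~\cite{Stolyarov2022} with $B=\min_jA^{La_j/3}$. Note also that inside your contradiction scheme the quantitative lower bound on the limit was never the obstacle. For a nonzero measure $h\geq0$, $\Heat[h](\fdot;\vec s\,)>0$ everywhere, which already contradicts $\|f^n_0\|_{L_q(Q_{0,0})}\to0$.
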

Note that the quantity~$\|f_0\|_{L_1(w)}$ is not controlled by~$\|f_0\|_{L_q(Q_{0,0})}$ in general. We will derive Proposition~\ref{Prop45} from its isotropic version, Theorem~$5$ in~\cite{Stolyarov2022}. In the isotropic case we have~$a = \one$,~$K = 2$, and~$L=3$. Let us replace the scaling parameter~$A$ by~$B$ for a while. The flatness condition~\eqref{FlatnessInThm41} reads as
\eq{\label{IsotropicFlatness}
\|\heat[f](\fdot, B^{-6})\|_{L_1(\heat[w](\fdot, 1 - B^{-6}))} \leq (1+\eps)\|\heat[f](\fdot, 1)\|_{L_1(w)}.
}
Here we use the classical heat extension~\eqref{ClassicalHeat}. The concentration condition becomes
\eq{\label{IsotropicConcentration}
\|\heat[f](\fdot, B^{-4})\|_{L_1(u)} \leq C \|\heat[f](\fdot, B^{-4})\|_{L_1(\heat[w](\fdot, 1- B^{-4}))}.
}
Theorem~$5$ in~\cite{Stolyarov2022} says that these two conditions together with~$f\in \W$ yield the bound
\eq{
\|\heat[f](\fdot, B^{-4})\|_{L_1(\heat[w](\fdot, 1 - B^{-4}))} \lesssim \|f_0\|_{L_q(Q_{0,0})},
}
provided~$B$ is sufficiently large; the multiplicative constant in the inequality is independent of~$B$. To be more precise, Theorem~$5$ of~\cite{Stolyarov2022} contains isotropic versions of both Theorem~\ref{Theorem41} and Proposition~\ref{Prop45} as its first and second parts.
\begin{Rem}
We note that the second part of Theorem~$5$ in~\cite{Stolyarov2022}  requires neither~$\delta_0 \notin \M^\W$, nor~$f\in \W$. While this is not stated explicitly in that paper, a direct inspection shows: The proof of the theorem starts with the justification of its second part, and this justification occupies five lines only.
\end{Rem}

\begin{proof}[Proof of Proposition~\ref{Prop45}] We choose a number~$B > 1$ such that
\eq{
\forall j \in [1\twodots d] \qquad B^2 \leq A^{Ka_j},\ B^3 \leq A^{La_j}.
}
It will be convenient for us to assume that one of these inequalities turns into equality,~$B = \min_j A^{\frac{La_j}{3}}$; since~$K \geq 2$ and~$L=K+1$, the second group of inequalities is stronger than the first one. 
In particular,~$B$ is a certain power of~$A$. We may also assume~$B$ is sufficiently large. 

We apply Lemma~\ref{BasicMonotonicityLemma} together with~\eqref{SemigroupProperty}:
\eq{
\|\heat[f](\fdot, B^{-6})\|_{L_1(\heat[w](\fdot, 1- B^{-6}))} \leq \|f_L\|_{L_1(\tilde{w})} \leq (1+\eps)\|f_0\|_{L_1(w)},
}
and, thus, verify the flatness assumption~\eqref{IsotropicFlatness}. Similarly,
\mlt{
\|\heat[f](\fdot, B^{-4})\|_{L_1(u)}\\
 \leq \|f_K\|_{L_1(\Heat[u](\fdot, B^{-4} - A^{-2Ka_1},B^{-4} - A^{-2Ka_2},\ldots ,B^{-4} - A^{-2Ka_d}))} \Lsref{\text{\tiny Lem.\ref{Lemma41}}} \|f_K\|_{L_1(u)}.
}
Therefore,
\eq{
\|\heat[f](\fdot, B^{-4})\|_{L_1(u)} \Leqref{Localization}C' \|f_L\|_{L_1(\tilde{w})} \Leqref{FlatnessInThm41}  C'(1+\eps) \|\heat[f](\fdot, B^{-4})\|_{L_1(\heat[w](\fdot, 1- B^{-4}))},
}
and the isotropic concentration assumption~\eqref{IsotropicConcentration} is also verified. Thus, we have reduced the proposition to the isotropic case and may conclude that
\eq{
\|\heat[f](\fdot; B^{-4})\|_{L_1(\heat[w](\fdot; 1- B^{-4}))} \lesssim \|f_0\|_{L_q(Q_{0,0})},
}
which, by the flatness assumption yields~\eqref{NonzeroAnisotropic}.
\end{proof}

We conclude this section by adjusting Theorem~\ref{Theorem41} and Proposition~\ref{Prop45} to an arbitrary atom. For that, we consider the weights
\eq{
u_{k,j} = \Dil^{A^{-k}}[u(\fdot - j)];\qquad v_{k,j} = \Dil^{A^{-k}}[v(\fdot - j)], \qquad k \in \{0\}\cup \N,\ j\in \Z^d.
}
The concentration condition for the atom~$(k,j)$ then reads as
\mlt{\label{ConcentrationAtom}
\big\|f_{k+K}\big\|_{L_1(u_{k,j})} \leq C\big\| f_{k+K}\big\|_{L_1[\tilde{w}_{k,j}]},\\
\tilde{w}_{k,j} = \Heat[w_{k,j}]\big(\fdot; A^{-2ka_1} - A^{-2(k+K)a_1},\ldots,  A^{-2ka_d} - A^{-2(k+K)a_d}\big).
}
\begin{Th}\label{Theorem41bis}
Assume~$\delta_0 \notin \M^\W$,~$\nu \in (0,\kappa(\M^\W,\theta_1, C_1))$. For any~$C \geq 1$ and any sufficiently large~$A$ there exists~$\eps$ such that for any~$k \in \{0\} \cup \N$,~$j\in \Z^d$, and~$f\in \W$ if the atom~$(k,j)$ is~$\eps$-flat and satisfies the concentration assumption~\eqref{ConcentrationAtom}, then 
\mlt{
\|f_{k+1}\|_{L_q(\tilde{v}_{k,j})} \leq A^{\alpha - 2\nu} \|f_k\|_{L_q(v_{k,j})},\\
\tilde{v}_{k,j} = \Heat[v_{k,j}]\Big(\fdot; \frac{A^{-2ka_1} - A^{-2(k+1)a_1}}{q},\ldots,  \frac{A^{-2ka_d} - A^{-2(k+1)a_d}}{q}\Big).
}
\end{Th}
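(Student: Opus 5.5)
The plan is to reduce Theorem~\ref{Theorem41bis} to Theorem~\ref{Theorem41}, i.e., to the atom~$(0,0)$, by a translation followed by an anisotropic dilation. Both operations preserve~$\W$ and commute appropriately with the decomposition~$f\mapsto f_k$. Given~$f\in\W$ and an atom~$(k,j)$, set
\[
g = \Dil_{A^{k}}\big[f(\fdot+\Dil_{A^{-k}}j)\big],
\]
possibly with the order of translation and dilation adjusted so that the parallelepiped~$Q_{k,j}$ is carried onto~$Q_{0,0}$. By translation and dilation invariance of~$\W$, we have~$g\in\W$. By Corollary~\ref{DilationPartsCorollary} (with~$m=k$, up to the sign convention), together with the fact that translations commute with~$\Heat$, we get~$g_m = \Dil_{A^{k}}[f_{m+k}](\fdot + \text{shift})$. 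In particular,~$g_0, g_1, g_K, g_L$ are the rescaled and translated versions of~$f_k, f_{k+1}, f_{k+K}, f_{k+L}$.

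Next, I will transport each weighted norm in the hypotheses and in the conclusion.

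For the~$L_1$ norms, the dilation~$\Dil_\lambda$ preserves integrals, so
\[
\|\Dil_\lambda F\|_{L_1(\Dil^\lambda W)} = \|F\|_{L_1(W)}.
\]
The weights transform by value-preserving dilations:~$w_{k,j}$ and~$u_{k,j}$ become~$w$ and~$u$ by their definition~\eqref{DefOfWeights}. For the heated weights~$\tilde w_{k,j}$ and the one in~\eqref{ConcentrationAtom}, Lemma~\ref{SeveralDilations} shows that heating~$\Dil^{A^{-k}}[w(\fdot-j)]$ for times~$A^{-2ka_i}-A^{-2(k+L)a_i}$ is the value-dilation of~$\Heat[w](\fdot-j;\,1-A^{-2La_i})$. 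This gives exactly the weight~$\tilde w$ of~\eqref{FlatnessInThm41}, and similarly for the~$K$-weight. Hence~$\eps$-flatness of~$(k,j)$ for~$f$ becomes~\eqref{FlatnessInThm41} for~$g$, and~\eqref{ConcentrationAtom} becomes~\eqref{Localization} for~$g$ with the same constant~$C$.

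For the~$L_q$ norms, I will use~\eqref{eq217}: both sides of the desired inequality acquire the same factor~$A^{-\alpha k}$ (up to the direction of the dilation). Likewise,~$v_{k,j}$ and~$\tilde v_{k,j}$ correspond to~$v$ and~$\tilde v$ of~\eqref{eq426} via Lemma~\ref{SeveralDilations} and the rescaled heating times~$(A^{-2ka_i}-A^{-2(k+1)a_i})/q$. Applying Theorem~\ref{Theorem41} to~$g$, with the same~$A$,~$C$,~$\nu$, and the~$\eps$ given there, then yields the conclusion. Since~$\eps$ came from Theorem~\ref{Theorem41}, it is independent of~$k$,~$j$, and~$f$, as required.

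The only step requiring care is the bookkeeping of dilations. One must check that the translation by~$j$ at scale~$k$ is~$\Dil_{A^{-k}}j$ in the original coordinates, and that the heat times of the weights rescale exactly by the factors~$A^{-2ka_i}$. The latter is where Lemma~\ref{SeveralDilations} and the semigroup property~\eqref{SemigroupProperty} are used. Once this is verified for one weight, the others follow in the same way. I expect no genuine obstacle beyond this computation.
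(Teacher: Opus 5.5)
Your proposal is correct and takes the paper's route: the paper obtains Theorem~\ref{Theorem41bis} by transferring Theorem~\ref{Theorem41} from the atom $(0,0)$ to an arbitrary atom $(k,j)$ via translations and dilations, with $\eps$ independent of $k$, $j$, and $f$. Your hedges are unnecessary, because $g=\Dil_{A^k}\big[f(\fdot+\Dil_{A^{-k}}j)\big]$ is exactly the right normalization: $g_m(y)=A^{-kd}f_{m+k}\big(\Dil_{A^{-k}}(y+j)\big)$ carries $w_{k,j},u_{k,j},v_{k,j}$ and their heated versions onto $w,u,v,\tilde w,\tilde v$, while both $L_q$ norms pick up the same factor $A^{-\alpha k}$.
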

\begin{St}\label{LqCubeArbitraryAtom}
Assume the atom~$(k,j)$ is~$\eps$-flat and fulfills the concentration assumption~\eqref{ConcentrationAtom}. Then,
\eq{
\|f_{k+K}\|_{L_1(\Heat[w_{k,j}](\fdot, A^{-2ka_1} - A^{-2(k+K)a_1},\ldots, A^{-2ka_d} - A^{-2(k+K)a_d}))}\lesssim A^{-\alpha k}\|f_{k}\|_{L_q(Q_{k,j})},
}
provided~$\eps$ is sufficiently small. The multiplication constant in this inequality is independent of~$A$,~$k$, and~$j$.
\end{St}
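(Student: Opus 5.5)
We reduce Proposition~\ref{LqCubeArbitraryAtom} to Proposition~\ref{Prop45} by translating and dilating, following the pattern used to pass from Theorem~\ref{Theorem41} to Theorem~\ref{Theorem41bis}. Fix an atom $(k,j)$ and a function $f$. Set $g = \Dil_{A^{k}}[f(\fdot + \Dil_{A^{-k}} j)]$; more precisely, first translate $f$ so that $Q_{k,j}$ becomes $Q_{k,0}$, and then apply $\Dil_{A^k}$ so that $Q_{k,0}$ becomes $Q_{0,0}$. Heat extensions commute with translations. By Corollary~\ref{DilationPartsCorollary} (with $m = k$, reading it as $(\Dil_{A^k} f)_i = \Dil_{A^k}[f_{i+k}]$ up to the sign convention there), the pieces satisfy $g_i = \Dil_{A^{k}}[f_{i+k}]$ after translation. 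Proposition~\ref{Prop45} uses no property of $\W$, so we do not even need $g\in\W$.

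Next we transfer the hypotheses. Since $\Dil_{A^k}$ preserves the integral, we have $\|\Dil_{A^k}[h]\|_{L_1(W)} = \|h\|_{L_1(\Dil^{A^{-k}}[W])}$ for any weight $W$. Also, $\Dil^{A^{-k}}$ applied to $w(\fdot - j)$ gives $w_{k,j}$, and similarly for $u$. For the heated weights we use Lemma~\ref{SeveralDilations}: heating $w$ for times $1 - A^{-2Ka_i}$ and then applying $\Dil^{A^{-k}}$ yields $\Heat[w_{k,j}]$ at times $A^{-2ka_i} - A^{-2(k+K)a_i}$, because the time variables scale by $A^{-2ka_i}$. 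The same computation applies to $L$. Hence $\eps$-flatness of $(k,j)$ for $f$ becomes \eqref{FlatnessInThm41} for $g$, and \eqref{ConcentrationAtom} becomes \eqref{Localization} for $g$ with the same $C$. Proposition~\ref{Prop45} then gives
\eq{
\|g_K\|_{L_1(\Heat[w](\fdot; 1-A^{-2Ka_1},\ldots,1-A^{-2Ka_d}))}\lesssim \|g_0\|_{L_q(Q_{0,0})},
}
with a constant independent of $A$.

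Finally we undo the scaling. By the same identity, the left-hand side equals the left-hand side of the claim. For the right-hand side, \eqref{eq217} applied with the weight $\chi_{Q_{0,0}}$ gives $\|\Dil_{A^k}[f_k]\|_{L_q(\chi_{Q_{0,0}})} = A^{-\alpha k}\|f_k\|_{L_q(\Dil^{A^{-k}}\chi_{Q_{0,0}})}$, once we check that $\Dil^{A^{-k}}$ maps the translated indicator to $\chi_{Q_{k,j}}$. This matches the definition $Q_{k,j} = \{|\Dil_{A^k}x - j|_{\ell_\infty}\le 1/2\}$. The constant stays independent of $A$, $k$ and $j$, since every step is an exact identity.

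The main obstacle is bookkeeping rather than analysis. One has to get the directions $\Dil_{A^k}$ versus $\Dil_{A^{-k}}$ and $\Dil$ versus $\Dil^{\cdot}$ right. One also has to check that the translation by $j$ in the rescaled lattice is compatible with the order in which we translate and dilate, i.e.\ that we translate by $\Dil_{A^{-k}}j$ before dilating. Consistency of the exponent $-\alpha k$ with \eqref{eq217} and \eqref{DefOfq} should be checked last.
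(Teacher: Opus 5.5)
Your proposal is correct and follows the same route the paper intends: the paper gives no separate argument for Proposition~\ref{LqCubeArbitraryAtom}, only noting that Proposition~\ref{Prop45} is transferred to an arbitrary atom by translation and the dilation $\Dil_{A^k}$, and your bookkeeping checks out (Corollary~\ref{DilationPartsCorollary}, the rescaling of the heated weights via Lemma~\ref{SeveralDilations}, and \eqref{eq217} applied to $\chi_{Q_{0,0}}$ to produce the factor $A^{-\alpha k}$). Since every step is an exact identity, the constant and the smallness threshold for $\eps$ carry over unchanged from Proposition~\ref{Prop45}.
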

\begin{Cor}\label{CubeEstimateCorollary}
Assume~$\delta_0 \notin \M^\W$,~$\nu \in (0,\kappa(\M^\W,\theta_1, C_1))$. For any~$C > 1$ and any sufficiently large~$A$ there exists~$\eps > 0$ such that for any~$k \in \{0\} \cup \N$,~$j\in \Z^d$, and~$f\in \W$ if the atom~$(k,j)$ is~$\eps$-flat and satisfies the concentration assumption~\eqref{ConcentrationAtom}, then
\mlt{
\|f_{k+1}\|_{L_q(\tilde{v}_{k,j})} \lesssim A^{\alpha - 2\nu}\|f_{k}\|_{L_q(Q_{k,j})}, \\
 \tilde{v}_{k,j} = \Heat[v_{k,j}]\Big(\fdot; \frac{A^{-2ka_1} - A^{-2(k+1)a_1}}{q},\ldots,  \frac{A^{-2ka_d} - A^{-2(k+1)a_d}}{q}\Big).
}
The multiplication constant in this inequality is independent of~$A$,~$k$, and~$j$.
\end{Cor}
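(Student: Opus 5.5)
The plan is to chain Theorem~\ref{Theorem41bis} with Proposition~\ref{LqCubeArbitraryAtom}, using a weighted $L_1\to L_q$ estimate (Corollary~\ref{Cor44}) as the bridge. Fix $C$, $\nu$, and $A$ large. Take $\eps$ small enough that both Theorem~\ref{Theorem41bis} and Proposition~\ref{LqCubeArbitraryAtom} apply; the $\eps$ for the latter does not depend on $A$, so we take the minimum. Let $(k,j)$ be an $\eps$-flat atom satisfying~\eqref{ConcentrationAtom}. Theorem~\ref{Theorem41bis} gives
\[
\|f_{k+1}\|_{L_q(\tilde v_{k,j})}\le A^{\alpha-2\nu}\|f_k\|_{L_q(v_{k,j})} .
\]
It therefore remains to show
\[
\|f_k\|_{L_q(v_{k,j})}\lesssim \|f_k\|_{L_q(Q_{k,j})},
\]
with a constant independent of $A$, $k$, and $j$.

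To bound $\|f_k\|_{L_q(v_{k,j})}$, first use the reproducing formula~\eqref{ReproducingThefk}: $f_k$ is a heat extension of $f_{k+K}$ with times $A^{-2ka_i}-A^{-2(k+K)a_i}\in[\tfrac12A^{-2ka_i},2A^{-2ka_i}]$. Apply Corollary~\ref{Cor44} with $y=j$, with $v_{k,j}$ as the decaying weight ($\theta_v=\theta_3$), and with $u=\tilde w_{k,j}$, which is bounded below by $(1+|\Dil_{A^k}x-j|)^{-\theta_1}$ by Corollary~\ref{Lemma42Cor}. The hypothesis~\eqref{ThetavThetau} holds because $\theta_3>q\theta_1$ by~\eqref{eq415}. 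This gives
\[
A^{-\alpha k}\|f_k\|_{L_q(v_{k,j})}\lesssim \|f_{k+K}\|_{L_1(\tilde w_{k,j})}.
\]
Here the weight $\tilde w_{k,j}$ is exactly the one on the left-hand side of Proposition~\ref{LqCubeArbitraryAtom}.

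Next apply Proposition~\ref{LqCubeArbitraryAtom}:
\[
\|f_{k+K}\|_{L_1(\tilde w_{k,j})}\lesssim A^{-\alpha k}\|f_k\|_{L_q(Q_{k,j})}.
\]
Combining the last two displays and cancelling $A^{-\alpha k}$ yields the required estimate, and with it the corollary.

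The main obstacle is uniformity in $A$. The constant in Corollary~\ref{Cor44} is stated with time windows $[\tfrac12 A^{-2ka_i},2A^{-2ka_i}]$ that do not depend on $A$. After rescaling by $\Dil_{A^{-k}}$ via Lemma~\ref{SeveralDilations} and~\eqref{eq217}, that estimate reduces to Lemma~\ref{Lemma44} at $k=0$ with $t_i=1-A^{-2Ka_i}\in[\tfrac12,2]$, and those constants involve only $d$, $\theta$, and the weights. One must also check that the lower bound for $\tilde w_{k,j}$ coming from Corollary~\ref{Lemma42Cor} is uniform in $A$; it is, since again the times lie in $[0,2A^{-2ka_i}]$. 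So all constants except the explicit factor $A^{\alpha-2\nu}$ are independent of $A$, $k$, and $j$, as claimed.
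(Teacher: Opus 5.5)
Your proposal is correct and is essentially the paper's proof: Theorem~\ref{Theorem41bis}, then Corollary~\ref{Cor44} (through the reproducing formula~\eqref{ReproducingThefk}) to bound $A^{-\alpha k}\|f_k\|_{L_q(v_{k,j})}$ by $\|f_{k+K}\|_{L_1(\tilde w_{k,j})}$, and finally Proposition~\ref{LqCubeArbitraryAtom}. Your checks of the hypotheses ($\theta_3 > q\theta_1$ from~\eqref{eq415}, the lower bound on $\tilde w_{k,j}$ from Corollary~\ref{Lemma42Cor}, the admissible time windows) and of the $A$-independence of the constants fill in details that the paper leaves implicit.
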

\begin{proof}
We combine Theorem~\ref{Theorem41bis} with the estimate
\eq{
A^{\alpha - 2\nu} \|f_k\|_{L_q(v_{k,j})} \lesssim A^{\alpha - 2\nu + \alpha k} \|f_{k+K}\|_{L_1(\Heat[w_{k,j}](\fdot, A^{-2ka_1} - A^{-2(k+K)a_1},\ldots, A^{-2ka_d} - A^{-2(k+K)a_d}))},
} 
which follows from Corollary~\ref{Cor44}, and complete the bounds with the help of Proposition~\ref{LqCubeArbitraryAtom} 
\end{proof}
\begin{Rem}
Considerations in the style of Lemmas~\ref{Lemma41} and~\ref{Lemma42} allow to replace~$\tilde{v}_{k,j}$ with simply~$v_{k,j}$ in the above corollary.
\end{Rem}


\subsection{Combinatorial part}\label{s42}
Fix a real parameter~$\theta_4 >d$ to be chosen later.
\begin{Def}
Let~$k \in \{0\} \cup \N$ and let~$j\in \Z^d$. Set
\mlt{\label{LocalSizeDefinitionFormula}
f^*_{k,j} = \|f_{k+K}\|_{L_1(\tilde{w}_{k,j})}, \\
\tilde{w}_{k,j} = \Heat[w_{k,j}](\fdot, A^{-2ka_1} - A^{-2(k+K)a_1}, A^{-2ka_2} - A^{-2(k+K)a_2}, \ldots, A^{-2ka_d} - A^{-2(k+K)a_d})
}
to be the local size of the function~$f$. 
\end{Def}
The choice of the weight~$\tilde{w}_{k,j}$ in~\eqref{LocalSizeDefinitionFormula} is suggested by the inequality
\eq{
\|f_{k}\|_{L_1(w_{k,j})} \leq f^*_{k,j} \leq \|f_{k+L}\|_{L_1(\Heat[w_{k,j}](\fdot, A^{-2ka_1} - A^{-2(k+L)a_1},  \ldots, A^{-2ka_d} - A^{-2(k+L)a_d}))},
}
which will be important since the quantities on the left and right appear in our central definition of~$\eps$-flat atoms, Definition~\ref{Convex/flat}.
\begin{Def}
Define the maximal function
\eq{\label{DefinitionOfMaximalFunction}
\Max_{k,j}f = \sup\limits_{i\in\Z^d} (1+|i - j|)^{-\theta_4} f_{k,i}^*.
}
The atom~$(k,j)$ is called \emph{saturated}, provided
\eq{
\Max_{k,j}f = f_{k,j}^*.
}
\end{Def}
In other words, an atom~$(k,j)$ is saturated, provided
\eq{
f_{k,i}^* \leq (1+|i - j|)^{\theta_4} f_{k,j}^*
}
for any~$i\in \Z^d$. There is a slight difference in our definition of a saturated atom and the one in~\cite{Stolyarov2022}, the latter one included an additional parameter~$K$, which seems unnecessary. We will shortly show that a saturated atom fulfills the concentration condition~\eqref{ConcentrationAtom}. 

Note that if~$f \in L_1(\R^d)$, the sequence~$\{f_{k,j}^*\}_j$ is uniformly bounded for any~$k$ fixed and tends to zero at infinity. Thus, the supremum in~\eqref{DefinitionOfMaximalFunction} is attained at some~$i$. For fixed~$j$, choose one of these `maximal' points and call it~$\vec{j}_k$. These choices define an oriented graph~$\Gamma_k$. 
\begin{Def}
The set of vertices of~$\Gamma_k$ is~$\Z^d$. We draw an arrow from~$\vec{j}_k$ to~$j$ for each~$j$, provided~$\vec{j}_k \ne j$. 
\end{Def}
In~\cite{Stolyarov2022}, the graph~$\Gamma_k$ was defined in a slightly different, more sophisticated way. The definition included a parameter~$\lambda$; now this parameter is redundant.

\begin{Le}\label{ShortPathsLemma}
The graph~$\Gamma_k$ does not contain oriented paths of length greater than one.
\end{Le}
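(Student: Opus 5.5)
The plan is to show that the head of any arrow in~$\Gamma_k$ is a saturated atom, whereas the tail of any arrow is not saturated. Then no vertex can be both the tail of one arrow and the head of another. This rules out oriented paths of length two, and hence of any greater length.

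First, I will check that $\vec{j}_k$ is saturated for every~$j$. By the choice of $\vec{j}_k$ as a maximizer in~\eqref{DefinitionOfMaximalFunction},
\eq{
(1+|i-j|)^{-\theta_4}f^*_{k,i} \leq (1+|\vec{j}_k - j|)^{-\theta_4} f^*_{k,\vec{j}_k}\qquad \text{for all}\ i\in\Z^d.
}
We need $f^*_{k,i}\leq (1+|i-\vec{j}_k|)^{\theta_4}f^*_{k,\vec{j}_k}$. This follows from the displayed inequality together with the elementary bound
\eq{
1+|i-j| \leq (1+|i-\vec{j}_k|)(1+|\vec{j}_k-j|),
}
which is the triangle inequality $|i-j|\leq |i-\vec{j}_k|+|\vec{j}_k-j|$ plus a nonnegative cross term. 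Raising this bound to the power~$\theta_4$ and multiplying gives
\eq{
f^*_{k,i}\leq (1+|i-j|)^{\theta_4}(1+|\vec{j}_k-j|)^{-\theta_4}f^*_{k,\vec{j}_k}\leq (1+|i-\vec{j}_k|)^{\theta_4}f^*_{k,\vec{j}_k}.
}
Hence $\Max_{k,\vec{j}_k}f=f^*_{k,\vec{j}_k}$, that is, $\vec{j}_k$ is saturated.

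Second, I will look at the tail of an arrow. An arrow points into~$j$ only when $\vec{j}_k\neq j$, and its source is $\vec{j}_k$. The remaining task is to show that a saturated vertex~$j'$ never emits an arrow to itself in the relevant sense; equivalently, that a saturated $j$ receives no incoming arrow. This is where the choice of maximizer matters. If $j$ is saturated, then $j$ itself attains the supremum in~\eqref{DefinitionOfMaximalFunction}. So we should adopt the convention, implicit in the definition, that $\vec{j}_k=j$ whenever $j$ is a maximizer, i.e., we choose $j$ itself among the maximal points when possible. With this convention, saturated vertices have no incoming arrows, while every arrow starts at a saturated vertex. A path $j_1\to j_2\to j_3$ would require $j_2$ to be the head of the first arrow and the tail of the second, hence both non-saturated and saturated, which is a contradiction.

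The main obstacle is the tie-breaking issue just described. If the maximizer were chosen arbitrarily, a saturated~$j$ could point to a different maximizer, and chains could arise. I would handle this by fixing the choice rule (prefer $j$ itself when $j$ is saturated) at the point where $\vec{j}_k$ is defined. The submultiplicativity inequality above is the only computation needed.
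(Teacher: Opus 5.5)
Your Step 1 is correct: every chosen maximizer $\vec{j}_k$ is saturated. (Note that $\vec{j}_k$ is the \emph{tail} of the arrow $\vec{j}_k\to j$, so your opening sentence has head and tail swapped.) With your tie-breaking rule, the rest of the argument also goes through.

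The gap is that this rule is not part of the definition of $\Gamma_k$, where $\vec{j}_k$ is \emph{any} maximizer. As written, you prove the lemma only for a restricted choice. Your claim that an arbitrary choice could produce chains is false. Under an arbitrary choice a saturated vertex can indeed receive an arrow (in case of a tie), so ``saturated vertices have no incoming arrows'' fails. What remains true is that a vertex receiving an arrow can never emit one, and that is exactly what the lemma needs.

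The missing observation is that the submultiplicative bound you already use is strict when both differences are nonzero: $1+|j-i|\le 1+|j-m|+|m-i|<(1+|j-m|)(1+|m-i|)$, since the cross term $|j-m|\,|m-i|$ is positive. This is the paper's argument. Suppose $i\to m\to j$. Then $i\ne m\ne j$, and taking the index $m$ in the supremum gives $f^*_{k,m}\le \Max_{k,m}f=(1+|m-i|)^{-\theta_4}f^*_{k,i}$. Hence
\[
\Max_{k,j}f=(1+|j-m|)^{-\theta_4}f^*_{k,m}\le\big((1+|j-m|)(1+|m-i|)\big)^{-\theta_4}f^*_{k,i}<(1+|j-i|)^{-\theta_4}f^*_{k,i}\le \Max_{k,j}f,
\]
which is a contradiction for every choice of maximizers.

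The strict step uses $f^*_{k,i}>0$. This holds for all $i$ once $f\ne 0$, because $\tilde w_{k,i}>0$ everywhere and $f_{k+K}\not\equiv 0$. Only for $f=0$, where every index is a maximizer, does the choice actually matter. There your convention (or simply discarding $f=0$) is what makes the statement true.

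So your convention is harmless, and even convenient later, since it makes the saturated atoms exactly the star centers. But the lemma does not need it.
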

The proof may be found in Lemma~$13$ of~\cite{Stolyarov2022}, now we will provide a sketch. One argues by contradiction: Assume there is a path~$i\to 0 \to j$. In such a case,
\alg{
\Max_{0,j}[f] = (1+|j|)^{-\theta_4} f_{0,0}^*;\\
f_{0,0}^* \leq (1+|i|)^{-\theta_4} f_{0,i}^*.
}
Combining these two bounds and using the triangle inequality, we arrive at~$\Max_{0,j}[f] < (1+|i-j|)^{-\theta_4} f_{0,i}^*$, which contradicts the definition of~$\Max_{0,j}[f]$.
\begin{Cor}\label{SaturatedOutgoingCor}
Assume a vertex~$j$ in~$\Gamma_k$ has an outgoing edge. Then, the corresponding atom~$(k,j)$ is saturated.
\end{Cor}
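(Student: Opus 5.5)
The plan is to read this off directly from the construction of $\Gamma_k$ together with Lemma~\ref{ShortPathsLemma}. Suppose $j$ has an outgoing edge $j \to i$ in $\Gamma_k$. By the definition of the graph, arrows are drawn only from a chosen maximizer to the vertex it serves, so this edge means $j = \vec{i}_k$ and $j \ne i$. Our target is to show that $\vec{j}_k = j$, that is, that $j$ itself is a maximizer in the supremum defining $\Max_{k,j} f$, which is exactly saturation.

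Assume the contrary: $\vec{j}_k \ne j$. Then the definition of $\Gamma_k$ puts an arrow $\vec{j}_k \to j$. Concatenating it with $j \to i$ gives an oriented path $\vec{j}_k \to j \to i$ of length two, which contradicts Lemma~\ref{ShortPathsLemma}. Hence $\vec{j}_k = j$. Unwinding the definition of $\vec{j}_k$ as a point where the supremum in~\eqref{DefinitionOfMaximalFunction} is attained, we get
\[
\Max_{k,j} f = (1+|j-j|)^{-\theta_4} f^*_{k,j} = f^*_{k,j},
\]
so the atom $(k,j)$ is saturated.

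There is essentially no obstacle. The one point to check is that the path in Lemma~\ref{ShortPathsLemma} is allowed to have arbitrary endpoints. In particular, we need the case where $i$ might coincide with $\vec{j}_k$, which would give a $2$-cycle $\vec{j}_k \to j \to \vec{j}_k$; this is still an oriented path of length two. If one prefers not to rely on the lemma in that case, it can be excluded by the same inequality argument. From $j = \vec{i}_k$ we have
\[
f^*_{k,j} \ge (1+|i-j|)^{\theta_4} f^*_{k,i} > f^*_{k,i}
\]
when $f^*_{k,i}>0$, and symmetrically $f^*_{k,i} > f^*_{k,j}$, which is impossible. The degenerate case $f^*_{k,j} = 0$ makes saturation trivial, since then every $f^*_{k,i}$ would vanish as well.
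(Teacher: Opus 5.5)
Your proof is correct and is essentially the paper's argument. An outgoing edge $j\to i$ together with Lemma~\ref{ShortPathsLemma} excludes an incoming edge $\vec{j}_k\to j$, so $\vec{j}_k=j$ and hence $\Max_{k,j}f=f^*_{k,j}$. Your side check of the $2$-cycle and the degenerate case $f^*_{k,\cdot}\equiv 0$ is valid but not needed for the main argument.
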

\begin{proof}
By Lemma~\ref{ShortPathsLemma},~$j$ does not have incoming edges. By the very definition, this means~$(k,j)$ is saturated.
\end{proof}
The graph~$\Gamma_k$ is a disjoint union of `stars': Each such star consists of its center, which is a saturated atom, and other vertices that are subordinate to the center; a star may consist of its center only.
\begin{Le}\label{SaturatedtoConcentrated}
Assume~$\theta_2 > \theta_4 + d$. There exists an absolute constant~$C$ such that any saturated atom in any graph~$\Gamma_k$ fulfills the concentration condition~\eqref{ConcentrationAtom}.
\end{Le}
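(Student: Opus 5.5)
By dilations (Corollary~\ref{DilationPartsCorollary}) and translations it suffices to treat the atom~$(0,0)$. Here we write~$\tilde{w}_{0,i}$ for the weight in~\eqref{LocalSizeDefinitionFormula} and set
\[
W = \Heat[w](\fdot; 1-A^{-2Ka_1},\ldots,1-A^{-2Ka_d}) = \tilde{w}_{0,0}.
\]
The concentration condition~\eqref{ConcentrationAtom} then reads~$\|f_K\|_{L_1(u)} \leq C\|f_K\|_{L_1(W)}$, and its right hand side is~$C f^*_{0,0}$. Saturation gives~$f^*_{0,i}\leq (1+|i|)^{\theta_4} f^*_{0,0}$ for every~$i\in\Z^d$. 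The plan is to dominate~$u$ pointwise by a weighted sum of the translated weights~$\tilde{w}_{0,i}$:
\[
u(x) \lesssim \sum_{i\in\Z^d} (1+|i|)^{-\theta_2}\,\tilde{w}_{0,i}(x).
\]
Integrating~$|f_K|$ against this bound and using saturation would give
\[
\|f_K\|_{L_1(u)} \lesssim \sum_i (1+|i|)^{-\theta_2} f^*_{0,i} \leq f^*_{0,0}\sum_i (1+|i|)^{\theta_4-\theta_2},
\]
and the last series converges precisely because~$\theta_2 > \theta_4 + d$. The resulting constant depends only on~$d$ and the~$\theta$'s, not on~$A$,~$k$, or~$j$.

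The key step is the pointwise domination of~$u$. First, all heating times~$1-A^{-2Ka_j}$ lie in~$[1/2,1]$ because~$A>2$, so Lemma~\ref{Lemma43} (or Lemma~\ref{Lemma42}) applies: since~$w\in\Smooth(\theta_1,C_1)$ and~$w(x)\gtrsim(1+|x|)^{-\theta_1}$, it gives~$\tilde{w}_{0,i}(x)\asymp w(x-i)$ with constants independent of~$A$. Second, for~$x$ in the unit cube around~$i$, the normalizing sum in~\eqref{OurWeights} is bounded above and below by constants, so~$w(x-i)\gtrsim 1$ there. Since~$u(x)\asymp(1+|i|)^{-\theta_2}$ on that cube, summing over~$i$ gives the domination, because the unit cubes cover~$\R^d$.

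For a general atom~$(k,j)$, the same computation is transported by the affine map~$x\mapsto\Dil_{A^k}x$. This map sends~$w_{k,j}$, $u_{k,j}$, and the heated weights to their~$(0,\cdot)$ counterparts; this follows from Lemma~\ref{SeveralDilations} (equivalently, Corollaries~\ref{Lemma41Cor} and~\ref{Lemma42Cor}). The~$L_1$-preserving dilation of~$f$ does not change the ratios involved. The main point to check carefully is exactly this uniformity: the heating parameters~$A^{-2ka_j}-A^{-2(k+K)a_j}$ must lie in~$[\tfrac12 A^{-2ka_j},2A^{-2ka_j}]$, which holds because~$A>2$ and~$Ka_j\geq 1$. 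Once that is verified, the comparison constants do not depend on~$A$ or~$k$.
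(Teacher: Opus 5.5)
Your proof is correct and is essentially the argument the paper relies on; the paper omits the proof and refers to Lemma~15 of \cite{Stolyarov2022}. You dominate $u_{k,j}$ pointwise by $\sum_i (1+|i-j|)^{-\theta_2}\tilde{w}_{k,i}$ using the lower bound of Lemma~\ref{Lemma42} on unit cells, then apply saturation and sum $\sum_i(1+|i|)^{\theta_4-\theta_2}<\infty$. The dilation bookkeeping you flag also checks out: $\Heat[\Dil^{A^{-k}}g](x;\Dil_{A^{-2k}}\vec{s}\,)=\Heat[g](\Dil_{A^k}x;\vec{s}\,)$, so the domination transfers to every atom with constants independent of $A$, $k$, $j$.
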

This lemma is identical to Lemma~$15$ in~\cite{Stolyarov2022}. We will need yet another weighted lemma and postpone its proof till Subsection~\ref{SWeights} of the appendix. This lemma differs from previous lemmas about weights because here we measure the~$L_q$-norm of~$f_1$ on a cube of generation~$0$, not~$1$.
\begin{Le}\label{Lemma48}
Assume~$w$ satisfies the bound~\eqref{eq2216}. Then,
\eq{
\|f_1\|_{L_q(Q_{0,0})} \lesssim A^{\alpha} \|f_K\|_{L_1(w)}.
}
\end{Le}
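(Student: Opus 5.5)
We use the semigroup property to write $f_1$ as a heat extension of $f_K$ and then bound the resulting heat kernel on $Q_{0,0}$ by the weight $w$. By \eqref{ReproducingThefk},
\eq{
f_1 = \Heat[f_K]\big(\fdot; A^{-2a_1} - A^{-2Ka_1},\ldots, A^{-2a_d} - A^{-2Ka_d}\big),
}
and every time coordinate $s_j = A^{-2a_j} - A^{-2Ka_j}$ lies in $[A^{-2a_j}/2, A^{-2a_j}]$ once $A$ is large. So $f_1(x) = \int G_{\vec{s}}(x-y) f_K(y)\,dy$, where $G_{\vec{s}}$ is the anisotropic Gaussian
\eq{
G_{\vec{s}}(z) = \Big(\prod\limits_{j=1}^d(4\pi s_j)\Big)^{-\frac12}e^{-\sum_{1}^d\frac{z_j^2}{4s_j}}.
}
Minkowski's integral inequality then gives
\eq{
\|f_1\|_{L_q(Q_{0,0})} \leq \int\limits_{\R^d}|f_K(y)|\,\|G_{\vec{s}}(\fdot - y)\|_{L_q(Q_{0,0})}\,dy.
}
It therefore suffices to prove the pointwise bound
\eq{
\|G_{\vec{s}}(\fdot - y)\|_{L_q(Q_{0,0})} \lesssim A^{\alpha} w(y),\qquad y\in\R^d,
}
with a constant independent of $A$.

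We split the range of $y$ into two regions.

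\emph{Near region, $|y|\leq 2\sqrt d$.} Here we use only the global norm: $\|G_{\vec{s}}\|_{L_q(\R^d)} \asymp \prod_j s_j^{-\frac{q-1}{2q}} \asymp A^{\frac{q-1}{q}\sum_j a_j} = A^{\alpha}$. This uses $\sum a_j = d$ and \eqref{DefOfq}. Since $w(y)\gtrsim 1$ on this ball by \eqref{eq2216}, the bound follows.

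\emph{Far region, $|y| > 2\sqrt d$.} For $x\in Q_{0,0}$ we have $|x-y|\geq |y|/2$, so some coordinate satisfies $|x_i - y_i| \gtrsim |y|$. Since $s_i \leq A^{-2a_i}\leq 1$, the Gaussian factor is at most $e^{-c|y|^2 A^{2a_i}}$. This decays faster than any power of $|y|$ and beats the prefactor $\prod_j s_j^{-1/2}\lesssim A^{d}$, uniformly in $A\geq 2$. Since $|Q_{0,0}| = 1$, we get $\|G_{\vec{s}}(\fdot - y)\|_{L_q(Q_{0,0})} \lesssim (1+|y|)^{-\theta}\lesssim w(y)$, and this is even $\lesssim A^{\alpha}w(y)$.

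Combining the two regions yields $\|f_1\|_{L_q(Q_{0,0})}\lesssim A^\alpha\|f_K\|_{L_1(w)}$. Vector-valued $f$ are handled coordinatewise or by applying Minkowski's inequality to $|f_K|$.

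The main point to check is that the constant does not depend on $A$. This is exactly why the $L_q$ norm is taken over the fixed cube $Q_{0,0}$ rather than against a weight: the far-region Gaussian tail has to dominate the polynomial factor $A^{d}$ uniformly. That domination holds because $|y|\gtrsim 1$ there and $e^{-c|y|^2A^{2a_i}}\leq e^{-c'|y|^2}e^{-c'A^{2a_i}}$, and $A^{d}e^{-c'A^{2a_i}}$ is bounded.
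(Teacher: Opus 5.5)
Your proof is correct, but it takes a different route from the paper.

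The paper does not estimate the kernel at scale $A^{-1}$ directly. It rescales via Lemma~\ref{SeveralDilations} and \eqref{eq217}, which extracts the factor $A^{\alpha}$ and turns the times into $1-A^{-2a_j(K-1)}\in[\frac12,1]$. The claim then becomes $\|\Heat[h](\fdot,\vec t\,)\|_{L_q(\Dil_A Q_{0,0})}\lesssim\|h\|_{L_1(\Dil^A w)}$. This follows from Lemma~\ref{Lemma47} applied with the weight $\Dil^A[(1+|\fdot|)^{-\theta'}]$, where $\theta'\geq q\theta$. That weight dominates the indicator of the large box $\Dil_A Q_{0,0}$ and is $\lesssim(\Dil^A w)^q$. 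By \eqref{DilationAndSmoothness} it stays in a fixed class $\Smooth(\theta',C)$, and this is where the paper gets uniformity in $A$.

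You instead test on point masses directly via Minkowski's inequality and split $y$ into near and far regions. Near the cube, the global $L_q$ norm of the anisotropic Gaussian gives exactly $A^{\alpha}$, using $\sum_j a_j=d$. Far from it, the Gaussian tail absorbs the prefactor $A^{d}$ uniformly. Your argument is self-contained and avoids the smoothness-function formalism. It also shows slightly more: mass of $f_K$ far from $Q_{0,0}$ contributes without the factor $A^{\alpha}$. The paper's version is shorter because it reuses Lemma~\ref{Lemma47} and the dilation bookkeeping used throughout.

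One point to state cleanly: the coordinate $i$ with $|x_i-y_i|\gtrsim|y|$ depends on $x$. The uniform bound for $x\in Q_{0,0}$ should therefore be written as $e^{-c|y|^2A^{2\min_j a_j}}$. Your closing remark effectively does this, since there are only finitely many $i$.
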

\begin{Le}\label{SubordinationLemma}
 Assume there is an arrow~$j \to i$ in the graph~$\Gamma_k$. Then,
 \eq{\label{ShiftedEstimateOneLayer}
 \|f_{k+1}\|_{L_q(2Q_{k,i})} \lesssim A^{\alpha(k+1)} (1+|i-j|)^{-\theta_4} f_{k,j}^*.
 }
 The multiplication constant in this inequality is independent of~$A$,~$k$, and~$j$.
\end{Le}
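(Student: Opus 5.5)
We prove Lemma~\ref{SubordinationLemma} by reducing to the atom $(0,i)$, applying the generation-zero bound of Lemma~\ref{Lemma48} to translated pieces covering $2Q_{0,i}$, and then transferring the resulting local sizes from $i$ to $j$ with the maximal-function definition.

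\emph{Step 1: normalization.} Corollary~\ref{DilationPartsCorollary} together with~\eqref{eq217} reduces the claim to $k=0$. Replacing $f$ by $\Dil_{A^{-k}}[f]$ turns $f_{k+1}$ into the $(1)$-part, $2Q_{k,i}$ into $2Q_{0,i}$, $f^*_{k,j}$ into $f^*_{0,j}$, and produces exactly the factor $A^{\alpha k}$. Since $L_q(\R^d)$ and the heat extension commute with translations, it is then enough to prove
\eq{
\|f_1\|_{L_q(2Q_{0,i})}\lesssim A^{\alpha}(1+|i-j|)^{-\theta_4}f^*_{0,j}
}
whenever $j\to i$ in $\Gamma_0$. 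The relation $j\to i$ means that $j=\vec{i}_0$ is the point realizing $\Max_{0,i}f$, so
\eq{
f^*_{0,i'}\le \Max_{0,i'}f\quad\text{and}\quad (1+|i-j|)^{-\theta_4}f^*_{0,j}=\Max_{0,i}f\ \ge\ (1+|i-i'|)^{-\theta_4}f^*_{0,i'}\quad\text{for all } i'.
}

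\emph{Step 2: local bound.} Cover $2Q_{0,i}$ by the $2^d$ unit cubes $Q_{0,i'}$ with $|i'-i|_{\ell_\infty}\le 1$. For each of them, Lemma~\ref{Lemma48}, translated to $i'$, gives
\eq{
\|f_1\|_{L_q(Q_{0,i'})}\lesssim A^{\alpha}\|f_K\|_{L_1(w(\cdot-i'))}.
}
Next, Lemma~\ref{BasicMonotonicityLemma} combined with~\eqref{ReproducingThefk} lets us replace $f_K$ by $f_{K}$ integrated against the heated weight:
\eq{
\|f_K\|_{L_1(w(\cdot-i'))}\le \|f_K\|_{L_1(\tilde{w}_{0,i'})}=f^*_{0,i'}.
}
If the weights do not line up exactly in this step, one can still bound $w(\cdot-i')\lesssim\tilde{w}_{0,i'}$ pointwise, using Corollary~\ref{Lemma42Cor} together with the upper bound from Lemma~\ref{Lemma41}. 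Both lemmas apply because the heating times $1-A^{-2Ka_m}$ lie in $[1/2,2]$ and the constants do not depend on $A$.

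\emph{Step 3: transfer to $j$.} By the displayed inequality from Step~1, $f^*_{0,i'}\le (1+|i-i'|)^{\theta_4}\Max_{0,i}f\lesssim (1+|i-j|)^{-\theta_4}f^*_{0,j}$, since $|i-i'|\le\sqrt d$. Summing over the $2^d$ values of $i'$ gives the claim with a constant depending only on $d$, $\theta_4$ and the weights, and in particular independent of $A$, $k$ and $j$.

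The main obstacle is Step~2. Lemma~\ref{Lemma48} is stated for $Q_{0,0}$ and the weight $w$, while the local size $f^*$ uses the heated weight $\tilde{w}$. One has to check that the comparison $w\lesssim \tilde w$ holds with constants uniform in $A$. This is where the $A$-independence of Lemmas~\ref{Lemma41} and~\ref{Lemma42} over $t_m\in[0,2]$ is essential.
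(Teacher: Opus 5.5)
Your argument is correct and is essentially the paper's: normalize to $k=0$ (via $\Dil_{A^{k}}[f]$ rather than $\Dil_{A^{-k}}[f]$, by Corollary~\ref{DilationPartsCorollary}), bound the $L_q$ norm on the doubled cube by local sizes through Lemma~\ref{Lemma48}, and conclude with $f^*_{0,i}\le\Max_{0,i}f=(1+|i-j|)^{-\theta_4}f^*_{0,j}$. The paper simply applies Lemma~\ref{Lemma48} on $2Q_{0,i}$ with the heated weight $\tilde w_{0,i}$, admissible by Lemma~\ref{Lemma42}, so your covering by the $3^d$ (not $2^d$) cubes $Q_{0,i'}$ and the extra factor $(1+\sqrt{d})^{\theta_4}$ are harmless but unnecessary.

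One correction: the inequality $\|f_K\|_{L_1(w(\cdot-i'))}\le\|f_K\|_{L_1(\tilde w_{0,i'})}$ does not follow from Lemma~\ref{BasicMonotonicityLemma} and~\eqref{ReproducingThefk}. Those results move the heat from the function onto the weight, so they only give $\|f_0\|_{L_1(w_{0,i'})}\le f^*_{0,i'}$. Drop that line and keep your pointwise comparison $w(\cdot-i')\lesssim\tilde w_{0,i'}$. This comparison follows from the explicit bound $w\lesssim(1+|x|)^{-\theta_1}$ in~\eqref{OurWeights} together with Lemma~\ref{Lemma42} (or directly from Lemma~\ref{Lemma43}), not from Lemma~\ref{Lemma41}.
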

\begin{proof}
Without loss of generality, we may assume~$k=0$. Then,
\eq{
 \|f_{1}\|_{L_q(2Q_{0,i})} \lesssim A^\alpha \|f_K\|_{L_1(\Heat[w_{0,i}](\fdot, 1 - A^{-2a_1K},1 - A^{-2a_2K}, \ldots, 1 - A^{-2a_dK}))} = A^{\alpha}f_{0,i}^*
}
by Lemma~\ref{Lemma48}; we have used Lemma~\ref{Lemma42} to justify that the weight
\eq{
\Heat[w_{0,0}](\fdot, 1 - A^{-2a_1K},1 - A^{-2a_2K}, \ldots, 1 - A^{-2a_dK}))
}
fulfills the bound~\eqref{eq2216}; here we apply the lemma to shifted weights. The obtained quantity~$A^{\alpha}f_{0,i}^*$ does not exceed~$A^{\alpha}\Max_{0,i} f$, which equals the right hand side of~\eqref{ShiftedEstimateOneLayer}:
\eq{
A^{\alpha}\Max_{0,i}f = A^{\alpha}(1+|i-j|)^{-\theta_4} f_{0,j}^*.
}
\end{proof}
\begin{Th}\label{CubeToCubeHorizontalTheorem}
Assume~$\delta_0 \notin \M^\W$. For any number~$\theta_5 \in (d,\theta_4)$ there exists~$\nu^* > 0$  with the following property. For any sufficiently large~$A$ there exists~$\eps$ such that if the atom~$(k,j)$ is~$\eps$-flat and there is an arrow~$j\to i$ in the graph~$\Gamma_k$, then
\eq{
\|f_{k+1}\|_{L_q(2Q_{k,i})} \lesssim A^{\alpha - \nu^*} (1+|i-j|)^{-\theta_5} \|f_k\|_{L_q(Q_{k,j})}.
}
The multiplication constant in this inequality is independent of~$A$,~$k$, and~$j$.
\end{Th}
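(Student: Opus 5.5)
By Corollary~\ref{DilationPartsCorollary}, \eqref{eq217} and translation invariance of~$\W$, I would reduce to~$k=0$ and prove
\eq{
\|f_1\|_{L_q(2Q_{0,i})} \lesssim A^{\alpha-\nu^*}(1+|i-j|)^{-\theta_5}\|f_0\|_{L_q(Q_{0,j})}.
}
The idea is to interpolate between two bounds. The first is Lemma~\ref{SubordinationLemma}, which gives spatial decay~$(1+|i-j|)^{-\theta_4}$ but carries the full factor~$A^{\alpha}$. The second is Corollary~\ref{CubeEstimateCorollary}, which gives the gain~$A^{-2\nu}$ but a weaker spatial decay, namely that of the weight~$\tilde v_{0,j}$.

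\emph{Step 1 (hypotheses of Corollary~\ref{CubeEstimateCorollary}).} Since~$j\to i$ is an arrow in~$\Gamma_0$, Corollary~\ref{SaturatedOutgoingCor} says that~$(0,j)$ is saturated. Choosing~$\theta_2>\theta_4+d$, Lemma~\ref{SaturatedtoConcentrated} then gives the concentration condition~\eqref{ConcentrationAtom} with an absolute constant~$C$. The atom is~$\eps$-flat by assumption. Fix~$\nu\in(0,\kappa(\M^\W,\theta_1,C_1))$, which is positive by Proposition~\ref{StrengtheningOfSemiinvariant} because~$\delta_0\notin\M^\W$. For~$A$ large and~$\eps$ small, Corollary~\ref{CubeEstimateCorollary} yields
\eq{
\|f_1\|_{L_q(\tilde v_{0,j})}\lesssim A^{\alpha-2\nu}\|f_0\|_{L_q(Q_{0,j})}.
}
By Corollary~\ref{Lemma42Cor}, on~$2Q_{0,i}$ we have~$\tilde v_{0,j}\gtrsim (1+|i-j|)^{-\theta_3}$, with a constant independent of~$A$. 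Therefore
\eq{
\|f_1\|_{L_q(2Q_{0,i})}\lesssim A^{\alpha-2\nu}(1+|i-j|)^{\theta_3/q}\|f_0\|_{L_q(Q_{0,j})}.
}

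\emph{Step 2 (decaying bound).} Lemma~\ref{SubordinationLemma} gives
\eq{
\|f_1\|_{L_q(2Q_{0,i})}\lesssim A^{\alpha}(1+|i-j|)^{-\theta_4}f^*_{0,j}.
}
Proposition~\ref{LqCubeArbitraryAtom} applies here, since we have flatness and concentration, and it gives~$f^*_{0,j}\lesssim\|f_0\|_{L_q(Q_{0,j})}$. Hence
\eq{
\|f_1\|_{L_q(2Q_{0,i})}\lesssim A^{\alpha}(1+|i-j|)^{-\theta_4}\|f_0\|_{L_q(Q_{0,j})}.
}
All implicit constants in both steps are independent of~$A$.

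\emph{Step 3 (geometric mean).} Take the minimum of the two bounds, which is at most their weighted geometric mean with weights~$\lambda$ and~$1-\lambda$:
\eq{
A^{\alpha-2\lambda\nu}(1+|i-j|)^{\lambda\theta_3/q-(1-\lambda)\theta_4}.
}
Choose~$\lambda\in(0,1)$ small enough that~$(1-\lambda)\theta_4-\lambda\theta_3/q\ge\theta_5$. This is possible because~$\theta_5<\theta_4$, and~$\theta_3$ is fixed by~\eqref{eq415}. Setting~$\nu^*=2\lambda\nu$ gives the claim.

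The main obstacle is the order of parameter choices. We need~$\nu^*$ to depend only on~$\theta_5$ and the fixed weights, not on~$A$. This works because~$\nu$ comes from~$\kappa$ and~$\lambda$ from the exponents, and~$A$ and~$\eps$ are fixed only afterwards, as Theorem~\ref{Theorem41bis} requires. A second point is that the constants in Corollary~\ref{CubeEstimateCorollary}, Proposition~\ref{LqCubeArbitraryAtom} and the lower bound on~$\tilde v_{0,j}$ must all be uniform in~$A$.
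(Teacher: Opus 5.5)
Your proposal is correct and follows the paper's proof. The paper likewise reduces to $k=0$ and obtains two bounds. The decaying bound $A^{\alpha}(1+|i-j|)^{-\theta_4}\|f_0\|_{L_q(Q_{0,j})}$ comes from Lemma~\ref{SubordinationLemma} and Proposition~\ref{LqCubeArbitraryAtom}, with concentration supplied by saturation. The gaining bound $A^{\alpha-2\nu}(1+|i-j|)^{\theta_3/q}\|f_0\|_{L_q(Q_{0,j})}$ comes from Corollary~\ref{CubeEstimateCorollary} and the lower bound on the heated weight. Your explicit weighted-geometric-mean step with small $\lambda$ is exactly how the paper, without writing it out, combines these two estimates.
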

\begin{proof}
Without loss of generality, let~$k=0$ and~$j=0$. The desired inequality will follow from the estimates
\alg{
\label{eq4214}\|f_1\|_{L_q(2Q_{0,i})} &\lesssim A^\alpha (1+|i|)^{-\theta_4} \|f_0\|_{L_q(Q_{0,0})};\\
\label{eq4215}\|f_1\|_{L_q(2Q_{0,i})} &\lesssim A^{\alpha - 2\nu} (1+|i|)^{\frac{\theta_3}{q}} \|f_0\|_{L_q(Q_{0,0})}.
}
Here~$\nu\in (0,\kappa(\M^\W, \theta_1,C_1))$. The inequality~\eqref{eq4214} follows from Proposition~\ref{LqCubeArbitraryAtom} and Lemma~\ref{SubordinationLemma}. The concentration condition required in Proposition~\ref{LqCubeArbitraryAtom} is implied by the fact that~$(0,0)$ is a saturated atom via Lemma~\ref{SaturatedtoConcentrated}. The atom~$(0,0)$ is saturated by Corollary~\ref{SaturatedOutgoingCor}. 

Let us prove~\eqref{eq4215}. We start with the bounds
\mlt{
\|f_1\|_{L_q(2Q_{0,i})}^q \Lsref{\text{\tiny Lem.\ref{Lemma42}}} (1+|i|)^{\theta_3}\int\limits_{2Q_{0,i}} |f_1(x)|^q \Heat[v]\Big(x, \frac{1 - A^{-2a_1}}{q}, \frac{1 - A^{-2a_2}}{q},\ldots, \frac{1 - A^{-2a_d}}{q}\Big)\,dx\\
 \leq (1+|i|)^{\theta_3} \|f_1\|^q_{L_q(\Heat[v](\fdot, \frac{1 - A^{-2a_1}}{q}, \frac{1 - A^{-2a_2}}{q}, \ldots, \frac{1 - A^{-2a_d}}{q}))},
}
which reduce~\eqref{eq4215} to Corollary~\ref{CubeEstimateCorollary}.
\end{proof}
Recall Definition~\ref{TreeStructure} of the tree structure on the set of atoms.
\begin{Def}\label{DefinitionOfGamma}
Define the oriented graph~$\GGamma$ that describes the vertical interaction of atoms. Set~$V(\GGamma) = \Fl$. There is an arrow from~$(k,j)$ to~$(k+1,i)$ if one of the two possibilities below occur:
\begin{itemize}
\item $(k,j)$ is a saturated atom and also the parent of~$(k+1,i)$ in~$\GenTree$;
\item Some other atom~$(k,j') \in \Fl$ is the parent of~$(k+1,i)$ in~$\GenTree$ and there is the arrow~$j\to j'$ in~$\Gamma_k$.
\end{itemize}
\end{Def}
At most one of the two possibilities in the definition above can occur by the construction of the graphs~$\Gamma_k$. Note that by Corollary~\ref{SaturatedOutgoingCor} only saturated atoms might have outgoing edges in~$\GGamma$. The graph~$\GGamma$ does not have cycles, so, this is a forest, i.e., a disjoint union of maximal  trees  with respect to inclusion. These trees may be also defined as the connectivity components of~$\GGamma$. Let the collection of all the obtained trees be denoted by~$\TREE$.
\begin{Le}\label{KidsBoundsWithConstantLemma}
Assume~$\delta_0 \notin \M^\W$. Let~$(k,j)\in\Fl$ be a saturated atom and let~$\{(k+1,i)\}_{i\in J}$ be the collection of all its children in~$\GGamma$, i.e., all~$i\in\Z^d$ such that~$(k,j) \to (k+1,i)$ in~$\GGamma$. Then,
\eq{\label{KidsBoundsWithConstant}
\|f_{k+1}\|_{L_q(\bigcup\limits_{i\in J} Q_{k+1,i})} \lesssim A^{\alpha - \nu^*} \|f_k\|_{L_q(Q_{k,j})},
} 
where~$\nu^* > 0$ is a constant independent of~$A$.
\end{Le}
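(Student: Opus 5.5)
We first classify the children of $(k,j)$ in $\GGamma$. By Definition~\ref{DefinitionOfGamma}, an arrow $(k,j)\to(k+1,i)$ arises in one of two ways. Either $(k,j)$ itself is the parent of $(k+1,i)$ in $\GenTree$, or the parent of $(k+1,i)$ is some flat atom $(k,j')$ with an arrow $j\to j'$ in $\Gamma_k$. Let $J'\subset\Z^d$ be the set consisting of $j$ together with all $j'$ such that $j\to j'$ in $\Gamma_k$ and $(k,j')\in\Fl$. Every child cube $Q_{k+1,i}$ then intersects its parent $Q_{k,j'}$ with $j'\in J'$. Since $A>2$ and all $a_m>0$, each side of $Q_{k+1,i}$ is at most half the corresponding side of $Q_{k,j'}$, so $Q_{k+1,i}\subset 2Q_{k,j'}$. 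Hence
\[
\bigcup_{i\in J}Q_{k+1,i}\subset \bigcup_{j'\in J'} 2Q_{k,j'},
\]
and by the triangle inequality in $L_q$ it suffices to bound $\sum_{j'\in J'}\|f_{k+1}\|_{L_q(2Q_{k,j'})}$.

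For $j'\ne j$ we use Theorem~\ref{CubeToCubeHorizontalTheorem}, which applies because $(k,j)$ is $\eps$-flat and $j\to j'$ in $\Gamma_k$. It gives
\[
\|f_{k+1}\|_{L_q(2Q_{k,j'})}\lesssim A^{\alpha-\nu^*}(1+|j'-j|)^{-\theta_5}\|f_k\|_{L_q(Q_{k,j})}.
\]
Since $\theta_5>d$, the sum over $j'\in\Z^d$ of $(1+|j'-j|)^{-\theta_5}$ converges to a constant independent of $A$. These terms therefore contribute $\lesssim A^{\alpha-\nu^*}\|f_k\|_{L_q(Q_{k,j})}$.

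The remaining term is the diagonal one, $j'=j$, and this is the main obstacle: Theorem~\ref{CubeToCubeHorizontalTheorem} is stated only for genuine arrows $j\to i$. We will repeat its proof with $i=j$. The bound corresponding to \eqref{eq4215}, namely
\[
\|f_{k+1}\|_{L_q(2Q_{k,j})}\lesssim A^{\alpha-2\nu}\|f_k\|_{L_q(Q_{k,j})},
\]
is obtained exactly as before. One bounds the $L_q(2Q_{k,j})$ norm by the $L_q(\tilde v_{k,j})$ norm using Corollary~\ref{Lemma42Cor}; here $|i-j|=0$, so no polynomial factor appears. One then applies Corollary~\ref{CubeEstimateCorollary}. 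Its hypotheses hold: the atom is $\eps$-flat by assumption, and it satisfies the concentration condition \eqref{ConcentrationAtom} because it is saturated, by Lemma~\ref{SaturatedtoConcentrated}. This is the only place where saturation is used directly. The diagonal term is thus $\lesssim A^{\alpha-2\nu}\|f_k\|_{L_q(Q_{k,j})}$.

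Finally, we take $\nu^*$ to be the minimum of $2\nu$ and the $\nu^*$ from Theorem~\ref{CubeToCubeHorizontalTheorem}. Adding the diagonal and off-diagonal contributions gives \eqref{KidsBoundsWithConstant}, with a constant independent of $A$, $k$, $j$, provided $A$ is large and $\eps$ small as required by the cited results. The point to check is that the constants in Corollary~\ref{CubeEstimateCorollary} and in the inclusion $Q_{k+1,i}\subset 2Q_{k,j'}$ do not depend on $A$; both follow from the scaling structure.
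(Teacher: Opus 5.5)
Your proof is correct and follows the paper's argument. The children lie in $2Q_{k,j}\cup\bigcup_{j\to j'}2Q_{k,j'}$, the diagonal piece is handled by Corollary~\ref{CubeEstimateCorollary} (with the concentration condition coming from saturation via Lemma~\ref{SaturatedtoConcentrated}), and the off-diagonal pieces are handled by Theorem~\ref{CubeToCubeHorizontalTheorem} and summed using $\theta_5>d$. One small slip: the inclusion $Q_{k+1,i}\subset 2Q_{k,j'}$ needs $A^{-a_m}\le 1/2$ for every $m$, i.e.\ $A\ge 2^{1/\min_m a_m}$, which does not follow from $A>2$ when some $a_m<1$; it does hold for the sufficiently large $A$ that the cited results already require.
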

\begin{proof}
Without loss of generality, assume~$k=0$ and~$j=0$. Let~$J'\subset \Z^d$ be the set of all points~$j'$ such that~$0 \to j'$ in~$\Gamma_0$. Then,
\eq{
\bigcup_{i\in J} Q_{1,i} \subset \Big(\bigcup_{j' \in J'} 2Q_{0,j'}\Big) \cup 2Q_{0,0},
}
and, by the triangle inequality,
\eq{
\|f_{1}\|_{L_q(\bigcup\limits_{i\in J} Q_{1,i})} \leq \sum\limits_{j'\in J'} \|f_{1}\|_{L_q(2Q_{0,j'})} + \|f_{1}\|_{L_q(2Q_{0,0})}.
}
The bound
\eq{
\|f_{1}\|_{L_q(2Q_{0,0})} \lesssim A^{\alpha - \nu^*} \|f_0\|_{L_q(Q_{0,0})} 
}
follows from Corollary~\ref{CubeEstimateCorollary}. To estimate the first sum, we employ Theorem~\ref{CubeToCubeHorizontalTheorem}:
\eq{
\sum\limits_{j'\in J'} \|f_{1}\|_{L_q(2Q_{0,j'})} \lesssim A^{\alpha - \nu^*} \sum\limits_{j'\in J'}(1+|j'|)^{-\theta_5} \|f_0\|_{L_q(Q_{0,0})} \lesssim A^{\alpha - \nu^*}  \|f_0\|_{L_q(Q_{0,0})} 
}
since~$\theta_5 > d$.
\end{proof}
The inequality~\eqref{KidsBoundsWithConstant} says there exists an absolute constant~$C$ (that might depend on the choice of~$\W$,~$\alpha$, or the parameters~$\theta_i$,~$i=1,\ldots, 5$, but not on~$A$ or~$\eps$) such that
\eq{\label{NonStrictInequality}
\|f_{k+1}\|_{L_q(\bigcup\limits_{i\in J} Q_{k+1,i})} \leq C A^{\alpha - \nu^*} \|f_k\|_{L_q(Q_{k,j})}
}
whenever all the assumptions of Lemma~\ref{KidsBoundsWithConstantLemma} are satisfied. By choosing sufficiently large~$A$, we deduce the bound
\eq{\label{StrictInequality}
\|f_{k+1}\|_{L_q(\bigcup\limits_{i\in J} Q_{k+1,i})} \leq A^{\alpha - \nu^*/2} \|f_k\|_{L_q(Q_{k,j})}
}
and also set~$\nu^{**} = \nu^*/2$. From now on, we fix the parameter~$A$. The latter inequality is suitable for induction, and we obtain the following lemma.
\begin{Le}\label{OneLayerOfTheTreeEstimate}
Let~$\Tree\in \TREE$ be a tree in~$\GGamma$ with the root at~$(k,j)$. Then,
\eq{
\|f_{m}\|_{L_q(\!\!\! \bigcup\limits_{(m,i)\in \Tree}\!\!\! Q_{m,i})} \leq A^{(\alpha - \nu^{**})(m-k)} \|f_k\|_{L_q(Q_{k,j})}
}
for any~$m \geq k$.
\end{Le}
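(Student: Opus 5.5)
We argue by induction on $m\geq k$. For $m=k$ the tree has only its root on level $k$ (every vertex of $\Tree$ on level $k$ is the root, since arrows of $\GGamma$ go from level $k$ to level $k+1$ and a tree in $\TREE$ has a unique minimal vertex), so the inequality is an identity. For the induction step, assume the bound holds for $m$ and consider level $m+1$. Every atom $(m+1,i)\in\Tree$ has exactly one incoming arrow in $\GGamma$, coming from some atom $(m,j')\in\Tree$. By Corollary~\ref{SaturatedOutgoingCor} and the remark after Definition~\ref{DefinitionOfGamma}, only saturated flat atoms have outgoing arrows. Hence the level-$(m+1)$ part of $\Tree$ splits into the disjoint families $J(m,j')$ of children of the saturated atoms $(m,j')\in\Tree$:
\eq{
\bigcup\limits_{(m+1,i)\in\Tree} Q_{m+1,i} = \bigcup\limits_{(m,j')\in \Tree}\ \bigcup\limits_{i\in J(m,j')} Q_{m+1,i}.
}

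We estimate each family by~\eqref{StrictInequality}. Then we combine the pieces in $L_q$ using $q$-th powers rather than the triangle inequality, so that the constant does not grow with the number of pieces:
\eq{
\|f_{m+1}\|^q_{L_q(\bigcup_{(m+1,i)\in\Tree} Q_{m+1,i})} \leq \sum\limits_{(m,j')\in\Tree}\|f_{m+1}\|^q_{L_q(\bigcup_{i\in J(m,j')} Q_{m+1,i})} \leq A^{q(\alpha-\nu^{**})}\sum\limits_{(m,j')\in\Tree}\|f_m\|^q_{L_q(Q_{m,j'})}.
}
The first inequality uses only that the families $J(m,j')$ are disjoint, so each set $Q_{m+1,i}$ is counted once; the sets may overlap, but this only helps. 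The last sum equals $\|f_m\|^q_{L_q(\bigcup_{(m,j')\in\Tree}Q_{m,j'})}$, because the parallelepipeds $Q_{m,j'}$ of a fixed generation tile $\R^d$ with overlaps of measure zero. The induction hypothesis now gives the factor $A^{(\alpha-\nu^{**})(m-k)}\|f_k\|_{L_q(Q_{k,j})}$, and the claim follows for $m+1$.

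The step that needs care is the bookkeeping that makes the $q$-th power summation legitimate. First, each child must have a unique parent in $\GGamma$. This holds because each atom of generation $m+1$ has a unique $\GenTree$-parent, and that parent has at most one incoming arrow in $\Gamma_m$ (each $j$ receives an arrow only from $\vec{j}_m$); moreover at most one of the two possibilities in Definition~\ref{DefinitionOfGamma} can occur. Second, all atoms of $\Tree$ at level $m$ must be flat, so that~\eqref{StrictInequality}, which requires flatness, applies to every saturated parent. This holds because $V(\GGamma)=\Fl$. Non-saturated atoms at level $m$ contribute nothing at level $m+1$ but still appear on the right-hand side, which only helps.
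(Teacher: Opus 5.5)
Your proof is correct and is the same induction on levels via \eqref{StrictInequality} that the paper only announces (``the latter inequality is suitable for induction''). You also supply the detail the paper omits: the saturated parents' contributions are added in $q$-th powers over the almost disjoint parallelepipeds $Q_{m,j'}$, rather than by the triangle inequality, which would lose (cf.\ Subsection~\ref{NecessityForNonSplit}). One minor point: your first displayed inequality is plain subadditivity of $E\mapsto\int_E|f_{m+1}|^q$ and needs neither disjointness of the families $J(m,j')$ nor uniqueness of parents in $\GGamma$; uniqueness is really used only to see that the root is the unique vertex of $\Tree$ on level $k$.
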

Here and in what follows we use a slight abuse of notation: By~$(m,i)\in \Tree$ we mean~$(m,i)\in V(\Tree)$.
\begin{Def}\label{TreeLevels}
Let~$\Tree\in\TREE$ be a tree in~$\GGamma$ and let~$m\in \{0\}\cup \N$. Set
\eq{
\TS_m = \bigcup\limits_{(m,i)\in \Tree}\!\!\! Q_{m,i}.
}
\end{Def}
\begin{Cor}\label{TreeBoundLebesgueCorollary}
Let~$\Tree\in \TREE$ be a tree in~$\GGamma$ with the root at~$(k,j)$. Then,
\eq{\label{TreeBoundLebesgueCorollaryFormula}
\sum\limits_m A^{-\alpha m} \|f_{m}\|_{L_q(\TS_m)} \leq A^{-\alpha k} \|f_k\|_{L_q(Q_{k,j})}.
}
\end{Cor}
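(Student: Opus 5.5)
The plan is to sum the layer-by-layer bound of Lemma~\ref{OneLayerOfTheTreeEstimate} over all generations of the tree, using the gain $A^{-\nu^{**}}$ per generation to get a convergent geometric series.

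First I would note that every vertex of $\Tree$ has the form $(m,i)$ with $m \geq k$. Indeed, arrows in $\GGamma$ only go from generation $m$ to generation $m+1$, and $\Tree$ is the connected component with root $(k,j)$. Hence $\TS_m = \varnothing$ for $m<k$, those terms vanish, and the sum in~\eqref{TreeBoundLebesgueCorollaryFormula} effectively runs over $m \geq k$.

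For each $m \geq k$, Lemma~\ref{OneLayerOfTheTreeEstimate} gives
\eq{
A^{-\alpha m}\|f_m\|_{L_q(\TS_m)} \leq A^{-\alpha m} A^{(\alpha-\nu^{**})(m-k)}\|f_k\|_{L_q(Q_{k,j})} = A^{-\alpha k}A^{-\nu^{**}(m-k)}\|f_k\|_{L_q(Q_{k,j})}.
}
Summing over $m\geq k$ yields
\eq{
\sum_m A^{-\alpha m}\|f_m\|_{L_q(\TS_m)} \leq \frac{1}{1-A^{-\nu^{**}}}\,A^{-\alpha k}\|f_k\|_{L_q(Q_{k,j})}.
}

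There is no real obstacle here, only one bookkeeping point. The factor $(1-A^{-\nu^{**}})^{-1}$ is at most $2$ once $A$ is large, and $A$ has already been fixed large. The corollary should therefore be read with a harmless multiplicative constant, i.e., as a `$\lesssim$' bound, which suffices for summing over trees afterwards. To obtain the literal inequality `$\leq$' one can instead absorb this constant at the stage of~\eqref{StrictInequality}, for example by proving that inequality with $\nu^*/4$ in place of $\nu^*/2$. Either way, the essential ingredient is the strict gain $\nu^{**}>0$ in Lemma~\ref{OneLayerOfTheTreeEstimate}, which makes the series geometric.
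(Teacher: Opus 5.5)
Your argument is the paper's own: sum Lemma~\ref{OneLayerOfTheTreeEstimate} over $m\geq k$ and bound the resulting geometric series. Exactly as in the paper's proof, which is written with `$\lesssim$', this gives the estimate up to the factor $(1-A^{-\nu^{**}})^{-1}$, and that is how the corollary is used afterwards. Drop your proposed route to the literal `$\leq$', however: since $Q_{k,j}\subset \TS_k$, the term $m=k$ alone is already at least the right-hand side, so no modification of~\eqref{StrictInequality} can give constant one unless every term with $m>k$ vanishes.
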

\begin{proof}
We use Lemma~\ref{OneLayerOfTheTreeEstimate} and the estimate for the geometric series:
\mlt{
\sum\limits_m A^{-\alpha m} \|f_{m}\|_{L_q(\TS_m)} \lesssim \sum\limits_{m \geq k} A^{-\alpha m} A^{(\alpha - \nu^{**})(m-k)} \|f_{k}\|_{L_q(Q_{k,j})}\\
 =\Big(\sum\limits_{m \geq k}A^{-\nu^{**} m}\Big) A^{-(\alpha - \nu^{**})k}\|f_k\|_{L_q(Q_{k,j})} \lesssim A^{-\alpha k} \|f_k\|_{L_q(Q_{k,j})}.
}
\end{proof}
This corollary says that the bound for the sum over a tree is reduced to the bound for the local quantity over its root. The lemma below describes the parents in the tree of all atoms (see Definition~\ref{TreeStructure}) of the roots of the trees in~$\TREE$. It is a direct consequence of definitions, the formal proof may be found in Lemma~$18$ in~\cite{Stolyarov2022}.
\begin{Le}
Let~$(k,j)$ be the root of a tree~$\Tree \in \TREE$, let also~$k \geq 1$. Either~$(k,j)$ is a child of a convex atom~$(k-1,j')$ in~$\GenTree$ or it is a child of some atom~$(k-1,j')$ in~$\GenTree$ that is subordinate to a convex atom~$(k-1,j^\uparrow)$ in~$\Gamma_{k-1}$.
\end{Le}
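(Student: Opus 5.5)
The plan is to unwind the definitions of $\GGamma$ and $\GenTree$ and argue by elimination on the $\GenTree$-parent of the root. Let $(k,j)$, $k\geq 1$, be the root of $\Tree\in\TREE$. Then $(k,j)\in\Fl$, since $V(\GGamma)=\Fl$, and being a root means it has no incoming arrow in $\GGamma$. Let $(k-1,j')$ be its parent in $\GenTree$ (Definition~\ref{TreeStructure}). If $(k-1,j')\in\CO$, we are in the first alternative and nothing more is needed. So the work is the case $(k-1,j')\in\Fl$, where we must show that $j'$ is subordinate, in $\Gamma_{k-1}$, to some convex atom $(k-1,j^\uparrow)$.

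In that case we look at the structure of $\Gamma_{k-1}$ near $j'$. By Lemma~\ref{ShortPathsLemma} and the construction (every vertex $j$ either has $\vec{j}_{k-1}=j$ or receives exactly one arrow from $\vec{j}_{k-1}$), exactly one of the following holds:
\begin{itemize}
\item[(a)] $j'$ has no incoming arrow, so $(k-1,j')$ is saturated by definition;
\item[(b)] there is a unique arrow $j^\uparrow\to j'$, and $(k-1,j^\uparrow)$ is saturated by Corollary~\ref{SaturatedOutgoingCor}.
\end{itemize}

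In case (a), $(k-1,j')$ is a flat saturated atom and the parent of $(k,j)$ in $\GenTree$. The first bullet of Definition~\ref{DefinitionOfGamma} then gives an arrow $(k-1,j')\to(k,j)$ in $\GGamma$, contradicting that $(k,j)$ is a root. So case (a) cannot occur.

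In case (b), suppose $(k-1,j^\uparrow)$ were flat. Then it lies in $V(\GGamma)$, the atom $(k-1,j')\in\Fl$ is the parent of $(k,j)$, and $j^\uparrow\to j'$ in $\Gamma_{k-1}$. The second bullet of Definition~\ref{DefinitionOfGamma} gives an arrow $(k-1,j^\uparrow)\to(k,j)$ in $\GGamma$, again contradicting rootedness. Hence $(k-1,j^\uparrow)\in\CO$, and $j'$ is subordinate to this convex atom, which is the second alternative.

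The only delicate point is bookkeeping. One must check that "subordinate" means exactly receiving an arrow in $\Gamma_{k-1}$. One must also check that the vertex set of $\GGamma$ is $\Fl$, so that an arrow exists only when the source is flat. This is precisely why convexity of the source is forced in case (b). Everything else is immediate from the definitions.
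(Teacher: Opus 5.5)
Your proof is correct and is the same definition-unwinding the paper intends; the paper calls the lemma a direct consequence of the definitions and refers to Lemma~18 of~\cite{Stolyarov2022}. The key step is that a root has no incoming arrow in $\GGamma$, so a flat $\GenTree$-parent can be neither saturated (ruled out by the first bullet of Definition~\ref{DefinitionOfGamma}) nor subordinate to a flat atom (ruled out by the second bullet), which forces its center in $\Gamma_{k-1}$ to be convex. A minor remark: in case (b) you do not need the saturation of $j^\uparrow$ from Corollary~\ref{SaturatedOutgoingCor}, since the second bullet only requires the arrow $j^\uparrow\to j'$.
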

In the first case, when~$(k,j)$ is a child of a convex atom,
\mlt{
 A^{-\alpha k}\|f_k\|_{L_q(Q_{k,j})} \Lsref{\text{\tiny Cor.~\ref{Cor44}}} \|f_{k+K}\|_{L_1(\tilde{w}_{k-1,j'})} \Lsref{\scriptscriptstyle (k-1,j')\in \CO} \Big(\|f_{k+K}\|_{L_1(\tilde{w}_{k-1,j'})} - \|f_{k-1}\|_{L_1(w_{k-1,j'})}\Big),\\
  \tilde{w}_{k-1,j'} = \Heat[w_{k-1,j'}]\Big(\fdot, A^{-2(k-1)a_1} - A^{-2(k-1+L)a_1}, \ldots, A^{-2(k-1)a_d} - A^{-2(k-1+L)a_d}\Big).
}
Similar bounds lead to the estimate
\mlt{\label{HugeBoundSecondCase}
A^{-\alpha k}\|f_k\|_{L_q(Q_{k,j})} \\
\Lsref{\text{\tiny Cor.~\ref{Cor44}}} \|f_{k-1+K}\|_{L_1(\Heat[w_{k-1,j'}](\fdot; A^{-2(k-1)a_1} - A^{-2(k-1+K)a_1}, \ldots, A^{-2(k-1)a_d} - A^{-2(k-1+K)a_d}) )} \\
\Lref{\scriptscriptstyle j^\uparrow \stackrel{\scriptscriptstyle\Gamma_{k-1}}{\longrightarrow} j'}(1+|j^\uparrow-j'|)^{-\theta_4} \|f_{k-1+K}\|_{L_1(\Heat[w_{k-1,j^\uparrow}](\fdot; A^{-2(k-1)a_1} - A^{-2(k-1+K)a_1}, \ldots, A^{-2(k-1)a_d} - A^{-2(k-1+K)a_d}) )} \\
\Lsref{\scriptscriptstyle (k-1,j^\uparrow)\in \CO}  (1+|j^\uparrow-j'|)^{-\theta_4} \Big(\|f_{k+K}\|_{L_1(\tilde{w}_{k-1,j^\uparrow})} - \|f_{k-1}\|_{L_1(w_{k-1,j^\uparrow})}\Big),\\
  \tilde{w}_{k-1,j^\uparrow} = \Heat[w_{k-1,j^\uparrow}]\Big(\fdot, A^{-2(k-1)a_1} - A^{-2(k-1+L)a_1}, \ldots, A^{-2(k-1)a_d} - A^{-2(k-1+L)a_d}\Big)
}
in the second case. We will informally assume that the first bound is a particular case of the second one, i.e., think that a saturated atom is subordinate to itself.

\begin{proof}[Proof of Theorem~\ref{MainTheoremBesovLorentzScale}]
According to Remark~\ref{DiscretizationRemark} and Lemma~\ref{TruncationLemma}, it suffices to prove the bound
\eq{
\sum\limits_{k \geq 0} A^{-\alpha k} \|f_k\|_{L_{q}} \lesssim \|f\|_{L_1},\qquad f\in \W,
}
for some number~$A > 1$ independent of~$f$. This number is defined by Theorem~\ref{Theorem41}, Proposition~\ref{Prop45}, and the passage from~\eqref{NonStrictInequality} to~\eqref{StrictInequality}. We also choose sufficiently small~$\eps$ after we have fixed~$A$, the smallness of~$\eps$ is also specified in Theorem~\ref{Theorem41} and Proposition~\ref{Prop45}. Let now~$\Omega_k$ be the union of the~$Q_{k,j}$ that correspond to convex atoms~$(k,j)$ of generation~$k$, as defined in~\eqref{OmegakDefinition}. By the triangle inequality,
\eq{\label{eq4231}
\sum\limits_{k \geq 0} A^{-\alpha k} \|f_k\|_{L_{q}} \leq \sum\limits_{k \geq 0} A^{-\alpha k} \|f_k\|_{L_{q}(\Omega_k)} + \sum\limits_{k \geq 0} A^{-\alpha k} \|f_k\|_{L_{q}(\R^d \setminus \Omega_k)}.
}
The first sum is bounded with~$\|f\|_{L_1}$ in Corollary~\ref{ConvexAtomsEstimates}. To bound the second sum, we construct the graph~$\Gamma$ and split it into trees~$\Tree$; note that we have already fixed~$A$ and~$\eps$. Each tree~$\Tree$ generates its own collection of sets~$\{\TS_k\}_k$ via Definition~\ref{TreeLevels}. Since each flat atom belongs to some tree, we have the decomposition
\eq{
\R^d \setminus \Omega_k  = \bigcup_{\Tree\in \TREE} \TS_k,\qquad k\in \{0\}\cup \N.
}
We decompose the second sum on the right hand side of~\eqref{eq4231} further using the triangle inequality,
\eq{
\|f_k\|_{L_{q}(\R^d \setminus \Omega_k)} \leq \sum\limits_{\Tree\in \TREE} \|f_k\|_{L_{q}(\TS_k)}. 
}
Note that we cannot decompose further, see Subsection~\ref{NecessityForNonSplit} in the appendix. Let~$(k(\Tree), j(\Tree))$ be the root of a tree~$\Tree$. We interchange the orders of summation and collect the previous estimates:
\mlt{
\sum\limits_{k \geq 0} \sum\limits_{\Tree \in \TREE} A^{-\alpha k}\|f_k\|_{L_{q}(\TS_k)}\\
 = \sum\limits_{\Tree\in \TREE} \sum\limits_{m \geq k(\Tree)} A^{-\alpha m} \|f_m\|_{L_{q}(\TS_m)} \Lsref{\text{\tiny Cor.\ref{TreeBoundLebesgueCorollary}}} \sum\limits_{\Tree\in \TREE}A^{-\alpha k(\Tree)} \|f_{k(\Tree)}\|_{L_q(Q_{k(\Tree),j(\Tree)})}\\
\LseqrefTwo{TreeBoundLebesgueCorollaryFormula}{HugeBoundSecondCase} \sum\limits_{\Tree\in \TREE} (1+|j^\uparrow (\Tree)-j'(\Tree)|)^{-\theta_4} \Big(\|f_{k(\Tree)+K}\|_{L_1(\tilde{w}_{k(\Tree)-1,j^\uparrow(\Tree)})} - \|f_{k(\Tree)-1}\|_{L_1(w_{k(\Tree)-1,j^\uparrow(\Tree)})}\Big).
}
In the latter formula, we use the convention that in the case~$k(\Tree) = 0$, we replace the difference between weighted~$L_1$ norms with simply~$\|f_{K}\|_{L_1(w_{0,j^\uparrow(\Tree)})}$. We wish to bound the latter sum with the right hand side of~\eqref{DefOfTildeWeights}. This will follow, provided we show for any~$k$ and~$i$ that
\eq{
\sum\limits_{\genfrac{}{}{0pt}{-2}{\Tree\colon k(\Tree)=k,}{j^\uparrow(\Tree)=i}} (1+|j^\uparrow (\Tree)-j'(\Tree)|)^{-\theta_4}  \lesssim 1.
}
This estimate is true since each atom~$(k,j')$ has at most~$C_dA^{d}$,~$C_d$ being a dimensional constant, children, and~$\theta_4 > d$.
\end{proof}


\subsection{Reflection and possible further development}\label{s43}
In this subsection, we provide a context for several aspects of the proof. The reasoning is more informal.

\paragraph{Relationship between flat atoms and the polar decomposition of charges.} Let~$\mu$ be a finite charge on~$\R^d$. The Besicovitch differentiation theorem says that since~$\mu$ is absolutely continuous with respect to its total variation~$|\mu|$, there exists the density function~$\vn \colon \R^d \to S^{\ell-1}$ attaining its values in the unit sphere and such that~$\mu = \vec{n} |\mu|$. What is more, 
\eq{
\lim\limits_{r\to 0} \Big(|\mu|(B_r(x))\Big)^{-1}\int\limits_{B_r(x)} \big|\vn(x) - \vn(y)\big|\,d|\mu|(y) = 0
}
for~$|\mu|$-almost all~$x\in \supp \mu$; see Remark $2.15(3)$ in~\cite{Mattila1995} for the Besicovitch differentiation theorem and the latter limit relation. In other words,
\eq{
\int\limits_{B_r(x)} \big|\vn(x) - \vn(y)\big|\,d|\mu|(y) \leq \eps |\mu|(B_r(x))
}
for sufficiently small~$r$, whenever~$\eps$ and~$x$ are fixed. By the triangle inequality, the left hand side is bounded away from zero by
\mlt{
\Big|\int\limits_{B_r(x)}\vn(x)\,d|\mu|(y) - \int\limits_{B_r(x)}\vn(y)\,d|\mu|(y)\Big|\\
 = \Big|\vn(x) |\mu|(B_r(x)) - \mu(B_r(x))\Big| \geq |\mu|(B_r(x)) - |\mu(B_r(x))|.
}
In particular, 
\eq{\label{FlatnessBalls}
|\mu|(B_r(x)) - |\mu(B_r(x))| \leq \eps |\mu|(B_r(x))
}
for~$|\mu|$-almost every~$x$ and all~$\eps > 0$, provided~$r$ is sufficiently small. 

Now let us turn to~$\eps$-flat atoms. Until the end of this subsection we work with the classical isotropic homogeneity, i.e.,~$a = (1,1,\ldots, 1)$. Recall that in this case~\eqref{DefinitionOfK} says~$K=2$ and~$L=3$. Therefore,
\mlt{
\|f_{k+3}\|_{L_1(\tilde{w}_{k,j})} - \|f_k\|_{L_1(w_{k,j})} = \int\limits_{\R^d}  |f_{k+3}(x) | \heat [w_{k,j}]\big(x; A^{-2k} - A^{-2k-6}\big)\,dx - \int\limits_{\R^d}  |f_{k}(x) | w_{k,j}(x)\,dx\\
\Eeqref{ReproducingThefk} \int\limits_{\R^d} \Big(\heat\big[|f_{k+3}|\big]\big(x; A^{-2k} - A^{-2k-6}\big) - \Big| \heat\big[f_{k+3}\big]\big(x; A^{-2k} - A^{-2k-6}\big)\Big|\Big)w_{k,j}(x)\,dx.
}
Thus, Definition~\ref{Convex/flat} of a flat atom says that on average 
\mlt{
\heat\big[|f_{k+3}|\big]\big(x; A^{-2k} - A^{-2k-6}\big) - \Big| \heat\big[f_{k+3}\big]\big(x; A^{-2k} - A^{-2k-6}\big)\Big|\\
 \leq \eps \Big| \heat\big[f_{k+3}\big]\big(x; A^{-2k} - A^{-2k-6}\big)\Big| \leq \eps\heat\big[|f_{k+3}|\big]\big(x; A^{-2k} - A^{-2k-6}\big),
}
when~$x$ is close to~$A^{-k}j$. This might be thought of as a Gaussian version of~\eqref{FlatnessBalls}. On the other hand, we have used several formalizations of the reverse principle: The presence of an~$\eps$-flat atom~$(k,j)$ ensures that in a neighborhood of~$A^{-k}j$ the charge or function in question is close to a rank-one measure on the scale~$A^{-k}$. To summarize the informal discussion, the flat/convex decomposition allows to split a charge or a function into a sum of approximate rank-one measures and elementary `convex' atoms that are easy to analyze. The difficulty comes from the fact that the condition~$f\in \W$ is unfriendly to standard splitting procedures such as truncation or multiplication, so we need to work with weighted norms rather than split~$f$ into parts directly.

\paragraph{The role of dimension.} 
Let~$\mu$ be a locally finite charge on~$\R^d$. Its lower Hausdorff dimension at~$x\in \supp \mu$ is defined as
\eq{
\ldH \mu(x) = \varliminf_{r\to 0} \frac{\log |\mu|(B_r(x))}{\log r}.
}
The lower Hausdorff dimension of a charge is given by
\eq{
\ldH \mu = \sup \Set{\gamma \geq 0}{\ldH \mu (x) \geq \gamma \text{ for $|\mu|$-almost all } x}.
}
This quantity measures the `maximal singularity' of a charge, see Chapter~$10$ in~\cite{Falconer1997} for equivalent definitions and properties of this notion. It appears that larger values of~$\nu$ in Proposition~\ref{StrengtheningOfSemiinvariant} lead to better lower bounds for~$\ldH \mu$. Here we also prefer to work with the classical isotropic homogeneity~$a = (1,1,\ldots,1)$.
\begin{St}\label{GorbunovProposition}
Fix~$\gamma \in (0,d)$ and~$p > 1$. Let~$\mu$ be a finite measure on~$\R^d$ such that
\eq{
\|\heat [\mu](\fdot;t)\|_{L_p(\R^d)} \lesssim t^{- \frac{(d-\gamma)(p-1)}{2p}}
}
for all~$t > 0$ sufficiently small. Then,~$\ldH \mu \geq \gamma$.
\end{St}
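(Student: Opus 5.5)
We plan to show that, at $\mu$-almost every point, the ball mass $\mu(B_r(x))$ cannot be much larger than $r^{\gamma}$ for small $r$; the hypothesis is an $L_p$ bound on the heat extension, which we convert into such mass bounds via a pointwise lower bound on the heat extension.

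First step: a pointwise lower bound. For $|y-x|\leq r$ and $t=r^2$ we have
\eq{
\heat[\mu](y;t) \geq (4\pi t)^{-\frac d2}\int_{B_r(x)} e^{-\frac{|y-z|^2}{4t}}\,d\mu(z) \gtrsim r^{-d}\mu(B_r(x)),
}
since $|y-z|\leq 2r$ for such $y$ and $z$.

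Second step: a ball-mass estimate. Integrating the $p$-th power of this lower bound over $B_r(x)$ gives
\eq{
r^{d}\big(r^{-d}\mu(B_r(x))\big)^p \lesssim \|\heat[\mu](\fdot;r^2)\|_{L_p}^p \lesssim r^{-(d-\gamma)(p-1)}.
}
Hence $\mu(B_r(x))\lesssim r^{\frac{d(p-1)-(d-\gamma)(p-1)}{p}} = r^{\gamma(p-1)/p}$. This is uniform in $x$, but the exponent is only $\gamma(p-1)/p<\gamma$. So the crude bound is lossy, and this is the main obstacle: a single ball cannot exploit the full $L_p$ norm.

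Third step: a covering argument to remove the loss. We recover the exponent $\gamma$ by a Frostman-type argument, treating the proposition as the statement that an $L_p$ energy bound controls the dimension, much as finite Riesz energy does.

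Fix $\gamma'<\gamma$ and consider the set
\eq{
E=\{x : \mu(B_{r}(x))\geq r^{\gamma'} \text{ for arbitrarily small } r\}.
}
For a fixed dyadic scale $r=2^{-n}$, let $E_n$ be the set of $x$ with $\mu(B_r(x))\geq r^{\gamma'}$. By the Vitali covering lemma, pick disjoint balls $B_r(x_i)$, $x_i\in E_n$, whose fivefold enlargements cover $E_n$; let $N$ be their number.

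On each chosen ball, $\heat[\mu](\fdot;r^2)\gtrsim r^{-d}\mu(B_r(x_i))$ by the first step. Since the balls are disjoint, we can sum the $L_p$ contributions:
\eq{
\sum_i r^{d}\big(r^{-d}\mu(B_r(x_i))\big)^p \lesssim r^{-(d-\gamma)(p-1)}.
}
Each summand satisfies $r^{d-dp}\mu(B_r(x_i))^p \geq r^{d-dp}\mu(B_r(x_i))\,r^{\gamma'(p-1)}$. Therefore
\eq{
\sum_i \mu(B_r(x_i)) \lesssim r^{(\gamma-\gamma')(p-1)}.
}

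Fourth step: conclusion. The mass of $E_n$ is controlled by the fivefold enlargements. Using the doubling-free bound $\mu(B_{5r}(x_i))\leq \mu(B_{6r}(y))$ for suitable points $y$ might fail without a doubling property. Instead we repeat the Vitali selection at radius $5r$ with the condition applied at radius $r$ and reverse the roles of the radii; equivalently, we use the Besicovitch covering theorem, which gives bounded overlap directly, so that $\mu(E_n)\lesssim \sum_i\mu(B_r(x_i))$.

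Then $\mu(E_n)\lesssim 2^{-n(\gamma-\gamma')(p-1)}$, which is summable in $n$. By Borel--Cantelli, $\mu$-almost every $x$ lies in only finitely many $E_n$. Passing between dyadic and general $r$ costs only a constant factor, so $\ldH\mu(x)\geq\gamma'$ for $\mu$-almost every $x$. Letting $\gamma'\uparrow\gamma$ gives $\ldH\mu\geq\gamma$.

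The delicate point is the covering step: making the Besicovitch overlap argument compatible with evaluating ball masses at radius $r$.
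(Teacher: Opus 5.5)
Your proof is correct, and it takes a different route from the paper, which does not prove Proposition~\ref{GorbunovProposition}. The paper only indicates that the intended argument rests on a strengthened Frostman lemma going back to \cite{StolyarovWojciechowski2014}.

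You replace that with a direct argument with four ingredients:
\begin{enumerate}[1)]
\item the Gaussian lower bound $\heat[\mu](y;r^2)\gtrsim r^{-d}\mu(B_r(x))$ for $y\in B_r(x)$;
\item a covering of $E_n$ at the single scale $r=2^{-n}$;
\item the trade-off $\mu(B)^p\geq r^{\gamma'(p-1)}\mu(B)$ on heavy balls;
\item Borel--Cantelli over dyadic scales.
\end{enumerate}
This is essentially the standard proof that an $L_p$-type (R\'enyi, $p>1$) dimension bound controls the lower local dimension almost everywhere. It works scale by scale, exploiting that the hypothesis holds uniformly at every small $t$. It also yields the explicit exceptional-set estimate $\mu(E_n)\lesssim 2^{-n(\gamma-\gamma')(p-1)}$. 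A Frostman-type argument instead works with coverings of sets of small dimension charged by $\mu$. That framework is more flexible, but here it is heavier than necessary. Your passage between dyadic and arbitrary radii is also fine.

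The step you flag as delicate is harmless. Any one of the following settles it:
\begin{itemize}
\item \textbf{Separated net (Besicovitch for equal radii).} Take a maximal $r$-separated subset $\{x_i\}\subset E_n$. The balls $B_r(x_i)$ cover $E_n$, and each point of $\R^d$ lies in at most $C_d$ of them. Hence $\sum_i\int_{B_r(x_i)}\heat[\mu](\fdot;r^2)^p\leq C_d\|\heat[\mu](\fdot;r^2)\|_{L_p}^p$. Your third step then goes through with this bounded overlap in place of disjointness. You need to state this explicitly, since you did not say how the $L_p$ summation survives once the balls are no longer disjoint.
\item \textbf{Keep Vitali; no doubling is needed.} For $y\in B_r(x_i)$ and $z\in B_{5r}(x_i)$ one has $|y-z|<6r$. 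Therefore $\heat[\mu](y;r^2)\geq (4\pi r^2)^{-d/2}e^{-9}\mu(B_{5r}(x_i))$, and your summation works verbatim with $B_{5r}(x_i)$ in place of $B_r(x_i)$.
\item \textbf{No covering lemma at all.} By Fubini, $\int\mu(B_r(x))^{p-1}\,d\mu(x)\lesssim r^{d(p-1)}\|\heat[\mu](\fdot;r^2)\|_{L_p}^p\lesssim r^{\gamma(p-1)}$. Chebyshev's inequality then bounds $\mu(E_n)$ directly.
\end{itemize}
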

We will not prove this proposition\footnote{Proposition~\ref{GorbunovProposition} was suggested by Leonid Gorbunov.}.  The main instrument of the proof is a strengthening of the Frostman lemma going back to~\cite{StolyarovWojciechowski2014}. See~\cite{Dobronravov2024} for more details on these techniques. A version of Proposition~\ref{GorbunovProposition} with~$p=\infty$ was used in~\cite{Stolyarov2023} to obtain bounds for the lower Hausdorff dimension of charges~$\mu$ with Fourier constraints. We wish to prove the following result, which does not provide any explicit bounds on the dimension.
\begin{Th}\label{DimensionEstimate}
Let~$\W \subset \Sw'(\R^d,\R^\ell)$ be a closed translation and dilation invariant subspace. Assume~$\W$ does not contain vectorial delta measures. Then, there exists~$\eta > 0$ such that~$\ldH \mu \geq \eta$ for any charge~$\mu \in \W \cap \M (\R^d,\R^\ell)$. 
\end{Th}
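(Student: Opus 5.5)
The argument will combine the Bourgain--Brezis bound of Theorem~\ref{MainTheoremLorentzScale} (or rather its quantitative decay information) with Proposition~\ref{GorbunovProposition}. The goal is a heat-extension decay estimate of the form $\|\heat[|\mu|](\fdot;t)\|_{L_p}\lesssim t^{-\frac{(d-\gamma)(p-1)}{2p}}$ for some $\gamma>0$.

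\textbf{Step 1: an $L_q$ bound with a gain over the trivial rate.} Fix the isotropic anisotropy and some $\alpha\in(0,d)$, and set $q=d/(d-\alpha)$. By Theorem~\ref{MainTheoremBesovLorentzScale} and Lemma~\ref{SmoothingLemma}, every $\mu\in\W\cap\M$ satisfies $\sum_k A^{-\alpha k}\|\mu_k\|_{L_q}\lesssim\|\mu\|_{\M}$, where $\mu_k=\heat[\mu](\fdot;A^{-2k})$. Consequently $\|\heat[\mu](\fdot;t)\|_{L_q}\lesssim t^{-\alpha/2}\|\mu\|_{\M}$, which is just the trivial rate for a delta measure, so it gives no gain by itself. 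The gain has to come from the strengthened monotonicity, that is, from $\kappa(\M^\W,\theta_1,C_1)>0$.

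\textbf{Step 2: localization and the passage to $|\mu|$.} The actual plan is to work pointwise. For $|\mu|$-almost every $x$, the polar decomposition discussion above shows that at small scales around $x$ the atoms are $\eps$-flat. Near such points, $\mu$ looks like $\vn(x)\otimes|\mu|$ at those scales. Rescaling around $x$ at scale $r$ (translation and dilation invariance of $\W$) and passing to tangent measures, every tangent measure $\lambda$ of $|\mu|$ at $x$ satisfies $\vn(x)\otimes\lambda\in\W$, hence $\lambda\in\M^\W$. By Proposition~\ref{StrengtheningOfSemiinvariant}, all such $\lambda$ obey the improved decay $S_q[\lambda,w](t)\le t^{\nu q}S_q[\lambda,w](1)$ with $\nu=\kappa>0$.

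\textbf{Step 3: iteration over scales.} I will iterate this improved decay over scales $A^{-k}$ around $x$. By compactness, as in Theorem~\ref{Theorem41} with $\eps$-flatness replacing exact tangency, for all large $k$ I expect the bound
\[
\|\heat[|\mu|](\fdot;A^{-2(k+1)})\|_{L_q(v_{k+1,x})}\le A^{\alpha-\nu}\|\heat[|\mu|](\fdot;A^{-2k})\|_{L_q(v_{k,x})}.
\]
Telescoping gives local growth $A^{k(\alpha-\nu)}$. By Lemma~\ref{Lemma44}-type lower bounds, $\|\heat[|\mu|](\fdot;A^{-2k})\|_{L_q}$ near $x$ dominates $A^{k\alpha}\,|\mu|(B_{A^{-k}}(x))$, up to the normalization $A^{-kd(q-1)/q}$. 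This yields $|\mu|(B_r(x))\lesssim_x r^{\eta}$ with $\eta\asymp \nu q/(q-1)$, and hence $\ldH\mu(x)\ge\eta$ almost everywhere. Equivalently, one can run this through Proposition~\ref{GorbunovProposition} after restricting $|\mu|$ to a set of large measure where the constants are uniform (Egorov).

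\textbf{Main obstacle.} The hardest step is making Step 3 uniform in the scale. The localization condition~\eqref{Localization} and flatness must hold at all sufficiently small scales at $|\mu|$-a.e.\ point, with constants that do not depend on the point after an Egorov-type restriction. Flat but non-concentrated scales must also be absorbed. I would first try to avoid this by reusing the tree/saturation machinery of Subsection~\ref{s42} locally: convex atoms at $x$ can occur only finitely often, with total contribution bounded by $\|\mu\|_{\M}$ through~\eqref{Telescopic}, so they cost only a constant factor. Saturated atoms are handled by Lemma~\ref{SaturatedtoConcentrated}, and non-saturated flat atoms are charged to their saturated centers using the $(1+|i-j|)^{-\theta_4}$ decay.
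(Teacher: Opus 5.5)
There is a genuine gap, and it is in Step~3, where you yourself locate the main obstacle. The step is not proved, and the repair you sketch does not close it.

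The one-scale decay you want to telescope (Theorem~\ref{Theorem41}, Corollary~\ref{CubeEstimateCorollary}) is available only at atoms that are both $\eps$-flat and satisfy the concentration condition~\eqref{ConcentrationAtom}. Subsection~\ref{SNecessity} shows that flatness alone does not suffice. Nothing in your argument controls how often, as $k\to\infty$, concentration fails at a $|\mu|$-typical point. At each such scale the only available bound is the trivial factor $A^{\alpha}$ of Proposition~\ref{MonotoniityFormula}, and for $\heat[\mu]$ itself not even that, by Subsection~\ref{SNecessity}. So unless the bad scales have small upper density, no power gain survives.

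The saturation/tree machinery of Subsection~\ref{s42} does not fix this. A non-saturated atom is controlled through its saturated center $j^\uparrow$, so the estimate moves away from $x$. The machinery only outputs the summed bound $\sum_k A^{-\alpha k}\|f_k\|_{L_q}\lesssim\|f\|_{L_1}$, which, as you note in Step~1, carries no dimensional gain. There is also a mismatch of objects: Theorem~\ref{Theorem41} controls $|\heat[\mu]|$ for $\mu\in\W$, while your chain and your lower bound via $|\mu|(B_{A^{-k}}(x))$ concern $\heat[|\mu|]$, and $|\mu|\notin\W$.

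The missing idea is to avoid iterating on $\mu$ altogether and instead extract one good blow-up. The paper quotes Theorem~$1.2$ and Proposition~$2.2$ of~\cite{Stolyarov2023}: if $\ldH\mu<\eta$, a sequence of translates and dilates of $\mu$ converges in $\Sw'$ to a rank-one $a\otimes\m\in\W$ with three properties:
\begin{enumerate}[1)]
\item $\m\ge0$, so $\m\in\M^\W$;
\item $\ldH\m(0)<\eta$;
\item $\m(B_R(0))\lesssim R^{\eta}$ for $R>1$.
\end{enumerate}
The growth bound puts $\m$ in $L_1((1+|x|)^{-2\eta})$. By Lemma~\ref{Lemma44}, the right-hand side of~\eqref{StrengtheningOfSemiinvariantFormula} is then finite for the fixed weight $(1+|x|)^{-2p\eta}\in\Smooth(2p\eta,1)$. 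Proposition~\ref{StrengtheningOfSemiinvariant} then gives $\|\heat[\m](\fdot;t)\|_{L_p((1+|x|)^{-2p\eta})}\lesssim t^{-\frac{d(p-1)}{2p}+\nu}$ for all $t\in(0,1)$ at once, with no flatness or localization bookkeeping. On the other side, $\m(B_{r_j}(0))\ge r_j^{\eta}$ along some $r_j\to0$ gives the lower bound $r_j^{-d\frac{p-1}{p}+\eta}$ at $t=r_j^2$, which forces $\eta\ge2\nu$.

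The tangent measures of your Step~2 are the right kind of object, but an arbitrary blow-up will not do. Low lower local dimension at $x$ is a $\liminf$ over small scales and is not inherited at the origin by a generic limit. Without the $R^{\eta}$ growth, there is no admissible weight making~\eqref{StrengtheningOfSemiinvariantFormula} non-vacuous. The selection result from~\cite{Stolyarov2023} supplies exactly these two properties, and that is what your proposal lacks.
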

\begin{proof}
Let~$\mu \in \W \cap \M (\R^d,\R^\ell)$ be an excessively singular charge:~$\ldH \mu < \eta$. Our aim is to obtain non-trivial lower bounds for~$\eta$, a curious reader may look up~\eqref{LowerBoundForEta} below. Theorem~$1.2$ and Proposition~$2.2$ in~\cite{Stolyarov2023} say that in such a case there exists a sequence of shifts and dilations of~$\mu$ that converge in~$\Sw'(\R^d, \R^\ell)$ to a non-trivial rank-one charge~$a\otimes \m\in \W$ that satisfies the following requirements:
\begin{enumerate}[1)]
\item $\m \geq 0$;
\item $\ldH\m (0) < \eta$;
\item $\m(B_R(0))\lesssim R^\eta$ for~$R > 1$.
\end{enumerate}
The latter condition, in particular, yields the finiteness of the integral
\eq{
\int\limits_{\R^d} (1+|x|)^{-2\eta}\,d\m(x).
}
This might be interpreted as~$d\m \in L_1((1+|\fdot|)^{-2\eta})$. Lemma~\ref{Lemma44} then implies
\eq{
\int\limits_{\R^d} (\heat[\m](x,1))^p (1+|x|)^{-2p\eta}\,dx < \infty.
}
By Proposition~\ref{StrengtheningOfSemiinvariant}, we then have
\eq{\label{eq4312}
\|\heat [\m](\fdot;t)\|_{L_p((1+|x|)^{-2p\eta})} \lesssim t^{- \frac{d(p-1)}{2p} + \nu},
}
where~$\nu \in (0 ,\kappa(\M^\W, 2p\eta, 1))$; the parameter~$1$ in the latter formula appears from~\eqref{TypicalWeightSmoothness}.

Now we wish to relate the heat extension bounds with the second item,~$\ldH\m (0) < \eta$. This local dimension bound, in particular, leads to the estimate~$\m(B_{r_j}(0)) \geq r_j^\eta$ for some sequence of radii~$r_j$ tending to zero. Thus,
\eq{
\heat[\m](x,r^2) = \big(4\pi r^2\big)^{-d/2}\int\limits_{\R^d} e^{-\frac{|x-y|^2}{4r^2}}\,d\m(y) \gtrsim r^{\eta -d},\qquad x\in B_{r}(0),
}
here~$r$ is one of the~$r_j$. Therefore,
\eq{
\|\heat [\m](\fdot;r^2)\|_{L_p((1+|x|)^{-2p\eta})} \gtrsim r^{\eta -d + d/p} = r^{-d\frac{p-1}{p} + \eta}.
}
Since~$r_j$ is arbitrarily small, this bound together with~\eqref{eq4312} implies
\eq{
-d\,\frac{p-1}{p} + 2\nu \leq -d\,\frac{p-1}{p} + \eta,
}
therefore,~$2\nu \leq \eta$. Fix~$p=2$ and note that~$\kappa(\M^\W, 2p\eta, 1) \geq \kappa(\M^\W, 4d, 1)$ by simple inclusions. Consequently,
\eq{\label{LowerBoundForEta}
\eta \geq 2 \kappa(\M^\W, 4d, 1).
}
\end{proof}
The bounds obtained in the proof above are rough. The argument is not sensitive to the choice of~$p$. The search for sharp bounds in a similar discrete problem from~\cite{ASW2021} included optimization with respect to~$p$ (the optimal~$p$ does not exist, and the optimal value is obtained as~$p$ approaches~$1$). Seemingly, the starting point for transferring the reasoning from~\cite{ASW2021} would be to obtain the strengthening of Theorem~$1.2$ in~\cite{Stolyarov2023} that, with the same assumptions, leads to the stronger conclusion~$\ldH\m < \eta$. Note that there are other approaches to the dimension problem, see, e.g.,~\cite{ARDHR2019} and~\cite{DePhilippisRindler2016}.

Theorem~\ref{DimensionEstimate} suggests the interpretation of the material of Section~\ref{S3}: If an invariant cone of measures does not contain delta measures, then there are non-trivial lower dimensional bounds for the measures that are elements of the cone.  
A similar effect had already been observed in~\cite{RoginskayaWojciechowski2006} in a similar setting of Fourier constrained spaces described in Subsection~\ref{sD2} of the appendix.

\paragraph{Relationship with~$\mathrm{DS}_\beta$ spaces.} 
The target space~$\R^\ell$ plays an important role in our study: For all interesting examples of~$\W$ that do not contain delta measures, we have~$\ell \geq 2$. It is therefore desirable to design spaces of scalar functions, measures, and distributions, that somehow have the properties similar to those provided by Proposition~\ref{StrengtheningOfSemiinvariant}. The paper~\cite{SS2024} suggests a scale of spaces~$\mathrm{DS}_\beta(\R^d)$. The space depends on a real parameter~$\beta\in [0,d]$. The definition is slightly involved and mimics the atomic definition of the real Hardy class~$\mathrm{H}_1$. We have~$\mathrm{DS}_0 = \M(\R^d)$,~$\mathrm{DS}_d = \mathrm{H}_1$, and the other spaces interpolate these two endpoints. The important property that distinguishes different spaces inside the scale is that~$\ldH \mu \geq \beta$ whenever~$\mu \in \mathrm{DS}_\beta$, and, moreover, for any~$\beta \in [0,d]$ there are plenty of measures~$\mu \in \mathrm{DS}_\beta$ for which~$\ldH \mu = \beta$. 

The definition originated from~\cite{HernandezSpector2024}, where it was proved that~$\W\cap \M(\R^d,\R^d)$ embeds into~$\mathrm{DS}_1$, where~$\W$ is given by divergence-free vector fields:
\eq{
\W = \Set{g\in \Sw'(\R^d,\R^d)}{\mathrm{div}\, g = 0}.
}
By the embedding here we mean that each coordinate of a solenoidal charge is an element of~$\mathrm{DS}_1$.
Later, in~\cite{SSS2026}, it was shown that in the case
\eq{
\W = \Set{\nabla f}{f \in \Sw'(\R^d)}
}
we have~$\W\cap \M(\R^d,\R^d)$ embedded into~$\mathrm{DS}_{d-1}$. The natural question is, given some translation and dilation invariant closed subspace~$\W \subset \Sw'(\R^d,\R^\ell)$, what is the largest possible~$\beta$ such that~$\W \cap \M(\R^d,\R^\ell) \hookrightarrow \mathrm{DS}_\beta$? If~$\W$ is defined by a Fourier constraint as in Subsection~\ref{sD2} in the appendix, can this optimal~$\beta$ be expressed explicitly in terms of the corresponding function~$\Omega$?
\appendix

\section{Technical lemmas}
\subsection{General facts}\label{AppendixA}
\begin{St}\label{ConvolutionTranslationInvariant}
Let~$\W$ be a closed translation invariant linear subspace of~$\Sw'(\R^d,\R^\ell)$. For any function~$\varphi\in \Sw(\R^d)$ and any distribution~$\zeta \in \W$, we have~$\varphi*\zeta\in \W$. 
\end{St}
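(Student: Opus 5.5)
The plan is a duality (Hahn--Banach) argument combined with a Riemann-sum approximation of the convolution. Suppose $\varphi*\zeta\notin\W$. Since $\W$ is a closed subspace of the locally convex space $\Sw'(\R^d,\R^\ell)$, the Hahn--Banach theorem gives a continuous linear functional on $\Sw'$ that vanishes on $\W$ and not on $\varphi*\zeta$. We work with the weak-$*$ topology (for the strong topology the same conclusion holds because $\Sw$ is reflexive). In either case the dual of $\Sw'(\R^d,\R^\ell)$ is $\Sw(\R^d,\R^\ell)$, so there is $\psi\in\Sw(\R^d,\R^\ell)$ with
\[
\langle \tau_y\eta,\psi\rangle=0 \quad\text{for all } \eta\in\W,\ y\in\R^d, \qquad\text{while}\qquad \langle\varphi*\zeta,\psi\rangle\neq 0,
\]
where $\tau_y$ denotes translation by $y$. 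The first condition uses the translation invariance of $\W$.

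Next I compute the pairing with the convolution directly. Set $\tilde\varphi(x)=\varphi(-x)$. Then
\[
\langle \varphi*\zeta,\psi\rangle=\langle\zeta,\tilde\varphi*\psi\rangle=\int_{\R^d}\varphi(y)\,\langle\tau_y\zeta,\psi\rangle\,dy .
\]
The last equality is Fubini for distributions, and it is justified by checking that the vector-valued integral $\int\varphi(y)\,\tau_{-y}\psi\,dy$ converges in the topology of $\Sw$ to $\tilde\varphi*\psi$. The function $y\mapsto \tau_{-y}\psi$ is continuous into $\Sw$ and has polynomially bounded seminorms, while $\varphi$ decays rapidly. Hence the Riemann sums $\sum_n h^d\varphi(y_n)\tau_{-y_n}\psi$ over a lattice of mesh $h$ converge to $\tilde\varphi*\psi$ in $\Sw$. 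Since each scalar $\langle\tau_{y}\zeta,\psi\rangle$ vanishes for $\zeta\in\W$, the integrand is identically zero, and therefore $\langle\varphi*\zeta,\psi\rangle=0$. This is a contradiction.

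Instead of Hahn--Banach, one can equivalently show directly that the Riemann sums $\sum_n h^d\varphi(y_n)\,\tau_{y_n}\zeta$ belong to $\W$ (they are finite-tail limits of finite linear combinations of translates) and converge to $\varphi*\zeta$ in $\Sw'$. Closedness then finishes the argument. For the vector-valued case one applies everything coordinatewise, since convolution acts on each component.

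The main obstacle is the convergence of the Riemann sums in $\Sw$ uniformly enough to pass to the limit. I will handle it with the estimates $\sup_x (1+|x|)^N|\partial^\beta(\tau_{-y}\psi-\tau_{-y'}\psi)(x)|\lesssim (1+|y|)^N|y-y'|$ for $|y-y'|\le1$, together with the rapid decay of $\varphi$ to control the tail of the lattice sum.
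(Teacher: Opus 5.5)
Your proposal is correct and is essentially the paper's argument. The paper takes $\varphi\in C_0^\infty$ and approximates it weak-$*$ by finite combinations of point masses $\mu_n$ supported in $\supp\varphi$, so that $\mu_n*\zeta\in\W$. It proves $\mu_n*\zeta\to\varphi*\zeta$ in $\Sw'$ by showing that the convolutions of $\mu_n$ with a Schwartz function converge in $\Sw$ (via Arzel\`a--Ascoli), and then passes to general $\varphi\in\Sw$ by density. That is exactly your ``direct'' variant, and your Hahn--Banach version is its dual formulation resting on the same convergence in $\Sw$.

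The only real difference is technical. You take lattice Riemann sums over all of $\R^d$ and use an explicit Lipschitz bound on $y\mapsto\tau_{-y}\psi$ in the Schwartz seminorms, which handles Schwartz $\varphi$ at once and avoids the compact-support-plus-density step. To make that estimate complete, also use the rapid decay of $\nabla\varphi$ to control $\varphi(y)-\varphi(y_n)$ within each lattice cell.
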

\begin{Le}\label{WeaklyConvergentMeasuresAndDistribution}
Let~$\{\mu_n\}_n$ be a sequence of charges supported in a compact set~$K\subset \R^d$. Assume it converges in the weak-$*$ sense to a charge~$\mu$. In such a case,~$\mu_n*\varphi \to \mu*\varphi$ in the Schwartz class topology, provided~$\varphi\in \Sw(\R^d)$.
\end{Le}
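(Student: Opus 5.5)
The plan is to treat separately the two claims packed into the statement, in the order given: first the closedness of the class under Schwartz convolution, then Lemma~\ref{WeaklyConvergentMeasuresAndDistribution}. The guiding idea for the first is to write $\varphi*\zeta$ as a limit in $\Sw'$ of finite linear combinations of translates of $\zeta$, i.e.\ of Riemann sums for
\eq{
\varphi*\zeta = \int\limits_{\R^d}\varphi(y)\,\zeta(\fdot - y)\,dy.
}
Each Riemann sum lies in $\W$ by translation invariance and linearity, and then closedness of $\W$ finishes.

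For the proposition, I would work by duality. Fix a test function $\psi\in\Sw(\R^d)$. Then
\eq{
\scalprod{\varphi*\zeta}{\psi} = \scalprod{\zeta}{\tilde\varphi*\psi},
}
where $\tilde\varphi(x)=\varphi(-x)$. Consider the Riemann sums
\eq{
\sigma_N = \sum_{y\in N^{-1}\Z^d,\,|y|\le N} N^{-d}\varphi(y)\,\zeta(\fdot-y)\in\W,
}
for which
\eq{
\scalprod{\sigma_N}{\psi} = \scalprod{\zeta}{\Psi_N},\qquad \Psi_N = \sum N^{-d}\varphi(y)\,\psi(\fdot+y).
}
It suffices to show $\Psi_N\to\tilde\varphi*\psi$ in the topology of $\Sw$, since $\zeta$ is continuous on $\Sw$. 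This is the main technical step. For every seminorm $\sup|x^\beta\partial^\gamma\cdot|$, the error of the Riemann sum is controlled by the mesh $N^{-1}$ times a bound on the $y$-gradient of $\varphi(y)\,x^\beta\partial^\gamma\psi(x+y)$, plus the tail $|y|>N$. Using $|x|^{|\beta|}\lesssim (1+|x+y|)^{|\beta|}(1+|y|)^{|\beta|}$ and the rapid decay of $\varphi$ and $\psi$ together with their derivatives, both pieces tend to zero uniformly in $x$. Thus $\sigma_N\to\varphi*\zeta$ weak-$*$ in $\Sw'$; in particular this holds in the topology for which $\W$ is assumed closed (the weak topology, or the strong one on bounded sets by Banach--Steinhaus). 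Hence $\varphi*\zeta\in\W$. The vector-valued case is handled coordinatewise.

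For Lemma~\ref{WeaklyConvergentMeasuresAndDistribution}, write
\eq{
\partial^\gamma(\mu_n*\varphi)(x)=\int \partial^\gamma\varphi(x-y)\,d\mu_n(y).
}
Weak-$*$ convergence gives pointwise convergence for each $x$, and the uniform boundedness principle gives $\sup_n\|\mu_n\|<\infty$. On $K$, the family of functions $y\mapsto x^\beta\partial^\gamma\varphi(x-y)$, indexed by $x\in\R^d$, is equicontinuous, uniformly bounded, and uniformly small when $|x|$ is large (since $K$ is compact and $\varphi$ is Schwartz). Approximating this family by a finite $\epsilon$-net, together with the uniform mass bound, upgrades pointwise convergence to uniform convergence of $x^\beta\partial^\gamma(\mu_n*\varphi)$. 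This gives convergence in every Schwartz seminorm.

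The main obstacle is the uniform-in-$x$ Riemann sum estimate in the proposition. Weights must be handled carefully there, which is why I would truncate to $|y|\le N$ and absorb the polynomial factors into the decay of $\varphi$.
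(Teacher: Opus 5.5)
Your argument for the lemma is correct. It uses the same ingredients as the paper's proof: pointwise convergence from weak-$*$ convergence, the uniform bound $\sup_n\|\mu_n\|<\infty$, and an Arzel\`a--Ascoli compactness step. The difference is that you put the compactness on the other side of the pairing.

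The paper applies Arzel\`a--Ascoli to the convolutions themselves. The functions $x\mapsto\mu_n*\varphi(x)$ are uniformly equicontinuous and converge pointwise, hence uniformly on every ball $|x|\le R$. The weighted tails $|x|>R$ are handled by the uniform boundedness of $\sup_x(1+|x|)^{N}|\mu_n*\varphi(x)|$; to make these tails uniformly small this bound should be taken with a larger exponent. Derivatives are handled by replacing $\varphi$ with $\partial^\gamma\varphi$.

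You instead note that the kernels $y\mapsto x^\beta\partial^\gamma\varphi(x-y)$, $x\in\R^d$, form a uniformly bounded, equicontinuous, hence precompact family in $C(K)$. A bounded weak-$*$ convergent sequence then converges uniformly over this family. This puts the polynomial weight into the test functions, so no separate tail estimate is needed; the smallness for large $|x|$ that you mention is not even used once the whole family has a finite $\epsilon$-net. You also justify $\sup_n\|\mu_n\|<\infty$ by the uniform boundedness principle, which the paper takes for granted.

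The first half of your proposal concerns Proposition~\ref{ConvolutionTranslationInvariant}, which is not part of this statement. In the paper the logic runs the other way: the proposition is derived from this lemma by approximating $\varphi\in C_0^\infty$ with finite combinations of point masses. Your Riemann-sum duality argument proves the proposition directly, without using the lemma.
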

\begin{proof}
We need to verify the limit relation
\eq{
\sup\limits_{x\in \R^d}(1+|x|)^N\big|\mu*\varphi(x) - \mu_n*\varphi(x)\big| \to 0\quad \text{as}\ n\to \infty;
}
similar limit relations for higher derivatives reduce to this one by replacing~$\varphi$ with the corresponding derivative. Here~$N\in \N$ is an arbitrary number. Since the charges~$\mu_n$ have uniformly bounded variations, the numerical sequences in question are bounded. Thus, it suffices to verify, given an arbitrary~$R$, that
\eq{
\sup\limits_{|x|\leq R}\big|\mu*\varphi(x) - \mu_n*\varphi(x)\big| \to 0\quad \text{as}\ n\to \infty.
}
For any~$x\in \R^d$, we have~$\mu_n*\varphi(x) \to \mu*\varphi(x)$. What is more, the function family~$\{\mu_n*\varphi\}_n$ is uniformly continuous:
\eq{
\big|\mu_n*\varphi(x) - \mu_n*\varphi(y)\big|  = \Big|\int\limits_{\R^d}\varphi(x-z)\,d\mu_n(z) - \int\limits_{\R^d}\varphi(y-z)\,d\mu_n(z)\Big| \lesssim \omega(\varphi; |x-y|)\|\mu\|;
}
the symbol~$\omega$ denotes the classical modulus of continuity. Therefore, the application of the Arzel{\`a}--Ascoli theorem finishes the proof.
\end{proof}
\begin{proof}[Proof of Proposition~\ref{ConvolutionTranslationInvariant}]
Consider the case~$\varphi\in C_0^\infty(\R^d)$ first. In this case, we may approximate~$\varphi$ by a sequence of charges
\eq{
\mu_n = \sum\limits_{j=1}^n a_j\delta_{x_j}
}
in the sense that~$\supp \mu_n\subset \supp \varphi$ and~$\mu_n$ tends to~$\varphi$ in the weak-$*$ topology. Since~$\W$ is translation invariant and~$\zeta\in \W$, we also have~$\mu_n*\zeta \in \W$. Thus, it remains to justify the limit relation
\eq{
\mu_n*\zeta \to \varphi*\zeta
}
in the topology of~$\Sw'(\R^d,\R^\ell)$. This follows from Lemma~\ref{WeaklyConvergentMeasuresAndDistribution} and the definition of topology in~$\Sw'(\R^d,\R^\ell)$.

To reduce the case of a general~$\varphi\in \Sw(\R^d)$ to the already considered, we may simply refer to the density of smooth compactly supported functions in~$\Sw(\R^d)$.
\end{proof}
\begin{proof}[Proof of Lemma~\ref{SeveralDilations}]
The value of the left hand side of~\eqref{SeveralDilationsFormula}, evaluated at~$(x,\vec{t}\,)$, is
\eq{
\prod\limits_{j=1}^d (4\pi t_j)^{-\frac12}\int\limits_{\R^d} e^{-\sum_{1}^d \frac{|x_j - y_j|^2}{4t_j}} \lambda^{-d} f\Big(\lambda^{-a_1}y_1,\lambda^{-a_2}y_2, \ldots, \lambda^{-a_d}y_d\Big)\,dy.
}
The evaluation of the right hand side at the same point equals
\eq{
\lambda^{-d} \prod\limits_{j=1}^d (4\pi \lambda^{-2a_j}t_j)^{-\frac12}\int\limits_{\R^d} e^{-\sum_{1}^d \frac{|\lambda^{-a_j}x_j - z_j|^2}{4\lambda^{-2a_j}t_j}} f(z)\,dz.
}
The substitution~$z = \Dil_{\lambda^{-1}} y$ yields their coincidence.
\end{proof}

\subsection{About Lorentz spaces}\label{AppendixLorentz}
The reader may find a good introduction to Lorentz spaces~$L_{q,r}$ in Subsection~$1.4.2$ of~\cite{Grafakos2008}. Though we will need the case~$r=1$ only, we prefer to keep the parameter~$r$ for a while. The norm in the space~$L_{q,r}$ is defined via the formula
\eq{\label{LorentzDef}
\|h\|_{L_{q,r}(\Omega)} = \bigg(q\int\limits_0^{\infty} t^r\big|\set{x\in \Omega}{|h(x)| \geq t}\big|^{\frac{r}{q}}\,\frac{dt}{t}\bigg)^{\frac1r}.
}
To be precise, this quantity does not define a norm in general. However, it is equivalent to a norm when~$q \in (1,\infty)$. Thus, there exists a constant~$C$ such that for any collection of functions~$h_1,h_2, h_3, \ldots$, the triangle inequality holds true:
\eq{\label{TriangleLorentz}
\Big\|\sum\limits_j h_j\Big\|_{L_{q,r}} \leq C \sum\limits_j \|h_j\|_{L_{q,r}}.
}

This may also be stated as a convolutional inequality
\eq{\label{ConvolutionLorentz}
\|f*g\|_{L_{q,r}}\lesssim \|f\|_{L_{q,r}}\|g\|_{L_1}.
}
When~$r=q$, the Lorentz space~$L_{q,q}$ coincides with~$L_q$ since the quantity~\eqref{LorentzDef} in this case is equal to~$\|h\|_{L_q(\Omega)}$ via the layer cake representation formula.

We will also need that the Lorentz space is an interpolation space between two Lebesgue spaces. In particular,
\eq{\label{LorentzMultiplicative}
\|f\|_{L_{q,r}} \lesssim \|f\|_{L_{q_1}}^{1-\theta}\|f\|_{L_{q_2}}^{\theta},\qquad \frac{1-\theta}{q_1} + \frac{\theta}{q_2} = \frac{1}{q},\quad 1 < q_1 < q < q_2 < \infty;
}
this follows from the interpolation relation
\eq{\label{LorentzInterpolation}
L_{q,r} = (L_{q_1},L_{q_2})_{\theta,r},
}
see Theorem~$5.2.1$ in~\cite{BerghLofstrom1976}, and Theorem~$3.1.2$ of the same book.

We also often encounter the dilations of functions and need to keep track of their Lorentz norm. The following formula will be useful:
\eq{\label{LorentzDilation}
\big\|\Dil_t[f]\big\|_{L_{q,r}} = t^{-\frac{q-1}{q}d} \|f\|_{L_{q,r}} =  t^{-\alpha} \|f\|_{L_{q,r}},\qquad t > 0.
}

\subsection{Anisotropic Besov--Lorentz spaces}\label{Besov--Lorentz}
The scale of Besov--Lorentz spaces is a straightforward generalization of a more classical Besov scale. These spaces fall into a more general setting of abstract Besov spaces introduced in~\cite{Peetre1976} and later developed in~\cite{HedbergNetrusov2007}. For the study of the specific Besov--Lorentz scale, see the recent paper~\cite{SeegerTrebels2019}. We provide a case study needed for our purposes; note that the three papers above work with isotropic spaces.

Fix some function~$\psi \in \Sw(\R^d)$ and define the seminorm
\eq{
\|f\|_{\dot{B}_{q,r}^{\beta,s}} = \Big(\sum\limits_{k\in\Z} A^{s\beta k}\big\|f*(\psi_k - \psi_{k-1})\big\|_{L_{q,r}}^s\Big)^{\frac{1}{s}},\qquad \psi_k = \Dil_{A^{-k}} [\psi].
}
Here~$A > 1$ is an auxiliary parameter;~$\beta\in \R$ and~$s \in [1,\infty)$ are the smoothness parameters of the norm;~$q\in [1,\infty]$ and~$r\in [1,\infty]$ are summability parameters. Note that the seminorm above crucially depends on the choice of the anisotropy~$a$. We restrict our considerations to the case~$s=1$:
\eq{\label{DefOfBesovNorm}
\|f\|_{\dot{B}_{q,r}^{\beta,1}} = \sum\limits_{k\in\Z} A^{\beta k}\big\|f*(\psi_k - \psi_{k-1})\big\|_{L_{q,r}}.
}
The choice~$r=q$ defines the anisotropic homogeneous Besov space~$\dot{B}^{\beta,1}_q$ since in this case the Lorentz space reduces to the classical Lebesgue space~$L_q$.

For~$q > 1$, the triangle inequality~\eqref{TriangleLorentz} yields the continuous embedding
\eq{\label{BesovToLorentz}
\dot{B}_{q,r}^{0,1} \hookrightarrow L_{q,r};
}
the details are similar to the proof of~\eqref{eq224} in Lemma~\ref{TriangleInequalityLemma}.

Let us assume now that~$\hat{\psi}$ is compactly supported and that~$\hat{\psi} \equiv 1$ in the neighborhood of the origin. Our first aim is to establish that our definition of the norm in~$\dot{B}_{q,r}^{\beta,1}$ is independent of the choice of~$\psi$. Let~$\varphi$ be another function or distribution. We ask  whether~$\varphi$ defines an equivalent norm via a similar formula; to be more precise, we wish to obtain some good conditions on~$\varphi$ that are sufficient for the inequality
\eq{\label{BesovLorentzEquiv}
\sum\limits_{k\in\Z} A^{\beta k}\big\|f*(\varphi_k - \varphi_{k-1})\big\|_{L_{q,r}} \lesssim\|f\|_{\dot{B}_{q,r}^{\beta,1}};
}
the dilated functions~$\varphi_k$ are defined accordingly,~$\varphi_k = \Dil_{A^{-k}}[\varphi]$.
\begin{Le}\label{BesovLocalizationLemma}
If there are numbers~$\beta_+$ and~$\beta_-$ such that~$\beta_+ < \beta < \beta_-$ and that the inequality
\eq{\label{BesovLorentzEquivBase}
\big\|f*(\varphi - \varphi_{-1})\big\|_{L_{q,r}} \lesssim \sum\limits_{\ell \geq 0} A^{\beta_+ \ell}\big\|f*(\psi_\ell - \psi_{\ell-1})\big\|_{L_{q,r}} + \sum\limits_{\ell < 0} A^{\beta_- \ell}\big\|f*(\psi_\ell - \psi_{\ell-1})\big\|_{L_{q,r}}
}
holds true, then,~\eqref{BesovLorentzEquiv} holds true as well.
\end{Le}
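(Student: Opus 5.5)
The argument is a dilation reduction followed by a Littlewood--Paley resummation. I write
\[
\Delta_\ell f = f*(\psi_\ell - \psi_{\ell-1}).
\]
The goal is to control each term $A^{\beta k}\|f*(\varphi_k-\varphi_{k-1})\|_{L_{q,r}}$ of the left-hand side of~\eqref{BesovLorentzEquiv} by a two-sided geometrically weighted sum of the pieces $\|\Delta_{\ell}f\|_{L_{q,r}}$. After that I interchange the order of summation.

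First, I will transfer the hypothesis~\eqref{BesovLorentzEquivBase}, which is stated at scale $k=0$, to an arbitrary scale $k$. I apply it to $g = \Dil_{A^{k}}[f]$, or to its inverse dilation depending on the direction of the convention. From $\psi_k = \Dil_{A^{-k}}[\psi]$ and the fact that $\Dil_t$ on $L_1$ densities is multiplicative under convolution up to the normalization, I get
\[
\Dil_t[f]*\Dil_t[\varphi] = \Dil_t[f*\varphi].
\]
Hence $g*(\varphi-\varphi_{-1})$ is a dilation of $f*(\varphi_k-\varphi_{k-1})$, and $g*(\psi_\ell-\psi_{\ell-1})$ is the same dilation of $\Delta_{\ell+k}f$. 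By~\eqref{LorentzDilation}, both sides of~\eqref{BesovLorentzEquivBase} then pick up the same factor $t^{-\alpha}$, which cancels. This yields
\[
\big\|f*(\varphi_k-\varphi_{k-1})\big\|_{L_{q,r}} \lesssim \sum_{\ell\ge0}A^{\beta_+\ell}\|\Delta_{k+\ell}f\|_{L_{q,r}}+\sum_{\ell<0}A^{\beta_-\ell}\|\Delta_{k+\ell}f\|_{L_{q,r}},
\]
with a constant uniform in $k$. I must check the sign of $k$ in the dilation carefully, for instance with Fig.~\ref{Figure0}, but the structure is clear.

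Second, I multiply by $A^{\beta k}$, sum over $k\in\Z$, substitute $m=k+\ell$, and apply Tonelli to the nonnegative terms. The coefficient of $A^{\beta m}\|\Delta_m f\|_{L_{q,r}}$ becomes
\[
\sum_{\ell\ge0}A^{(\beta_+-\beta)\ell}+\sum_{\ell<0}A^{(\beta_--\beta)\ell}.
\]
Both geometric series converge exactly because $\beta_+<\beta<\beta_-$. The result is~\eqref{BesovLorentzEquiv}, with a constant depending on $A$ and on $\beta-\beta_\pm$.

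The only delicate point is the bookkeeping in the dilation step: the correct direction of $\Dil$, and the verification that the Lorentz quasinorm scales identically on both sides. The latter holds because~\eqref{LorentzDilation} gives the same power of $t$ for every function. Quasi-triangle issues do not arise, since no triangle inequality in $L_{q,r}$ is used, only rearrangement of nonnegative series.
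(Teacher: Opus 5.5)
Your proposal is correct and is essentially the paper's proof. You rescale the hypothesis~\eqref{BesovLorentzEquivBase} to scale $k$ via~\eqref{LorentzDilation}; your choice $g=\Dil_{A^k}[f]$ is the right direction, since $\Dil_{A^k}[\varphi_k]=\varphi$ and $\Dil_{A^k}[\psi_{k+\ell}]=\psi_\ell$. Then you multiply by $A^{\beta k}$, sum over $k$, interchange the order of summation, and sum the two geometric series, which converge exactly because $\beta_+<\beta<\beta_-$.
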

\begin{proof}
By dilation invariance~\eqref{LorentzDilation}, the assumptions yield the bound
\mlt{
A^{\beta k}\big\|f*(\varphi_k - \varphi_{k-1})\big\|_{L_{q,r}}\\ \lesssim \sum\limits_{\ell \geq k} A^{\beta_+ (\ell - k) + \beta k}\big\|f*(\psi_\ell - \psi_{\ell-1})\big\|_{L_{q,r}} + \sum\limits_{\ell < k} A^{\beta_- (\ell - k) + \beta k}\big\|f*(\psi_\ell - \psi_{\ell-1})\big\|_{L_{q,r}}.
}
If we sum these inequalities over~$k$, interchange the order of summation on the right hand side, and estimate the geometric series by its major term, we get the desired bound~\eqref{BesovLorentzEquiv}:
\mlt{
\sum\limits_{k\in\Z}\bigg(\sum\limits_{\ell \geq k} A^{\beta_+ (\ell - k) + \beta k}\big\|f*(\psi_\ell - \psi_{\ell-1})\big\|_{L_{q,r}} + \sum\limits_{\ell < k} A^{\beta_- (\ell - k) + \beta k}\big\|f*(\psi_\ell - \psi_{\ell-1})\big\|_{L_{q,r}}\bigg)\\ 
= \sum\limits_{\ell \in \Z} \big\|f*(\psi_\ell - \psi_{\ell-1})\big\|_{L_{q,r}} \Big(\sum\limits_{k \leq \ell} A^{\beta_+\ell + k(\beta - \beta_+)} + \sum\limits_{k > \ell} A^{\beta_-\ell + k(\beta - \beta_-)}\Big)\\
\lesssim \sum\limits_{\ell \in \Z} A^{\beta\ell}\big\|f*(\psi_\ell - \psi_{\ell-1})\big\|_{L_{q,r}}.
}
\end{proof}
When does the bound~\eqref{BesovLorentzEquivBase} hold true? We list several simple answers.

\begin{enumerate}
\item Assume~$\varphi$ is a summable function,~$\hat{\varphi} \equiv 1$ in a neighborhood of the origin, and~$\supp \hat{\varphi}$ is compact. Then,~\eqref{BesovLorentzEquivBase} is true, and, in fact, one needs only a finite number of summands on the right hand side. Indeed, we may write 
\eq{\label{TelescopingLP}
f*(\varphi - \varphi_{-1}) = \sum\limits_{\ell} f*(\varphi - \varphi_{-1})*(\psi_{\ell} - \psi_{\ell - 1})
}
since the sum on the right is finite, and notice that the~$L_1$-norm of the function~$\varphi - \varphi_{-1}$ is finite, which yields
\eq{
\Big\| f*(\varphi - \varphi_{-1})*(\psi_{\ell} - \psi_{\ell - 1})\Big\|_{L_{q,r}}\lesssim \Big\| f*(\psi_{\ell} - \psi_{\ell - 1})\Big\|_{L_{q,r}}
}
by~\eqref{ConvolutionLorentz}. In particular, the definition~\eqref{DefOfBesovNorm} of the Besov--Lorentz norm does not depend on the choice of~$\psi$: Any choice of~$\psi$ from the class of Schwartz functions with Fourier transform compactly supported and equal to~$1$ in a neighborhood of the origin leads to an equivalent norm via~\eqref{DefOfBesovNorm}.

\item Assume~$\varphi$ satisfies the same requirements as in the previous item. Let now~$B > 1$ be a number. We wish to prove the bound 
\eq{
\sum\limits_{k \in \Z} B^{\beta k}\Big\|f*\big(\Dil_{B^{-k}}[\varphi] - \Dil_{B^{-k-1}}[\varphi]\big)\Big\|_{L_{q,r}} \lesssim \|f\|_{B_{q,r}^{\beta,1}},
}
which generalizes the previous item. We mimic the same reasoning and for each~$k$ find~$m\in \Z$ such that~$B^k \sim A^m$ (the multiplicative constants are uniform with respect to~$k$). For example, we may set~$m = \lceil k \log B/ \log A \rceil$. The inequality above follows via the proof of Lemma~\ref{BesovLocalizationLemma} from the individual bound
\eq{
\bigg\|f*\Big(\Dil_{B^{-k} A^m}[\varphi] - \Dil_{B^{-k-1} A^m}[\varphi]\Big)\bigg\|_{L_{q,r}} \lesssim \sum\limits_{\ell\colon |\ell| \lesssim 1} \big\|f*(\psi_\ell - \psi_{\ell-1})\big\|_{L_{q,r}},
}
which, in its turn, holds by the same reasons as in the previous item: The Fourier transform of~$\Dil_{B^{-k} A^m}[\varphi] - \Dil_{B^{-k-1} A^m}[\varphi]$ is compactly supported outside the origin and the~$L_1$-norm of this function is uniformly bounded.

Thus, the definition~\eqref{DefOfBesovNorm} of the Besov--Lorentz norm does not depend on the choice of~$A$. This principle, in particular, allows us to vary~$A$ during the proof of Theorem~\ref{MainTheoremBesovLorentzScale}. The forthcoming lemma will later show that for Theorem~\ref{MainTheoremBesovLorentzScale}, the Besov--Lorentz scale improvement may be achieved with ease.
\begin{Le}\label{FromBesovToLorentzBesov}
Let~$0 < \alpha < \beta < d$,~$p = d/(d-\beta)$,~$q = d/(d-\alpha)$. Then,
\eq{\label{FromBesovToLorentzBesovFormula}
\|\I_\beta f\|_{\dot{B}_{p,1}^{0,1}} \lesssim \|\I_\alpha f\|_{\dot{B}_{q}^{0,1}}.
}
\end{Le}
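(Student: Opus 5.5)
We set $g = \I_\alpha f$, so that $\I_\beta f = \I_{\beta-\alpha} g$ by the semigroup property~\eqref{SemigroupForRieszPotentials}, and $\gamma = \beta - \alpha$. Note that $\frac1q - \frac1p = \frac{\beta-\alpha}{d} = \frac{\gamma}{d}$. The claim then reads $\|\I_\gamma g\|_{\dot{B}^{0,1}_{p,1}} \lesssim \|g\|_{\dot{B}^{0,1}_q}$. This is an anisotropic Besov version of the Hardy--Littlewood--Sobolev embedding, and it can be proved block by block.

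\textbf{Step 1: a single Littlewood--Paley block.} Write $\Delta_k h = h*(\psi_k - \psi_{k-1})$, and let $\tilde\psi$ be a function of the same type whose Fourier transform equals $1$ on the support of $\widehat{\psi_0} - \widehat{\psi_{-1}}$. We aim to prove
\eq{
\|\Delta_k \I_\gamma g\|_{L_{p,1}} \lesssim \sum\limits_{|m-k|\leq 1}\|\Delta_m g\|_{L_q}.
}
By the dilation relations~\eqref{eq217}, \eqref{LorentzDilation} and the homogeneity~\eqref{RieszPotentialHomogeneity}, the powers of $A$ cancel exactly because $\gamma = d(\frac1q-\frac1p)$. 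So it suffices to treat $k=0$.

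For $k=0$, write $\Delta_0 \I_\gamma g = \Phi * \Delta_0 g$, where $\Phi$ has symbol $\rho^{-\gamma}\hat{\tilde\psi}_{\text{ann}}$ with a smooth cutoff supported on an annulus. This symbol is smooth and compactly supported, so $\Phi \in \Sw$.

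\textbf{Step 2: the $L_q \to L_{p,1}$ bound for $\Phi$.} Convolution with a Schwartz function maps $L_q$ to $L_{p_1}$ and to $L_{p_2}$ for any $q\le p_1<p<p_2$, by Young's inequality. Interpolating via~\eqref{LorentzInterpolation} (real method, second parameter $1$), or simply using~\eqref{LorentzMultiplicative}, gives boundedness $L_q\to L_{p,1}$.

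\textbf{Step 3: summation.} Sum the block estimate over $k\in\Z$. Each $\|\Delta_m g\|_{L_q}$ appears at most three times, which yields~\eqref{FromBesovToLorentzBesovFormula}. The independence of the Besov--Lorentz norm of the choice of $\psi$, established in the preceding item, justifies the use of the enlarged cutoff. Alternatively, one can apply Lemma~\ref{BesovLocalizationLemma} directly with $\varphi$-blocks.

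\textbf{Main obstacle.} The only delicate point is Step~2. The constant is uniform only after the dilation reduction in Step~1, and that reduction is where the relation $p = d/(d-\beta)$, $q = d/(d-\alpha)$ is used. A secondary issue is that $\I_\gamma g$ is a priori defined only modulo polynomials. This should be handled by working with the blocks directly, which are finite and well defined because each block multiplier vanishes near the origin.
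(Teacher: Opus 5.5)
Your proposal is correct and follows essentially the same route as the paper. Both arguments set $g=\I_\alpha f$ and use the semigroup property to reduce to single Littlewood--Paley blocks. Homogeneity then passes to $k=0$, where the powers of $A$ cancel because $\beta-\alpha=d(\frac1q-\frac1p)$. The block is bounded from $L_q$ to $L_r$ for every $r\geq q$ via a Schwartz kernel with smooth compactly supported symbol, and the proof concludes with~\eqref{LorentzMultiplicative}. Your Step~1 actually gives the sharper bound with only the $k$-th block of $g$ on the right, which is exactly the form the paper uses.
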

\begin{proof}
By the definition of Besov-type norms, the semigroup properties of the Riesz potentials~\eqref{SemigroupForRieszPotentials}, and notation~$g = \I_\alpha f$,~\eqref{FromBesovToLorentzBesovFormula} follows from
\eq{\label{IndividualLorentz}
\Big\|\I_{\beta-\alpha}[g]*(\varphi_k - \varphi_{k-1})\Big\|_{L_{p,1}}  \lesssim \Big\|g*(\varphi_k - \varphi_{k-1})\Big\|_{L_q}.
}
By homogeneity, the case of general~$k$ reduces to the case~$k=0$. In that case we have stronger embeddings
\eq{
\Big\|\I_{\beta-\alpha}[g]*(\varphi - \varphi_{-1})\Big\|_{L_r}  \lesssim \Big\|g*(\varphi - \varphi_{-1})\Big\|_{L_q}
}
for any~$r \geq q$ (this is similar to the proof of~\eqref{IndividualBound}), and~\eqref{IndividualLorentz} follows from the interpolation estimate~\eqref{LorentzMultiplicative}.
\end{proof}
\begin{Rem}\label{LInftyRemark}
We may extend Lemma~\ref{FromBesovToLorentzBesov} to the case~$p=\infty$ without additional efforts: For any~$\alpha \in (0,d)$, the estimate
\eq{
\sum\limits_{k\in\Z} A^{-dk} \|f*(\varphi_k - \varphi_{k-1}) \|_{_{L_\infty}} \lesssim \|\I_\alpha f\|_{\dot{B}_{q}^{0,1}},
}
where~$q = d/(d-\alpha)$.
\end{Rem}

\item Assume that~$\beta > 0$ now. Consider the classical difference characterization to Besov norms as in~\cite{BIN1979}.  Denote the divided difference of order~$N$ with respect to~$j$-th coordinate by the symbol~$\Delta_j^{N}$:
\eq{
\Delta_j^{N}(h) f(x) = \sum\limits_{i=0}^N (-1)^{N-i} \binom{N}{i} f(x+ ihe_j), 
}
where~$e_j$ is the~$j$-th vector of the standard basis; the notation~$\binom{N}{i}$ is used for the binomial coefficient. The quantity
\eq{
\sum\limits_{j=1}^d\int\limits_0^\infty t^{-s_j} \|\Delta_j^{N}(t)f\|_{L_{q,r}}\,\frac{dt}{t}
}
defines the seminorm of the homogeneous anisotropic Besov space~$\dot{B}_{q,r}^{\vec{s},1}$, where~$\vec{s}= (s_1,s_2,\ldots, s_d)$ is a vector with positive coordinates. A folklore fact says this norm is equivalent to the norm of~$\dot{B}_{q,r}^{\beta,1}$ defined in~\eqref{DefOfBesovNorm}, where~$s_j = \beta /a_j$ and~$N$ is larger than any of the~$s_j$. We do not need the equivalence, we only wish to prove the one-sided inequality
\eq{\label{eq136}
\int\limits_0^\infty t^{-s_j} \|\Delta_j^{N}(t)f\|_{L_{q,r}}\,\frac{dt}{t} \lesssim \|f\|_{\dot{B}_{q,r}^{\beta,1}}, \qquad j=1,2,\ldots,d.
}
For the proof of~\eqref{eq136} in the case of the classical Lebesgue norms and anisotropic homogeneity, see Remark~$5.7$ in~\cite{Triebel2006}. We present the proof in the Lorentz case below, it mimics the known proofs for the Lebesgue scale.
We start similarly to the proof of Lemma~\ref{BesovLocalizationLemma}: It suffices to show that
\mlt{
\int\limits_{A^{-(k+1)a_j}}^{A^{-ka_j}} t^{-s_j} \|\Delta_j^{N}(t)f\|_{L_{q,r}}\,\frac{dt}{t}\\
 \lesssim \sum\limits_{\ell \geq k} A^{\beta_+(\ell - k) + \beta k}\|f*(\psi_\ell - \psi_{\ell-1})\|_{L_{q,r}} + \sum\limits_{\ell \leq k} A^{\beta_-(\ell - k) + \beta k}\|f*(\psi_\ell - \psi_{\ell-1})\|_{L_{q,r}},
}
where the smoothness parameters satisfy the bounds~$\beta_+ < \beta < \beta_-$. This, in its turn, reduces to 
\eq{\label{PointwiseDividedDifferenceBound}
\|\Delta_j^{N}(t)f\|_{L_{q,r}} \lesssim  \sum\limits_{\ell \geq 0} A^{\beta_+ \ell}\big\|f*(\psi_\ell - \psi_{\ell-1})\big\|_{L_{q,r}} + \sum\limits_{\ell < 0} A^{\beta_- \ell}\big\|f*(\psi_\ell - \psi_{\ell-1})\big\|_{L_{q,r}},
}
for any~$t \in [A^{-a_j},1]$. The reduction is also similar to Lemma~\ref{BesovLocalizationLemma}, the only additional ingredient is the dilation formula
\eq{\label{DilationAndDividedDifference}
\Delta_j^N(t)\big[\Dil_{A^{-k}} [g]\big](x) = \Dil_{A^{-k}}\Big[\Delta_j^N(A^{a_jk} t)[g]\Big].
}
The estimate~\eqref{PointwiseDividedDifferenceBound} would, in its turn, follow from the bounds
\alg{
\Big\|\Delta_j^{N}(t)f*(\psi_\ell - \psi_{\ell-1})\Big\|_{L_{q,r}} \lesssim   A^{\beta_+ \ell}\big\|f*(\psi_\ell - \psi_{\ell-1})\big\|_{L_{q,r}} ,\qquad & \ell \geq 0;\\
\label{eqC021}\Big\|\Delta_j^{N}(t)f*(\psi_\ell - \psi_{\ell-1})\Big\|_{L_{q,r}} \lesssim   A^{\beta_- \ell}\big\|f*(\psi_\ell - \psi_{\ell-1})\big\|_{L_{q,r}} ,\qquad & \ell \leq 0.
}
The first inequality holds with~$\beta_+ = 0$: The divided difference is a linear combination of shifts of~$f$. Here we have used the assumption~$\beta > 0$. To prove the second inequality, consider a function~$\Phi \in \Sw(\R^d)$ whose Fourier transform is compactly supported outside the origin and equals one on the support of~$\hat{\psi} - \hat{\psi}_{-1}$. It suffices to show that
\eq{
\big\|\Delta_j^{N}(t) \Phi_\ell\big\|_{L_1}\lesssim A^{\beta_-\ell},\qquad \ell \leq 0;\qquad \Phi_\ell = \Dil_{A^{-\ell}}[\Phi],
}
this bound yields~\eqref{eqC021} via~\eqref{ConvolutionLorentz}.
Using~\eqref{DilationAndDividedDifference}, we transform this inequality into
\eq{
\int\limits_{\R^d} \big|\Delta_j^{N}(A^{a_j\ell}t)\Phi(x)\big|\,dx \lesssim A^{\beta_- \ell}.
}
This bound may be obtained from the integral representation
\eq{
\Delta_j^{N}(\tau)\Phi(x) = \int_0^\tau \int_0^\tau\ldots \int_0^\tau\frac{\partial^N \Phi}{\partial x_j^N}\Big(x+ (s_1 + s_2+ \ldots + s_N)e_j\Big)\,ds_1\,ds_2\ldots\,ds_N,
}
taking into account~$Na_j > \beta$ (we use the notation~$\tau = A^{a_j\ell}t$ in the estimates below):
\mlt{
\int\limits_{\R^d} \big|\Delta_j^{N}(A^{a_j\ell}t)\Phi(x)\big|\,dx\\
 \lesssim \int_0^\tau \int_0^\tau\ldots \int_0^\tau\int\limits_{\R^d}\Big|\frac{\partial^N \Phi}{\partial x_j^N}\Big(x+ (s_1 + s_2+ \ldots + s_N)e_j\Big)\Big|\,dx\,ds_1\,ds_2\ldots\,ds_N\\
  \leq \tau^{N}\|\nabla^N \Phi\|_{L_1} \lesssim A^{Na_j \ell} \lesssim A^{\beta_- \ell}.
}
\end{enumerate}

\subsection{Proofs of weighted lemmas}\label{SWeights}
\begin{proof}[Proof of Lemmas~\ref{Lemma41} and~\ref{Lemma42}]
We will be using the following elementary inequality:
\eq{\label{eqE01}
(1+|x|)^{-\theta}(1+|y|)^{-\theta} \leq (1+|x-y|)^{-\theta} \leq (1+|x|)^{-\theta}(1+|y|)^\theta,\qquad x,y\in \R^d,
}
which follows from the triangle inequality and also yields~\eqref{TypicalWeightSmoothness}.
We start with the proof of Lemma~\ref{Lemma41}:
\mlt{
\Heat[G](x,\vec{t}\,) =  \Big(\prod\limits_{j=1}^d(4\pi t_j)\Big)^{-\frac12}\int\limits_{\R^d}G(x-y)e^{-\sum_{1}^d\frac{y_j^2}{4t_j}}\,dy\\
 \leq C \Big(\prod\limits_{j=1}^d(4\pi t_j)\Big)^{-\frac12}\int\limits_{\R^d}(1+|x-y|)^{-\theta}e^{-\sum_{1}^d\frac{y_j^2}{4t_j}}\,dy\\
\Leqref{eqE01} C(1+|x|)^{-\theta} \Big(\prod\limits_{j=1}^d(4\pi t_j)\Big)^{-\frac12}\int\limits_{\R^d}(1+|y|)^{\theta}e^{-\sum_{1}^d\frac{y_j^2}{4t_j}}\,dy.
}
Thus, Lemma~\ref{Lemma41} is reduced to the bound
\eq{\label{Eeq3}
\Big(\prod\limits_{j=1}^d(4\pi t_j)\Big)^{-\frac12}\int\limits_{\R^d}(1+|y|)^{\theta}e^{-\sum_{1}^d\frac{y_j^2}{4t_j}}\,dy \lesssim 1, \qquad \forall j\in [1\twodots d] \quad t_j \in [0,2].
}
Similarly, Lemma~\ref{Lemma42} is reduced via~\eqref{eqE01} to
\eq{\label{Eeq4}
\Big(\prod\limits_{j=1}^d(4\pi t_j)\Big)^{-\frac12}\int\limits_{\R^d}(1+|y|)^{-\theta}e^{-\sum_{1}^d\frac{y_j^2}{4t_j}}\,dy \gtrsim 1, \qquad \forall j\in [1\twodots d] \quad t_j \in [0,2].
}
We may justify~\eqref{Eeq4} by the substitution~$z_j = y_j/\sqrt{t_j}$:
\eq{
\Big(\prod\limits_{j=1}^d(4\pi t_j)\Big)^{-\frac12}\int\limits_{\R^d}(1+|y|)^{-\theta}e^{-\sum_{1}^d\frac{y_j^2}{4t_j}}\,dy 
=(4\pi)^{-\frac{d}{2}}\int\limits_{\R^d}\Big(1+\big(\sum_{1}^d t_j^2 z_j^2\big)^{\frac12}\Big)^{-\theta}e^{-\frac{|z|^2}{4}}\,dz
}
and note that even the part of the latter integral over the unit ball is bounded away from zero since~$t_j \leq 2$. The estimate~\eqref{Eeq3} follows from the bound~$(1+|y|)^\theta \lesssim e^{|y|^2/10}$:
\eq{
\Big(\prod\limits_{j=1}^d(4\pi t_j)\Big)^{-\frac12}\int\limits_{\R^d}(1+|y|)^{\theta}e^{-\sum_{1}^d\frac{y_j^2}{4t_j}}\,dy \lesssim \Big(\prod\limits_{j=1}^d(4\pi t_j)\Big)^{-\frac12}\int\limits_{\R^d}e^{-\sum_{1}^dy_j^2 (\frac{1}{4t_j} - \frac{1}{10})}\,dy,
}
and it remains to notice that~$\frac{1}{4t_j} - \frac{1}{10}$ and~$\frac{1}{4t_j}$ are comparable since~$t_j \leq 2$:
\eq{
\Big(\prod\limits_{j=1}^d(4\pi t_j)\Big)^{-\frac12}\int\limits_{\R^d}e^{-\sum_{1}^dy_j^2 (\frac{1}{4t_j} - \frac{1}{10})}\,dy \lesssim \prod\limits_{j=1}^d \Big(\frac{1}{4t_j} - \frac{1}{10}\Big)^{\frac12}\int\limits_{\R^d}e^{-\sum_{1}^dy_j^2 (\frac{1}{4t_j} - \frac{1}{10})}\,dy=\pi^{\frac{d}{2}}.
}
\end{proof}
\begin{proof}[Proof of Lemma~\ref{Lemma43}] This is completely similar to the proof above: We need to replace the inequality~\eqref{eqE01} with
\eq{
C^{-1} (1+|y|)^{-\theta}G(x) \leq G(x-y) \leq C(1+|y|)^{\theta}G(x)
}
and use the bounds~\eqref{Eeq3} and~\eqref{Eeq4}.
\end{proof}
\begin{proof}[Proof of Lemma~\ref{Lemma44}]
As usual for~$L_1$-estimates,~\eqref{eq416} is reduced to the case~$f=\delta_x$,~$x\in \R^d$, in which it reads as follows:
\eq{
\Big(\prod\limits_{j=1}^d(4\pi t_j)\Big)^{-\frac12}\Big(\int\limits_{\R^d}v(y)e^{-q\sum_{1}^d\frac{|x_j-y_j|^2}{4t_j}}\,dy\Big)^{\frac1q}\lesssim u(x).
}
Since the~$t_j$ are bounded away from zero and infinity, the first factor can be absorbed into the constant. The second factor does not exceed
\eq{
\Big(C_v\int\limits_{\R^d}(1+|x-y|)^{-\theta_v}e^{-q\sum_{1}^d\frac{|y_j|^2}{4t_j}}\,dy\Big)^{\frac1q} \lesssim (1+|x|)^{-\frac{\theta_v}{q}}
}
by the reasoning presented in the previous proof (we may formally cite Lemma~\ref{Lemma41} and use that~$t_j/q < 2$). The lemma follows since~$\theta_v \geq q\theta_u$ by~\eqref{ThetavThetau}.
\end{proof}

The proof of Lemma~\ref{Lemma47} is similar to the proof of Lemma~\ref{Lemma44}; the only difference is that we use Lemma~\ref{Lemma43} instead of Lemma~\ref{Lemma41}.
\begin{proof}[Proof of Lemma~\ref{Lemma48}]
By~\eqref{ReproducingThefk}, the inequality in question reduces to
\mlt{
\Big\| \Heat[g](\fdot,\vec{s}\,)\Big\|_{L_q(Q_{0,0})}\lesssim A^{\alpha}\|g\|_{L_1(w)},\\
 \vec{s} = \Big(A^{-2a_1} - A^{-2a_1K},A^{-2a_2} - A^{-2a_2K},\ldots, A^{-2a_d} - A^{-2a_dK}\Big).
}
Using dilations, this may be rewritten as
\mlt{
\Big\| \Heat[h](\fdot,\vec{t}\,)\Big\|_{L_q(\Dil_A Q_{0,0})}\lesssim \|h\|_{L_1(\Dil^{A} w)},\\
 \vec{t} = \Big(1 - A^{-2a_1(K-1)},1 - A^{-2a_2(K-1)},\ldots, 1 - A^{-2a_d(K-1)}\Big).
}
This inequality follows from Lemma~\ref{Lemma47} since
\eq{
\chi_{Q_{0,0}}(\Dil_{A^{-1}} x)\lesssim \Dil^{A} v(x),
}
where~$v(x) = (1+|x|)^{-\theta}$ for sufficiently large~$\theta$ and the latter weight meets the assumptions of the said lemma.
\end{proof}

\section{Examples}\label{s13}
\subsection{Anisotropic Sobolev spaces.}
The first series of examples comes from the classical generalizations of the Gagliardo--Nirenberg embedding~$\dot{W}_1^1 (\R^d)\hookrightarrow L_{d/(d-1)}$, going back to~\cite{Gagliardo1959} and~\cite{Nirenberg1959}. In these generalizations, one replaces the derivative~$\frac{\partial f}{\partial x_j}$ with a  higher order pure derivative. If the orders of these derivatives are different, the problem becomes anisotropic. The sharpest results in this direction were obtained by V. I. Kolyada in~\cite{Kolyada1993} (see~\cite{BesovIlin1969} and~\cite{Solonnikov1972} for earlier results). Here we mean the sharpness with respect to the function space scale: one wishes to embed into the narrowest space possible, preserving the invariance properties of the problem. We cite Theorem~$4$, case~$p=1$, from~\cite{Kolyada1993}; we also adjust notation.
\begin{Th}[Theorem~$4$ in~\cite{Kolyada1993}]\label{KolyadaThm}
Let~$d \geq 2$, let~$r_1,r_2,\ldots,r_d$ be natural numbers, and let also~$q\in (1,d/(d-r))$. Set
\eq{\label{eq131}
r = d\Big(\sum\limits_{j=1}^d \frac{1}{r_j}\Big)^{-1},\qquad b_j = r_j\Big(1 - \frac{d}{r}\cdot \frac{q-1}{q}\Big),\qquad j=1,2,\ldots, d.
}
For any smooth function~$f$, the inequality
\eq{\label{eq132}
\sum\limits_{j=1}^d \int\limits_0^\infty t^{-b_j - 1} \|\Delta_j^{r_j}(t) f\|_{L_{q,1}}\,dt \lesssim \sum\limits_{j=1}^d \big\|\partial_j^{r_j} f\big\|_{L_1}
}
holds true with a uniform constant.
\end{Th}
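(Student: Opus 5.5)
We will derive Theorem~\ref{KolyadaThm} from Theorem~\ref{MainTheoremBesovLorentzScale} applied to a suitable anisotropy and a space of vector fields built from the pure derivatives.

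\emph{Choice of anisotropy.} Set $a_j = r/r_j$. Then $\sum_j a_j = r\sum_j 1/r_j = d$, so $a$ is an anisotropy. With this choice each operator $\partial_j^{r_j}$ is homogeneous of the same order: $\partial_j^{r_j}[\Dil^{t}g] = t^{-r}\Dil^t[\partial_j^{r_j}g]$.

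\emph{The space $\W$.} Let $\ell = d$ and let $\W$ be the closure in $\Sw'(\R^d;\R^d)$ of the set of vector fields $(\partial_1^{r_1}f,\ldots,\partial_d^{r_d}f)$. Equivalently, $\W$ is the set of $F=(F_1,\ldots,F_d)$ satisfying the compatibility relations $\partial_i^{r_i}F_j = \partial_j^{r_j}F_i$, modulo the usual issue with polynomials. This space is closed and translation invariant. It is also invariant under $\Dil_t$, because all the $\partial_j^{r_j}$ carry the same homogeneity.

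\emph{Absence of vectorial delta measures.} Suppose $c\otimes\delta_0\in\W$ with $c\neq0$. The Fourier form of the compatibility relations gives $(i\xi_i)^{r_i}c_j = (i\xi_j)^{r_j}c_i$ for all $\xi$. Taking $c_j\ne0$ and choosing $\xi$ with $\xi_j=0$ and $\xi_i\neq0$ forces $c_j=0$, since $d\ge2$. This is a contradiction, so $\W$ contains no vectorial delta measures.

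\emph{Application of the main theorem.} Theorem~\ref{MainTheoremBesovLorentzScale}, upgraded to Lorentz targets through Lemma~\ref{FromBesovToLorentzBesov}, gives
\[
\|\I_\gamma F\|_{\dot{B}^{0,1}_{q,1}}\lesssim\|F\|_{L_1},\qquad F\in\W\cap L_1,
\]
where $\gamma = d(q-1)/q$. The condition $q<d/(d-r)$ means exactly $\gamma<r$.

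\emph{Passage back to $f$.} Take $F=(\partial_j^{r_j}f)_j$. We want to express $f$ as a smoothing of $F$ of total order $r$: $f = \I_{\gamma}\,T F$, where $T$ has order $-(r-\gamma)$. The natural candidate is the multiplier
\[
T F = \Big(\sum_j \overline{(2\pi i\xi_j)^{r_j}}\,\hat F_j\Big)\big/\big(\rho^{-\gamma}(\xi)\,m(\xi)\big),\qquad m(\xi)=\sum_j|2\pi\xi_j|^{2r_j}.
\]
Indeed, $m$ is $2r$-homogeneous, smooth, and positive away from the origin, so the symbol of $T$ is smooth and homogeneous of degree $-(r-\gamma)$. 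Positive-order smoothing acts on homogeneous Besov--Lorentz spaces by shifting smoothness: applying $T$ to each Littlewood--Paley piece, the argument of item~1 of the appendix adapted as in the proof of~\eqref{IndividualBound} yields
\[
\|f\|_{\dot{B}^{r-\gamma,1}_{q,1}}\lesssim\|\I_\gamma F\|_{\dot B^{0,1}_{q,1}}.
\]

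\emph{Identification of the norms.} Finally, item~3 of the appendix gives~\eqref{eq136} with $\beta=r-\gamma>0$ and $s_j=\beta/a_j=r_j(r-\gamma)/r$. Since $\gamma = d(q-1)/q$, we have $s_j = r_j\big(1-\frac{d}{r}\cdot\frac{q-1}{q}\big)=b_j$. The inequality then requires $N=r_j>s_j$, which holds. Combining the three displayed bounds yields~\eqref{eq132}.

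\emph{Expected obstacles.} The main obstacle is the step from $\W\cap L_1$ to smooth $f$: one must justify $f=\I_\gamma T F$ and deal with polynomial ambiguities, for example by working with $f\in\Sw$ and then approximating. A second point is that $\W$ must be defined so that it is closed; the Fourier-side compatibility description is the safest choice.
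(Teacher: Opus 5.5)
Your proposal is correct and follows essentially the same route as the paper. It uses the anisotropy $a_j = r/r_j$ and the compatibility space $\W$ without vectorial delta measures, then Theorem~\ref{MainTheoremBesovLorentzScale} combined with Lemma~\ref{FromBesovToLorentzBesov}. It recovers $f$ through the multiplier $\sum_j \overline{(2\pi i\xi_j)^{r_j}}\hat{g}_j/\sum_k|2\pi\xi_k|^{2r_k}$ on Littlewood--Paley blocks, and finally applies~\eqref{eq136} with $\beta = r-\gamma$, so that $s_j = b_j$. The only difference is cosmetic: you phrase the block estimate as a lifting property of the order $-(r-\gamma)$ multiplier $T$, whereas the paper phrases it as the equivalence $\|g\|_{\dot{B}^{-\alpha,1}_{q,1}}\asymp\|f\|_{\dot{B}^{b,1}_{q,1}}$.
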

We have used the notation~$\partial_j$ for~$\frac{\partial}{\partial x_j}$. The number~$r$ in~\eqref{eq131} is often called the \emph{mean smoothness}. Kolyada's theorem above is dilation invariant, provided we choose the anisotropy
\eq{\label{eqD13}
a_j = \frac{d}{r_j}\Big(\sum\limits_{i=1}^d \frac{1}{r_i}\Big)^{-1},\qquad j=1,2,\ldots,d.
}
As we will see later, the mean smoothness~$r$ may be interpreted as the order of the~$a$-homogeneous differential operator~$(\partial_1^{r_1},\partial_2^{r_2},\ldots, \partial_d^{r_d})$.
The norm on the left-hand side of~\eqref{eq132}, in its turn, may be interpreted as a norm in a Besov--Lorentz space, see the discussion after formula~\eqref{eq136}. For that we introduce another mean smoothness~$b$:
\eq{
b = d\Big(\sum\limits_{j=1}^d \frac{1}{b_j}\Big)^{-1},
}
similar to the definition of~$r$. The expression on the left hand side of~\eqref{eq132} is equivalent to the~$\dot{B}_{q,1}^{b,1}$-norm of~$f$, since
\eq{
a_j b_j = r - \frac{d(q-1)}{q} = b
}
for all~$j=1,2,\ldots, d$. We will not prove this equivalence, only one-sided bound needed to derive Theorem~\ref{KolyadaThm} from our results. The corresponding inequality will be justified slightly later.

We define the space~$\W$ by the formula
\eq{
\W = \Set{g\in \Sw'(\R^d;\R^d)}{\exists f\in \Sw'(\R^d) \quad \forall j \quad g_j = \partial_j^{r_j} f}.
}
If~$d\geq 2$, this space does not contain delta measures. This follows from the description
\eq{
\W = \set{g \in \Sw'(\R^d; \R^d)}{\forall i \ne j\qquad  \partial_j^{r_j} g_i = \partial_i^{r_i} g_j}.
}
If~$a\otimes \delta_0 \in \W$, then~$\partial_j^{r_j}[a_i \delta_0] =\partial_i^{r_i} [a_j\delta_0]$ for any~$i\ne j$; this immediately yields~$a_i=a_j=0$. Note that~$\delta_0 \in \W$ in the case~$d=1$.

Then, Theorem~\ref{MainTheoremBesovLorentzScale} and Lemma~\ref{FromBesovToLorentzBesov} imply
\eq{
\|g\|_{\dot{B}_{q,1}^{-\alpha,1}} \asymp \|\I_\alpha g\|_{\dot{B}_{q,1}^{0,1}} \lesssim \|g\|_{L_1}, \quad g\in \W,
}
where~$\alpha$ is defined by~$q$,~$\alpha = d(1-1/q)$. This would yield~\eqref{eq132} via~\eqref{eq136}, provided we show
\eq{
\|g\|_{\dot{B}_{q,1}^{-\alpha,1}} \asymp \|f\|_{\dot{B}_{q,1}^{b,1}}, \qquad \forall j \quad g_j = \partial_j^{r_j} f.
}
This bound, by the very definition~\eqref{DefOfBesovNorm}, is reduced to
\eq{\label{eqD19bis}
A^{-\alpha k}\|g*(\psi_k - \psi_{k-1})\|_{L_{q,1}} \asymp  A^{bk} \|f*(\psi_k - \psi_{k-1})\|_{L_{q,1}}, \quad k\in \Z.
}
Note that~$\hat{g}_j(\xi) = (2\pi i \xi_j)^{r_j} \hat{f}(\xi)$ for all~$j$, which also yields
\eq{\label{eqD19}
\hat{f}(\xi) = \sum\limits_{j=1}^d \frac{(-2\pi i \xi_j)^{r_j}\hat{g}_j(\xi)}{\sum_{k} |2\pi \xi_k|^{2r_k}}.
}
These formulas reduce~\eqref{eqD19bis} to the bounds
\eq{
\Big\|\mathcal{F}^{-1}\big[(\xi_j)^{r_j} \hat{\Psi}_k(\xi)\big]\Big\|_{L_1} \lesssim A^{(\alpha + b)k};\qquad \bigg\|\mathcal{F}^{-1}\Big[\frac{(-2\pi i \xi_j)^{r_j}\hat{\Psi}_k(\xi)}{\sum_{i} |2\pi i \xi_i|^{2r_i}}\Big]\bigg\|_{L_1} \lesssim A^{-(\alpha + b)k},
}
where~$\Psi$ is a Schwartz function whose Fourier transform is compactly supported outside the origin and equals one on the support of~$\hat{\psi} - \hat{\psi}_{-1}$;~$\Psi_k$ is defined in the usual way,~$\Psi_k = \Dil_{A^{-k}}[\Psi]$. Fix~$j$. For the case~$k=0$, the bounds are clearly true, and the question is whether they are uniform in~$k$. In fact, they are dilation invariant, and we only need to verify that the orders of homogeneity on the left and right hand sides coincide. The functions on the left hand sides are homogeneous of the order~$a_jr_j$ and~$-a_jr_j$, correspondingly. Thus, we need to check that~$a_jr_j = \alpha + b$. By~\eqref{eqD13},~$a_jr_j = r$ and we always have~$\alpha = d - d/q$; we now need~$r = b + d - d/q$, which follows from~\eqref{eq131}. Alternatively,~$r$ is the order of the operator~$(\partial_1^{r_1},\partial_2^{r_2},\ldots, \partial_d^{r_d})$,~$b$ is the order of smoothness on the left hand side~\eqref{eq132}, and thus their difference should coincide with the order of the operator that transforms the right hand side into the left hand side, which is exactly~$\alpha$.

\subsection{Fourier constrained spaces.}\label{sD2}
Since we will be working with the Fourier transform, it will be convenient to switch to complex scalars. Let~$\ell = 2l$ and~$k < l$. By~$G(l,k)$ we denote the complex Grassmannian, the collection of all~$k$-dimensional~$\Co$-linear subspaces of the space~$\Co^l$ equipped with smooth structure. Let~$\Omega\colon S^{d-1}\to G(l,k)$ be a smooth function. It naturally generates a smooth vector bundle~$\sqcup_{\zeta \in S^{d-1}} \Omega(\zeta)$. Consider the function space
\eq{
W_1^\Omega = \Set{f\in L_1(\R^d, \Co^l)}{\forall \xi \in \R^d \setminus\{0\} \quad \hat{f}(\xi) \in \Omega\big(\Dil_{1/\rho(\xi)}(\xi)\big)},
}
where~$\rho$ is the anisotropic 'norm' defined in~\eqref{AnisotropicNorm}. In other words, the function~$\Omega$ defines an~$a$-homogeneous bundle on~$\R^d\setminus \{0\}$ and we restrict our attention to the functions~$f$ whose Fourier transforms are sections of this bundle. We say that the function~$\Omega$ defines the Fourier constraints and also that the elements~$f\in W_1^\Omega$ are subordinate to~$\Omega$. 
We will shortly explain how these spaces are related to Sobolev spaces. By definition,~$W_1^\Omega$ is dilation and translation invariant. It is also a closed subspace of~$L_1(\R^d,\Co^l)$.
\begin{Ex}\label{GradientExample}
Let~$l = d$ and~$k=1$. Assume~$d \geq 2$ to avoid technical issues. Consider the function~$\Omega(\zeta) = \zeta\cdot \Co$. If~$a = (1,1,\ldots,1)$, i.e., we consider the classical isotropic homogeneity, then
\eq{\label{eqD23}
W_1^\Omega = \Set{f\in L_1(\R^d,\Co^l)}{ \exists g\in \Sw'(\R^d)\quad \nabla g = f}.
}
Let us justify the identity above. The inclusion~$\supset$ follows from standard distribution theory: if a continuous function~$\hat{f}$ satisfies~$\hat{f}(\xi) = 2\pi i \xi \cdot \hat{g}(\xi)$ in the sense of distributions, then~$\hat{f}(\xi) \parallel \xi$ outside the origin in the classical pointwise sense. The reverse inclusion~$\subset$ requires to define a distribution~$g\in \Sw'(\R^d)$ for every~$f\in W_1^\Omega$. We choose
\eq{
\hat{g}(\xi) = \frac{\sum_{j=1}^d 2\pi i \xi_j \hat{f}_j(\xi)}{-4\pi^2 |\xi|^2}
}  
and get a locally summable function~$\hat{g}$ since~$d \geq 2$. This finishes the proof of~\eqref{eqD23}. The equation says that~$W_1^\Omega = \nabla \dot{W}_1^1$ for the choice~$\Omega(\zeta) = \zeta\cdot \Co$. 
\end{Ex}
\begin{Ex}
Let~$d=2$,~$l = 2$, and~$k=1$. Set~$a_1 = 3/2$,~$a_2 = 1/2$, and~$\Omega(\zeta) = (\zeta_1, - 4\pi^2 \zeta_2^3)\cdot \Co$. In such a case,
\eq{
\hat{f}(\xi) \in \Omega\big(\Dil_{1/\rho(\xi)}(\xi)\big) = \big(\xi_1,-4\pi^2\xi_2^3\big)\cdot \Co.
} 
Similarly to the previous example,
\eq{
W_1^\Omega = \Set{f\in L_1(\R^2, \Co^2)}{ \exists g\in \Sw'(\R^2)\quad f=(\partial_1 g,\partial_2^3 g)}.
}
\end{Ex}
\begin{Ex}\label{CurrentsExample}
Let~$a = (1,1,1,\ldots,1)$ and let~$\ell = \binom{d}{p}$, where~$p=0,1,\ldots, d-1$. We enumerate the basic vectors in~$\R^\ell$ with subsets of~$[1\twodots d]$ of cardinality~$p$. This describes the natural identification of~$\R^\ell$ with the space of exterior~$p$-forms~$\Lambda^p(\R^d)$. Set
\eq{
\Omega(\zeta) = \sset{\{v_I\}_{\#I = p}}{\forall J \subset [1\twodots d], \#J = p+1\qquad \sum\limits_{j\in J} \sign(j, J\setminus \{j\}) \zeta_j v_{J\setminus \{j\}} = 0}.
}
The notation~$\sign(j, J\setminus \{j\})$ is used to denote the sign of the permutation needed to re-order the string~$(j, J\setminus \{j\})$ alphabetically. In other words,
\eq{
\Omega(\zeta) = \Set{v \in \Lambda^p(\R^d)}{\zeta \wedge v = 0}.
}
The corresponding space~$W_1^\Omega$ is the space of summable closed differential~$p$-forms on~$\R^d$.
\end{Ex}
We may also generalize the concept of the gradient of a~$\BV$ function or of a divergence-free measure to general Fourier constraints~$\Omega$:
\eq{
\BV^\Omega = \Set{\mu\in \M(\R^d, \Co^l)}{\forall \xi \in \R^d \setminus\{0\} \quad \hat{\mu}(\xi) \in \Omega\big(\Dil_{1/\rho(\xi)}(\xi)\big)}.
}
In the case of the function~$\Omega$ described in Example~\ref{GradientExample}, we have~$\BV^\Omega = \nabla \dot{\BV}$, where~$\dot{\BV}$ is the homogeneous version of the space of functions of bounded variation. In the case considered in Example~\ref{CurrentsExample}, the space~$\BV^\Omega$ is the space of measure-valued closed~$p$-forms. 

Now we wish to define the space of distributions subordinate to~$\Omega$. This requires some work. The natural approach we survey below was suggested in~\cite{AyoushWojciechowski2017}. Consider the function~$\Omega^\perp\colon S^{d-1} \to G(l,l-k)$ that maps a point~$\zeta\in S^{d-1}$ to the subspace of~$\Co^l$ orthogonal to~$\Omega(\zeta)$; clearly, the function~$\Omega^\perp$ obtained this way is smooth. We also pick an auxiliary function~$H\in \Sw(\R^d)$ that attains positive values outside the origin and vanishes to infinite order at the origin:~$\forall N \in \mathbb{N}$ we have~$H(x) = O(|x|^N)$ as~$x\to 0$. Set
\eq{
\WW = \Set{f\in \Sw'(\R^d, \Co^l)}{\pi_{\Omega^\perp(\Dil_{1/\rho(\xi)}(\xi))} [f] \cdot H = 0}.
}
This space is translation and dilation invariant. It is also a closed subspace of~$\Sw'(\R^d,\Co^l)$. By standard distribution theory techniques,
\eq{\label{CorrespondenceOfDistrAndMeasures}
\WW \cap L_1(\R^d,\Co^l) = W_1^\Omega\quad \text{and}\quad \WW \cap \M(\R^d,\Co^l) = \BV^\Omega.
}
Thus, Corollary~\ref{LorentzCorollary} leads to the result below.
\begin{Cor}\label{CorollaryForOmega}
If the space~$\BV^\Omega$ does not contain measures of the type~$a\otimes \delta_0$,~$a\in \Co^l \setminus \{0\}$, then~$I_\alpha \colon W_1^\Omega \to L_{d/(d-\alpha),1}$ continuously.
\end{Cor}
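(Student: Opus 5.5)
The statement is Corollary~\ref{CorollaryForOmega}. I will deduce it directly from Corollary~\ref{LorentzCorollary} applied to $\W := \WW$, so the work is to check that $\WW$ meets the three hypotheses of that corollary and that its $L_1$-part is exactly $W_1^\Omega$.

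First, $\WW$ is a closed, translation invariant and dilation invariant linear subspace of $\Sw'(\R^d,\Co^l)$, as noted right after its definition. To match the real-valued framework of the main theorems, I will identify $\Co^l$ with $\R^{2l} = \R^\ell$. Under this identification $\WW$ is a real closed subspace of $\Sw'(\R^d;\R^\ell)$, with the same invariances. Complex-linearity is only stronger than real-linearity, so nothing is lost.

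Second, I check that $\WW$ contains no vectorial delta measure. A distribution $a\otimes\delta_0$ is a finite charge, so if it lay in $\WW$ it would lie in $\WW\cap\M(\R^d,\Co^l)$. By~\eqref{CorrespondenceOfDistrAndMeasures} this intersection is $\BV^\Omega$, which by hypothesis contains no such measure. The only point needing care is the real/complex identification. A real vector $a\in\R^{2l}\setminus\{0\}$ corresponds to a nonzero complex vector in $\Co^l$, and $a\otimes\delta_0$ is precisely a measure of the excluded type. So the hypothesis translates verbatim.

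Third, Corollary~\ref{LorentzCorollary} then yields $\|\I_\alpha f\|_{L_{q,1}}\lesssim\|f\|_{L_1}$ for $f\in\WW\cap L_1$, with $q=d/(d-\alpha)$. By the first identity in~\eqref{CorrespondenceOfDistrAndMeasures}, $\WW\cap L_1 = W_1^\Omega$, and this is the claim. The operator $\I_\alpha$ acts coordinatewise, so it is the same whether coordinates are viewed as real or complex.

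I expect the only real content to be the identities~\eqref{CorrespondenceOfDistrAndMeasures}, which the paper states without proof. If they need justification, I would argue as follows. For $f\in L_1$ or $f\in\M$, $\hat f$ is continuous. The condition $\pi_{\Omega^\perp}[\hat f]\cdot H=0$ holds as an equality of continuous functions. Since $H>0$ off the origin, this is equivalent to the pointwise constraint $\hat f(\xi)\in\Omega(\Dil_{1/\rho(\xi)}\xi)$ for $\xi\neq0$. (I am reading the projection in the definition of $\WW$ as applied to $\hat f$.) With that, everything reduces to the earlier corollary.
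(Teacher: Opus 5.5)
Your proposal is correct and is the paper's argument: the paper also applies Corollary~\ref{LorentzCorollary} to $\W=\WW$, using \eqref{CorrespondenceOfDistrAndMeasures} to identify $\WW\cap L_1$ with $W_1^\Omega$ and to pass the absence of delta measures from $\BV^\Omega$ to $\WW$. Your remarks on the identification $\Co^l\cong\R^{2l}$ and on the continuity argument behind \eqref{CorrespondenceOfDistrAndMeasures} fill in details the paper leaves implicit.
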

We conclude this example with a simple observation that describes the functions~$\Omega$ for which the corresponding space~$\BV^\Omega$ does not contain delta measures. The lemma below follows from the fact that the Fourier transform of the charge~$a\otimes \delta_0$ equals~$a$ identically.
\begin{Le}\label{AlgebraicCancelling}
The space~$\BV^\Omega$ does not contain measures of the type~$a\otimes \delta_0$,~$a\in \Co^l \setminus \{0\}$, if and only if
\eq{
\bigcap_{\zeta\in S^{d-1}} \Omega(\zeta) = \{0\}.
}
\end{Le}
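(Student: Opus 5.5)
The statement to prove is Lemma~\ref{AlgebraicCancelling}: $\BV^\Omega$ contains no charge of the form $a\otimes\delta_0$, $a\ne0$, if and only if $\bigcap_{\zeta\in S^{d-1}}\Omega(\zeta)=\{0\}$. The plan is to compute the Fourier transform of $a\otimes\delta_0$ and read off the constraint defining $\BV^\Omega$ directly, proving both directions by contraposition.

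The key observation is the following. For $a\in\Co^l$, the charge $\mu=a\otimes\delta_0$ is a finite $\Co^l$-valued measure. Its Fourier transform, with the normalization fixed in Subsection~\ref{s12}, is the constant function
\[
\hat{\mu}(\xi)=\int_{\R^d}e^{-2\pi i\scalprod{x}{\xi}}\,a\,d\delta_0(x)=a,\qquad \xi\in\R^d.
\]
So $\mu\in\BV^\Omega$ holds exactly when $a\in\Omega\big(\Dil_{1/\rho(\xi)}(\xi)\big)$ for every $\xi\in\R^d\setminus\{0\}$.

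The next step is to show that, as $\xi$ runs over $\R^d\setminus\{0\}$, the point $\Dil_{1/\rho(\xi)}(\xi)$ runs over all of $S^{d-1}$.
\begin{itemize}
\item By the remark after~\eqref{AnisotropicNorm}, $\Dil_{1/\rho(\xi)}(\xi)\in S^{d-1}$ for every $\xi\ne0$.
\item Conversely, take $\zeta\in S^{d-1}$. The defining equation~\eqref{AnisotropicNorm} gives $\sum_j\zeta_j^2=1$, hence $\rho(\zeta)=1$. Choosing $\xi=\zeta$ then yields $\Dil_{1}(\zeta)=\zeta$.
\end{itemize}
Therefore $a\otimes\delta_0\in\BV^\Omega$ if and only if $a\in\bigcap_{\zeta\in S^{d-1}}\Omega(\zeta)$.

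The lemma follows at once from this equivalence, with both directions stated contrapositively. If the intersection contains some $a\ne0$, then $a\otimes\delta_0\in\BV^\Omega$. If instead $a\otimes\delta_0\in\BV^\Omega$ for some $a\ne0$, then this nonzero $a$ lies in the intersection.

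There is no genuine obstacle here. The only point needing care is the surjectivity of $\xi\mapsto\Dil_{1/\rho(\xi)}(\xi)$ onto the sphere, and that is settled by the identity $\rho|_{S^{d-1}}=1$.
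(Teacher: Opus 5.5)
Your proof is correct and is essentially the paper's argument, which also rests on the fact that the Fourier transform of $a\otimes\delta_0$ is identically $a$, so that membership in $\BV^\Omega$ amounts to $a\in\Omega(\zeta)$ for all $\zeta\in S^{d-1}$. Your check that $\xi\mapsto\Dil_{1/\rho(\xi)}(\xi)$ maps $\R^d\setminus\{0\}$ onto $S^{d-1}$ just spells out a step the paper leaves implicit. That check uses $\rho(\zeta)=1$ on the sphere, which holds because $r\mapsto\sum_j x_j^2r^{-2a_j}$ is strictly decreasing, so $\rho$ is uniquely determined.
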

This cancellation condition appeared in~\cite{RoginskayaWojciechowski2006} and~\cite{VanSchaftingen2013} independently.

\subsection{Differential operators.}
Consider a differential operator~$A(\partial)$. We assume it is linear and has constant coefficients. Assume it is~$a$-homogeneous:
\eq{
A(\partial) u = \sum\limits_{\scalprod{\gamma}{a} = m} c_{\gamma}\partial^\gamma u.
}
Here~$\gamma \in \Z^{d}_+$ and~$\partial^\gamma = \partial_1^{\gamma_1}\partial_2^{\gamma_2}\ldots \partial_d^{\gamma_d}$,~$|\gamma| = \sum_j \gamma_j$. The number~$m$, which is the~$a$-order of the operator, need not be integer: See Example~\ref{SimplestAnistropicGradient} below. We will be working with vectorial differential operators. So, here~$u\in \Sw(\R^d, \Co^{\kappa})$ and~$A(\partial) u \in \Sw(\R^d, \Co^l)$. Thus, the coefficients~$c_\gamma$ are~$l\times \kappa$ matrices with complex entries. To each differential operator, we assign its \emph{symbol}:
\eq{
A(\xi) = \sum\limits_{\scalprod{\gamma}{a} = m} (2\pi i)^{|\gamma|}c_{\gamma}\xi^\gamma.
}
We say that the operator~$A$ is \emph{injectively elliptic}, provided~$A(\xi)$ is an injective linear operator for any~$\xi\in \R^d \setminus \{0\}$. We will say that~$A$ is a \emph{differential operator of constant rank}, provided the rank of~$A(\zeta)$ does not depend on~$\zeta\in S^{d-1}$. The simplest example of an injectively elliptic operator is the gradient~$\nabla$. One may also consider its anisotropic version
\eq{\label{AnisotropicGradient}
A(\partial) = \Big(\partial_1^{r_1},\partial_2^{r_2},\ldots, \partial_{d}^{r_d}\Big).
}
Here we define the anisotropy by~\eqref{eqD13}. This operator is also injectively elliptic. Another simple example of an elliptic operator is the Laplacian~$\Delta$. The first example of a constant rank operator is the divergence. If~$d=3$ and we add the curl operator, i.e., consider the vector-valued operator~$(\mathrm{div}, \curl)$ mapping vector fields on~$\R^3$ into~$\Co^4$, we obtain an elliptic operator. The differential~$d$ acting on the space of~$p$-forms is injectively elliptic if and only if~$p=0$. The pair~$(d,\partial)$ is always injectively elliptic. See Example~\ref{HodgeDeRhamExample} below.

Each differential operator~$A$ of rank~$k$ defines the function~$\Omega\colon S^{d-1}\to G(l,k)$ in a natural way:
\eq{\label{OmegaForDifferentialOperators}
\Omega(\zeta) = \Image A(\zeta),\qquad \zeta \in S^{d-1}.
}
This function is smooth. One may wonder when the estimate
\eq{
\|\partial^s u\|_{L_q} \lesssim \|A(\partial) u\|_{L_1}
}
holds. Here~$s\in \Z^{d}_+$ is some fixed vector.  The parameters~$q$ and~$s$ are linked by the homogeneity conditions:
\eq{\label{HomogeneityForDifOperators}
q = d/(d-\alpha),\quad \text{where}\quad \alpha = m - \scalprod{s}{a},\qquad \alpha \in (0,d).
}
If the operator~$A$ is injectively elliptic and~$q \in (1,\infty)$, then, by a version of the H\"ormander--Mikhlin multiplier theorem, for example, the one in~\S 1.III of~\cite{FabesRiviere1966},
\eq{\label{AnisotropicMikhlin}
\|\partial^s u\|_{L_{q,1}} \lesssim \|\I_\alpha [A(\partial) u]\|_{L_{q,1}}.
}
The cited multiplier theorem claims~$L_q$ continuity, the~$L_{q,1}$ continuity follows by interpolation via~\eqref{LorentzInterpolation}.
\begin{Th}\label{JeanTheorem}
Assume~$A$ is an~$a$-homogeneous constant coefficient linear injectively elliptic operator of~$a$-order~$m$. For any~$s$ such that~$\scalprod{a}{s} < m$, the estimate
\eq{\label{cancelestimate}
\|\partial^s u\|_{L_{q,1}} \lesssim \|A(\partial) u\|_{L_1}
}
holds true, provided the homogeneity conditions~\eqref{HomogeneityForDifOperators} are fulfilled and the operator~$A$ is cancelling:
\eq{\label{Canceling}
\bigcap_{\zeta\in S^{d-1}} \Image A(\zeta) = \{0\}.
}
\end{Th}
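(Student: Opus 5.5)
We deduce Theorem~\ref{JeanTheorem} from Corollary~\ref{LorentzCorollary} applied to the Fourier constrained space attached to $A$, combined with the multiplier bound~\eqref{AnisotropicMikhlin}. Set $\Omega(\zeta) = \Image A(\zeta)$ as in~\eqref{OmegaForDifferentialOperators}. Since $A$ is injectively elliptic, the rank of $A(\zeta)$ equals $\kappa$ for every $\zeta \in S^{d-1}$, so $\Omega\colon S^{d-1}\to G(l,\kappa)$ is smooth; this is the standard fact that the image of a smooth family of injective matrices is a smooth map into the Grassmannian. Let $\WW$ be the corresponding closed, translation and dilation invariant subspace of $\Sw'(\R^d,\Co^l)$. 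We view it as a real subspace of $\Sw'(\R^d,\R^{2l})$, so that the real-scalar theory applies.

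\textbf{Step 1.} For $u\in \Sw(\R^d,\Co^\kappa)$, the function $f = A(\partial)u$ is summable, and $\hat f(\xi) = A(\xi)\hat u(\xi)$. By $a$-homogeneity of the symbol, $A(\xi) = \rho(\xi)^m A(\Dil_{1/\rho(\xi)}\xi)$, hence $\hat f(\xi) \in \Omega(\Dil_{1/\rho(\xi)}(\xi))$ for $\xi \neq 0$. Thus $f \in W_1^\Omega = \WW\cap L_1$ by~\eqref{CorrespondenceOfDistrAndMeasures}.

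\textbf{Step 2.} We check that $\WW$ contains no vectorial delta measures. By Lemma~\ref{AlgebraicCancelling} this is equivalent to $\bigcap_\zeta \Omega(\zeta) = \{0\}$, which is exactly the cancelling condition~\eqref{Canceling}. If the delta measure is real-valued $a\otimes\delta_0$ with $a\in\R^{2l}$, its complexification is again of the form $a'\otimes\delta_0$ with $a'\in\Co^l\setminus\{0\}$, so the lemma still covers it.

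\textbf{Step 3.} Corollary~\ref{LorentzCorollary} now gives
\[
\|\I_\alpha [A(\partial)u]\|_{L_{q,1}}\lesssim \|A(\partial)u\|_{L_1},
\]
with $\alpha = m-\scalprod{s}{a}\in(0,d)$ and $q=d/(d-\alpha)$. Combining this with~\eqref{AnisotropicMikhlin} yields~\eqref{cancelestimate} for Schwartz $u$.

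\textbf{Justifying~\eqref{AnisotropicMikhlin}.} The map $\I_\alpha A(\partial) u \mapsto \partial^s u$ is the Fourier multiplier
\[
(2\pi i\xi)^s\,\rho(\xi)^{\alpha}\,A(\xi)^{+},\qquad A(\xi)^{+} = \big(A(\xi)^*A(\xi)\big)^{-1}A(\xi)^*,
\]
where $A(\xi)^{+}$ is the left inverse of $A(\xi)$, well defined by injective ellipticity. This symbol is $0$-homogeneous with respect to $a$ and smooth away from the origin, so the anisotropic Mikhlin theorem applies.

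\textbf{Main obstacle.} The main technical point is the density and extension step: passing from Schwartz $u$ to the natural class of $u$ for which the left side of~\eqref{cancelestimate} makes sense. I would handle it by approximation: mollify $u$ and truncate in frequency, using that both sides behave continuously under convolution with an approximate identity, as in Lemma~\ref{SmoothingLemma}. Since the statement is an a priori estimate, restricting to $u\in\Sw$ should suffice.
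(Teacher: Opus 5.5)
Your proposal is correct and follows the paper's proof. You build the Fourier-constrained space $\WW$ from $\Omega(\zeta)=\Image A(\zeta)$, and you exclude vectorial delta measures via Lemma~\ref{AlgebraicCancelling} together with~\eqref{CorrespondenceOfDistrAndMeasures}. You then apply Corollary~\ref{LorentzCorollary} (the paper cites its specialization, Corollary~\ref{CorollaryForOmega}) and conclude with~\eqref{AnisotropicMikhlin}. The details you add that the paper leaves implicit are correct: $A(\partial)u\in W_1^\Omega$, the explicit $0$-homogeneous multiplier, and the real/complex identification. One small addition: the cited multiplier theorem gives $L_q$ bounds, and the $L_{q,1}$ version of~\eqref{AnisotropicMikhlin} follows by interpolation via~\eqref{LorentzInterpolation}.
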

\begin{proof}
Consider the corresponding spaces~$W_1^\Omega$ and~$\BV^\Omega$ with~$\Omega$ given by~\eqref{OmegaForDifferentialOperators}. By Lemma~\ref{AlgebraicCancelling}, the space~$\BV^\Omega$ does not contain delta measures. Therefore, by~\eqref{CorrespondenceOfDistrAndMeasures}, the space~$\WW$ of distributions subordinate to~$\Omega$ does not contain vectorial delta measures as well. Thus, we apply Corollary~\ref{CorollaryForOmega} together with the bound~\eqref{AnisotropicMikhlin} and obtain the desired result.
\end{proof}
We have shown that~\eqref{Canceling} implies~\eqref{cancelestimate}. Note that the reverse implication is immediate. For recent rearrangement-invariant extensions of the isotropic theory of canceling and co-canceling operators, see also~\cite{BCS2026}.
In the case where the operator~$A$ is not injectively elliptic, we are still able to obtain some bounds by the same reasoning. Note that without the ellipticity condition, the estimate~\eqref{AnisotropicMikhlin} may be false. 
\begin{Th}\label{ConstantRankTheorem}
Assume~$A$ is an~$a$-homogeneous constant coefficient linear differential operator of~$a$-order~$m$ and constant rank. Assume it is cancelling:~\eqref{Canceling} holds true. Then,
\eq{
\|\I_\alpha [A(\partial) u]\|_{L_{q,1}} \lesssim \|A(\partial) u\|_{L_1},
}
where~$q = d/(d-\alpha)$ and~$\alpha \in (0,d)$.
\end{Th}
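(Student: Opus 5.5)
The plan is to apply Corollary~\ref{LorentzCorollary} directly to the space of distributions subordinate to the Fourier constraint~$\Omega$ from~\eqref{OmegaForDifferentialOperators}. In contrast with Theorem~\ref{JeanTheorem}, no Mikhlin step is needed, since the left-hand side is already expressed through~$\I_\alpha$ applied to~$A(\partial)u$.

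First, because~$A$ has constant rank~$k$, the map~$\zeta\mapsto \Image A(\zeta)$ is a smooth function~$S^{d-1}\to G(l,k)$. Smoothness follows from constant rank: locally the image is spanned by a fixed choice of~$k$ columns of the smoothly varying matrix~$A(\zeta)$. Hence the space~$\WW$ built from this~$\Omega$ is a closed, translation and dilation invariant subspace of~$\Sw'(\R^d,\Co^l)$. Lemma~\ref{AlgebraicCancelling} together with~\eqref{Canceling} shows that~$\BV^\Omega$ contains no measures~$a\otimes\delta_0$. By~\eqref{CorrespondenceOfDistrAndMeasures}, neither does~$\WW$, because any vectorial delta in~$\WW$ would be a measure and hence lie in~$\BV^\Omega$.

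Second, I need to check that~$f=A(\partial)u$ belongs to~$W_1^\Omega=\WW\cap L_1$ whenever the right-hand side is finite. We have~$\hat f(\xi)=A(\xi)\hat u(\xi)$. The symbol is~$a$-homogeneous, so~$A(\xi)=\rho(\xi)^m A(\Dil_{1/\rho(\xi)}\xi)$, and therefore~$\hat f(\xi)\in \Image A(\Dil_{1/\rho(\xi)}\xi)=\Omega(\Dil_{1/\rho(\xi)}\xi)$ for every~$\xi\neq0$. Corollary~\ref{LorentzCorollary}, in the form of Corollary~\ref{CorollaryForOmega}, then gives~$\|\I_\alpha f\|_{L_{q,1}}\lesssim\|f\|_{L_1}$, which is exactly the claim. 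Complex scalars are handled by identifying~$\Co^l$ with~$\R^{2l}$.

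The only delicate point is the definition of~$\WW$ and the identity~\eqref{CorrespondenceOfDistrAndMeasures}, specifically that~$\pi_{\Omega^\perp}$ varies smoothly. This is exactly where constant rank is used: without it the projection can jump where the rank drops, and the space need not be closed or correctly described. I would verify this by writing~$\pi_{\Omega(\zeta)}$ through~$A(\zeta)A(\zeta)^*$ restricted to its nonzero spectral part. Because the rank is constant, the nonzero eigenvalues stay bounded away from zero on the compact sphere, so the Riesz projection formula~$\frac{1}{2\pi i}\oint (z-A(\zeta)A(\zeta)^*)^{-1}\,dz$ over a fixed contour yields smooth dependence on~$\zeta$.
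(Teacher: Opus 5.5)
Your proposal is correct and follows the paper's route. The paper proves this theorem ``by the same reasoning'' as Theorem~\ref{JeanTheorem}: Lemma~\ref{AlgebraicCancelling} together with \eqref{CorrespondenceOfDistrAndMeasures} shows that $\WW$ contains no vectorial delta measures, and Corollary~\ref{CorollaryForOmega} then gives the bound, with the Mikhlin step \eqref{AnisotropicMikhlin} simply omitted. The details you add are points the paper leaves implicit: the smoothness of $\Omega$ (and of $\pi_{\Omega^\perp}$) from constant rank, and the fact that $\widehat{A(\partial)u}(\xi)\in\Omega(\Dil_{1/\rho(\xi)}\xi)$ by homogeneity of the symbol.
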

\begin{Rem}
One may replace the Lorentz space~$L_{q,1}$ with the Besov space~$\dot{B}_{q}^{0,1}$ in Theorems~\ref{JeanTheorem} and~\ref{ConstantRankTheorem}, or with even narrower space~$\dot{B}_{q,1}^{0,1}$.
\end{Rem}

\begin{Ex}\label{SimplestAnistropicGradient}
We may apply Theorem~\ref{JeanTheorem} to the case of 'anisotropic gradient' described in Theorem~\ref{KolyadaThm}. We set~$k=1$,~$l= d$, and define the operator by~\eqref{AnisotropicGradient}. Further, set~$d=2$,~$r_1 = 1$, and~$r_2 = 2$. In other words, we wish to have the operator~$(\partial_1,\partial_2^2)$ on the right hand side of our inequality. Formula~\eqref{eqD13} suggests the anisotropy~$a = (4/3,2/3)$. The chosen operator is injectively elliptic, cancelling, and has order~$m=4/3$. Note that the order of such an innocent operator is already non-integer. Let us try to bound the~$L_q$ norm of the function~$f$ itself. Then, by~\eqref{HomogeneityForDifOperators}, we need~$\alpha = m = 4/3$. By the same formula, we have~$q=3$ and arrive at the bound
\eq{\label{PartEx1}
\|f\|_{L_3} \lesssim \|\partial_1f\|_{L_1} + \|\partial_2^2 f\|_{L_1}
}
for Schwartz functions in two variables. Note that this inequality does not formally follow from Theorem~\ref{KolyadaThm} since the inequality~$q < d/(d-r)$ is strict in the latter theorem.
\end{Ex}
\begin{Ex}
Let now~$k=1$,~$l = 2$,~$d \geq 2$, and
\eq{
a_1 = \frac{2d}{d+1},\ a_2=a_3 = \ldots = a_d = \frac{d}{d+1}.
}
Set
\eq{
A(\partial)  = \Big(\partial_1, \partial_2^2 + \partial_3^2 + \ldots + \partial_d^2\Big) = \Big(\partial_1,\Delta_{2,3,\ldots, d}\Big).
}
The symbol of this operator is~$(2\pi i \xi_1, -4\pi^2\sum_{2}^d \xi_j^2)$ and the order is~$2d/(d+1)$. In particular, this operator is injectively elliptic and cancelling. Here~$\alpha= 2d/(d+1)$,~$q = (d+1)/(d-1)$, and we obtain
\eq{
\|f\|_{L_{\frac{d+1}{d-1}}} \lesssim \|\partial_1 f\|_{L_1} + \Big\|\Delta_{2,3,\ldots, d} f\Big\|_{L_1}.
}
In the case~$d=2$ it reduces to~\eqref{PartEx1}. In the case~$d=3$ it reads as
\eq{
\|f\|_{L_2}\lesssim \|\partial_1f\|_{L_1} + \Big\|(\partial_2^2 + \partial_3^2)f\Big\|_{L_1};
}
the latter inequality was obtained in~\cite{KMS2015}, see~\cite{Stolyarov2021bibis} as well.
\end{Ex}
\begin{Ex}\label{HodgeDeRhamExample}
This example generalizes Example~\ref{CurrentsExample} and adjusts the Hodge--de Rham complex to the anisotropic setting. We refer the reader to~\cite{Lee2013} for background. Choose a vector~$\vec{r} = (r_1,r_2,\ldots, r_d)$ with natural entries. Define the operator~$\dr$ that maps differential~$p$-forms into~$(p+1)$-forms by the rule
\eq{
\dr \omega = \sum\limits_{\genfrac{}{}{0pt}{-2}{I \subset [1\twodots d]}{\# I = p}} \sum\limits_{j\notin I} \partial_j^{r_j}\omega_I dx^j \wedge dx^I,\qquad \omega = \sum\limits_{\genfrac{}{}{0pt}{-2}{I \subset [1\twodots d]}{\# I = p}} \omega_I dx^I.
}
Here~$\omega_I\in C^{\infty}(\R^d)$ are the coefficients of the~$p$-form~$\omega$ in the standard basis
\eq{
dx^I = dx^{i_1}\wedge dx^{i_2}\wedge \ldots \wedge dx^{i_p}, \quad I = (i_1,i_2,\ldots, i_p).
} 
The symbol of~$\dr$ is
\eq{
\Lambda^p(\R^d)\ni \omega \mapsto v(\xi)\wedge \omega,\qquad v(\xi) = \big((2\pi i\xi_1)^{r_1},(2\pi i\xi_2)^{r_2},\ldots, (2\pi i\xi_d)^{r_d}\big),\ \xi \in \R^d.
}
In the case~$\vec{r} = (1,1,\ldots, 1)$, this operator coincides with the classical exterior derivative~$d$. We may also consider a version of the codifferential~$\codr$ as the operator with the symbol:
\eq{
\Lambda^p(\R^d)\ni \omega \mapsto \iota_{\overline{v(\xi)}}\,\omega,\quad  \xi \in \R^d.
}
One may see that the operators~$\dr$ and~$\codr$ are adjoint in the sense that
\eq{\label{AdjointExtDer}
\scalprod{\dr \omega}{\eta} = \scalprod{\omega}{\codr \eta},\qquad \omega \in C^{\infty}(\R^d,\Lambda^p(\R^d)) \text{ and } \eta \in C^{\infty}(\R^d, \Lambda^{(p+1)}(\R^d)),
}
at least one of these forms is compactly supported, and we use the standard scalar product on~$\Lambda^p(\R^d)$ and spaces of differential forms. 

We use the anisotropy defined by~\eqref{eqD13} and see that both~$\dr$ and~$\codr$ are~$a$-homogeneous differential operators of order~$r$. The latter symbol denotes the mean smoothness of~$\vec{r}$ defined in~\eqref{eq131}. The operator~$\dr$ is of constant rank since the corresponding function~$\Omega$ is defined by
\eq{
\Omega(\zeta) = \Set{v(\zeta)\wedge \omega}{ \omega\in \Lambda^p(\R^d)} = \set{\eta \in \Lambda^{p+1}(\R^d)}{v(\zeta) \wedge \eta = 0},\qquad \zeta \in S^{d-1}.
}
The dimension of the latter set is~$\binom{d-1}{p}$. By~\eqref{AdjointExtDer},~$\codr$ is also of constant rank.

We claim that the pair~$(\dr,\codr)$ that maps a~$p$-form into a pair of~$(p-1)$ and~$(p+1)$-forms, is an injectively elliptic operator. For that, we need to prove that for any~$\zeta \in S^{d-1}$ fixed the equalities~$v(\zeta)\wedge \omega = 0$ and~$\iota_{\overline{v(\zeta)}}\,\omega = 0$ imply~$\omega = 0$. This follows from the identity
\eq{
\big|v\wedge \omega\big|^2 + \big|\iota_{\overline{v}}\,\omega\big|^2 = |v|^2|\omega|^2.
}
Next, we claim that~$\dr$ is cancelling, provided~$p < d-1$. We need to verify that
\eq{
\bigcap_{\zeta \in S^{d-1}} \Set{v(\zeta)\wedge \omega}{\omega\in \Lambda^p(\R^d)} = \{0\},\qquad p = 0,1,\ldots, d-2.
}
Assume the contrary and let~$\eta\in \Lambda^{(p+1)}(\R^d)$ be a non-zero form lying in this intersection. Then,~$v(\zeta)\wedge \eta = 0$ for any~$\zeta\in S^{d-1}$. In particular,~$dx^j \wedge \eta = 0$ for any~$j \in [1 \twodots d]$. This means~$\eta = c \det$ for~$c \ne 0$, i.e.,~$\eta \in \Lambda^d(\R^d)$, which contradicts our assumptions about~$p$. Thus,~$\dr$ is cancelling, provided~$p<d-1$. Consequently,~$\codr$ is cancelling, provided~$p>1$.  Therefore, the pair~$(\dr,\codr)$ is cancelling if and only if~$1 < p < d-1$. Theorem~\ref{JeanTheorem} then delivers the following result. In the classical case~$r_j = 1$ it was obtained by Lanzani and Stein in~\cite{LanzaniStein2005} for the Lebesgue scale, see~\cite{HRS2023} and~\cite{VanSchaftingen2010} as well.
\begin{Cor}
If~$d \geq 4$,~$p \in [2\twodots d-2]$, and~$r < d$, then
\eq{\label{SLAnisotropic}
\|\omega\|_{L_{q,1}} \lesssim \|\dr \omega\|_{L_1} + \|\codr \omega\|_{L_1}, \qquad \omega \text{ is a~$p$-form},
}
where~$q = d/(d-r)$ and~$r$ is the mean smoothness of~$\vec{r}$ defined in~\eqref{eq131}.
\end{Cor}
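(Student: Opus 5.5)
The plan is to apply Theorem~\ref{JeanTheorem} to the injectively elliptic operator $(\dr,\codr)$, with $s=0$ and $m=r$. Here $\langle a,s\rangle=0<r=m$. The homogeneity condition~\eqref{HomogeneityForDifOperators} gives $\alpha=r\in(0,d)$, which holds by the assumption $r<d$, and then $q=d/(d-r)$ as stated. We have already checked that both operators are $a$-homogeneous of order $r$ with respect to the anisotropy~\eqref{eqD13}: the symbol of $\partial_j^{r_j}$ has homogeneity $a_jr_j=r$, so the pair acts from $p$-forms to $\Lambda^{p+1}\oplus\Lambda^{p-1}$ with order $r$. Injective ellipticity also follows from the identity $|v\wedge\omega|^2+|\iota_{\overline v}\omega|^2=|v|^2|\omega|^2$, together with the fact that $v(\zeta)\neq 0$ for $\zeta\in S^{d-1}$.

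The remaining step is to verify the cancelling condition~\eqref{Canceling} for the pair. The image of the pair's symbol at $\zeta$ is contained in $\Image(v(\zeta)\wedge\cdot)\oplus\Image(\iota_{\overline{v(\zeta)}}\cdot)$. The intersection over $\zeta$ is therefore contained in the direct sum of the two separate intersections, which reduces the question to each component.

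For the $\dr$ component, the argument above applies: an element $\eta\in\Lambda^{p+1}$ lying in every image satisfies $v(\zeta)\wedge\eta=0$ for all $\zeta$. Choosing $\zeta=e_j$ gives $dx^j\wedge\eta=0$ for every $j$. This forces $\eta$ to be a top-degree form, which is impossible when $p+1<d$, i.e.\ when $p\le d-2$.

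For the $\codr$ component, I would use the analogous dual argument. An element $\eta\in\Lambda^{p-1}$ lying in every image of $\iota_{\overline{v(\zeta)}}$ is orthogonal to the kernel of the adjoint, so $v(\zeta)\wedge\eta=0$ for all $\zeta$. Taking $\zeta=e_j$ again, $\eta$ must be a multiple of the volume form, which requires $p-1=d$. That never happens for $p\le d$, so this component is cancelling for any $p\ge1$, and in particular for $p\ge2$. This is the step that needs the most care: the image of $\iota_{\overline v}$ equals the orthogonal complement of the kernel of $v\wedge\cdot$ on $\Lambda^{p-1}$, so an element of every image lies in $\Image(\iota_{\overline{v}})$ for each $v=v(\zeta)$. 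Once this is set up correctly, cancellation for the whole pair follows for $2\le p\le d-2$, which needs $d\ge4$.

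With these checks in place, Theorem~\ref{JeanTheorem} applied with $u=\omega$ gives
\[
\|\omega\|_{L_{q,1}}\lesssim\|(\dr\omega,\codr\omega)\|_{L_1}\le\|\dr\omega\|_{L_1}+\|\codr\omega\|_{L_1},
\]
which is~\eqref{SLAnisotropic}. The main obstacle is the cancellation verification for the combined pair, specifically the passage from the intersection of images of the pair to the intersection of each component's images. Everything else is a direct substitution into the homogeneity bookkeeping.
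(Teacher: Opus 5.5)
Your overall route is the paper's: apply Theorem~\ref{JeanTheorem} with $s=0$, $m=\alpha=r$ to the injectively elliptic pair $(\dr,\codr)$, and check cancellation componentwise. The homogeneity bookkeeping and the ellipticity identity are fine. So are the reduction of the intersection of images to the direct sum of two intersections and the $\dr$ component.

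The $\codr$ component, however, rests on a false step. You correctly note that $\Image(\iota_{\overline{v}}|_{\Lambda^p})$ is the orthogonal complement of $\ker(v\wedge\cdot\,|_{\Lambda^{p-1}})$. But being orthogonal to that kernel does not put $\eta$ in it, so it does not give $v\wedge\eta=0$. For example, with $v=e_1$ and $p=2$, the form $\eta=dx^2=\iota_{e_1}(dx^1\wedge dx^2)$ lies in the image, yet $e_1\wedge dx^2\neq 0$. The correct computation goes as follows. For $v\neq 0$ one has $\ker(v\wedge\cdot\,|_{\Lambda^{p-1}})=v\wedge\Lambda^{p-2}$, and by adjointness its orthogonal complement is $\ker(\iota_{\overline v}|_{\Lambda^{p-1}})$. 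So an element of every image satisfies $\iota_{\overline{v(\zeta)}}\eta=0$ for all $\zeta$. Taking $\zeta=e_j$ gives $\iota_{e_j}\eta=0$ for every $j$, which kills every coefficient $\eta_I$ with $I\neq\emptyset$. Hence the intersection is $\{0\}$ exactly when $p-1\geq 1$. This is precisely where the hypothesis $p\ge 2$ enters; the paper reaches the same conclusion by duality with the $\dr$ case.

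Your version instead asserts that $\codr$ is cancelling for every $p\ge 1$, which is false. For $p=1$, $\iota_{\overline{v(\zeta)}}$ maps $\Lambda^1$ onto $\Lambda^0=\Co$ for every $\zeta$, so every image contains $\Co$. If your claim held, Theorem~\ref{JeanTheorem} would give \eqref{SLAnisotropic} for $p=1$ (when $d\ge 3$), and the paper notes that this is false. The range $2\le p\le d-2$ you end with is correct, but your argument does not prove the $\codr$ half; replace it with the kernel-of-$\iota$ argument above.
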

The inequality~\eqref{SLAnisotropic} is false in the cases~$p=1$ and~$p=d-1$. However, here we recover the following result in the case~$p=d-1$.
\begin{Cor}
Let~$d \geq 3$ and~$r < d$. Then,
\eq{
\|\omega\|_{L_{q,1}} \lesssim \|\codr \omega\|_{L_1}, \ \text{provided~$\omega$ is a closed~$(d-1)$-form},
}
and~$q = d/(d-r)$.
\end{Cor}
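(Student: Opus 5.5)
The plan is to combine the ellipticity of the pair $(\dr,\codr)$ with Theorem~\ref{ConstantRankTheorem} applied to $\codr$ alone. I interpret ``closed'' in the anisotropic sense, $\dr\omega = 0$; this matches the setting of the example and reduces to the classical notion when $\vec r=(1,\ldots,1)$. We work with Schwartz $(d-1)$-forms and set $\alpha = r\in(0,d)$, so that $q = d/(d-r)$.

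\emph{Step 1: reduce to the Riesz potential of $\codr\omega$.} The operator $\mathcal{A} = (\dr,\codr)$ is $a$-homogeneous of order $r$ and injectively elliptic, as shown above via the identity $|v\wedge\omega|^2+|\iota_{\overline v}\omega|^2=|v|^2|\omega|^2$. Hence $\mathcal{A}(\xi)$ has the left inverse $(\mathcal{A}(\xi)^*\mathcal{A}(\xi))^{-1}\mathcal{A}(\xi)^*$, which is smooth on $\R^d\setminus\{0\}$ and $a$-homogeneous of order $-r$. Multiplying it by $\rho(\xi)^{r}$ produces a $0$-homogeneous smooth matrix symbol. The anisotropic Mikhlin theorem gives $L_q$ boundedness, and interpolation via~\eqref{LorentzInterpolation} gives $L_{q,1}$ boundedness. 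This is exactly~\eqref{AnisotropicMikhlin} with $s=0$:
\[
\|\omega\|_{L_{q,1}} \lesssim \big\|\I_r[\mathcal{A}(\partial)\omega]\big\|_{L_{q,1}} = \big\|\I_r[\codr\omega]\big\|_{L_{q,1}},
\]
where the equality uses $\dr\omega=0$.

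\emph{Step 2: apply Theorem~\ref{ConstantRankTheorem} to $\codr$ acting on $(d-1)$-forms.} We already know $\codr$ is $a$-homogeneous of order $r$ and of constant rank, the latter by the adjointness~\eqref{AdjointExtDer} and the constant rank of $\dr$. It remains to verify the cancelling condition~\eqref{Canceling} for $\codr$ on $\Lambda^{d-1}$.

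The cleanest route is through the Hodge star, which intertwines $\iota_{\overline{v(\zeta)}}$ on $\Lambda^{d-1}$ with $\overline{v(\zeta)}\wedge$ on $\Lambda^1$, up to sign and conjugation. The image of $\codr(\zeta)$ on $(d-1)$-forms is therefore the Hodge dual of $\set{\overline{v(\zeta)}\wedge\eta}{\eta\in\Lambda^{1}}$. Now run the same argument as for $\dr$ with $p=1<d-1$, which is where $d\ge 3$ enters. If a nonzero element lies in every one of these images, its dual $2$-form satisfies $dx^j\wedge(\cdot)=0$ for all $j$. That forces it to be a multiple of $\det$, which is impossible because $2<d$. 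So $\codr$ is cancelling on $(d-1)$-forms, and Theorem~\ref{ConstantRankTheorem} with $\alpha=r$ yields
\[
\|\I_r[\codr\omega]\|_{L_{q,1}}\lesssim\|\codr\omega\|_{L_1}.
\]
Combining this with Step~1 gives the corollary.

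\emph{Main obstacle.} The only nonroutine point is the cancellation check in Step~2: tracking signs and conjugations under the Hodge star, and using that $v(\zeta)$ ranges over a set spanning all directions (including the coordinate vectors $dx^j$) as $\zeta$ runs over $S^{d-1}$. The Mikhlin step is standard once ellipticity is in hand; the one care point is that the $L_{q,1}$ bound comes from interpolating $L_{q_1}$ and $L_{q_2}$ bounds. The reduction from Schwartz forms to the general statement is by density, as elsewhere in the paper.
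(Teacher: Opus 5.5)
Your proposal is correct and follows the paper's proof. The paper likewise applies Theorem~\ref{ConstantRankTheorem} to $\codr$ on $(d-1)$-forms, which is cancelling precisely because $d\geq 3$, and then uses the ellipticity of $(\dr,\codr)$ via~\eqref{AnisotropicMikhlin} with $s=0$ and $\dr\omega=0$; your Hodge-star check of the cancellation condition and your explicit left-inverse construction just spell out steps the paper states in one line.
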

\begin{proof}
Since~$d \geq 3$,~$\codr$ is a cancelling constant rank operator. Theorem~\ref{ConstantRankTheorem} says
\eq{
\|\I_\alpha \codr \omega\|_{L_{q,1}} \lesssim \| \codr\omega\|_{L_1}.
}
The inequality
\eq{
\|\omega\|_{L_{q,1}} \lesssim   \|\I_\alpha [\codr \omega]\|_{L_{q,1}} = \|\I_\alpha [\codr \omega]\|_{L_{q,1}} + \|\I_\alpha [\dr \omega]\|_{L_{q,1}}
}
follows from the ellipticity of the pair~$(\dr,\codr)$ and the bound~\eqref{AnisotropicMikhlin}.
\end{proof}
\end{Ex}

\section{Supplementary facts}

\subsection{The necessity of the concentration assumption in Theorem~\ref{Theorem41}}\label{SNecessity}
We wish to show the necessity of a concentration assumption in Theorem~\ref{Theorem41}. Throughout this section we work with~$\W$ generated by the gradient as described in Example~\ref{GradientExample}. In particular, we work with the isotropic homogeneity~$a= (1,1,\ldots, 1)$ and classical heat extension. We also use our common notation for the weights~\eqref{OurWeights}.
\begin{St}\label{NecessityOfConcentrationProp}
For any~$\eps > 0$ and any~$A$ sufficiently large there exists a vector field~$F\in \W$ such that the atom~$(0,0)$ is~$\eps$-flat for~$F$ and the scaling parameter~$A$, i.e.,
\eq{
\|F_3\|_{L_1(\heat[w](\fdot, 1 - A^{-6}))} \leq (1+\eps)\|F_0\|_{L_1(w)},
}
however,
\eq{
\|F_1\|_{L_q(\heat[v](\fdot, \frac{1- A^{-2}}{q}))}  >  A^{\alpha} \|F_0\|_{L_q(v)}.
  }
\end{St}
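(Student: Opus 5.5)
The plan is to build $F$ as the sum of two gradient fields. The first is a rank-one piece that makes the atom $(0,0)$ flat. The second is a small, sharply concentrated piece near the origin that produces the excessive $L_q$ growth. The construction exploits the mismatch of decay rates in~\eqref{eq415}: $v^{1/q}$ decays strictly faster than $w$, because $\theta_3/q>\theta_1$. So mass placed far away counts in $L_1(w)$ but is negligible in $L_q(v)$. This is exactly what a concentration assumption such as~\eqref{Localization} rules out.

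\emph{Construction.} Fix a large $R$ and a small amplitude $m>0$, both to be chosen. Let $g_1(x)=\chi_{\{x_1>R\}}$ be the Heaviside function of the hyperplane $\{x_1=R\}$. Then $\nabla g_1=e_1\otimes\sigma_R$, where $\sigma_R$ is the surface measure of that hyperplane. Let $g_2(x)=m\,\varphi(Ax)$ with $\varphi$ a fixed bump, and set $F=\nabla g_1+\nabla g_2\in\W$. The field $\nabla g_1$ is a non-negative rank-one measure, so for it the inequality of Lemma~\ref{BasicMonotonicityLemma} is an equality. Integrating $w$ and $v$ over the hyperplane gives
\[
\|(\nabla g_1)_3\|_{L_1(\heat[w](\fdot,1-A^{-6}))}=\|(\nabla g_1)_0\|_{L_1(w)}\asymp R^{d-1-\theta_1},
\]
while $\|(\nabla g_1)_k\|_{L_q(v)}\lesssim A^{k(q-1)/q}R^{(d-1-\theta_3)/q}$ for $k=0,1$, using Lemmas~\ref{Lemma41} and~\ref{Lemma42}.

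\emph{The bump piece.} The second field has $\|\nabla g_2\|_{L_1}\asymp mA^{1-d}$. By scaling, $(\nabla g_2)_1$ lives at scale $A^{-1}$ near the origin without cancellation, so $\|(\nabla g_2)_1\|_{L_q(\tilde v)}\gtrsim mA^{1-d}A^{\alpha}$; here $\tilde v\asymp1$ on the unit ball. By contrast $(\nabla g_2)_0=\nabla\heat[g_2](\fdot,1)$ has norm $\lesssim mA^{-d}$, since $\|g_2\|_{L_1}\asymp mA^{-d}$.

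\emph{Parameter choice.} Flatness follows from the triangle inequality:
\[
\|F_3\|_{L_1(\cdot)}\le R^{d-1-\theta_1}c+CmA^{1-d},\qquad \|F_0\|_{L_1(w)}\ge R^{d-1-\theta_1}c-CmA^{-d}.
\]
So it suffices to take $mA^{1-d}=c'\eps R^{d-1-\theta_1}$ with $c'$ small. For the growth inequality, the reverse triangle inequality gives
\[
\|F_1\|_{L_q(\tilde v)}\ge c\,mA^{1-d+\alpha}-CA^{(q-1)/q}R^{(d-1-\theta_3)/q},
\]
and also
\[
A^\alpha\|F_0\|_{L_q(v)}\le A^\alpha\big(CmA^{-d}+CR^{(d-1-\theta_3)/q}\big).
\]
After substituting $m$, the required inequality becomes
\[
c\eps R^{d-1-\theta_1}\gg A\cdot CmA^{-d}+R^{(d-1-\theta_3)/q}.
\]
The first term on the right is $\lesssim \eps R^{d-1-\theta_1}$, and the factor $1/A$ makes it harmless once $A$ is large. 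The second term is $o(R^{d-1-\theta_1})$ as $R\to\infty$, because $\frac{\theta_3-(d-1)}{q}>\theta_1-(d-1)$. This holds by $\theta_3>q\theta_1$ together with $q>1$. Hence we first fix $A$ large, depending only on dimensional constants, and then take $R$ large depending on $\eps$ and $A$.

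\emph{Main obstacle.} The delicate point is the lower bound for $\|(\nabla g_2)_1\|_{L_q(\tilde v)}$ with constants uniform in $A$. The cross term from the hyperplane must also stay dominated; that part is easy, since $R$ is chosen last. For the lower bound, I would rescale by $\Dil_{A}$ using Lemma~\ref{SeveralDilations} and~\eqref{eq217}. This reduces the bound to the fixed, nonzero function $\nabla\heat[\varphi](\fdot,1-A^{-2}\cdot A^{2})$ restricted to a ball on which the rescaled weight is bounded below. That function is nonzero because the heat extension is injective.
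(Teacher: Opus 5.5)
Your construction is correct and is essentially the paper's. It adds a nonnegative rank-one hyperplane field placed far out, which is exactly flat (equality in the monotonicity lemma) and negligible in $L_q(v)$ because $\theta_3>q\theta_1$, to a bump concentrated at scale $A^{-1}$ whose time-one heat extension is small by cancellation. The only difference is bookkeeping: the paper fixes a bump with $N$ vanishing moments (beating any power of $A$) and sends the hyperplane's $L_1(w)$-mass to infinity, whereas you tie the bump amplitude to $\eps$ times the hyperplane mass and use only the single cancellation of a gradient. That gain of $A^{-1}$ suffices once $A$ exceeds an absolute constant.

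There are two small slips to fix. In your final inequality, the first right-hand term should be $CmA^{-d}=A^{-1}\cdot CmA^{1-d}$, not $A\cdot CmA^{-d}$. In your last paragraph, the rescaled fixed function is $\nabla\heat[\varphi](\fdot,1)$.
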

\begin{Le}\label{LemmaE1}
For any~$A$ sufficiently large there exists a smooth compactly supported vector field~$f\in \W$ such that
\eq{\label{LemmaF2Inequality}
\|f_1\|_{L_q(\heat[v](\fdot, \frac{1- A^{-2}}{q}))} > A^{\alpha} \|f_0\|_{L_q(v)},
}
the weight~$v$ is given by~\eqref{OurWeights}.
\end{Le}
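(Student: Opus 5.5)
The idea is to exploit the mismatch of scales. Take $f$ to be the gradient of a bump living at scale $A^{-1}$ near the origin. At time $A^{-2}$ the heat extension still resolves the bump, so $f_1$ has the size of the gradient of a function of scale $A^{-1}$. At time $1$ the bump has been averaged into a Gaussian of total mass $\sim A^{-d}$, so $f_0$ is tiny. The weights play no essential role, since everything happens in a fixed neighbourhood of the origin where they are comparable to constants.

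Concretely, fix a nonnegative $\varphi\in C_0^\infty(\R^d)$, $\varphi\not\equiv 0$, and set $g(x)=\varphi(Ax)$ and $f=\nabla g$. Then $f$ is smooth, compactly supported and lies in $\W$ by Example~\ref{GradientExample}. By isotropic scaling, $\heat[g](x,A^{-2})=\psi(Ax)$ with $\psi=\heat[\varphi](\fdot,1)$, hence
\[
f_1(x)=A(\nabla\psi)(Ax).
\]
Since $\psi$ is a nonconstant Schwartz function, there is $R>0$ with $c_0=\int_{B_R(0)}|\nabla\psi|^q>0$, and therefore
\[
\int_{B_{R/A}(0)}|f_1|^q\,dx=A^{q-d}c_0 .
\]
For the weight on the left of~\eqref{LemmaF2Inequality}, note that $v\in\Smooth(\theta_3,C_3)$ and the time $(1-A^{-2})/q$ lies in $[0,2]$. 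Lemma~\ref{Lemma43} then gives $\heat[v](x,\frac{1-A^{-2}}{q})\asymp v(x)\asymp1$ on $B_R(0)$, uniformly in $A$. Consequently
\[
\|f_1\|_{L_q(\heat[v](\fdot,\frac{1-A^{-2}}{q}))}\gtrsim A^{1-d/q}
\]
once $A\geq1$.

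For the right-hand side, write $f_0=\nabla\heat[g](\fdot,1)=g*\nabla\Phi$, where $\Phi$ is the Gauss kernel at time $1$. Since $v\le1$,
\[
\|f_0\|_{L_q(v)}\le\|g\|_{L_1}\|\nabla\Phi\|_{L_q}=A^{-d}\|\varphi\|_{L_1}\|\nabla\Phi\|_{L_q}\lesssim A^{-d}.
\]
Comparing the two bounds, the ratio of the left side of~\eqref{LemmaF2Inequality} to $A^{\alpha}\|f_0\|_{L_q(v)}$ is at least
\[
cA^{1-d/q+d-\alpha}=cA^{1-d/q+d/q}=cA,
\]
using $\alpha=d-d/q$. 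Here $c>0$ does not depend on $A$, so the inequality holds for all sufficiently large $A$.

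The only step requiring care is the uniformity in $A$ of the lower bound for the heated weight on a fixed ball. This is exactly what Lemma~\ref{Lemma43} supplies. Alternatively, one can use Lemma~\ref{Lemma42} with the bound $v(x)\ge(1+|x|)^{-\theta_3}$. Everything else is explicit scaling of Gaussians.
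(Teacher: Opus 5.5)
Your proof is correct and follows essentially the paper's argument. Both concentrate a cancelling element of $\W$ at scale $A^{-1}$, so that $f_1$ still resolves it while $f_0$ is small, and both use that the heated weight is comparable to a constant near the origin. The only difference is in bounding $f_0$. You bound $f_0=g*\nabla\Phi$ by Young's inequality using the single cancellation of the gradient, which gains exactly one power of $A$; that is enough here. The paper instead bounds $\|f_0\|_{L_\infty}$ by $\|\hat f_0\|_{L_1}$ using vanishing moments up to order $N$, which gains an arbitrary power of $A$.
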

\begin{proof}
Let~$\tf \in \W$ be a smooth compactly supported vector field that has vanishing moments up to order~$N$; here~$N$ is a sufficiently large natural number. Set~$f(x) = A^{d/q} \tf(Ax)$. We use the dilation that preserves the~$L_q$ norm. Then,
\mlt{
\|f_0\|_{L_q(v)} \leq \|f_0\|_{L_\infty} \leq \|\mathcal{F}[f_0]\|_{L_1} = \int\limits_{\R^d} |\hat{f}(\xi)| e^{-4\pi^2 |\xi|^2}\,d\xi\\
 = A^{-\alpha}\int\limits_{\R^d} \big|\mathcal{F}[\tilde{f}] (\xi/A)\big| e^{-4\pi^2 |\xi|^2}\,d\xi \lesssim A^{-\alpha}\int\limits_{\R^d} \Big|\frac{\xi}{A}\Big|^{N+1} e^{-4\pi^2 |\xi|^2}\,d\xi = O(A^{-\alpha-N})
} 
by the vanishing moments assumption. What is more,
\eq{
\|f_1\|_{L_q(\heat[v](\fdot, \frac{1- A^{-2}}{q}))} \sim \|f_1\|_{L_q} \sim 1
}
by construction. Thus,~$f$ indeed serves as the desired function, provided~$A$ is sufficiently large.
\end{proof}
In fact, the constant~$A^{\alpha}$ in~\eqref{LemmaF2Inequality} may be replaced with any power of~$A$.

\begin{Le}\label{LemmaE2}
For any~$A > 1$, there exists a sequence of vector fields~$g^n\in \W$ such that~$g^n = e\otimes h^n$, where~$e\in \R^d \setminus \{0\}$ is a fixed vector and~$h^n$ is a non-negative function, and, moreover,
\alg{
\|g_1^n\|_{L_q(\heat[v](\fdot, \frac{1- A^{-2}}{q}))} \longrightarrow 0,\qquad n\to \infty;\\
\|g_3^n\|_{L_1(\heat[w](\fdot, 1 - A^{-6}))} \longrightarrow \infty,\qquad n\to\infty.
}
\end{Le}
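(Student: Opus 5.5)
We need a sequence $g^n = e\otimes h^n \in \W$, with $\W$ the space of gradients and $h^n\ge 0$. The idea is to take a gradient field that points in a single direction $e$ and spreads out more and more. The first requirement is small weighted $L_q$ norm at scale $1$; the second is large weighted $L_1$ norm at scale $A^{-3}$. Since $\heat$ is an average, $L_1$ mass against the slowly decaying weight can be made large while $L_q$ norms stay small, by spreading the mass.

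A vector field $e\otimes h$ with $e=e_1$ is a gradient exactly when $\partial_j h = 0$ for $j\ne 1$ (curl-free condition), that is, $h(x)=\phi(x_1)$ depends on $x_1$ only. Such $h$ is not summable, but $\W$ is a space of distributions, so tempered $\phi\ge0$ is allowed; $g=\nabla \Phi$ with $\Phi' = \phi$. I would take $h^n(x)=\phi_n(x_1)$ with $\phi_n = n^{-1/2}\chi_{[-n,n]}*\gamma$, where $\gamma$ is a fixed Gaussian, or a similar smooth plateau of height $\lambda_n$ and width $n$.

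The $L_q$ term is computed first. Heating preserves the $x_1$-only structure, so $g^n_1 = e\otimes \heat[\phi_n](\cdot,A^{-2})$, which is bounded pointwise by $\lambda_n$. The weight $\heat[v]$ is integrable because $\theta_3>d$. Hence $\|g^n_1\|_{L_q(\heat[v])}\lesssim \lambda_n \to 0$.

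The $L_1$ term is handled the same way. The weight $\heat[w](\cdot,1-A^{-6}) \gtrsim (1+|x|)^{-\theta_1}$ by Lemma~\ref{Lemma42}. We have $g^n_3 \approx \lambda_n$ on a slab $\{|x_1|\le n/2\}$, so the weighted $L_1$ norm is at least $\lambda_n\int_{|x_1|\le n/2}(1+|x|)^{-\theta_1}dx$. This integral tends to a finite constant, since $\theta_1>d$. That gives only $\lambda_n\cdot O(1)$, which is not infinite, so the construction needs fixing.

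The fix is to redistribute the mass. Since the weight is integrable, $L_1$ largeness must come from large values somewhere, while $L_q$ smallness at time $A^{-2}$ must come from oscillation or cancellation. But the field is positive, so there is no cancellation. Positivity together with a common direction means heating only smooths, and $\|g_1\|_{L_q(\heat v)}$ and $\|g_3\|_{L_1(\heat w)}$ should then be comparable. There is a real tension.

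The resolution is to use the weights' different decay rates. By \eqref{eq415}, $v$ decays much faster than $w$ ($\theta_3 > q\theta_1$). So place the mass far away: take $\phi_n$ to be a bump of height $H_n$ located at $x_1 = R_n$. Then
\[
\|g^n_1\|_{L_q(\heat v)}\lesssim H_n R_n^{-\theta_3/q}
\]
(integrating over the slab, where the other coordinates give $O(1)$), while
\[
\|g^n_3\|_{L_1(\heat w)} \gtrsim H_n R_n^{-\theta_1}.
\]
Choosing $H_n = R_n^{\theta_1}\cdot n$ with $R_n$ large, the first quantity is $n R_n^{\theta_1-\theta_3/q}\to 0$ once $R_n$ grows fast enough, since $\theta_3/q > \theta_1$, and the second is $\gtrsim n \to\infty$.

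The remaining checks are the slab integrals. The weighted integrals of a function of $x_1$ alone against $(1+|x|)^{-\theta}$ behave like $(1+|x_1|)^{-\theta+d-1}$, so the exponents shift by $d-1$ on both sides. The $L_q$ side gives $H_n R_n^{(d-1-\theta_3)/q}$ and the $L_1$ side gives $H_n R_n^{d-1-\theta_1}$. The comparison still works because $\theta_3 - (d-1) > q(\theta_1-(d-1))$, which follows from $\theta_3>q\theta_1$ and $q>1$. Verifying these Gaussian–polynomial bounds via Lemmas~\ref{Lemma41}, \ref{Lemma42} is the main technical step.
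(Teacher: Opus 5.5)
Your final construction, a profile $h^n(x)=\phi_n(x_1)$ concentrated near $x_1=R_n$ with height $H_n$, is correct and is essentially the paper's proof. The $x'$-integration shifts both decay exponents by $d-1$, and the decisive comparison is $\theta_3-(d-1)>q(\theta_1-(d-1))$, which follows from $\theta_3>q\theta_1$. The paper takes the normalized hyperplane measure $(\bar w(n))^{-1}\mathcal{H}_{d-1}|_{\{x_1=n\}}$ and multiplies by a slowly growing $c_n$, which is the same mechanism; your smooth bump has the minor advantage of giving a genuine non-negative function without the smoothing step the paper leaves to the reader, and in a write-up you should drop the discarded plateau attempt and the ``tension'' paragraph.
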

\begin{proof}
Let~$e = (1,0,\ldots,0)$ for simplicity of notation. We will also construct measures instead of functions and leave the tedious smoothing procedure to the reader. Set
\eq{
\bar{w}(s) = \int\limits_{\R^{d-1}} w(s,x')\,dx'; \quad \bar{v}(s) = \int\limits_{\R^{d-1}} v(s,x')\,dx',\qquad s\in \R.
}
Then,~$\bar{w}(s) \asymp s^{d-1-\theta_1}$ and~$\bar{v}(s) \asymp s^{d-1-\theta_3}$.
We will construct~$h^n$ as
\eq{
h^n = (\bar{w}(n))^{-1}\cdot d\mathcal{H}_{d-1}\big|_{\set{x\in \R^d}{x_1 = n}}.
}
Clearly,~$g^n = e \otimes h^n \in \W$ and
\alg{
\label{eqE07}\|h_1^n\|_{L_q(\heat[v](\fdot, \frac{1- A^{-2}}{q}))} &\sim \frac{\bar{v}^{\frac1q}(n)}{\bar{w}(n)} ;\\
\|h_3^n\|_{L_1(\heat[w](\fdot, 1 - A^{-6}))} &\sim 1.
}
Note that the constants in these inequalities do not depend on~$s$ and by~\eqref{eq415},~$\bar{v}^{\frac1q}(n)/\bar{w}(n) \to 0$ as~$n\to \infty$. Thus, we may redefine~$g^n := c_ng^n$, where~$c_n$ is a sequence that tends to infinity sufficiently slowly.
\end{proof}

\begin{proof}[Proof of Proposition~\ref{NecessityOfConcentrationProp}] We fix some large~$A$ and construct a function~$f$ with the help of Lemma~\ref{LemmaE1}. Construct the functions~$g^n$ as in Lemma~\ref{LemmaE2} and consider the function~$F = f+ g^n$. Then, since~$f$ is smooth and compactly supported,
\mlt{
\|F_3\|_{L_1(\heat[w](\fdot, 1 - A^{-6}))} = \|g_3^n\|_{L_1(\heat[w](\fdot, 1 - A^{-6}))} + O(1)\\
  = \|g_0^n\|_{L_1(w)} + O(1) = \|F_0\|_{L_1(w)} + O(1),
}
which means that for any given~$\eps > 0$, the function~$F$ is~$\eps$-flat, provided~$n$ is sufficiently large. Moreover,
\mlt{
\|F_1\|_{L_q(\heat[v](\fdot, \frac{1- A^{-2}}{q}))}  \geq \|f_1\|_{L_q(\heat[v](\fdot, \frac{1- A^{-2}}{q}))} + o(1)\\
  > A^{\alpha} \|f_0\|_{L_q(v)} + o(1) \geq A^{\alpha} \|F_0\|_{L_q(v)} + o(1).
}
\end{proof}


\subsection{Why it is impossible to split the~$L_q$ norm inside a tree}\label{NecessityForNonSplit}
In this section, we consider an example that shows the need for considering trees in the proof of Theorem~\ref{MainTheoremBesovLorentzScale} and running induction over trees in Subsection~\ref{s42}. We will show that the estimate
\eq{\label{TooMuchSplitting}
\sum\limits_{k \geq 0} A^{-\alpha k}\sum\limits_{j \in \Z^d} \|f_k\|_{L_q(Q_{k,j})} \lesssim \|f\|_{L_1},\qquad f\in \W,
}
is impossible for the most natural example of the space~$\W$. Namely, set~$d=2$ and~$a = (1,1)$, i.e., we consider the classical isotropic homogeneity in two variables. Set~$\W$ to be the space of gradients described in Example~\ref{GradientExample}.

Let~$f$  be the gradient of a characteristic function of~$Q_{0,0}$. This vector field is a charge, however, a principle similar to Lemma~\ref{SmoothingLemma} says that it suffices to disprove~\eqref{TooMuchSplitting} for the case where~$f$ is a charge, with the~$L_1$ norm replaced by the total variation of that charge. Let us fix~$A$ and consider large values of~$k$. We wish to study the behavior of~$f_k$ in a neighborhood of the right side of the square~$Q_{0,0}$; call this side~$L$ and denote by~$n_L$ the outward pointing unit normal to~$L$. We see that 
\eq{
f_k \sim A^{k} n_L \qquad \text{on a tubular neighborhood of~$L$ of thickness~$\sim\!\! A^{-k}$}. 
}
Call this neighborhood~$U_k$. There are~$\sim\!\! A^k$ squares~$Q_{k,j}$ that lie inside~$U_k$. For each such square, we have
\eq{\label{LqnormSmallCube}
 \|f_k\|_{L_q(Q_{k,j})}  \sim \big(A^{qk} A^{-2k}\big)^{\frac1q} \sim A^{(1-2/q)k}.
}
Therefore,
\eq{
\sum\limits_{j \in \Z^d} \|f_k\|_{L_q(Q_{k,j})}  \gtrsim A^k A^{(1-2/q)k} =A^{2\frac{q-1}{q} k}= A^{\alpha k},
}
and the series on the left hand side of~\eqref{TooMuchSplitting} diverges. 

Further analysis of this example shows that the cubes~$Q_{k,j}$ intersecting~$U_k$ correspond to flat atoms for any reasonable~$\eps$, because the vector field~$f_k$ is approximately  a scalar multiple of a fixed vector on~$U_k$: It points into the direction~$(1,0)$. They form a tree~$\Tree$, and the corresponding set~$\TS_k$ essentially coincides with~$U_k$:
\eq{
\|f_k\|_{L_q(U_k)} = \Big(\sum\limits_{j\colon Q_{k,j}\subset U_k} \|f_{k}\|_{L_q(Q_{k,j})}^q\Big)^{\frac1q} \Lseqref{LqnormSmallCube} \Big(A^k A^{(q-2)k}\Big)^{\frac1q} = A^{\frac{q-1}{q}k}.
}
Thus, we indeed observe a geometric decay in our series, exactly as in Corollary~\ref{TreeBoundLebesgueCorollary}:
\eq{
\sum\limits_{k} A^{-\alpha k} \|f_k\|_{L_q(U_k)} \lesssim \sum\limits_kA^{-\alpha k} A^{\alpha k/2}  \lesssim 1.
}

\bibliography{/Users/mac/Documents/Bib/Mybib_26_8}{}
\bibliographystyle{plain}

St. Petersburg State University, Department of Mathematics and Computer Science;

d.m.stolyarov at spbu dot ru.
\end{document}